\documentclass[12pt,reqno]{amsart}
\usepackage{amsmath}
\usepackage{amssymb}
\usepackage{amstext}
\usepackage{mathrsfs}
\usepackage{a4wide}
\usepackage{verbatim}
\usepackage{bm}
\usepackage{graphicx}
\usepackage{tikz}
\usepackage{pgfplots}
\usepackage{titletoc}
\usepackage[nocompress]{cite}

\allowdisplaybreaks 
\usepackage{color}
\usepackage{cases}
\usepackage{hyperref}
\hypersetup{
	colorlinks=true,
	linkcolor=black,
	anchorcolor=black,
	citecolor=black}

\pgfplotsset{compat=1.18}

\numberwithin{equation}{section}

\newtheorem{theorem}{Theorem}[section]
\newtheorem{proposition}[theorem]{Proposition}
\newtheorem{corollary}[theorem]{Corollary}
\newtheorem{lemma}[theorem]{Lemma}

\theoremstyle{definition}

\theoremstyle{remark}

\newcommand{\vertiii}[1]{{\left\vert\kern-0.25ex\left\vert\kern-0.25ex\left\vert #1
		\right\vert\kern-0.25ex\right\vert\kern-0.25ex\right\vert}}

\begin{document}
	
    \title[Water waves with compactly supported vorticity II]{Capillary--gravity water waves with compactly supported vorticity\\ II. Continuation and folds of the solution curve}
	
	\author{Lili Du}
	\address{Department of Mathematics, Sichuan University, Chengdu, Sichuan 610064, P.R. China}
		\email{dulili@scu.edu.cn}
	
	\author{Changjun Zou}
	\address{Department of Mathematics, Sichuan University, Chengdu, Sichuan 610064, P.R. China}
	\email{zouchangjun@amss.ac.cn}
	
    \begin{abstract}
    	This five-part series establishes the first systematic theory of capillary--gravity water waves carrying compactly supported vorticity with fixed positive circulation. Treating the free surface and the vortex as coupled unknowns, we study the well-posedness of traveling-wave equilibria, including the existence and continuation of solution branches with respect to bifurcation parameters, as well as their variational structure, geometric rigidity, canonical classification, and conditional dynamics. In this second paper of the series, we apply the Haziot--Wheeler analytic continuation theorem \cite{HaziotWheeler2023} to extend the small-speed point-vortex branch constructed in Part~I \cite{DuZouPartI} to a global curve of coupled equilibria. We establish two alternatives: either the geometric control quantity diverges, or it remains bounded along a sequence of solutions with no strongly convergent subsequence. If the second alternative occurs, we use concentration--compactness to obtain a finite profile decomposition without assuming monotonicity of the free surface: a nontrivial centered vortex wave and finitely many reflected pairs of irrotational waves, with a remainder converging strongly to zero. At any simple speed fold, a Schur-complement reduction in the physical vortex coordinate shows that the static energy--momentum Hessians of the two nearby equal-speed waves differ by one in negative index within the symmetric class. Finally, we use a joint two-parameter Lyapunov--Schmidt reduction to extend the regularization of Part~I to compact arcs of the global solution curve, carrying the canonical smooth-core branches through their specified simple speed folds. These results provide a global point-vortex reference for the variational construction in Part~III \cite{DuZouPartIII} and canonical smooth-core comparison arcs for the rigidity and uniqueness theory in Part~IV \cite{DuZouPartIV}.
    \end{abstract}

	\maketitle
	
	\medskip
    \noindent\textbf{Keywords.}
    Capillary--gravity water waves; compactly supported vorticity; global continuation; point vortices; Kirchhoff--Routh function; vortex regularization; folds of solution curves.

	\smallskip
	\noindent\textbf{2020 Mathematics Subject Classification.}
	Primary 76B15; Secondary 76B47, 35Q35, 35R35, 76B03.
	
	\clearpage
	\tableofcontents
	
	\section{Introduction}
	\label{sec1}
	
	This five-part series develops a systematic theory of capillary--gravity water waves carrying compactly supported vorticity with fixed positive circulation. Treating the free surface and the vortex as coupled unknowns, we address the existence and global continuation of traveling-wave equilibria, their variational structure, geometric rigidity, canonical classification, and conditional dynamics. Part~I \cite{DuZouPartI} constructs the small-speed point-vortex branch, characterizes vortex equilibria via the Kirchhoff--Routh function, and regularizes individual waves by smooth cores. The present Part~II develops the global continuation theory, analyzes possible solution curve folds and loss of compactness, and extends regularization to compact solution arcs passing through folds. These results provide a global point-vortex reference for the variational construction of smooth solutions developed in Part~III \cite{DuZouPartIII}. By contrast, the smooth-core solution arcs constructed in this paper arise from a singular perturbation of the point-vortex problem, while Part~IV \cite{DuZouPartIV} establishes the equivalence of the perturbative and variational constructions by proving rigidity and fixed-speed uniqueness under the stated hypotheses.
	
	We consider symmetric two-dimensional capillary--gravity solitary waves with compactly supported vorticity in an inviscid, incompressible fluid of infinite depth; see \cite{Lannes2013,HaziotEtAl2022} for general background. As in Part~I \cite{DuZouPartI}, we fix $g,\sigma,\Gamma>0$ and work in a frame moving with velocity $-We_1$, where $W>0$, so that the free surface $\partial\Omega$, with $\Omega=\{(\xi,y)\in\mathbb R^2:y<\eta(\xi)\}$ and even $\eta$, is stationary. Writing $v_{\rm rel}=v+We_1$ for the relative velocity and $\omega=\operatorname{curl}v_{\rm rel}$ for the vorticity, the steady Euler equations yield $(v_{\rm rel}\cdot\nabla)v_{\rm rel}+\nabla\mathsf p+ge_2=0$, $\nabla\cdot v_{\rm rel}=0$, and $(v_{\rm rel}\cdot\nabla)\omega=0$. On the free surface, the kinematic and capillary conditions are $v_{\rm rel}\cdot n=0$ and $\mathsf p=\sigma\kappa$, where $n=(-\eta',1)/\sqrt{1+(\eta')^2}$ and $\kappa=-\eta''/(1+(\eta')^2)^{3/2}$. The solitary-wave conditions $\eta\to0$ and $v\to0$ at infinity yield the Bernoulli relation
	\[
	\frac12|v_{\rm rel}(\xi,\eta(\xi))|^2+g\eta(\xi)+\sigma\kappa(\xi)=\frac{W^2}{2}.
	\]
	For a point vortex of circulation $\Gamma$ at $p_0\in\Omega$ on the symmetry axis, stationarity in the moving frame is equivalent to the vanishing of its regular drift \cite{Saffman1993},
	\[
	v_{\rm reg}(p_0):=\lim_{q\to p_0}\left[v_{\rm rel}(q)-\frac{\Gamma}{2\pi}\frac{(q-p_0)^\perp}{|q-p_0|^2}\right]=0,
	\qquad r^\perp:=(-r_2,r_1).
	\]
	The resulting coupling is bidirectional: the vortex-induced velocity determines the free-surface response through Bernoulli's condition, while the surface geometry feeds back into the vortex equilibrium through the conformal factor in the Kirchhoff--Routh function. 
	
	As the solution curve is continued globally from the local branch constructed in Part~I, two interesting mechanisms may arise: loss of compactness, where strong convergence fails, and \emph{simple folds}, where the curve remains regular but the wave speed undergoes a nondegenerate turning. Beyond establishing global continuation and regularization, we also provide a detailed mathematical characterization of both mechanisms.
	
	In the irrotational capillary--gravity setting, Buffoni, Groves, and Toland \cite{BuffoniGrovesToland1996} established the existence of a rich family of solitary waves. The existence and properties of such waves have also been investigated in \cite{VandenBroeckDias1992,IoossKirrmann1996,GrovesWahlen2011,Wheeler2018}. For rotational waves, Constantin and Strauss \cite{ConstantinStrauss2004} constructed a global continuum of periodic gravity waves, while Constantin, Strauss, and Varvaruca \cite{ConstantinStraussVarvaruca2016} made a major advance by developing a Riemann--Hilbert formulation of analytic global bifurcation encompassing large-amplitude waves and critical layers. Further developments include large-amplitude and stratified solitary waves \cite{Wheeler2013,ChenWalshWheeler2018} and the recent construction of overhanging solitary waves by D\'avila, del Pino, Musso, and Wheeler \cite{DavilaDelPinoMussoWheeler2026}. Particularly relevant to the present work, Gui, Wang, Yang, and Zhang \cite{GuiWangYangZhang2026} established a global bifurcation theory for deep-water Stokes waves with piecewise smooth vorticity and internal streamline interfaces, addressing the non-Fredholm difficulty through a singular bifurcation argument.
	
	Shatah, Walsh, and Zeng \cite{ShatahWalshZeng2013} established the existence of small-amplitude solitary capillary--gravity waves carrying a point vortex or compactly supported vorticity, using circulation as a small parameter. They also obtained global continuation for periodic point-vortex waves. Chen, Walsh, and Wheeler \cite{ChenWalshWheeler2019} developed a detailed theory of deep-water solitary waves with localized vorticity, including precise far-field asymptotics, nonexistence results, and new existence constructions with compactly supported vorticity. Related point-vortex constructions include finite-depth waves with vertically aligned vortices \cite{Varholm2016} and infinite-depth solitary waves with a finite dipole \cite{Le2019}. Ehrnstr\"om, Walsh, and Zeng \cite{EhrnstromWalshZeng2023} constructed stationary finite-depth waves with smooth, exponentially localized vorticity, whose surface deformation is small while the kinetic energy remains of order one. In the absence of gravity and surface tension, Crowdy \cite{Crowdy2023} obtained exact traveling waves with submerged point vortices.
	
	A major advance in the global theory of vortex-carrying water waves was made by Chen, Varholm, Walsh, and Wheeler \cite{ChenVarholmWalshWheeler2025}, who constructed large-amplitude solitary gravity waves with a submerged point vortex in finite depth at a fixed supercritical Froude number. Their framework allows overhanging free surfaces and also yields hollow-vortex regularization under a nondegeneracy condition. Their compactness argument exploits monotonicity in conformal coordinates to exclude an irrotational front. In contrast, we fix the circulation and vary the wave speed for infinite-depth capillary--gravity waves, restricting the free surface to the conformal graph class without assuming monotonicity.
	
	Classical local bifurcation theory \cite{CrandallRabinowitz1971} and global continuation methods \cite{Rabinowitz1971,Dancer1973,BuffoniToland2003} provide complementary frameworks for constructing and extending solution branches; related developments for elliptic front problems appear in \cite{ChenWalshWheeler2026Fronts}. Slease, Walsh, and Zhong \cite{SleaseWalshZhong2026} recently constructed small-amplitude point-vortex waves and hollow-vortex waves in finite depth using local implicit-function arguments. They note that surface tension generally prevents the monotonicity properties needed to control the limiting behavior of a global solution curve, leaving open the possibility of closed solution loops. Our branch presents a different challenge: it approaches a singular boundary point where the transformed equation is undefined and the vortex depth diverges as the wave speed tends to zero. We therefore apply the Haziot--Wheeler analytic continuation theorem \cite[Theorem~B.1]{HaziotWheeler2023}, which accommodates an initial analytic arc approaching such a boundary point under suitable Fredholm and local properness hypotheses. This differs from the Buffoni--Toland continuation used for rotating vortex patches \cite{HassainiaMasmoudiWheeler2020}, which starts from an interior bifurcation point. The continuation theorem provides a global solution curve, but its compactness requires further analysis. Under uniform surface and graph bounds and speeds in a compact subcritical interval, the vortex equilibrium equation controls the vortex depth and prevents collision with the free surface. Combining concentration--compactness \cite{Lions1984I,Lions1984II,Gerard1998,Struwe1984} with nonlinear profile subtraction, we characterize bounded loss of compactness through a finite profile decomposition consisting of a centered vortex wave and finitely many reflected pairs of irrotational waves, with a remainder converging strongly to zero in $H^{s+2}(\mathbb R)$ for $s>\frac12$.
	
	The global alternatives describe the behavior of the solution curve at large continuation parameters, while its wave speed may turn as a function of the conformal vortex depth $a$ at an interior point. At a simple fold, this turning is nondegenerate and gives two distinct nearby waves at the same speed on one side of the turning point. Variational and Hamiltonian approaches to water-wave multiplicity and stability have been developed in \cite{GrillakisShatahStrauss1987,GrillakisShatahStrauss1990,Plotnikov1992,Mielke2002,Buffoni2004,ConstantinStrauss2007,GrovesWahlen2015,BurtonToland2011,VarholmWahlenWalsh2020}. For the coupled surface--vortex problem, fixing the conformal vortex depth does not fix its physical position when the free surface varies. We therefore compute the energy--momentum Hessian in the physical vortex coordinate and use a Schur-complement reduction to separate the surface variations from vortex displacement. The full Hessian is thus decomposed into a surface quadratic form and a scalar contribution associated with the vortex equilibrium. Across a simple fold, the surface Hessian retains its negative index, while the scalar Schur complement associated with vortex displacement changes sign. Consequently, the energy--momentum Hessians of the two nearby equal-speed waves have negative indices differing by one in the symmetric class. Our fold results are conditional: we characterize the local geometry and variational structure of any simple fold on the continued branch and prove the persistence of prescribed folds under regularization, without asserting that such a fold actually occurs on the global curve.
	
	Classical constructions of concentrated steady vortices employed variational methods \cite{Turkington1983I,Turkington1983II} and singularly perturbed elliptic equations \cite{FlucherWei1998,SmetsVanSchaftingen2010,DeValeriolaVanSchaftingen2013,CaoLiuWei2014,CaoPengYan2015,LongWangZeng2019}. Related perturbative and global theories address vortex patches, pairs, and multipoles \cite{Wan1988,HmidiMateu2017,HassainiaMasmoudiWheeler2020,HassainiaWheeler2022,GarciaHaziot2023}. These regularization methods generally concern local constructions near prescribed equilibria, while existing global results do not address regularization across point-vortex folds. In the water-wave setting, related results include concentrated-vorticity constructions \cite{CordobaDiIorio2021} and hollow-vortex regularization at nondegenerate point-vortex configurations \cite{ChenVarholmWalshWheeler2025,ChenWalshWheeler2026Hollow}. To overcome the singular fixed-speed linearization at simple folds, we employ a joint two-parameter Lyapunov--Schmidt reduction to construct smooth-core solution arcs through these folds, with circulation and vorticity law fixed.
	
	On each prescribed compact regular point-vortex arc, we solve the surface and mass-constrained inner equations together in the reference lower half-plane. The remaining translation condition yields a normalized drift residual that converges to the point-vortex regular-drift residual in $C^2$ with an error of order $O(\varepsilon)$. This estimate gives uniform control of the smooth-core solution arcs and ensures the persistence of their specified simple folds.
	
	In this five-part series, Part~III \cite{DuZouPartIII} develops a joint dual variational construction of finite-core waves and analyzes their effective Hessian and refined asymptotics. Part~IV \cite{DuZouPartIV} establishes rigidity, exact reflection symmetry, and fixed-speed local uniqueness, identifying the perturbative and variational constructions under the stated hypotheses. Finally, Part~V \cite{DuZouPartV} studies fixed-impulse coercivity, energy exchange, and conditional logarithmic-time orbital tracking for the coupled free-surface Euler evolution.
	
	We now state the main results.
	
	\smallskip

	\subsection{Main results}
	
	We keep $g,\sigma,\Gamma>0$ fixed and use the conformal formulation of Part~I \cite{DuZouPartI}. Write $p=(p_1,p_2)$ for physical points and $z=x+\mathrm{i}y$ for the reference coordinate in the lower half-plane $\Pi_-:=\{z\in\mathbb C:\operatorname{Im}z<0\}$. We use $dp$ for physical area measure and $dx$ for measure on the reference boundary. Fix $s>\frac12$, and let $H_{\rm e}^{s+2}(\mathbb R)$ denote the real-valued even subspace of $H^{s+2}(\mathbb R)$. Our Hilbert transform convention is $\widehat{Hh}(\zeta)=\mathrm{i}\operatorname{sgn}(\zeta)\widehat h(\zeta)$, so $|D|=-H\partial_x$.
	
	The free surface is represented by a conformal map $Z_Y:\Pi_-\to\Omega_Y$, normalized by $Z_Y(z)-z\to0$ and $Z_Y'(z)\to1$ as $|z|\to\infty$ in $\overline{\Pi_-}$. Its boundary trace is
	\[
	Z_Y(x)=\Xi_Y(x)+\mathrm{i}Y(x),\qquad \Xi_Y(x):=x-HY(x).
	\]
	Here $Y$ is the surface height in conformal coordinates, while the physical surface is $\eta_Y=Y\circ\Xi_Y^{-1}$. We impose the graph condition $c_Y:=\inf_{\mathbb R}\Xi_Y'=\inf_{\mathbb R}(1+|D|Y)>0$, which ensures that $\Xi_Y$ is strictly increasing and $\Omega_Y=\{p\in\mathbb R^2:p_2<\eta_Y(p_1)\}$. No monotonicity of the surface profile is assumed.
	
	The restriction on wave speed arises from the linearization at the flat far field, where the influence of the localized vortex vanishes. For a Fourier mode $e^{ikx}$ with $k\ne0$, the linearized surface operator has symbol $g+\sigma k^2-W^2|k|$. This symbol vanishes when $W=c_{\rm ph}(|k|):=(g/|k|+\sigma|k|)^{1/2}$. Its minimum is
	\[
	c_{\min}:=(4g\sigma)^{1/4}, 
	\]
	attained at $|k|=\sqrt{g/\sigma}$. Thus $0<W<c_{\min}$ ensures that the far-field symbol is strictly positive for every Fourier mode. This guarantees invertibility of the flat far-field operator, but not necessarily of the Bernoulli linearization about a nonflat wave. Moreover, $c_{\min}$ is a threshold for linear resonance, not the speed of a possible fold of the nonlinear coupled solution curve. We define
	\begin{equation}
		\mathcal O^s:=\left\{Y\in H_{\rm e}^{s+2}(\mathbb R):c_Y>0\right\},\qquad \mathscr U^s:=\mathcal O^s\times(0,\infty)\times(0,c_{\min}).
		\label{1-1}
	\end{equation}
	The coordinates on $\mathscr U^s$ are $(Y,a,W)$, where $a>0$ is the conformal vortex depth. The point vortex is located at $-\mathrm{i}a$ in the reference half-plane and at $p_a:=Z_Y(-\mathrm{i}a)$ in the physical domain. Writing $E_ah:=(e^{-a|D|}h)(0)$ for harmonic evaluation at the reference vortex point, symmetry gives $p_a=(0,p_2(Y,a))$, where $p_2(Y,a)=-a+E_aY$. Thus the physical vortex position depends on both $a$ and the unknown surface.
	
	For $Y\in\mathcal O^s$, the boundary metric and signed curvature are
	\[
	J(Y):=|Z_Y'|^2=(1+|D|Y)^2+(Y')^2,\qquad \kappa(Y):=-\frac{(1+|D|Y)Y''-Y'(|D|Y)'}{J(Y)^{3/2}}.
	\]
	We also set
	\[
	q_{a,W}(x):=W-\frac{\Gamma a}{\pi(x^2+a^2)}.
	\]
	The squared relative velocity on the free surface is $q_{a,W}^2/J(Y)$. The Bernoulli and vortex equilibrium residuals are
	\begin{equation}
		\begin{aligned}B(Y,a,W)&:=gY+\sigma\kappa(Y)+\frac{q_{a,W}^2}{2J(Y)}-\frac{W^2}{2},\\ P(Y,a,W)&:=W-\frac{\Gamma}{4\pi a}-\frac{\Gamma}{4\pi}\frac{E_a(Y'')}{1+E_a(|D|Y)}.\end{aligned}
		\label{1-2}
	\end{equation}
	The equation $B=0$ imposes Bernoulli's condition, while $P=0$ expresses the point-vortex equilibrium. Indeed, the horizontal component of the regular drift is $P(Y,a,W)/Z_Y'(-\mathrm{i}a)$, where $Z_Y'(-\mathrm{i}a)=1+E_a(|D|Y)>0$. The first two terms in $P$ give the flat-boundary balance, and the last term accounts for the influence of the free surface.
	
	The coupled point-vortex traveling-wave problem is therefore equivalent to
	\begin{equation}
		\mathscr F(Y,a,W):=\begin{pmatrix}B(Y,a,W)\\ P(Y,a,W)\end{pmatrix}=0,\qquad (Y,a,W)\in\mathscr U^s.
		\label{1-3}
	\end{equation}
	This equivalence is established through the velocity reconstruction in Proposition~\ref{prop2-1}.

	\smallskip
	\noindent\textit{Global continuation and possible loss of compactness.}
	Part~I \cite{DuZouPartI} provides a small-speed solution branch $(Y_W,a_W,W)$ with fixed circulation $\Gamma$. As $W\to0^+$, the conformal vortex depth satisfies $a_W\sim\Gamma/(4\pi W)\to\infty$, while $Y_W\to0$ in $H^{s+2}(\mathbb R)$. Thus the limiting configuration lies outside $\mathscr U^s$. We seek to continue this branch beyond the small-speed regime, without requiring the wave speed to remain a local parameter or the surface linearization to remain invertible along the entire curve.
	
	A central difficulty is the possible loss of compactness. If $\|Y\|_{H^{s+2}}$ remains bounded and $c_Y$, $W$, and $c_{\min}-W$ stay bounded away from zero, the vortex equilibrium equation $P=0$ controls the vortex depth and prevents collision with the free surface; see Lemma~\ref{lem4-1}. Nevertheless, these bounds do not guarantee strong convergence, since surface profiles may separate and move toward spatial infinity. By symmetry, such profiles occur in reflected pairs, while the vortex remains near the center.
	
	Far from the vortex, $q_{a,W}(x)$ approaches $W$, so any nontrivial distant profile satisfies the irrotational Bernoulli equation. For a real profile $Q\in H^2(\mathbb R)$ with $\inf_{\mathbb R}(1+|D|Q)>0$, not necessarily even, we define
	\begin{equation}
		B_{\rm irr}(Q,W):=gQ+\sigma\kappa(Q)+\frac{W^2}{2J(Q)}-\frac{W^2}{2},
		\label{1-4}
	\end{equation}
	where the boundary metric $J$ and signed curvature $\kappa$ are given by the same formulas as above. The following theorem distinguishes divergence of the geometric control quantity from bounded loss of compactness, and characterizes the latter situation through a finite decomposition into a centered vortex wave and reflected pairs of irrotational waves.
	
	\begin{theorem}[Global continuation and two alternatives]
		\label{thm1-1}
		Fix $g,\sigma,\Gamma>0$ and $s>\frac12$. Then there exists a continuous solution curve with $0<W(t)<c_{\min}$ for every $t>0$,
		\[
		\gamma:(0,\infty)\longrightarrow\mathscr U^s,\qquad \gamma(t)=(Y(t),a(t),W(t)),\qquad \mathscr F(\gamma(t))=0,
		\]
		whose image $\mathscr C_0:=\gamma((0,\infty))$ contains, for some sufficiently small $W_{\rm loc}>0$, the initial arc $\mathscr C_{\rm loc}:=\{(Y_W,a_W,W):0<W<W_{\rm loc}\}$ from Part~I. Near every $t_0>0$, the curve admits a local real-analytic reparametrization. We may choose the parametrization to follow $\mathscr C_{\rm loc}$ near $t=0$ and to lie outside this initial arc for all sufficiently large $t$. At the initial end, 
		\begin{equation}
			\begin{aligned}W(t)&\longrightarrow0,\qquad a(t)\sim\frac{\Gamma}{4\pi W(t)},\\ \|Y(t)\|_{H^{s+2}}&=O\bigl(W(t)^{3/2}\bigr)\qquad\text{as }t\downarrow0.\end{aligned}
			\label{1-5}
		\end{equation}
		Set
		\begin{equation}
			\mathcal N_{\rm s}(t):=\|Y(t)\|_{H^{s+2}}+\frac1{c_{Y(t)}}+\frac1{W(t)}+\frac1{c_{\min}-W(t)}.
			\label{1-6}
		\end{equation}
		Then one of the following alternatives holds.
		\begin{enumerate}
			\item[(i)] We have
			\begin{equation}
				\mathcal N_{\rm s}(t)\longrightarrow\infty\qquad\text{as }t\to\infty.
				\label{1-7}
			\end{equation}
			
			\item[(ii)] There is a sequence $t_n\to\infty$ with $\sup_n\mathcal N_{\rm s}(t_n)<\infty$ such that the corresponding conformal surface profiles $\{Y(t_n)\}$ have no strongly convergent subsequence in $H^{s+2}(\mathbb R)$. Set $(Y_n,a_n,W_n):=\gamma(t_n)$. After passing to a subsequence,
			\[
			a_n\longrightarrow a_\infty>0,\qquad W_n\longrightarrow W_\infty\in(0,c_{\min}).
			\]
			There is a nonzero centered profile $Y^{(0)}\in\mathcal O^s\cap H^{s+3}(\mathbb R)$ with $\mathscr F(Y^{(0)},a_\infty,W_\infty)=0$. There are also an integer $N\ge1$ and nonzero real profiles $Q_1,\ldots,Q_N\in H^{s+3}(\mathbb R)$, where $N$ is the number of reflected profile pairs, such that
			\[
			B_{\rm irr}(Q_j,W_\infty)=0,\qquad \inf_{\mathbb R}(1+|D|Q_j)>0,\qquad 1\le j\le N.
			\]
			There are positive centers $x_n^{(j)}$ in the conformal coordinate $x$, for $1\le j\le N$, such that $Y_n(\cdot+x_n^{(j)})\to Q_j$ in $H_{\rm loc}^{s+2}(\mathbb R)$ and
			\[
			x_n^{(j)}\longrightarrow+\infty,\qquad |x_n^{(j)}-x_n^{(k)}|\longrightarrow\infty\quad\text{for }j\ne k.
			\]
			For these centers, we have
			\begin{equation}
				\begin{aligned}Y_n(x)&=Y^{(0)}(x)+\sum_{j=1}^N\bigl[Q_j(x-x_n^{(j)})+Q_j(-x-x_n^{(j)})\bigr]+R_n^{(N)}(x),\\ \|R_n^{(N)}\|_{H^{s+2}}&\longrightarrow0.\end{aligned}
				\label{1-8}
			\end{equation}
			In particular, $Y_n\to Y^{(0)}$ in $H_{\rm loc}^{s+2}(\mathbb R)$, while the nonzero profiles at infinity prevent global strong convergence in $H^{s+2}(\mathbb R)$.
		\end{enumerate}
	\end{theorem}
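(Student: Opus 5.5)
The proof has two stages. First we produce the curve $\gamma$ from the Haziot--Wheeler theorem \cite[Theorem~B.1]{HaziotWheeler2023}. Then we analyze its behavior as $t\to\infty$ through a compactness dichotomy.

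\emph{Stage 1: continuation.} We regard $\mathscr F:\mathscr U^s\to H_{\rm e}^{s}(\mathbb R)\times\mathbb R$ as a real-analytic map and check the hypotheses of \cite[Theorem~B.1]{HaziotWheeler2023} in turn.

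\begin{itemize}
\item \emph{Analyticity.} The map is analytic because $J$, $\kappa$ and $E_a$ depend analytically on $(Y,a)$ when $c_Y>0$ and $a>0$.
\item \emph{Fredholm property.} At solutions, $D_{(Y,a,W)}\mathscr F$ is Fredholm of index $1$. The principal part of $D_YB$ is $-\sigma J^{-3/2}\partial_x^2$ plus lower-order terms, and by Bernoulli its coefficients converge at infinity to the far-field symbol $g+\sigma k^2-W^2|k|$, which is positive because $W<c_{\min}$. Hence $D_YB$ is Fredholm of index $0$ on even functions, and adding the scalar unknowns $a,W$ against the one scalar equation $P$ gives index $1$.
\item \emph{Local properness.} $\mathscr F^{-1}(0)$ is locally compact on closed bounded sets where $\mathcal N_{\rm s}$ is bounded and profiles are uniformly localized. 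This follows from elliptic bootstrapping: solutions gain a derivative and lie in $H^{s+3}$.
\item \emph{Initial arc.} The Part~I branch $\mathscr C_{\rm loc}$ is an analytic arc along which $D_{(Y,a)}\mathscr F$ is invertible and which tends to the singular point $W=0$, $a=\infty$. The asymptotics \eqref{1-5} are inherited directly from Part~I: $a_W\sim\Gamma/(4\pi W)$, and the forcing $q_{a,W}^2-W^2$ has $H^s$ size $O(W^{3/2})$ when $a\sim W^{-1}$.
\end{itemize}

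The theorem then yields a continuous, locally analytically reparametrizable curve. Its alternatives are that either $\gamma(t)$ leaves every closed bounded subset of $\mathscr U^s$ on which $\mathscr F^{-1}(0)$ is compact, or the curve is a closed loop. The loop is excluded because the curve starts at the singular boundary point, and the parametrization can be chosen to leave $\mathscr C_{\rm loc}$ for large $t$, using injectivity near the initial end from the local uniqueness of Part~I.

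\emph{Stage 2: the dichotomy.} If \eqref{1-7} fails, there are $t_n\to\infty$ with $\sup_n\mathcal N_{\rm s}(t_n)<\infty$.

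Lemma~\ref{lem4-1} bounds $a_n$ above and below, so after extraction $a_n\to a_\infty>0$ and $W_n\to W_\infty\in(0,c_{\min})$. If $\{Y_n\}$ had a strongly convergent subsequence, the limit would be a solution in $\mathscr U^s$. The limit solution would then lie in a compact part of the solution set, contradicting the properness alternative of Stage 1. Hence no subsequence converges strongly, which gives alternative (ii).

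For the decomposition we run a concentration--compactness and profile extraction argument in $H^{s+2}$.

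\begin{itemize}
\item \emph{Centered profile.} Take the weak limit $Y_n\rightharpoonup Y^{(0)}$. Compactness of local embeddings and the equation give $H^{s+2}_{\rm loc}$ convergence, so $\mathscr F(Y^{(0)},a_\infty,W_\infty)=0$ and $c_{Y^{(0)}}>0$. The profile $Y^{(0)}$ is nonzero: if $Y^{(0)}=0$, then $P=0$ forces $W_\infty=\Gamma/(4\pi a_\infty)$, but the Bernoulli equation near the origin with forcing $q_{a_\infty,W_\infty}^2\ne W_\infty^2$ rules out $Y^{(0)}=0$.
\item \emph{Remainder.} Set $R_n^{(0)}=Y_n-Y^{(0)}$. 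If $\|R_n^{(0)}\|_{L^\infty}\not\to0$, we choose maximizing points $x_n^{(1)}$. By evenness we may take $x_n^{(1)}>0$, and since $R_n^{(0)}\to0$ locally, $x_n^{(1)}\to\infty$.
\item \emph{Irrotational profiles.} The translates converge locally to $Q_1\neq0$. Because $q_{a_n,W_n}(\cdot+x_n^{(1)})\to W_\infty$ in $C^k_{\rm loc}$, and because $|D|$ and $H$ are nonlocal but their tails decay, $Q_1$ satisfies $B_{\rm irr}(Q_1,W_\infty)=0$.
\end{itemize}

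We iterate, subtracting reflected pairs at each step. Orthogonality (Pythagorean) expansion of $\|Y_n\|^2_{H^{s+2}}$, combined with a uniform lower bound $\|Q\|_{H^{s+2}}\ge\delta(W_\infty)>0$ for nontrivial irrotational solitary waves (from invertibility of the flat far-field operator, via a small-data argument), forces the process to stop after some finite $N$. The bound $N\ge1$ holds because strong convergence fails.

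\emph{Main obstacle.} The hardest step is upgrading vanishing of the remainder from $L^\infty$ (or weak) to strong $H^{s+2}$ convergence, since it is only the nonlinear equation that prevents energy from escaping into dispersed small oscillations. We subtract the profile equations from $B(Y_n)=0$. This shows that $R_n^{(N)}$ satisfies
\[
L_{W_\infty}R_n^{(N)}=o(1)\ \text{in } H^{s}+\text{(terms quadratic in } R_n^{(N)}\text{ and cross terms between separated profiles)}.
\]
The cross terms tend to $0$ by separation of centers and decay of the profiles, whose $H^{s+3}$ regularity and exponential decay follow because the far-field symbol is positive. Here $L_{W}$ denotes the flat far-field operator with symbol $g+\sigma k^2-W^2|k|$, which is invertible with a uniform gain of two derivatives. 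The quadratic terms are bounded by $o(1)\|R_n^{(N)}\|_{H^{s+2}}$, because $\|R_n^{(N)}\|_{W^{1,\infty}}\to0$ after interpolation with the uniform $H^{s+3}$ bound. Absorbing these terms gives $\|R_n^{(N)}\|_{H^{s+2}}\to0$, which is \eqref{1-8}.
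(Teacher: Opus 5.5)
\textbf{The gap is in Stage~1: as you set it up, \cite[Theorem~B.1]{HaziotWheeler2023} does not apply.} That theorem needs the initial arc $(\tilde x(\lambda),\lambda)$ to converge, in the Banach space, to a point of $\partial\mathscr U$.

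With unknown $x=(Y,a)$ you have $a_W\sim\Gamma/(4\pi W)\to\infty$. So the arc has no limit at all; ``$a=\infty$'' is not a boundary point. Along the arc the fixed-speed inverse also degenerates: since $L(W)=\frac{4\pi}{\Gamma}W^2+O(W^5)$, the inverse $D_{(Y,a)}\mathscr F^{-1}$ has norm at least $1/L(W)\sim\Gamma/(4\pi W^2)$.

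The missing idea is the reciprocal depth $b=a^{-1}$. Then $\widehat{\mathscr F}(Y,b,W)=\mathscr F(Y,b^{-1},W)$ is analytic on $\widehat{\mathscr U}^{\,s}$, and $D_{(Y,b)}\widehat{\mathscr F}$ is still Fredholm of index zero by the chain rule. The arc $(Y_W,b_W,W)$ converges to $(0,0,0)\in\partial\widehat{\mathscr U}^{\,s}$. Along it, $D_{(Y,b)}\widehat{\mathscr F}$ converges to $\operatorname{diag}(g-\sigma\partial_x^2,-\Gamma/(4\pi))$, so its inverse is uniformly bounded (Lemma~\ref{lem3-2}).

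You also misquote the conclusion of the theorem. It does not give ``closed loop, or the curve leaves every compact piece of the zero set.'' It gives the dichotomy of Proposition~\ref{prop3-3}: either $\mathcal N=\|Y\|_{H^{s+2}}+b+W+1/\operatorname{dist}(\cdot,\partial\widehat{\mathscr U}^{\,s})\to\infty$, or \emph{some} sequence $t_n\to\infty$ with $\sup_n\mathcal N(t_n)<\infty$ has no convergent subsequence.

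\textbf{This breaks the first step of your Stage~2.} You cannot take an arbitrary sequence with bounded $\mathcal N_{\rm s}$ and conclude it has no strongly convergent subsequence ``by properness.'' The cited theorem gives no information about an arbitrary bounded sequence.

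What you need is that $\mathcal N$ and $\mathcal N_{\rm s}$ are bounded on exactly the same subsets of the solution curve (Proposition~\ref{prop4-8}). Two estimates give this:
\begin{itemize}
\item Lemma~\ref{lem4-1} gives $\frac{2\pi}{\Gamma}W<b\le\frac{4\pi C_0}{\Gamma c_Y}W$, which controls $b$ from both sides.
\item The bounds $c_Y/C_s\le\operatorname{dist}(Y,\partial\mathcal O^s)\le c_Y\|Y\|_{H^{s+2}}/(1-c_Y)$ convert between the graph constant and the distance to the boundary.
\end{itemize}
With this comparison, failure of (i) excludes the blow-up branch. The loss-of-compactness branch then supplies a sequence, and that sequence has bounded $\mathcal N_{\rm s}$.

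From there your profile decomposition is essentially the argument of Section~\ref{sec4}, and it works, with two additions:
\begin{itemize}
\item Include in your $o(1)$ the linear profile--remainder terms $D\mathfrak N_{W_\infty}(S_n)R_n$, where $S_n:=Y_n-R_n^{(N)}$, and the forcing difference. These vanish because $R_n\to0$ locally at every center while their coefficients decay away from the centers.
\item Check that $|D|R_n\to0$ in $W^{1,\infty}$, not just $R_n$. Since $|D|$ is nonlocal, this needs the translation argument behind \eqref{4-14}.
\end{itemize}

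Two smaller inaccuracies:
\begin{itemize}
\item The principal symbol of $A$ is $\sigma J^{-3/2}[(1+|D|Y)\zeta^2+\mathrm iY'\zeta|\zeta|]$, including a nonlocal second-order term. Variable second-order coefficients are not compact perturbations of $A_\infty(W)$, so ``coefficients converge at infinity'' does not by itself give Fredholmness. The paper uses the factorization $A=\mathcal T_YA_\infty(W)+\mathcal K_{Y,a,W}$ of Lemma~\ref{lem2-6}.
\item The distant profiles need not decay exponentially, because $g+\sigma k^2-W^2|k|$ is not smooth at $k=0$. Your cross-term estimate only needs $H^{s+3}$ membership and separation, so this does not affect the argument.
\end{itemize}
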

	
	Along the solution curve $\gamma(t)$, we may take $t=W$ on the initial small-speed arc, but a different parameter may be needed if the speed turns (at a fold) along the continued curve. Because $W_{\mathrm{loc}}$ lies strictly inside the fixed-speed uniqueness interval of Lemma~\ref{lem3-1}, the continued curve must coincide locally with the initial branch at every point of that branch with $0<W\le W_{\mathrm{loc}}$. The global curve eventually leaves the initial arc; consequently, it must contain solutions with $W>W_{\mathrm{loc}}$. This does not imply that $W(t)$ is monotone along the curve.
	
	In alternative~{\rm(i)}, divergence of $\mathcal N_{\rm s}$ means that, along some sequence, the surface norm becomes unbounded, $c_Y$ approaches zero, or the speed approaches $0$ or $c_{\min}$. In particular, $c_Y\to0$ indicates loss of the conformal graph bound, not necessarily a singular physical free surface. The profile decomposition in alternative~{\rm(ii)} identifies centers $x_n^{(j)}$ in conformal coordinates, which need not coincide with wave crests. Their separation describes the spatial structure of a sequence of steady waves, rather than the motion of profiles along the solution curve.
	
	The centered limit $(Y^{(0)},a_\infty,W_\infty)$ is a nontrivial coupled wave satisfying \eqref{1-3} and carrying the full circulation $\Gamma$, while each $Q_j$ is an irrotational wave at the same speed $W_\infty$. The Bernoulli equation gives both types of profiles one additional derivative, placing them in $H^{s+3}(\mathbb R)$. Although reflection symmetry pairs each distant profile with its reflection, we impose no sign, concavity, single-crest, or evenness condition on $Q_j$. Their separation and the strong convergence of the remainder to zero in $H^{s+2}(\mathbb R)$ yield
	\[
	\lim_{n\to\infty}\|Y_n-Y^{(0)}\|_{H^{s+2}}^2=2\sum_{j=1}^N\|Q_j\|_{H^{s+2}}^2>0.
	\]
	Thus the failure of global strong convergence is entirely due to profiles whose centers escape to horizontal infinity in conformal coordinates. Figure~\ref{fig:profile-splitting} illustrates one reflected pair.
	
	\begin{figure}[htbp]
		\centering
		\includegraphics[width=0.96\linewidth]{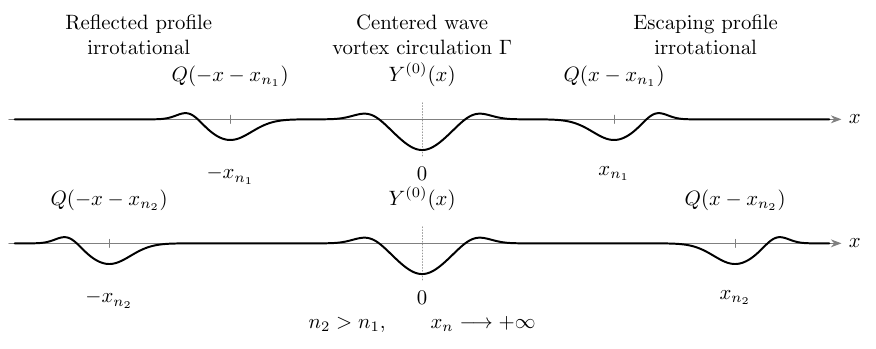}
		\caption{Profile separation in alternative~{\rm(ii)} of Theorem~\ref{thm1-1}, illustrated for one reflected pair. The centered wave retains circulation $\Gamma$, while the outer profiles are irrotational; the remainder in \eqref{1-8} is omitted. The rows depict different steady configurations with increasing separation for illustration, not time evolution or a claimed monotone movement along the curve. The profile shapes are schematic and have no prescribed sign or concavity.}
		\label{fig:profile-splitting}
	\end{figure}
	
	\smallskip
	\noindent\textit{Speed folds along the solution curve and the energy--momentum Hessian.}
	To study possible speed folds, consider a coupled solution $(Y_\star,a_\star,W_\star)$ at which the surface linearization
	\[
	A:=D_YB:H_{\rm e}^{s+2}(\mathbb R)\longrightarrow H_{\rm e}^s(\mathbb R)
	\]
	is invertible. The Bernoulli equation then determines a unique nearby surface $Y=Y(a,W)$, real analytic in the conformal vortex depth $a$ and wave speed $W$. On this local \emph{Bernoulli chart}, the remaining vortex equilibrium condition reduces to
	\begin{equation}
		\mathscr G_0(a,W):=P(Y(a,W),a,W),\qquad L:=\partial_a\mathscr G_0,\qquad M:=\partial_W\mathscr G_0.
		\label{1-9}
	\end{equation}
	Here $L$ and $M$ are the total derivatives of the vortex equilibrium residual with respect to $a$ and $W$, respectively, including the dependence of the free surface $Y(a,W)$ on these parameters. The nearby coupled solutions are exactly the zeros of $\mathscr G_0$, and both $L$ and $M$ incorporate the response of the free surface to changes in $a$ and $W$.
	
	For a flat surface, the vortex equilibrium gives $W=\Gamma/(4\pi a)$, which is strictly decreasing in the conformal vortex depth $a$. However, in the coupled problem, the free surface also contributes to the regular drift, so this monotonicity is no longer guaranteed along the solution curve. Hence it is important to study possible folds: at such points, the speed ceases to be a local parameter, and distinct nearby waves may have the same speed. On a Bernoulli chart with $M_\star\ne0$, the reduced equation $\mathscr G_0=0$ determines $W=w(a)$, with
	\[
	w'(a)=-\frac{L(a,w(a))}{M(a,w(a))}.
	\]
	If $L_\star=0$ and $(\partial_aL)_\star\ne0$, then $w'(a_\star)=0$ and $w''(a_\star)=-(\partial_aL)_\star/M_\star\ne0$, giving a simple speed fold. The solution curve remains regular, but the fixed-speed linearization becomes singular. 
	
	A simple speed fold also has a variational interpretation. Following the Hamiltonian formulation of free surfaces coupled to interior vortices \cite{RouhiWright1993}, we consider the energy $E$, after subtraction of the log-divergent point-vortex self-energy, and the horizontal momentum $\mathcal P$. With $\alpha(Y,a):=Z_Y'(-\mathrm{i}a)=1+E_a(|D|Y)>0$, their expressions for the reconstructed point-vortex flow are given below; see Section~\ref{sec6} for their derivation.
	\[
	\begin{aligned}
		E(Y,a,W)={}&\frac g2\int_{\mathbb R}Y^2(1+|D|Y)\,dx+\sigma\int_{\mathbb R}\bigl(\sqrt{J(Y)}-1-|D|Y\bigr)\,dx\\
		&+\frac{W^2}{2}\int_{\mathbb R}Y|D|Y\,dx+\frac{\Gamma^2}{4\pi}\log\bigl(2a\alpha(Y,a)\bigr),\\
		\mathcal P(Y,a,W)={}&-W\int_{\mathbb R}Y|D|Y\,dx-\Gamma a.
	\end{aligned}
	\]
	For waves traveling with velocity $-We_1$, the energy--momentum action is $E+W\mathcal P$, which gives
	\begin{equation}
		\begin{aligned}
			\mathcal J_W(Y,a):={}&\frac g2\int_{\mathbb R}Y^2(1+|D|Y)\,dx\\
			&+\sigma\int_{\mathbb R}\bigl(\sqrt{J(Y)}-1-|D|Y\bigr)\,dx-\frac{W^2}{2}\int_{\mathbb R}Y|D|Y\,dx\\
			&+\frac{\Gamma^2}{4\pi}\log\bigl(2a\alpha(Y,a)\bigr)-\Gamma Wa.
		\end{aligned}
		\label{1-10}
	\end{equation}
	As shown in Section~\ref{sec6}, its critical points at fixed $W$ are precisely the coupled equilibria. The last two terms give the vortex Kirchhoff--Routh contribution, including the dependence on the free surface through $\alpha$. We also hold $W$ fixed when computing the Hessian at each wave.
	
	The choice of vortex coordinate matters when computing the Hessian. Recall that the physical vertical position of the vortex is $p_2(Y,a)=-a+E_aY$, so fixing the conformal depth $a$ does not fix the physical vortex position when the surface varies. Since $\partial_a p_2(Y,a)=-\alpha(Y,a)<0$ at fixed $Y$, we may use $(Y,p_2)$ as local coordinates and define $\widetilde{\mathcal J}_W(Y,p_2):=\mathcal J_W(Y,\mathfrak a(Y,p_2))$, 
	where $a=\mathfrak a(Y,p_2)$ is obtained by inverting the relation $p_2=-a+E_aY$ for fixed $Y$. On the Bernoulli chart, set $p_2(a,W):=p_2(Y(a,W),a)$ and $\alpha(a,W):=\alpha(Y(a,W),a)$. Differentiating $p_2(a,W)=-a+E_aY(a,W)$ with respect to $a$ at fixed $W$ gives
	\[
	\partial_a p_2(a,W)=-1-E_a(|D|Y(a,W))+E_a(Y_a(a,W))=-\alpha(a,W)+E_a(Y_a(a,W)).
	\]
	We therefore define
	\begin{equation}
		\beta_{\rm g}(a,W):=1-\frac{E_a(Y_a(a,W))}{\alpha(a,W)},
		\label{1-11}
	\end{equation}
	so that $\partial_a p_2(a,W)=-\alpha(a,W)\beta_{\rm g}(a,W)$. Thus $\beta_{\rm g}$ measures how the surface response affects the physical displacement of the vortex. 
	
	At a coupled equilibrium and for fixed wave speed $W$, we consider the Hessian of $\widetilde{\mathcal J}_W$ with respect to the free surface and physical vortex position. For a surface variation $h\in H_{\rm e}^{s+2}(\mathbb R)$ and a physical vortex displacement $\nu\in\mathbb R$, define
	\[
	\mathfrak q[(h,\nu)]:=D^2\widetilde{\mathcal J}_W(Y,p_2(Y,a))[(h,\nu),(h,\nu)],\qquad \mathfrak q_{\rm s}[h]:=\mathfrak q[(h,0)].
	\]
	Here $\mathfrak q_{\rm s}$ describes surface variations at fixed physical vortex position, while $\mathfrak q$ also includes vortex displacement. We write $n_-(\mathfrak q)$ for the maximal dimension of a subspace on which $\mathfrak q$ is negative definite, and $\ker\mathfrak q$ for its Hessian kernel. A Schur-complement reduction separates the surface and vortex contributions without requiring positivity of the surface Hessian. The following theorem characterizes simple speed folds and the associated change in negative index.
	
	\begin{theorem}[Simple speed folds and the negative index of the energy--momentum Hessian]
		\label{thm1-2}
		Let $(Y_\star,a_\star,W_\star)\in\mathscr U^s$ be a coupled solution with invertible surface linearization $A_\star$. On the associated Bernoulli chart, suppose that
		\begin{equation}
			L_\star=0,\qquad M_\star\ne0,\qquad (\partial_aL)_\star\ne0,
			\label{1-12}
		\end{equation}
		and set $\chi_\star:=-(\partial_aL)_\star/M_\star$. Then the following conclusions hold.
		\begin{enumerate}
			\item[(i)] All sufficiently nearby coupled solutions form a regular real-analytic curve
			\[
			\gamma_\star(a):=(Y(a,w(a)),a,w(a)),\qquad w(a_\star)=W_\star,
			\]
			with
			\begin{equation}
				w(a)-W_\star=\frac{\chi_\star}{2}(a-a_\star)^2+O(|a-a_\star|^3),\qquad \chi_\star\ne0.
				\label{1-13}
			\end{equation}
			In a sufficiently small configuration neighborhood, each speed $W$ close to $W_\star$ admits exactly two solutions if $\chi_\star(W-W_\star)>0$, none if $\chi_\star(W-W_\star)<0$, and only the fold solution if $W=W_\star$.
			
			The full linearization $D_{(Y,a,W)}\mathscr F$ is surjective along this curve, while the fixed-speed linearization $D_{(Y,a)}\mathscr F$ is invertible away from the fold and satisfies
			\[
			\ker D_{(Y,a)}\mathscr F(Y_\star,a_\star,W_\star)=\operatorname{span}\{(-A_\star^{-1}B_{a,\star},1)\}.
			\]
			
			\item[(ii)] The geometric factor defined in \eqref{1-11} satisfies $\beta_{{\rm g},\star}:=\beta_{\rm g}(a_\star,W_\star)\ne0$. Let $\mathfrak q(a)$ and $\mathfrak q_{\rm s}(a)$ denote the Hessian forms defined above, evaluated along $\gamma_\star(a)$ with $W=w(a)$ fixed in each variation. On a sufficiently small segment around the fold, these forms extend to closed, lower-bounded forms on $L_{\rm e}^2(\mathbb R)\times\mathbb R$ and $L_{\rm e}^2(\mathbb R)$, with respective common form domains $H_{\rm e}^1(\mathbb R)\times\mathbb R$ and $H_{\rm e}^1(\mathbb R)$. Both have finite negative index.
			
			The surface form induces an isomorphism $H_{\rm e}^1(\mathbb R)\to H_{\rm e}^{-1}(\mathbb R)$, and its negative index remains constant along this segment. For every sufficiently small $\delta>0$,
			\begin{equation}
				n_-(\mathfrak q(a_\star+\delta))-n_-(\mathfrak q(a_\star-\delta))=\operatorname{sgn}\bigl(\beta_{{\rm g},\star}(\partial_aL)_\star\bigr).
				\label{1-14}
			\end{equation}
			The full form has trivial kernel away from the fold, whereas at $a=a_\star$,
			\[
			n_-(\mathfrak q(a_\star))=n_-(\mathfrak q_{\rm s}(a_\star)),\qquad \dim\ker\mathfrak q(a_\star)=1.
			\]
			The Hessian of $\mathcal J_{w(a)}$ in conformal coordinates also extends to $H_{\rm e}^1(\mathbb R)\times\mathbb R$ and has the same negative index and nullity as $\mathfrak q(a)$.
		\end{enumerate}
	\end{theorem}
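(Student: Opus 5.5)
Part (i) follows from Lyapunov--Schmidt reduction on the Bernoulli chart. Since $A_\star$ is invertible, the analytic implicit function theorem gives $Y=Y(a,W)$ solving $B=0$ near $(a_\star,W_\star)$. Nearby coupled solutions are then exactly the zeros of $\mathscr G_0$. Because $M_\star\ne0$, these zeros form the analytic graph $W=w(a)$ with $w'=-L/M$. From $L_\star=0$ we get $w'(a_\star)=0$, and differentiating once more at the fold gives $w''(a_\star)=-(\partial_aL)_\star/M_\star=\chi_\star$, which is \eqref{1-13}. The solution count at fixed $W$ then follows from the Morse lemma in one variable.

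For the linear statements, eliminate $h$ through $h=-A^{-1}(B_a\dot a+B_W\dot W)$. This reduces $D_{(Y,a,W)}\mathscr F$ to the $1\times2$ matrix $(L,M)$. Since $M\ne0$, this matrix is onto, and hence the full linearization is surjective. At fixed speed the reduced map is multiplication by $L(a,w(a))=-M\,w'(a)$. This vanishes only at $a_\star$ on a small segment, because $w''(a_\star)\neq0$. The kernel at the fold is spanned by $(-A_\star^{-1}B_{a,\star},1)$.

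For (ii), first relate the Hessian to the linearization. Criticality of $\mathcal J_W$ is equivalent to $\mathscr F=0$ (Section~\ref{sec6}). More precisely, I expect $D_Y\mathcal J_W=\mathcal M_1 B$ and $\partial_a\mathcal J_W=\mathcal M_2 P$ for some nonvanishing multipliers. In the conformal model these would be $\mathcal M_1=\Xi_Y'$-weighted, and $\mathcal M_2=-\Gamma$ from the Kirchhoff--Routh terms. At a critical point, $D^2\mathcal J_W$ is therefore a congruence of $D_{(Y,a)}\mathscr F$ and is symmetric.

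The surface form $\mathfrak q_{\rm s}$ has principal part $\sigma\int (h')^2 J^{-3/2}\,dx$ plus a $W^2|D|$ term and compact perturbations. I would establish closedness and lower-boundedness on $H^1_{\rm e}$ by a standard KLMN argument. Relative compactness of the lower-order terms, which decay at infinity, gives a finite negative index. At infinity the flat symbol $g+\sigma k^2-W^2|k|>0$ holds because $W<c_{\min}$. The isomorphism $H^1_{\rm e}\to H^{-1}_{\rm e}$ follows from invertibility of $A$: a kernel element in $H^1$ bootstraps to $H^{s+2}$ by elliptic regularity. The negative index of $\mathfrak q_{\rm s}$ is then constant on a connected segment where it stays nondegenerate. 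Here I would use continuity of the associated operators in the gap metric.

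Next apply the Schur complement in $(h,\nu)$ with $\nu=\delta p_2$. This gives $n_-(\mathfrak q)=n_-(\mathfrak q_{\rm s})+n_-(s(a))$, where
\[
s(a)=\mathfrak q[(0,1)]-\langle \mathfrak q_{\rm s}^{-1}b,b\rangle
\]
and $b$ is the mixed term. The key step is to identify $s(a)$. It equals the second derivative of the reduced function $p_2\mapsto\widetilde{\mathcal J}_W(Y(a,W),p_2)$ along the Bernoulli chart, because the minimizing surface response is exactly $Y_a$. Its first derivative in $p_2$ is $\partial_a\mathcal J/\partial_a p_2$. This is a constant multiple of $P/(\alpha\beta_{\rm g})$ along $Y(a,W)$.

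Differentiating at a zero of $P$ gives
\[
s=c\,\frac{L}{\alpha^2\beta_{\rm g}},
\]
up to a fixed positive constant $c$, using $\partial_ap_2=-\alpha\beta_{\rm g}$. Signs must be tracked carefully, and I expect the constant is $\Gamma$ with an appropriate sign.

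We need $\beta_{{\rm g},\star}\neq0$. If it vanished, $p_2$ would be stationary in $a$, so the chart could not be inverted in $p_2$. I would show that this contradicts the one-dimensional kernel, since the kernel vector would then have $\nu=0$ and $\mathfrak q_{\rm s}$ would be degenerate. Given $\beta_{{\rm g},\star}\ne0$, $s(a)$ changes sign across $a_\star$ with sign $\operatorname{sgn}(\beta_{\rm g}\partial_aL)$, which yields \eqref{1-14}. At the fold $s=0$, so the kernel is one-dimensional and $n_-(\mathfrak q)=n_-(\mathfrak q_{\rm s})$.

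Finally, the conformal-coordinate Hessian is related to $\mathfrak q$ by the invertible linear change $(h,\dot a)\mapsto(h,\dot p_2)$. Here $\dot p_2=-\alpha\dot a+E_ah$, and $E_a$ is bounded on $H^1$. This change of variables preserves the critical-point Hessian, so the negative index and nullity are the same.

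The main obstacle is the exact identification of the Schur complement with $L/(\alpha^2\beta_{\rm g})$. It requires checking the multiplier relation $\partial_a\mathcal J_W\propto P$ and tracking how $E_aY_a$ enters through the coordinate change. I would first verify this identity in the flat case $Y\equiv0$, where $\beta_{\rm g}=1$.
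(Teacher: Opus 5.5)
Your part (i) is correct and is essentially the paper's Proposition~\ref{prop5-2}. Part (ii) has a genuine gap, and it comes from one missing term.

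In conformal coordinates $D_Y\mathcal J_W$ is not a multiple of $B$. By Lemma~\ref{lem6-1}, $D\mathcal J_W(Y,a)[h,\ell]=\langle B,\mathcal C_Yh\rangle+\Gamma P\,(E_ah/\alpha-\ell)$. The term $\Gamma P E_ah/\alpha$ records the displacement $D_Yp_2[h]=E_ah$ of the physical vortex. Only in the coordinates $(Y,p_2)$ does the gradient split cleanly, as $\langle B,\mathcal C_Yh\rangle+\Gamma P\nu/\alpha$.

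Consequently $\Phi_{p_2}=\Gamma P/\alpha$, not a multiple of $P/(\alpha\beta_{\rm g})$. Along the Bernoulli chart the total $a$-derivative of $\mathcal J_W$ is $-\Gamma P+D_Y\mathcal J_W[Y_a]=-\Gamma\beta_{\rm g}P$. Your quotient instead divides $-\Gamma P$ by $\partial_ap_2=-\alpha\beta_{\rm g}$. Differentiating your own intermediate formula at $P=0$ gives a multiple of $L/(\alpha^2\beta_{\rm g}^2)$. Its sign does not see $\beta_{\rm g}$, so it would produce a jump $\pm\operatorname{sgn}(\partial_aL)_\star$ rather than \eqref{1-14}.

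The formula $cL/(\alpha^2\beta_{\rm g})$ you then write down has the right form, but it does not follow from your steps. You also call $c$ positive, which would reverse the direction of the jump; in fact $c=-\Gamma$. Your proposed flat check $\beta_{\rm g}=1$ cannot distinguish $\beta_{\rm g}$ from $\beta_{\rm g}^2$.

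The same omission breaks your nondegeneracy arguments. At fixed $p_2$ the depth $a=\mathfrak a(Y,p_2)$ moves with $Y$. Hence the surface form is $\widetilde{\mathcal H}_{\rm s}=\mathcal C_Y^*(A+B_a\mathcal E)=\mathcal C_Y^*A(I-Y_a\mathcal E)$, with $\mathcal Eh=E_ah/\alpha$. Since $(I-Y_a\mathcal E)Y_a=\beta_{\rm g}Y_a$, this rank-one perturbation of $\mathcal C_Y^*A$ is invertible if and only if $\beta_{\rm g}\ne0$. So invertibility of $A$ plus elliptic regularity does not give the isomorphism $H^1_{\rm e}\to H^{-1}_{\rm e}$.

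Your proof of $\beta_{{\rm g},\star}\ne0$ (``otherwise $\mathfrak q_{\rm s}$ would be degenerate'') is therefore circular: degeneracy of $\mathfrak q_{\rm s}$ is exactly what $\beta_{\rm g}=0$ means. You need an independent argument at the fold, as in Lemma~\ref{lem6-2}:
\begin{itemize}
\item The conformal Hessian is $\mathcal H[(h,\ell),(k,m)]=\langle Ah+B_a\ell,\mathcal C_Yk\rangle+\Gamma(P_Y[h]+P_a\ell)(\mathcal Ek-m)$, and $\zeta=(Y_a,1)\in\ker\mathcal H$ when $L=0$.
\item Testing $\zeta$ against $(h,0)$ gives $\langle Ah,\mathcal C_YY_a\rangle=\Gamma\beta_{\rm g}P_Y[h]$.
\item If $\beta_{\rm g}=0$, surjectivity of $A$ forces $\mathcal C_YY_a=0$, hence $Y_a=0$ and $\beta_{\rm g}=1$, a contradiction.
\end{itemize}
Only then is $\widetilde{\mathcal H}_{\rm s}$ invertible near the fold. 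The stationary surface response is then $\widetilde Y_{p_2}=-Y_a/(\alpha\beta_{\rm g})$, not $Y_a$, and the Schur complement equals $-\Gamma L/(\alpha^2\beta_{\rm g})$.

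A smaller point concerns the finite-index argument. The capillary part of $\mathfrak q_{\rm s}$ is $\sigma\int J^{-3/2}|\mathcal C_Y(h')|^2\,dx$, whose principal coefficient modulo compact forms is $\sigma J^{-1/2}$, not $\sigma J^{-3/2}$. Your positive reference form must carry this same variable coefficient, because a mismatch in $\int c\,(h')^2$ is not compact on $H^1$. The paper achieves this with $m_Y=J^{-1/4}$, applying the subcritical symbol bound to $m_Yh$.
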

	 
	Part~{\rm(i)} describes the local geometry of a simple speed fold. The two nearby waves at the same speed have different conformal vortex depths, and their physical vortex depths also differ. Indeed, setting $d(a):=-p_2(a,w(a))$, we have $d'(a_\star)=\alpha_\star\beta_{{\rm g},\star}\ne0$ by part~{\rm(ii)}. Thus the fold corresponds to distinct physical configurations, rather than merely different conformal coordinates. Figure~\ref{fig:simple-fold} illustrates the case $\chi_\star>0$.
	\begin{figure}[!htbp]
		\centering
		\includegraphics[width=0.76\linewidth]{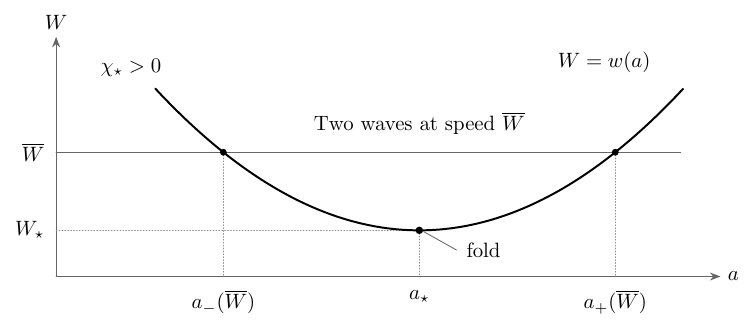}
		\caption{A simple speed fold in Theorem~\ref{thm1-2} for $\chi_\star>0$. Each nearby speed $\overline W>W_\star$ corresponds locally to two waves with conformal depths $a_-(\overline W)$ and $a_+(\overline W)$. The case $\chi_\star<0$ gives a local maximum. The schematic diagram depicts the solution curve, not the free surface.}
		\label{fig:simple-fold}
	\end{figure}
	Part~{\rm(i)} also identifies the kernel of the fixed-speed linearization at the fold, spanned by $(Y_a(a_\star,W_\star),1)=(-A_\star^{-1}B_{a,\star},1)$. This direction describes the coupled variation of the free surface and conformal vortex depth that preserves the equilibrium equations to first order while the wave speed remains fixed.
	
	To explain the negative-index change in Part~{\rm(ii)}, we eliminate the surface variable from the energy--momentum action. For fixed wave speed $W$ and physical vortex position $p_2$, the invertibility of the surface Hessian allows us to solve the surface stationarity condition for $Y=\widetilde Y(p_2,W)$. To identify the contribution of vortex displacement to the full Hessian, we substitute the stationary surface $Y=\widetilde Y(p_2,W)$ into the action and define the reduced action $\Phi(p_2,W):=\widetilde{\mathcal J}_W(\widetilde Y(p_2,W),p_2)$. At each coupled equilibrium and for fixed $W$, its second derivative with respect to $p_2$ satisfies
	\begin{equation}
		\Phi_{p_2p_2}(p_2(a,W),W)=-\frac{\Gamma L(a,W)}{\alpha(a,W)^2\beta_{\rm g}(a,W)},
		\label{1-15}
	\end{equation}
	which relates the effective curvature of the action to the derivative $L$ of the reduced vortex equilibrium residual. Since $\Gamma>0$, $\alpha>0$, and $\beta_{\rm g}$ remains nonzero near a simple fold, this curvature vanishes at the fold and changes sign as $L$ crosses zero.

	The reduced curvature $\Phi_{p_2p_2}$ is the Schur complement of the surface Hessian. Writing the full Hessian in physical coordinates $(Y,p_2)$ with blocks $H_{YY},H_{Yp},H_{pY},H_{pp}$, we have
	\[
	\Phi_{p_2p_2}=H_{pp}-H_{pY}H_{YY}^{-1}H_{Yp}.
	\]
	Near a simple fold, the surface Hessian has constant negative index, while $L$ changes sign and $\alpha>0$, $\beta_{\rm g}\ne0$. Consequently, the scalar Schur complement changes sign, and the two nearby equal-speed waves have negative indices differing by one. This provides a point-vortex explanation of the index change and complements the independent joint dual variational construction and effective finite-core Hessian analysis in Part~III \cite{DuZouPartIII}.

	\smallskip
	\noindent\textit{Regularization along the solution curve.}
	Part~I \cite{DuZouPartI} constructs smooth-core regularizations of individual small-speed point-vortex waves at fixed speed. We now seek to regularize compact arcs of the continued point-vortex solution curve, including any specified simple speed folds. Let $\mathscr K\subset\mathscr C_0$ be a compact regular embedded subarc satisfying
	\begin{equation}
		A\text{ is invertible},\qquad L^2+M^2>0\quad\text{on }\mathscr K.
		\label{1-16}
	\end{equation}
	Recall that $A=D_YB$ is the Bernoulli surface linearization, while $L=\partial_a\mathscr G_0$ and $M=\partial_W\mathscr G_0$ are defined in \eqref{1-9}. The invertibility of $A$ allows us to solve the Bernoulli equation locally for $Y=Y(a,W)$, while $L^2+M^2>0$ ensures that the reduced equation $\mathscr G_0(a,W)=0$ defines a regular curve, even at points where $L=0$ and $M\ne0$.
	
	As in Part~I, we fix a vorticity law $f\in C_{\rm loc}^{2,\gamma}(\mathbb R)$, with $\gamma\in(0,1)$, such that $f(\tau)=0$ for $\tau\le0$ and $f(\tau)>0$ for $\tau>0$. For the prescribed circulation $\Gamma$, we assume that there exists a radial profile $U\in C_{\rm loc}^{2,\gamma}(\mathbb R^2)$, positive in $B_1$, satisfying
	\begin{equation}
		\begin{aligned}
			-\Delta U&=f(U)\quad\text{in }\mathbb R^2,\qquad \int_{\mathbb R^2}f(U)\,dX=\Gamma,\\
			U(X)&=-\frac{\Gamma}{2\pi}\log|X|\qquad\text{for }|X|\ge1.
		\end{aligned}
		\label{1-17}
	\end{equation}
	The vorticity law $f$, reference core $U$, and circulation $\Gamma$ remain fixed throughout $\mathscr K$.
	
	The construction also requires the two inner nondegeneracy conditions of Part~I \cite[Section~5.1]{DuZouPartI}, verified there for the power-law class $f(\tau)=c(\tau_+)^p$, $p>2$. First, every bounded solution of $(-\Delta-f'(U))h=0$ in $\mathbb R^2$ belongs to $\operatorname{span}\{\partial_1U,\partial_2U\}$. Symmetry removes the horizontal translation mode, while the vertical mode gives a scalar solvability condition. Second, we assume
	\begin{equation}
		\int_{B_1}f'(U)\,dX\ne0.
		\label{1-18}
	\end{equation}
	This condition allows us to choose the scalar level in the vorticity law so that the total circulation is exactly $\Gamma$. Under these assumptions, we use a joint two-parameter Lyapunov--Schmidt reduction to solve the free-surface and mass-constrained inner equations together in conformal coordinates, treating $a$ and $W$ as independent parameters. The remaining vertical translation condition yields a scalar regular-drift equation $\mathscr G_\varepsilon(a,W)=0$, whose limit is the point-vortex regular-drift equation $\mathscr G_0(a,W)=0$.
	
	The main difficulty is to preserve simple folds along the regularized solution curve. At a simple fold, $L_\star=0$, so the fixed-speed argument cannot determine the vortex depth as a function of the wave speed. Moreover, the existence of a nearby smooth-core curve alone does not guarantee the persistence of a turning point. Hence on compact parameter sets $K$ in the local Bernoulli charts, we establish the two-parameter estimate
	\[
	\|\mathscr G_\varepsilon-\mathscr G_0\|_{C^2(K)}\le C_K\varepsilon.
	\]
    To preserve a simple fold, we must solve both $\mathscr G_\varepsilon=0$ and $\partial_a\mathscr G_\varepsilon=0$, rather than merely construct nearby solution curves. Since the Jacobian of this system involves second derivatives of $\mathscr G_\varepsilon$, $C^1$ convergence alone is insufficient. Our $C^2$ estimate controls this Jacobian and the quadratic turning coefficient. As the limiting Jacobian has determinant $-M_\star(\partial_aL)_\star\ne0$, each specified simple fold persists uniquely under regularization, with the same turning direction and local two-wave structure. The following theorem establishes the regularization of prescribed compact solution arcs and the persistence of their specified interior simple folds.

	\begin{theorem}[Regularization of solution curves and persistence through folds]
		\label{thm1-3}
		Let $f$ and $U$ satisfy the assumptions above, and let $\mathscr K\subset\mathscr C_0$ be a compact regular embedded subarc satisfying \eqref{1-16}. Choose a smooth regular embedding $\bm{\gamma}:I\to\mathscr C_0$ with image $\mathscr K$, where $I$ is a compact interval, and write $\bm{\gamma}(t)=(Y(t),a(t),W(t))$. Then there exist $\varepsilon_{\mathscr K},C_{\mathscr K}>0$ such that, for every $0<\varepsilon<\varepsilon_{\mathscr K}$, there is a family of symmetric smooth-core traveling waves
		\[
		\bigl(\Omega_\varepsilon(t),\Psi_\varepsilon(t),\mu_\varepsilon(t)\bigr),\qquad t\in I,
		\]
		traveling with velocities $-W_\varepsilon(t)e_1$. The following properties hold uniformly for $t\in I$.
		
		\begin{enumerate}
			\item[(i)] The relative stream function and vorticity satisfy
			\begin{equation}
				-\Delta_p\Psi_\varepsilon(t)=\omega_{\varepsilon,t},\qquad \omega_{\varepsilon,t}=\varepsilon^{-2}f(\Psi_\varepsilon(t)-\mu_\varepsilon(t))\ge0\quad\text{in }\Omega_\varepsilon(t).
				\label{1-19}
			\end{equation}
			On the free surface,
			\[
			\Psi_\varepsilon(t)=0,\qquad \frac12|\nabla_p\Psi_\varepsilon(t)|^2+gp_2+\sigma\kappa_\varepsilon(t)=\frac{W_\varepsilon(t)^2}{2}\quad\text{on }\partial\Omega_\varepsilon(t),
			\]
			where $\kappa_\varepsilon(t)$ is the signed curvature defined above. The fluid velocity $v_\varepsilon(t):=-\nabla_p^\perp\Psi_\varepsilon(t)-W_\varepsilon(t)e_1$ tends to zero at spatial infinity.

			The fluid domains satisfy $\Omega_\varepsilon(t)=Z_{\widehat Y_\varepsilon(t)}(\Pi_-)$ for some $\widehat Y_\varepsilon(t)$, and the macroscopic variables
			\[
			\bm{\gamma}_\varepsilon(t):=\bigl(\widehat Y_\varepsilon(t),a_\varepsilon(t),W_\varepsilon(t)\bigr)\in\mathscr U^s
			\]
			form a regular $C^2$ embedded arc satisfying
			\begin{equation}
				\|\bm{\gamma}_\varepsilon-\bm{\gamma}\|_{C^2(I;H_{\rm e}^{s+2}(\mathbb R)\times\mathbb R^2)}\le C_{\mathscr K}\varepsilon.
				\label{1-20}
			\end{equation}
			To establish a transverse correspondence between the point-vortex and smooth-core solution curves, we choose their parametrizations so that
			\begin{equation}
				M(t)\bigl(a_\varepsilon(t)-a(t)\bigr)-L(t)\bigl(W_\varepsilon(t)-W(t)\bigr)=0,
				\label{1-21}
			\end{equation}
			where $L(t)$ and $M(t)$ are evaluated at $\bm{\gamma}(t)$. Under the prescribed mass and translation normalizations, this correspondence is locally unique in the formulation of Section~\ref{sec7}.
			
			\item[(ii)] For every $t\in I$, the vorticity has exactly the prescribed circulation $\Gamma$. With
			\[
			p_\varepsilon(t):=Z_{\widehat Y_\varepsilon(t)}(-\mathrm{i}a_\varepsilon(t)),\qquad p(t):=Z_{Y(t)}(-\mathrm{i}a(t)),
			\]
			we have
			\begin{equation}
				\int_{\Omega_\varepsilon(t)}\omega_{\varepsilon,t}\,dp=\Gamma,\qquad \operatorname{supp}\omega_{\varepsilon,t}\subset B_{C_{\mathscr K}\varepsilon}(p_\varepsilon(t)).
				\label{1-22}
			\end{equation}
			The vortex centers satisfy
			\[
			\sup_{t\in I}|p_\varepsilon(t)-p(t)|\le C_{\mathscr K}\varepsilon.
			\]
			Extending $\omega_{\varepsilon,t}$ by zero outside $\Omega_\varepsilon(t)$, for every bounded Lipschitz function $\varphi:\mathbb R^2\to\mathbb R$,
			\begin{equation}
				\sup_{t\in I}\left|\int_{\mathbb R^2}\varphi(p)\omega_{\varepsilon,t}(p)\,dp-\Gamma\varphi(p(t))\right|\le C_{\mathscr K}\Gamma\operatorname{Lip}(\varphi)\varepsilon.
				\label{1-23}
			\end{equation}
			
			\item[(iii)] Let $(Y_\star,a_\star,W_\star)$ be a specified simple fold in the interior of $\mathscr K$, satisfying \eqref{1-12}. For sufficiently small $\varepsilon>0$, the smooth-core solution curve has a unique simple fold near $(Y_\star,a_\star,W_\star)$ within the prescribed normalized configuration neighborhood. Near this fold, the smooth-core solutions admit a $C^2$ parametrization by the conformal depth $a$, with speed $W=w_\varepsilon(a)$ satisfying
			\[
			w_\varepsilon(a_{\varepsilon,\star})=W_{\varepsilon,\star},\qquad w_\varepsilon'(a_{\varepsilon,\star})=0,\qquad \chi_{\varepsilon,\star}:=w_\varepsilon''(a_{\varepsilon,\star})\ne0.
			\]
			With $\chi_\star=-(\partial_aL)_\star/M_\star$ as in Theorem~\ref{thm1-2},
			\begin{equation}
				|a_{\varepsilon,\star}-a_\star|+|W_{\varepsilon,\star}-W_\star|+|\chi_{\varepsilon,\star}-\chi_\star|\le C_\star\varepsilon.
				\label{1-24}
			\end{equation}
			In particular, $\chi_{\varepsilon,\star}$ has the same sign as $\chi_\star$, and
			\[
			w_\varepsilon(a)-W_{\varepsilon,\star}=\frac{\chi_{\varepsilon,\star}}2(a-a_{\varepsilon,\star})^2+o(|a-a_{\varepsilon,\star}|^2).
			\]
			Within the same normalized configuration neighborhood, every sufficiently close speed $W$ admits exactly two smooth-core solutions if $\chi_{\varepsilon,\star}(W-W_{\varepsilon,\star})>0$, none if this product is negative, and only the fold solution if $W=W_{\varepsilon,\star}$.
		\end{enumerate}
	\end{theorem}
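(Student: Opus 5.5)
The construction proceeds in three stages. The first is a uniform inner--outer construction on Bernoulli charts. The second reduces the problem to a scalar drift equation with a $C^2$ estimate. The third transfers the curve and its folds from the point-vortex problem.

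\emph{Step 1 (uniform charts).} By compactness of $\mathscr K$ and the hypothesis \eqref{1-16}, cover $\mathscr K$ by finitely many Bernoulli charts. On each chart $A$ is invertible and $Y=Y(a,W)$ is real analytic on a compact parameter neighbourhood $K$ of the projection of $\mathscr K$ to the $(a,W)$ plane. On $K$ the quantities $c_Y$, $a$, $W$ and $c_{\min}-W$ are uniformly bounded away from degeneracy, so the vortex stays at a uniform distance from the free surface.

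For each $(a,W)\in K$ and small $\varepsilon$, pose the smooth-core problem in the reference half-plane $\Pi_-$. The unknowns are the surface $\widehat Y$, the level $\mu$, and an inner correction $\phi$ of the rescaled core $U((p-p_a)/\varepsilon)$. Mirroring Part~I, the outer field is the Kirchhoff--Routh Green function of $\Omega_{\widehat Y}$ plus a regular part. The difference from Part~I is that $(a,W)$ is now a free two-dimensional parameter rather than $W$ alone.

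The inner linearized operator $-\Delta-f'(U)$ is invertible on symmetric functions orthogonal to $\partial_2U$, by the first nondegeneracy assumption. The mass condition $\int\omega=\Gamma$ is solved for $\mu$ using \eqref{1-18}. The surface equation is solved using invertibility of $A$, perturbed by $O(\varepsilon)$, since the core's far field differs from the point vortex only at order $\varepsilon$ (in fact $\varepsilon^2$ for radial profiles, up to the boundary correction).

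A joint contraction in weighted Hölder spaces for the inner part and $H^{s+2}_{\rm e}$ for the surface then produces $(\widehat Y_\varepsilon,\mu_\varepsilon,\phi_\varepsilon)(a,W)$. These solve every equation except the projection onto $\partial_2U$.

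\emph{Step 2 (reduced equation and $C^2$ bound).} The remaining projection, suitably normalized by the factor $\alpha$, defines $\mathscr G_\varepsilon(a,W)$. A Pohozaev/translation identity, namely integrating the equation against $\partial_2$ of the solution over a small ball, expresses it as the regular drift of the smooth core. Comparing this drift with $P(Y(a,W),a,W)$ gives $\mathscr G_\varepsilon=\mathscr G_0+O(\varepsilon)$.

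\emph{The main obstacle is upgrading this bound to $C^2(K)$.} My plan is to differentiate the fixed-point equations in $(a,W)$ up to second order. Because the contraction map depends smoothly on the parameters ($f\in C^{2,\gamma}$ gives enough regularity of $f(\cdot)$ as a Nemytskii map), the derivatives satisfy linear equations with the same uniformly invertible operators. The delicate point is that the $a$-derivative moves the core, producing $\varepsilon^{-1}\partial_2U$ terms. I would handle this by working in coordinates centred at $p_a$ and differentiating only the smooth outer data, so that every parameter derivative of the residual remains $O(\varepsilon)$.

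\emph{Step 3 (curves and folds).} With $\|\mathscr G_\varepsilon-\mathscr G_0\|_{C^2(K)}\le C\varepsilon$ and $\nabla\mathscr G_0=(L,M)\neq0$ on $\mathscr K$, solve $\mathscr G_\varepsilon(\bm\gamma^{a,W}(t)+\tau(M,-L)(t))=0$ for $\tau=\tau_\varepsilon(t)$ by the implicit function theorem, uniformly in $t$. This yields the normalization \eqref{1-21} and, through the $C^2$ bound, the estimate \eqref{1-20} together with regularity and embeddedness of the perturbed arc. Feeding $(a_\varepsilon,W_\varepsilon)$ into Step 1 reconstructs $\Omega_\varepsilon$, $\Psi_\varepsilon$, and $\mu_\varepsilon$, giving (i).

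Part (ii) follows from the construction. The support of $f(U)$ lies in $B_1$, so $\omega_{\varepsilon,t}$ is supported in $B_{C\varepsilon}(p_\varepsilon(t))$ and has circulation exactly $\Gamma$. Then \eqref{1-23} follows from $|p_\varepsilon-p|\le C\varepsilon$ and the support bound.

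For (iii), solve the system $(\mathscr G_\varepsilon,\partial_a\mathscr G_\varepsilon)=0$ near $(a_\star,W_\star)$. Its limiting Jacobian has determinant $-M_\star(\partial_aL)_\star\ne0$, and the $C^2$ estimate makes this Jacobian an $O(\varepsilon)$ perturbation of the limiting one. The implicit function theorem therefore gives a unique fold $(a_{\varepsilon,\star},W_{\varepsilon,\star})$. Writing $W=w_\varepsilon(a)$ (possible since $\partial_W\mathscr G_\varepsilon\ne0$ nearby), we get $\chi_{\varepsilon,\star}=-\partial_a^2\mathscr G_\varepsilon/\partial_W\mathscr G_\varepsilon$ at the fold, which lies within $C\varepsilon$ of $\chi_\star$. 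This gives \eqref{1-24}, and the two-solution/no-solution count follows exactly as in Theorem~\ref{thm1-2}(i).
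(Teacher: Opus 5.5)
Your overall route matches the paper's. You cover the arc by Bernoulli charts and jointly solve the surface equation and the mass-constrained inner complement with $(a,W)$ free. You obtain a normalized scalar drift with $\|\mathscr G_\varepsilon-\mathscr G_0\|_{C^2(K)}\le C\varepsilon$ by differentiating at fixed inner coordinates. Fold persistence then comes from $(\mathscr G_\varepsilon,\partial_a\mathscr G_\varepsilon)=0$ with limiting determinant $-M_\star(\partial_aL)_\star$.

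\textbf{Step 3 fails as written.} Write $\theta(t)=(a(t),W(t))$. The vector $(M,-L)$ is \emph{tangent} to the reference curve in the $(a,W)$ plane, since $\nabla\mathscr G_0\cdot(M,-L)=LM-ML=0$; it is the $(a,W)$ part of the kernel in \eqref{2-31}.
\begin{itemize}
\item $\partial_\tau\mathscr G_0(\theta(t)+\tau(M,-L))$ vanishes at $\tau=0$, so the implicit function theorem has no transversality to work with.
\item Such a displacement gives $M(a_\varepsilon-a)-L(W_\varepsilon-W)=\tau(L^2+M^2)\ne0$, so it does not produce \eqref{1-21}.
\end{itemize}
You must displace along the gradient instead: take $\theta_\varepsilon(t)=\theta(t)+\tau n(t)$ with $n=(L,M)/(L^2+M^2)$. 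Then $g_0(t,\tau):=\mathscr G_0(\theta(t)+\tau n(t))$ satisfies $\partial_\tau g_0(t,0)=1$. The $C^2$ comparison gives $\partial_\tau g_\varepsilon\ge\frac12$ and $|g_\varepsilon(t,0)|\le C\varepsilon$ on a uniform $\tau$-interval, hence a unique zero $\tau_\varepsilon(t)=O(\varepsilon)$. Since $g_0(t,0)\equiv0$, you also have $\partial_tg_0(t,0)=\partial_t^2g_0(t,0)=0$. Differentiating $g_\varepsilon(t,\tau_\varepsilon(t))=0$ twice then gives $\|\tau_\varepsilon\|_{C^2(I)}=O(\varepsilon)$, which is exactly what \eqref{1-20} needs.

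\textbf{Support of the vorticity in (ii).} The fact $\operatorname{supp}f(U)\subset\overline{B_1}$ does not by itself localize $\omega_{\varepsilon,t}=\varepsilon^{-2}f(\Psi_\varepsilon-\mu_\varepsilon)$, which is defined on all of $\Omega_\varepsilon(t)$. You need $\Psi_\varepsilon-\mu_\varepsilon\le-c_*<0$ outside the rescaled ball $B_{R_*}$, uniformly in the parameters. The paper proves this in \eqref{7-14} by the maximum principle on the exterior region, using three facts:
\begin{itemize}
\item $\Psi_\varepsilon-\mu_\varepsilon=-\mu_\varepsilon=-\frac{\Gamma}{2\pi}\log\varepsilon^{-1}+O(1)$ on the free surface;
\item $W\operatorname{Im}z\le0$ at infinity;
\item $U=-\frac{\Gamma}{2\pi}\log R_*$ on $\partial B_{R_*}$.
\end{itemize}
The same bound is what makes the inner nonlinearity a fixed-support source, so it belongs in Step 1 as well.

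\textbf{Pohozaev evaluation of the drift.} The paper instead projects the residual onto the fixed mode $\partial_2U$ and uses radial identities; your Pohozaev route is acceptable. However, because you divide by $\varepsilon$, you need an $O(\varepsilon^2)$ remainder in $C^2(K)$. So the $O(\varepsilon)$ contributions of $\phi_\varepsilon$ and of the level must drop out identically in $(a,W)$. In the paper this is $\ell_0(\mathscr L_0\phi)=0$ and $\ell_0(f'(U))=0$. In your route it is the vanishing, or higher-order smallness, of the cross terms involving $\phi_\varepsilon$ in the boundary integral, which you should verify rather than assume.
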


	Part~{\rm(i)} compares the point-vortex and smooth-core solution arcs through the transverse correspondence \eqref{1-21}. Since $L(t)a'(t)+M(t)W'(t)=0$, the displacement $(a_\varepsilon-a,W_\varepsilon-W)$ is normal to the reference curve in the $(a,W)$ plane. This determines a correspondence between the two curves without imposing an additional physical constraint. At a simple fold, $L=0$ and $M\ne0$, so paired waves have the same conformal depth but may have different speeds. In particular, the smooth-core wave paired with the reference fold need not be the fold of the regularized curve. Figure~\ref{fig:fold-persistence} illustrates the coupled free boundaries and fold persistence.
	
	\begin{figure}[!htbp]
		\centering
		\includegraphics[width=0.98\linewidth]{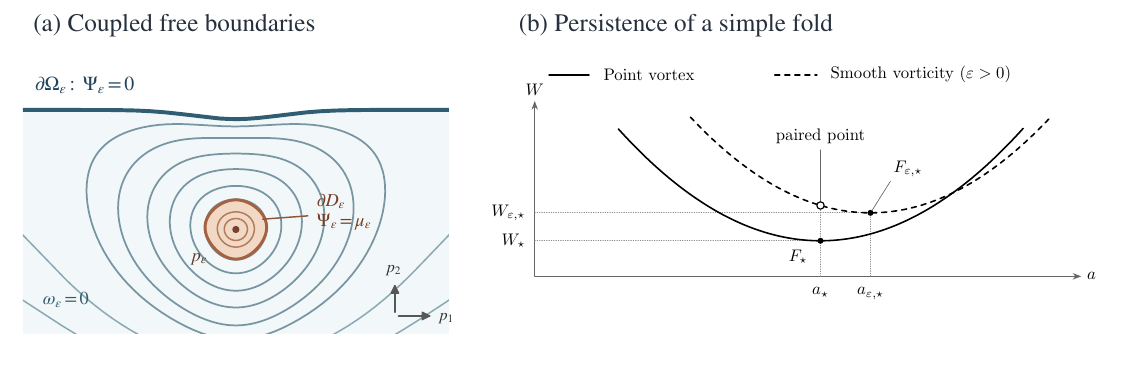}
		\caption{Coupled free boundaries and simple-fold persistence in Theorem~\ref{thm1-3}. (a) Relative streamlines between the free surface $\Psi_\varepsilon=0$ and the active-core boundary $\Psi_\varepsilon=\mu_\varepsilon$; core size and surface deformation are enlarged for visibility. (b) Point-vortex (solid) and smooth-core (dashed) branches near a specified simple fold. Filled points mark the two folds, whereas the open point is paired with $F_\star$ by \eqref{1-21} and need not coincide with $F_{\varepsilon,\star}$.}
		\label{fig:fold-persistence}
	\end{figure}
	
	Part~{\rm(ii)} describes the concentration of vorticity along the regularized solution arc. The smooth cores have fixed circulation $\Gamma$ and are supported in disks of radius $O(\varepsilon)$, with their centers remaining $O(\varepsilon)$-close to the corresponding point-vortex positions. Consequently, \eqref{1-23} gives uniform convergence of the vorticity measures to $\Gamma\delta_{p(t)}$ in the bounded-Lipschitz sense.
	
	Beyond constructing nearby smooth-core waves, part~{\rm(iii)} establishes the persistence of simple folds under two-parameter regularization. 
	Each specified interior simple fold of the point-vortex solution curve corresponds to a unique nearby simple fold of the smooth-core solution curve, with the same turning direction and local two-wave structure. For each prescribed compact regular arc $\mathscr K$, the construction holds for sufficiently small $\varepsilon$, with the admissible core size depending on $\mathscr K$. Thus the theorem does not establish smooth-core regularization along the entire global point-vortex curve for any fixed $\varepsilon>0$. It is also worth noting that these conclusions concern only steady equilibria and do not imply preservation of the point-vortex Hessian index or dynamical stability.

	\smallskip
	
	\subsection{Notation and conventions}
	We follow the conventions of Part~I \cite{DuZouPartI} and collect the notation used in the continuation, fold, and smooth-core analyses.
	
	\begin{itemize}
		\item We use $p=(p_1,p_2)$ for physical points, $z=x+\mathrm{i}y\in\Pi_-$ for conformal coordinates, and $\xi$ for the physical horizontal coordinate. The surface is represented by $Y(x)$ in conformal coordinates and by $\eta_Y(\xi)$ in physical coordinates. We write $z_a:=-\mathrm{i}a$, $p_a:=Z_Y(z_a)=(0,-h)$, and $b:=a^{-1}$, where $a$ and $h$ are the conformal and physical vortex depths. We also use $\vartheta_Y:=1+|D|Y=\Xi_Y'$, $c_Y:=\inf_{\mathbb R}\vartheta_Y$, and $\alpha(Y,a):=Z_Y'(-\mathrm{i}a)=1+E_a(|D|Y)>0$. Our curvature convention gives $D\kappa(0)[h]=-h''$.
		
		\item The Fourier transform and Bessel-potential Sobolev norms follow Part~I. In particular, the Hilbert transform $H$ has symbol $\mathrm{i}\operatorname{sgn}(\zeta)$, so $|D|=-H\partial_x$; see \cite{Stein1970}. The subscripts ${\rm e}$ and ${\rm o}$ denote real-valued even and odd subspaces. For $s>\frac12$, the point-vortex unknown and residual spaces are
		\[
		\mathcal X^s:=H_{\rm e}^{s+2}(\mathbb R)\times\mathbb R,\qquad \mathcal Y^s:=H_{\rm e}^{s}(\mathbb R)\times\mathbb R.
		\]
		
		\item For the relative stream function $\Psi$, we use $v_{\rm rel}=-\nabla_p^\perp\Psi$, where $\nabla_p^\perp\Psi=(-\partial_{p_2}\Psi,\partial_{p_1}\Psi)$, and $\omega=-\Delta_p\Psi$. The fluid velocity is $v=v_{\rm rel}-We_1$, and positive circulation corresponds to counterclockwise rotation. We use $dp$ for physical area measure and $dx$ for integration on the reference boundary.
		
		\item The admissible sets $\mathcal O^s,\mathscr U^s$ and the residuals $\mathscr F=(B,P)$ and $B_{\rm irr}$ are defined in \eqref{1-1}--\eqref{1-4}. In reciprocal-depth coordinates, we set $\widehat{\mathscr F}(Y,b,W):=\mathscr F(Y,b^{-1},W)$. We write $D$ for Fr\'echet derivatives, $A:=D_YB$ for the surface linearization, and $\mathscr L:=D_{(Y,a)}\mathscr F$ for the fixed-speed coupled linearization. On a Bernoulli chart, $\mathscr G_0$, $L:=\partial_a\mathscr G_0$, and $M:=\partial_W\mathscr G_0$ are defined in \eqref{1-9}, with the surface response included in both derivatives.
		
		\item We denote the chosen global point-vortex curve by $\mathscr C_0$ and a compact regular subarc by $\mathscr K$. In the compactness analysis, $Y^{(0)}$ denotes the centered limit, while $Q_j$ and $x_n^{(j)}$ denote the distant profiles and their centers. In Section~\ref{sec7}, $K$ denotes a compact parameter set in a Bernoulli chart, distinct from $\mathscr K$. A dot denotes differentiation with respect to a curve parameter $t$, a subscript $\star$ denotes evaluation at a distinguished configuration, and a prime denotes differentiation in the indicated scalar variable.
		
		\item We retain the Green-function normalization of Part~I,
		\[
		G_\Omega(p,q)=\frac1{2\pi}\log\frac1{|p-q|}-H_\Omega(p,q),\qquad R_\Omega(p):=H_\Omega(p,p).
		\]
		The Kirchhoff--Routh function is denoted by $K_{\Omega,W}$, with $K_{\Omega,W}(a)$ its restriction to the symmetry axis. We distinguish the horizontal momentum $\mathcal P$ from the vortex residual $P$. The energy--momentum action is $\mathcal J_W=E+W\mathcal P$, while $\widetilde{\mathcal J}_W$ uses the physical vortex position and $\Phi$ denotes the reduced action. The forms $\mathfrak q,\mathfrak q_{\rm s}$ and the index $n_-$ refer to the Hessian analysis of Theorem~\ref{thm1-2}.
		
		\item In Section~\ref{sec7}, $\varepsilon$ denotes the physical vortex-core scale, $U$ the radial reference stream function, and $f(U)$ its vorticity profile. We use $\omega_{\varepsilon,t}$ for the vorticity of the wave indexed by $t$, and $p_\varepsilon(t)$ for its selected center; $t$ is not physical time. The normalized smooth-core regular drift is $\mathscr G_\varepsilon$, with $L_\varepsilon:=\partial_a\mathscr G_\varepsilon$ and $M_\varepsilon:=\partial_W\mathscr G_\varepsilon$. The corresponding solution arc is denoted by $\mathscr C_{\varepsilon,\mathscr K}$. The rescaled variable $X$ is one-dimensional in Section~\ref{sec3} and two-dimensional in Section~\ref{sec7}.
		
		\item We write $A\lesssim B$ when $A\le CB$, with $\gtrsim$ and $\asymp$ defined analogously. Constants may depend on the fixed physical parameters, Sobolev regularity, and stated configuration bounds. The limit $W\to0^+$ refers to the initial deep-vortex branch, while $\varepsilon\to0^+$ is taken on a fixed compact arc $\mathscr K$, with $f$, $U$, and $\Gamma$ unchanged.
	\end{itemize}
	
	\smallskip
	
	\subsection{Organization of the paper}
	Section~\ref{sec2} develops the analytic formulation of the coupled point-vortex problem and establishes the Fredholm properties of its linearization. Section~\ref{sec3} continues the small-speed branch of Part~I through analytic global continuation. Section~\ref{sec4} studies the possible loss of compactness and establishes the finite profile decomposition of bounded solution sequences. Section~\ref{sec5} characterizes simple speed folds through the reduced vortex drift, while Section~\ref{sec6} develops their variational interpretation using the energy--momentum Hessian and a Schur-complement reduction. Finally, Section~\ref{sec7} constructs smooth-core solution arcs through a joint two-parameter Lyapunov--Schmidt reduction and proves the persistence of their specified simple folds.
	
	\bigskip

	\section{Analytic formulation and linearization}
    \label{sec2}
    We recall the conformal formulation of the point-vortex problem and develop the analytic and Fredholm properties needed for global continuation.

    \subsection{The coupled point-vortex equations}
    Fix $s>\frac12$ and use the spaces $\mathcal X^s$, $\mathcal Y^s$, and the configuration set $\mathscr U^s$ defined in Section~\ref{sec1}. Continuity of $|D|:H^{s+2}(\mathbb R)\to L^\infty(\mathbb R)$ makes $\mathscr U^s$ open. The Fredholm analysis concerns $0<W<c_{\min}$, although the reconstruction below is valid for every $W>0$.

    For $Y\in\mathcal O^s$, write $Z_Y(z)=z+\Phi_Y(z)$, where
    \[
    \Phi_Y(x+\mathrm{i}y):=-e^{y|D|}HY(x)+\mathrm{i}e^{y|D|}Y(x),\qquad y<0.
    \]
    The graph condition gives $\operatorname{Re}Z_Y'\ge c_Y>0$ on $\overline{\Pi_-}$, and $Z_Y$ is univalent by \cite[Lemma~2.1]{DuZouPartI}; see also \cite{Pommerenke1992}. Its boundary trace and reflection symmetry satisfy
    \begin{equation}
        Z_Y(x)=\Xi_Y(x)+\mathrm{i}Y(x),\qquad \Xi_Y(x):=x-HY(x),\qquad Z_Y(-\overline z)=-\overline{Z_Y(z)}.
        \label{2-1}
    \end{equation}
    For each fixed $Y$, both $Z_Y(z)-z\to0$ and $Z_Y'(z)\to1$ at infinity; no uniformity over families of surfaces is assumed.

    We use the boundary metric $J(Y)$, boundary velocity $q_{a,W}$, and Poisson evaluation $E_a$ from Section~\ref{sec1}. The signed curvature is
    \begin{equation}
        \kappa(Y)=-\frac{(1+|D|Y)Y''-Y'(|D|Y)'}{J(Y)^{3/2}}.
        \label{2-2}
    \end{equation}
    For a vortex at $z_a=-\mathrm{i}a$, with physical position $p_a=Z_Y(z_a)$, the Bernoulli and regular-drift residuals are
    \begin{equation}
        B(Y,a,W):=gY+\sigma\kappa(Y)+\frac{q_{a,W}^2}{2J(Y)}-\frac{W^2}{2}
        \label{2-3}
    \end{equation}
    and
    \begin{equation}
        P(Y,a,W):=W-\frac{\Gamma}{4\pi a}-\frac{\Gamma}{4\pi}\frac{E_a(Y'')}{1+E_a(|D|Y)}.
        \label{2-4}
    \end{equation}
    The graph condition gives $J(Y)\ge c_Y^2$ and $1+E_a(|D|Y)\ge c_Y$. Since $J(Y)^{-1}-1$ and $q_{a,W}-W$ belong to $H^{s+1}(\mathbb R)$, the cancellation
    \[
    \frac{q_{a,W}^2}{2J(Y)}-\frac{W^2}{2}=\frac{q_{a,W}^2-W^2}{2J(Y)}+\frac{W^2}{2}\bigl(J(Y)^{-1}-1\bigr)
    \]
    and Sobolev product estimates give $B(Y,a,W)\in H_{\rm e}^s(\mathbb R)$. The Poisson evaluations in $P$ are bounded for $a>0$. Thus the coupled residual defines a map
    \begin{equation}
        \mathscr F:\mathscr U^s\longrightarrow\mathcal Y^s,\qquad \mathscr F(Y,a,W):=\begin{pmatrix}B(Y,a,W)\\[1mm]P(Y,a,W)\end{pmatrix}.
        \label{2-5}
    \end{equation}
    The following proposition identifies this system with the point-vortex traveling-wave problem.

    \begin{proposition}
        \label{prop2-1}
        For $(Y,a,W)\in\mathscr U^s$, set
        \[
        \psi_a(x,y):=\frac{\Gamma}{4\pi}\log\frac{x^2+(y-a)^2}{x^2+(y+a)^2},\qquad \Psi_{a,W}(x,y):=\psi_a(x,y)+Wy,
        \]
        and define
        \[
        \widetilde\Psi_{Y,a,W}:=\Psi_{a,W}\circ Z_Y^{-1},\qquad v_{\rm rel}:=-\nabla^\perp\widetilde\Psi_{Y,a,W},\qquad v:=v_{\rm rel}-We_1.
        \]
        The reconstructed flow has vorticity $\Gamma\delta_{p_a}$, satisfies the kinematic boundary condition, and obeys $v(p)\to0$ as $|p|\to\infty$ in $\Omega_Y$. Writing the regular relative velocity in complex form as $\mathcal V_{\rm reg}=v_{{\rm reg},1}-\mathrm{i}v_{{\rm reg},2}$, we have
        \begin{equation}
            |v_{\rm rel}(Z_Y(x))|^2=\frac{q_{a,W}(x)^2}{J(Y)(x)},\qquad \mathcal V_{\rm reg}(p_a)=\frac{P(Y,a,W)}{Z_Y'(-\mathrm{i}a)}.
            \label{2-6}
        \end{equation}
        The reconstructed flow is a point-vortex traveling wave with velocity $-We_1$ if and only if
        \begin{equation}
            \mathscr F(Y,a,W)=0.
            \label{2-7}
        \end{equation}
        Conversely, every symmetric point-vortex traveling wave in this conformal graph class, with vortex preimage $-\mathrm{i}a$ and fluid velocity vanishing at infinity, admits this representation.
    \end{proposition}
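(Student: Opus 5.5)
The plan is to work in the reference lower half-plane and transport everything by the conformal map $Z_Y$, using univalence from \cite[Lemma~2.1]{DuZouPartI}. In $\Pi_-$, $\psi_a$ is the Green function of the half-plane with pole at $-\mathrm{i}a$, scaled by $\Gamma$, so $-\Delta\psi_a=\Gamma\delta_{-\mathrm{i}a}$ and $\psi_a=0$ on $\{y=0\}$. Conformal invariance of the Laplacian and of point masses (a Dirac mass pushes forward to a Dirac mass with the same weight, since $\Delta_p(\Psi\circ Z_Y^{-1})=|Z_Y'|^{-2}(\Delta\Psi)\circ Z_Y^{-1}$) gives $-\Delta_p\widetilde\Psi=\Gamma\delta_{p_a}$ in $\Omega_Y$. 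Since $Wy$ also vanishes on $y=0$, $\widetilde\Psi$ is constant on $\partial\Omega_Y$, which is the kinematic condition $v_{\rm rel}\cdot n=0$.

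For the decay at infinity, introduce the complex potential $w(z)=\frac{\Gamma}{2\pi\mathrm{i}}\log\frac{z+\mathrm{i}a}{z-\mathrm{i}a}+Wz$ (up to sign conventions), whose imaginary part is $\Psi_{a,W}$. The complex relative velocity is then $\overline{v_{\rm rel}}=w'(z)/Z_Y'(z)$, evaluated at $z=Z_Y^{-1}(p)$. As $|z|\to\infty$ we have $w'\to W$ and $Z_Y'\to1$, and $|Z_Y^{-1}(p)|\to\infty$ as $|p|\to\infty$ because $Z_Y(z)-z\to0$. Hence $v_{\rm rel}\to We_1$ and $v\to0$.

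On the boundary, $w'(x)$ is real. A direct computation gives
\[
w'(x)=W-\frac{\Gamma a}{\pi(x^2+a^2)}=q_{a,W}(x),
\]
so $|v_{\rm rel}|^2=q_{a,W}^2/|Z_Y'|^2=q_{a,W}^2/J(Y)$. This is the first identity in \eqref{2-6}. The Bernoulli relation together with $\mathsf p=\sigma\kappa$ and $p_2=Y(x)$ on the boundary is then exactly $B=0$, provided the curvature \eqref{2-2} is the physical curvature of $\partial\Omega_Y$ written in the parameter $x$. That is a standard computation from $Z_Y(x)=\Xi_Y+\mathrm{i}Y$ with $\Xi_Y'=1+|D|Y$.

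For the regular drift, expand near $z_a$. Write $w'(z)=\frac{\Gamma}{2\pi\mathrm{i}}\frac{-1}{z-z_a}+r(z)$, where $r$ is analytic and $r(z_a)=W-\frac{\Gamma}{4\pi a}$. Also
\[
\frac{1}{Z_Y(z)-p_a}=\frac{1}{Z_Y'(z_a)(z-z_a)}\Bigl(1-\frac{Z_Y''(z_a)}{2Z_Y'(z_a)}(z-z_a)+\dots\Bigr).
\]
Subtracting the singular part of the physical point vortex and letting $z\to z_a$ gives
\[
\mathcal V_{\rm reg}(p_a)=\frac{1}{Z_Y'(z_a)}\Bigl[r(z_a)+c\,\Gamma\frac{Z_Y''(z_a)}{Z_Y'(z_a)}\Bigr]
\]
for an explicit constant $c$. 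This is the Routh correction. Using symmetry, $Z_Y'(-\mathrm{i}a)=1+E_a(|D|Y)$ is real and $Z_Y''(-\mathrm{i}a)$ is purely imaginary, with its relevant component equal to $E_a(Y'')$ up to a factor $\mathrm{i}$, computed from $\Phi_Y''$ with Poisson semigroup symbols. Matching constants should produce exactly $P/Z_Y'(-\mathrm{i}a)$. The vertical component vanishes automatically by the reflection symmetry \eqref{2-1}. So stationarity of the vortex is equivalent to $P=0$.

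\textbf{Main obstacle.} I expect the bookkeeping here to be the hardest part: getting the sign of $\Gamma/(4\pi a)$ and the factor $\Gamma/(4\pi)$ in front of $E_a(Y'')/\alpha$ right, with the stated Hilbert convention $|D|=-H\partial_x$. I will verify these against the flat case $Y=0$, where the result must reduce to the image-vortex drift $W-\Gamma/(4\pi a)$.

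For the converse, take a symmetric wave in the graph class with $\Omega=Z_Y(\Pi_-)$ and vortex at $Z_Y(-\mathrm{i}a)$. Set $\Psi=\widetilde\Psi\circ Z_Y$. Then $\Psi$ is harmonic in $\Pi_-$ away from $-\mathrm{i}a$, has the logarithmic singularity of strength $\Gamma$ there, vanishes on the boundary after normalizing the constant, and has gradient tending to $(0,W)$ at infinity. Subtract $\Psi_{a,W}$. The difference is harmonic in $\Pi_-$, vanishes on $\partial\Pi_-$, and has bounded gradient tending to zero at infinity. Its odd reflection is therefore harmonic on $\mathbb R^2$ with bounded gradient, hence affine by Liouville; vanishing on the axis and gradient decay force it to be zero. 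Applying the forward computation then shows that the Bernoulli condition and drift condition reduce to \eqref{2-7}.
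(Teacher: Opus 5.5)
Your proposal is correct and follows the paper's proof essentially step by step: conformal transport of the scaled half-plane Green function, the boundary identity $w'(x)=q_{a,W}(x)$, the regular-drift formula, and the odd-reflection Liouville argument for the converse. The only difference is that you derive the drift formula by Laurent expansion, where the paper cites \cite[Lemma~2.3]{DuZouPartI}. When you carry that expansion out, note that the residue of $w'$ at $z_a=-\mathrm{i}a$ is $+\Gamma/(2\pi\mathrm{i})$, not $-\Gamma/(2\pi\mathrm{i})$, so that it matches the subtracted physical term $\frac{\Gamma}{2\pi\mathrm{i}}\frac{1}{p-p_a}$. This gives $c=\mathrm{i}/(4\pi)$ and hence exactly $P/Z_Y'(-\mathrm{i}a)$ via $Z_Y''(-\mathrm{i}a)=\mathrm{i}E_a(Y'')$. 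Your flat-case check cannot detect this sign, because the $Z_Y''$ term vanishes when $Y=0$.
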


    \begin{proof}
        The reference stream function vanishes on $\mathbb R$ and satisfies $-\Delta\Psi_{a,W}=\Gamma\delta_{z_a}$. Under the conformal map, $-\Delta_p\widetilde\Psi_{Y,a,W}=\Gamma\delta_{p_a}$, and the reconstructed velocity is divergence-free and tangent to the free surface. Since $\nabla\psi_a\to0$ and $Z_Y'\to1$ at infinity, we also have $v\to0$. On $y=0$, the identities $\partial_x\Psi_{a,W}=0$ and $\partial_y\Psi_{a,W}=q_{a,W}$ give the first formula in \eqref{2-6}. By Bernoulli's law, the capillary pressure condition is then equivalent to $B=0$.

        For the vortex equilibrium, \cite[Lemma~2.3]{DuZouPartI} gives
        \[
        \mathcal V_{\rm reg}(p_a)=\frac1{Z_Y'(-\mathrm{i}a)}\left[W-\frac{\Gamma}{4\pi a}+\frac{\mathrm{i}\Gamma}{4\pi}\frac{Z_Y''(-\mathrm{i}a)}{Z_Y'(-\mathrm{i}a)}\right].
        \]
        Using $Z_Y'(-\mathrm{i}a)=1+E_a(|D|Y)>0$ and $Z_Y''(-\mathrm{i}a)=\mathrm{i}E_a(Y'')$, we obtain the second formula in \eqref{2-6}. Thus $P=0$ is equivalent to the stationarity of the vortex, proving \eqref{2-7}.

        Conversely, let $\Psi^\#$ be the pullback of the relative stream function of a symmetric point-vortex traveling wave, normalized to vanish on $\mathbb R$. Then $R:=\Psi^\#-\psi_a-Wy$ is harmonic in $\Pi_-$, since the point-vortex singularities cancel, and satisfies $R=0$ on $\mathbb R$ and $\nabla R\to0$ at infinity. Odd reflection across $\mathbb R$ extends $R$ to an entire harmonic function with bounded gradient. By Liouville's theorem, $\nabla R=0$, and hence $R=0$. The stated reconstruction follows, and the Bernoulli and vortex equilibrium conditions give $B=P=0$.
    \end{proof}

    \smallskip

    \subsection{Analyticity of the coupled map}
    We record analyticity of the coupled residual and derivative bounds uniform over controlled configurations.

    \begin{lemma}
        \label{lem2-2}
        Let $s>\frac12$. The map $\mathscr F:\mathscr U^s\to\mathcal Y^s$ is real analytic. Fix $R,c_0>0$, $0<a_-<a_+<\infty$, and $[W_-,W_+]\Subset(0,c_{\min})$. For every integer $k\ge1$, there exists a constant $C_k$, depending only on $s,k,R,c_0,a_\pm,W_\pm$ and the fixed physical parameters, such that
        \begin{equation}
            \|\mathscr F(Y,a,W)\|_{\mathcal Y^s}+\|D^k\mathscr F(Y,a,W)\|_{\mathcal L^k(\mathcal X^s\times\mathbb R,\mathcal Y^s)}\le C_k
            \label{2-8}
        \end{equation}
        whenever
        \[
        \|Y\|_{H^{s+2}}\le R,\qquad \inf_{\mathbb R}(1+|D|Y)\ge c_0,\qquad (a,W)\in[a_-,a_+]\times[W_-,W_+].
        \]
    \end{lemma}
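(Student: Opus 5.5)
The plan is to decompose $\mathscr F=(B,P)$ into compositions of bounded multilinear maps with a small number of elementary analytic nonlinearities, namely $t\mapsto t^{-1}$, $t\mapsto t^{-3/2}$ and $t\mapsto\sqrt t$ on $(0,\infty)$. The first step is to show that each building block is real analytic in its arguments. The linear maps $Y\mapsto Y,\ |D|Y,\ Y',\ Y'',\ (|D|Y)'$ are bounded from $H^{s+2}$ into $H^{s+1}$ or $H^s$. Since $s>\frac12$, the spaces $H^{s}$ and $H^{s+1}$ are Banach algebras. Moreover, $H^{s+1}$ is a module over $\mathbb R\oplus H^{s+1}$, which covers the constants $1$ coming from $\vartheta_Y=1+|D|Y$. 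Consequently,
\[
J(Y)-1=2|D|Y+(|D|Y)^2+(Y')^2
\]
is a polynomial, hence analytic, map into $H^{s+1}$.

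To handle $J(Y)^{-1}$ and $J(Y)^{-3/2}$, I write $J=1+u$ with $u\in H^{s+1}$ and $\inf(1+u)\ge c_Y^2>0$. For an analytic function $\phi$ on $(0,\infty)$, the composition $u\mapsto\phi(1+u)-\phi(1)$ is analytic from the open set $\{u\in H^{s+1}:\inf(1+u)>0\}$ into $H^{s+1}$. I would prove this locally by expanding about a given $1+u_0$: one writes
\[
\phi(1+u_0+v)=\sum_k\phi^{(k)}(1+u_0)\frac{v^k}{k!}
\]
and sums the series in the algebra $\mathbb R\oplus H^{s+1}$. Convergence holds for $\|v\|_{H^{s+1}}$ small relative to $\inf(1+u_0)$, using Cauchy estimates for $\phi$ on a complex disk around the range of $1+u_0$.

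This step is the main technical point. The difficulty is that the range of $1+u_0$ is a compact interval $[c_Y^2,\|1+u_0\|_\infty]$ rather than a single point, so I would instead apply the holomorphic functional calculus in the commutative Banach algebra $\mathbb R\oplus H^{s+1}$ with a contour around that interval. The spectrum of $1+u_0$ there is the closure of its range together with $1$, and this lies in $(0,\infty)$. The same contour gives explicit bounds depending only on $R$ and $c_0$, because $\|u\|_{L^\infty}\lesssim R$ and $\inf(1+u)\ge c_0^2$.

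Given these compositions, I assemble $B$ as follows.
\begin{itemize}
\item The curvature $\kappa(Y)=-(\vartheta_YY''-Y'(|D|Y)')J^{-3/2}$ is a product of an $H^s$ factor with an element of $\mathbb R\oplus H^{s+1}$, so it lies in $H^s$.
\item The velocity term $q_{a,W}-W=-\Gamma a/(\pi(x^2+a^2))$ is real analytic in $a>0$ with values in $H^{s+1}$ (indeed $H^\infty$), with all $a$-derivatives bounded for $a\in[a_-,a_+]$. Its dependence on $W$ is polynomial.
\item The remaining term uses the cancellation identity displayed before the statement:
\[
\frac{q_{a,W}^2}{2J(Y)}-\frac{W^2}{2}=\frac{q_{a,W}^2-W^2}{2J(Y)}+\frac{W^2}{2}\bigl(J(Y)^{-1}-1\bigr),
\]
which is analytic into $H^{s+1}$.
\end{itemize}
Evenness of all these quantities is preserved, since $|D|$ commutes with reflection and $J,\kappa$ are even for even $Y$.

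For $P$, the Poisson evaluation is $E_ah=\frac{1}{\pi}\int\frac{a}{x^2+a^2}h(x)\,dx$. Moving derivatives onto the kernel gives
\[
E_a(Y'')=\int\partial_x^2P_a\,Y,\qquad E_a(|D|Y)=\int(|D|P_a)\,Y .
\]
Here $P_a$ and its $a$-derivatives are analytic in $a>0$ with values in $L^2$, bounded uniformly on $[a_-,a_+]$. Hence these evaluations are analytic, bilinear in $(a,Y)$, and satisfy bounds of the form $C(a_\pm)\|Y\|_{H^{s+2}}$. The denominator satisfies $1+E_a(|D|Y)=\operatorname{Re}Z_Y'(-\mathrm ia)\ge c_0$ by the maximum principle, and $1/a$ is analytic. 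It follows that $P$ is analytic.

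Finally, I obtain the bound \eqref{2-8} from the following two observations.
\begin{itemize}
\item Each building block satisfies uniform derivative bounds on the stated bounded set. These come from the Cauchy estimates on the contour of the functional calculus, whose radius is chosen in terms of $c_0$ and $R$ alone, and from explicit kernel bounds for the $a$-dependence.
\item Derivatives of sums and products of blocks are controlled through the Leibniz formula.
\end{itemize}
The restriction $W\le W_+<c_{\min}$ is not actually needed for these bounds. It only serves to keep the estimate in the form stated.
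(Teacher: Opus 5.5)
Your proposal is correct and follows essentially the same route as the paper. Both arguments use the polynomial structure of $J(Y)-1$ and $\mathfrak m_Y$, analytic negative powers of $J$ in the algebra $\mathbb R\oplus H^{s+1}(\mathbb R)$, analytic $a$-dependence of the vortex kernels, and product/chain-rule bounds for the uniform estimates.

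The differences are minor. You use a contour functional calculus; the paper instead factors $J(Y)=J_\circ\bigl[1+J_\circ^{-1}(J(Y)-J_\circ)\bigr]$ and sums the binomial series about the scalar $1$, which removes the interval-versus-point difficulty you raised. You use physical-space Poisson kernels for the $a$-dependence; the paper uses the Fourier bound $|\zeta|^je^{-a|\zeta|}\le j!(2/a_-)^je^{-a_-|\zeta|/2}$. Both versions rely, to the same extent, on the standard Moser-type composition estimates that put $J^{-1}$ in $\mathbb R\oplus H^{s+1}$ with bounds depending only on $R$ and $c_0$.
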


    \begin{proof}
        Since $s>\frac12$, Sobolev product estimates \cite{BahouriCheminDanchin2011} show that $J(Y)-1$ and $\mathfrak m_Y:=(1+|D|Y)Y''-Y'(|D|Y)'$ are polynomial maps from $H^{s+2}(\mathbb R)$ into $H^{s+1}(\mathbb R)$ and $H^s(\mathbb R)$, respectively. For $\theta=1,\frac32$, the negative powers of $J$ are analytic in the algebra $\mathbb R\oplus H^{s+1}(\mathbb R)$. Indeed, for any $Y_\circ\in\mathcal O^s$, setting $J_\circ:=J(Y_\circ)$ gives the convergent expansion
        \[
        J(Y)^{-\theta}=J_\circ^{-\theta}\sum_{j=0}^{\infty}\binom{-\theta}{j}\bigl[J_\circ^{-1}(J(Y)-J_\circ)\bigr]^j
        \]
        for $Y$ sufficiently close to $Y_\circ$. Thus $Y\mapsto J(Y)^{-\theta}-1$ is analytic into $H^{s+1}(\mathbb R)$, and $\kappa(Y)=-\mathfrak m_YJ(Y)^{-3/2}$ is analytic into $H^s(\mathbb R)$.

        For the vortex terms, set $r_a(x):=\Gamma a/[\pi(x^2+a^2)]$, so that $q_{a,W}=W-r_a$. The Fourier representations
        \[
        \widehat{r_a}(\zeta)=\Gamma e^{-a|\zeta|},\qquad E_ah=\frac1{2\pi}\int_{\mathbb R}e^{-a|\zeta|}\widehat h(\zeta)\,d\zeta
        \]
        and the bound $|\zeta|^je^{-a|\zeta|}\le j!(2/a_-)^je^{-a_-|\zeta|/2}$ for $a\ge a_->0$ give
        \begin{equation}
            \|\partial_a^jr_a\|_{H^{s+1}}+\|\partial_a^jE_a\|_{\mathcal L(H^s,\mathbb R)}\le Cj!\left(\frac2{a_-}\right)^j,\qquad j\ge0.
            \label{2-9}
        \end{equation}
        Hence $a\mapsto r_a$ and $a\mapsto E_a$ are analytic in their respective spaces.

        Using the kinetic-term cancellation before Proposition~\ref{prop2-1}, we obtain joint analyticity of $B$ into $H_{\rm e}^s(\mathbb R)$. For $P$, the analyticity of $E_a$, together with $1+E_a(|D|Y)>0$, gives joint analyticity of the regular-drift residual. This proves real analyticity of $\mathscr F$ as a map into $\mathcal Y^s$.

        Under the stated uniform bounds, $J(Y)\ge c_0^2$ and $1+E_a(|D|Y)\ge c_0$. Sobolev product and composition estimates therefore control all surface derivatives of each fixed order, while \eqref{2-9} controls the depth derivatives. Polynomial dependence on $W$ and the product and chain rules now give \eqref{2-8}.
    \end{proof}

    \smallskip

    \subsection{The coupled linearization}
    For $(Y,a,W)\in\mathscr U^s$, write $q=q_{a,W}$, $J=J(Y)$, and $\mathfrak m_Y=(1+|D|Y)Y''-Y'(|D|Y)'$, and set $\alpha:=1+E_a(|D|Y)=Z_Y'(-\mathrm{i}a)>0$. Unless indicated otherwise, partial derivatives with respect to $a$ and $W$ keep the surface $Y$ fixed.

    \begin{lemma}
        \label{lem2-3}
        Let $s>\frac12$ and $(Y,a,W)\in\mathscr U^s$. For $(h,\ell,\nu)\in H_{\rm e}^{s+2}(\mathbb R)\times\mathbb R^2$,
        \begin{equation}
            D\mathscr F(Y,a,W)[h,\ell,\nu]=\begin{pmatrix}Ah+B_a\ell+B_W\nu\\[1mm]P_Y[h]+P_a\ell+\nu\end{pmatrix},
            \label{2-10}
        \end{equation}
        where $A=D_YB$ is given by
        \begin{equation}
            Ah=gh+\sigma D\kappa(Y)[h]-\frac{q^2}{J^2}\bigl[(1+|D|Y)|D|h+Y'h'\bigr].
            \label{2-11}
        \end{equation}
        Here
        \begin{equation}
            D\kappa(Y)[h]=-\frac{D\mathfrak m_Y[h]}{J^{3/2}}+\frac{3\mathfrak m_Y}{J^{5/2}}\bigl[(1+|D|Y)|D|h+Y'h'\bigr],
            \label{2-12}
        \end{equation}
        with $D\mathfrak m_Y[h]=(1+|D|Y)h''-Y'(|D|h)'+Y''|D|h-(|D|Y)'h'$. The parameter derivatives of $B$ are
        \begin{equation}
            B_a=\frac{q\,\partial_aq}{J},\qquad B_W=\frac qJ-W,\qquad \partial_aq(x)=\frac{\Gamma(a^2-x^2)}{\pi(x^2+a^2)^2}.
            \label{2-13}
        \end{equation}
        For the vortex residual,
        \begin{equation}
            P_Y[h]=-\frac{\Gamma}{4\pi}\left[\frac{E_a(h'')}{\alpha}-\frac{E_a(Y'')E_a(|D|h)}{\alpha^2}\right]
            \label{2-14}
        \end{equation}
        \begin{equation}
            P_a=\frac{\Gamma}{4\pi a^2}+\frac{\Gamma}{4\pi}\left[\frac{E_a(|D|Y'')}{\alpha}+\frac{\bigl(E_a(Y'')\bigr)^2}{\alpha^2}\right],\qquad P_W=1.
            \label{2-15}
        \end{equation}
        The maps $A:H_{\rm e}^{s+2}(\mathbb R)\to H_{\rm e}^s(\mathbb R)$ and $P_Y:H_{\rm e}^{s+2}(\mathbb R)\to\mathbb R$ are bounded, with $B_a,B_W\in H_{\rm e}^{s+1}(\mathbb R)$, uniformly under the bounds of Lemma~\ref{lem2-2}.
    \end{lemma}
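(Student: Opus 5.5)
The formulas follow from direct Fréchet differentiation. Lemma~\ref{lem2-2} already gives real analyticity of $\mathscr F$, so every derivative exists and may be computed termwise by the product, quotient and chain rules in the algebra $\mathbb R\oplus H^{s+1}(\mathbb R)$ (for $J$, $q$) and in $H^s(\mathbb R)$ (for $\mathfrak m_Y$, $\kappa$). The block structure \eqref{2-10} is immediate: $B$ and $P$ are differentiated separately in $Y$, $a$, $W$. The only observation needed there is that $W$ enters $P$ linearly with coefficient one, so $P_W=1$.

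\emph{Bernoulli derivatives.} The basic identity is
\[
DJ(Y)[h]=2\bigl[(1+|D|Y)|D|h+Y'h'\bigr],
\]
since $|D|$ and $\partial_x$ are linear. Differentiating the kinetic term $q^2/(2J)$ in $Y$ gives $-q^2DJ[h]/(2J^2)$, which is the last term of \eqref{2-11}. For the curvature, write $\kappa=-\mathfrak m_YJ^{-3/2}$ and use $D(J^{-3/2})=-\tfrac32J^{-5/2}DJ$. This yields \eqref{2-12}, with $D\mathfrak m_Y[h]$ obtained by expanding the bilinear expression $\mathfrak m_Y$. For the parameter derivatives, $q=W-r_a$ gives $\partial_aq=-\partial_ar_a$. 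A direct computation of $\partial_a\bigl[\Gamma a/(\pi(x^2+a^2))\bigr]$ gives the formula in \eqref{2-13}, and hence $B_a=q\,\partial_aq/J$ and $B_W=q/J-W$.

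\emph{Vortex derivatives.} Equation \eqref{2-14} is the quotient rule applied to $E_a(Y'')/\alpha$, using that $E_a$ is linear in $Y$. For $P_a$, first $\partial_a(-\Gamma/(4\pi a))=\Gamma/(4\pi a^2)$. From the Fourier representation, $\partial_aE_ah=-E_a(|D|h)$. Since $|D|^2Y=-Y''$, this gives $\partial_a\alpha=E_a(Y'')$. The quotient rule then gives
\[
\partial_a\frac{E_a(Y'')}{\alpha}=-\frac{E_a(|D|Y'')}{\alpha}-\frac{(E_a(Y''))^2}{\alpha^2}.
\]
Multiplying by $-\Gamma/(4\pi)$ gives \eqref{2-15}.

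\emph{Boundedness and membership, the main point.} The computations above are routine. The substantive step is the mapping properties, uniform under the bounds of Lemma~\ref{lem2-2}.
\begin{itemize}
\item For $A$: we have $h''\in H^s$, while $|D|h$ and $h'$ lie in $H^{s+1}$. The coefficients $q^2/J^2$, $\mathfrak m_Y/J^{5/2}$ and $J^{-3/2}$ are each a constant plus an $H^{s+1}$ function (respectively $H^s$ for the $\mathfrak m_Y$ factor), with $J\ge c_0^2$. The product estimates for $s>\frac12$ then give $A\in\mathcal L(H^{s+2}_{\rm e},H^s_{\rm e})$.
\item Evenness is preserved, because $|D|$ commutes with reflection and $\partial_x$ maps even functions to odd ones, with derivatives appearing in pairs.
\item For $B_a$ and $B_W$, I would use the same cancellation as before Proposition~\ref{prop2-1}:
\[
\frac qJ-W=\frac{q-W}{J}+W\bigl(J^{-1}-1\bigr)\in H^{s+1},
\]
and $\partial_aq\in H^{s+1}$ by \eqref{2-9}.
\item For $P_Y$: the multipliers $e^{-a|\zeta|}\zeta^2$ and $e^{-a|\zeta|}|\zeta|^3$ are bounded by $C(a_-)\langle\zeta\rangle^{-s-2}$ times an integrable weight. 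Hence $h\mapsto E_a(h'')$ and $h\mapsto E_a(|D|h)$ are bounded on $H^{s+2}$. Together with $\alpha\ge c_0$, this gives uniform bounds for $P_Y$.
\end{itemize}
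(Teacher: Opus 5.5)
Your proposal is correct and follows the paper's proof essentially line by line. Both differentiate $J$ and $\mathfrak m_Y$, apply the quotient rule with $D_Y\alpha[h]=E_a(|D|h)$, and use $\partial_aE_a=-E_a|D|$ together with $|D|^2=-\partial_x^2$ for $P_a$. Both obtain the mapping properties from the product estimates, positivity of $J$ and $\alpha$, and the cancellation $B_W=(q-W)/J+W(J^{-1}-1)$.

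One small slip: the multiplier for $E_a(|D|h)$ is $|\zeta|e^{-a|\zeta|}$. The cubic multiplier $|\zeta|^3e^{-a|\zeta|}$ is instead what the paper uses to show that $E_a(|D|Y'')$ in $P_a$ is finite for $Y\in H^{s+2}$. This does not affect the argument, since all these multipliers decay exponentially, uniformly for $a\ge a_-$.
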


    \begin{proof}
        Differentiating $J$ gives $DJ(Y)[h]=2[(1+|D|Y)|D|h+Y'h']$, and the product rule gives $D\mathfrak m_Y[h]$. These identities yield \eqref{2-11}--\eqref{2-12}, while direct differentiation of $q$ gives \eqref{2-13}. For $P$, the quotient rule and $D_Y\alpha[h]=E_a(|D|h)$ give \eqref{2-14}. Since $\partial_aE_a=-E_a|D|$ and $|D|^2=-\partial_x^2$,
        \[
        \partial_a\alpha=E_a(Y''),\qquad \partial_a(E_a(Y''))=-E_a(|D|Y''),
        \]
        which proves \eqref{2-15} and hence \eqref{2-10}.

        The asserted bounds follow from Lemma~\ref{lem2-2} and positivity of the denominators. In particular, $B_W=(q-W)/J+W(J^{-1}-1)\in H_{\rm e}^{s+1}$. The factor $|\zeta|^3e^{-a|\zeta|}$ in the Fourier representation of $E_a(|D|Y'')$ is bounded as a functional on $H^{s+2}$ for $a>0$, uniformly when $a\ge a_->0$; so no additional regularity of $Y$ is needed.
    \end{proof}

    Separate the principal part of $A$ by writing $A=A_2+A_1$, where
    \begin{equation}
        A_2h=-\frac{\sigma(1+|D|Y)}{J^{3/2}}h''+\frac{\sigma Y'}{J^{3/2}}(|D|h)'.
        \label{2-16}
    \end{equation}
    The remainder $A_1:H_{\rm e}^{s+1}(\mathbb R)\to H_{\rm e}^s(\mathbb R)$ is bounded, uniformly under the same configuration bounds. Both terms in $A_2$ are of second order.

    \begin{lemma}
        \label{lem2-4}
        Let $s>\frac12$ and $(Y,a,W)\in\mathscr U^s$. The homogeneous second-order symbol of $A$ is
        \begin{equation}
            p_2(x,\zeta)=\frac{\sigma}{J(Y)(x)^{3/2}}\left[(1+|D|Y(x))\zeta^2+\mathrm{i}Y'(x)\zeta|\zeta|\right],\qquad \zeta\ne0,
            \label{2-17}
        \end{equation}
        and satisfies
        \begin{equation}
            \operatorname{Re}p_2(x,\zeta)\ge c|\zeta|^2,\qquad x\in\mathbb R,\quad\zeta\ne0,
            \label{2-18}
        \end{equation}
        where $c>0$ depends only on $s,\sigma,R,c_0$ when $\|Y\|_{H^{s+2}}\le R$ and $1+|D|Y\ge c_0>0$.

        Extend $A$ to $H^{s+2}(\mathbb R)$ by the same formula, and let $(\tau_rh)(x):=h(x-r)$. Define
        \begin{equation}
            A_\infty(W):=g-\sigma\partial_x^2-W^2|D|.
            \label{2-19}
        \end{equation}
        Then, for each fixed $h\in H^{s+2}(\mathbb R)$,
        \begin{equation}
            \|\tau_{-r}A\tau_rh-A_\infty(W)h\|_{H^s}\longrightarrow0\qquad\text{as }|r|\to\infty.
            \label{2-20}
        \end{equation}
    \end{lemma}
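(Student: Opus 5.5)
The plan is to obtain both assertions directly from the explicit formulas \eqref{2-11}, \eqref{2-12} and \eqref{2-16}, after checking that everything outside $A_2$ is lower order.

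\emph{Step 1: the principal part.} In \eqref{2-11} the kinetic term $-\tfrac{q^2}{J^2}[(1+|D|Y)|D|h+Y'h']$ involves at most one derivative of $h$. So does the second term of \eqref{2-12}, whose coefficient $\mathfrak m_Y J^{-5/2}$ lies in $H^s\subset L^\infty$. In $-D\mathfrak m_Y[h]J^{-3/2}$, the only second-order contributions are $(1+|D|Y)h''$ and $-Y'(|D|h)'$. Multiplied by $\sigma$, these give $A_2$. The remaining terms $Y''|D|h-(|D|Y)'h'$ are first order, with coefficients in $H^s$. Since $s>\tfrac12$, multiplication by $H^s$ functions maps $H^{s}$ into $H^s$, which gives $A_1:H^{s+1}\to H^s$ with the stated uniform bounds.

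To read off the symbol, use that $\partial_x^2$ has symbol $-\zeta^2$. For $(|D|h)'=\partial_x|D|h$, the symbol is $\mathrm{i}\zeta|\zeta|$. With the sign conventions in \eqref{2-16}, this yields \eqref{2-17}. The real part is $\sigma(1+|D|Y)J^{-3/2}\zeta^2$. By the graph condition, $1+|D|Y\ge c_0$. Also $J\le (1+\|\,|D|Y\|_\infty)^2+\|Y'\|_\infty^2\le C(R)$, using the embedding $H^{s+1}\subset L^\infty$. Together these give \eqref{2-18} with $c=\sigma c_0 C(R)^{-3/2}$.

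\emph{Step 2: the limit under translation.} Conjugation by $\tau_r$ turns every multiplication coefficient $m(x)$ into $m(x+r)$ and commutes with all Fourier multipliers. Decompose each coefficient as a constant limit plus a decaying part:
\[
1+|D|Y,\quad J,\quad J^{-3/2},\quad \frac{q^2}{J^2}\longrightarrow 1,1,1,W^2,
\]
while $Y'$, $Y''$, $(|D|Y)'$ and $\mathfrak m_Y$ tend to $0$. Each decaying part lies in $H^s$ or better; for $q^2/J^2-W^2$ this uses $q-W\in H^{s+1}$ together with the cancellation identity. The constant parts assemble exactly to $g-\sigma\partial_x^2-W^2|D|=A_\infty(W)$.

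It therefore suffices to show that, for fixed $m\in H^s$ and a fixed function $u\in H^s$ (one of $h$, $h'$, $h''$, $|D|h$, $(|D|h)'$),
\[
\|m(\cdot+r)\,u\|_{H^s}\to0\qquad\text{as }|r|\to\infty.
\]
First approximate $m$ and $u$ by Schwartz functions, using the uniform product estimate $\|mu\|_{H^s}\lesssim\|m\|_{H^s}\|u\|_{H^s}$. For Schwartz $m,u$, the product $m(\cdot+r)u$ tends to $0$ in every $H^k$, because the supports effectively separate. A $3\varepsilon$ argument concludes. The nonlinear coefficient $J^{-3/2}-1$ is handled the same way, since it lies in $H^{s+1}$ by the analyticity in Lemma~\ref{lem2-2}.

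The main point needing care is Step 2. Convergence is only strong for each fixed $h$, not in operator norm, and every coefficient must be verified to have an $H^s$-decaying part. The delicate cases are $q^2/J^2$ and the $\mathfrak m_Y$ term. There I will write $q^2/J^2-W^2=(q^2-W^2)J^{-2}+W^2(J^{-2}-1)$ and check each factor separately.
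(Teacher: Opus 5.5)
Your proposal is correct and follows essentially the same route as the paper. You read the symbol off the principal part $A_2$ and bound $\operatorname{Re}p_2$ below using the graph condition and the Sobolev bound on $J$. You then split each coefficient of $A$ into its constant limit plus an $H^s$ part, and conclude from $\|\rho(\cdot+r)f\|_{H^s}\to0$, proved by density and the algebra estimate. The only difference is cosmetic: the paper first groups the lower-order terms into $\mathfrak c|D|+\mathfrak d\,\partial_x$ with $\mathfrak c+W^2,\mathfrak d\in H^s$, whereas you work term by term, which tacitly uses that $\mathbb R\oplus H^{s+1}$ is an algebra to merge products of coefficients.
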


    \begin{proof}
        The symbol follows from \eqref{2-16}, and
        \[
        \operatorname{Re}p_2(x,\zeta)=\frac{\sigma(1+|D|Y(x))}{J(Y)(x)^{3/2}}|\zeta|^2\ge\frac{\sigma c_Y}{\|J(Y)\|_{L^\infty}^{3/2}}|\zeta|^2,
        \]
        which proves \eqref{2-18} by Sobolev embedding. Writing $Ah=A_2h+\mathfrak c|D|h+\mathfrak d h'+gh$, we obtain
        \[
        \begin{aligned}
            \mathfrak c&=-\frac{\sigma Y''}{J^{3/2}}+\frac{3\sigma\mathfrak m_Y(1+|D|Y)}{J^{5/2}}-\frac{q^2(1+|D|Y)}{J^2},\\
            \mathfrak d&=\frac{\sigma(|D|Y)'}{J^{3/2}}+\frac{3\sigma\mathfrak m_Y Y'}{J^{5/2}}-\frac{q^2Y'}{J^2}.
        \end{aligned}
        \]
        Since $q^2-W^2\in H^{s+1}$, the coefficient differences satisfy
        \[
        \frac{\sigma(1+|D|Y)}{J^{3/2}}-\sigma,\quad \frac{\sigma Y'}{J^{3/2}}\in H^{s+1}(\mathbb R),\qquad \mathfrak c+W^2,\quad\mathfrak d\in H^s(\mathbb R).
        \]
        For $\rho,f\in H^s(\mathbb R)$, $s>\frac12$, density of $C_c^\infty(\mathbb R)$ and the Sobolev algebra estimate give
        \begin{equation}
            \|\rho(\cdot+r)f\|_{H^s}\longrightarrow0\qquad\text{as }|r|\to\infty,\qquad \rho,f\in H^s(\mathbb R).
            \label{2-21}
        \end{equation}
        Each term of $A-A_\infty(W)$ is a coefficient from the displayed spaces multiplied by one of $\partial_x^2$, $\partial_x|D|$, $|D|$, or $\partial_x$. Since these operators commute with translations and map $H^{s+2}$ into $H^s$, \eqref{2-21} gives \eqref{2-20}.
    \end{proof}

    The convergence in \eqref{2-20} is strong but need not hold in operator norm. In particular, the variable second-order terms need not be compact perturbations of $A_\infty(W)$; we handle them by the conformal factorization below.

    The far-field Fourier symbol is
    \begin{equation}
        m_W(\zeta):=g+\sigma\zeta^2-W^2|\zeta|.
        \label{2-22}
    \end{equation}

    \begin{lemma}
        \label{lem2-5}
        Let $r\ge0$ and $0\le W<c_{\min}$. The operator $A_\infty(W):H^{r+2}(\mathbb R)\to H^r(\mathbb R)$ is an isomorphism, with inverse
        \begin{equation}
            \widehat{A_\infty(W)^{-1}f}(\zeta)=\frac{\widehat f(\zeta)}{g+\sigma\zeta^2-W^2|\zeta|}.
            \label{2-23}
        \end{equation}
        For every $0\le W_+<c_{\min}$, there is $C>0$, depending only on $g,\sigma,W_+$, such that
        \begin{equation}
            C^{-1}\|h\|_{H^{r+2}}\le\|A_\infty(W)h\|_{H^r}\le C\|h\|_{H^{r+2}},\qquad 0\le W\le W_+.
            \label{2-24}
        \end{equation}
        The same conclusions hold on the even and odd subspaces, including the isomorphism $H^2\to L^2$ when $r=0$.
    \end{lemma}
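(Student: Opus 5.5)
The proof is a Fourier-multiplier argument resting on a uniform lower bound for the symbol $m_W$ in \eqref{2-22}. By Plancherel, $A_\infty(W)$ is the Fourier multiplier with symbol $m_W(\zeta)=g+\sigma\zeta^2-W^2|\zeta|$. Both claims in \eqref{2-24} and the inverse formula \eqref{2-23} therefore reduce to the two-sided estimate
\[
c_*\,(1+\zeta^2)\le m_W(\zeta)\le C_*\,(1+\zeta^2),\qquad \zeta\in\mathbb R,\quad 0\le W\le W_+,
\]
with constants depending only on $g,\sigma,W_+$.

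The upper bound is immediate: $W^2|\zeta|\ge0$ gives $m_W(\zeta)\le g+\sigma\zeta^2\le\max(g,\sigma)(1+\zeta^2)$.

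For the lower bound, the first step is monotonicity in $W$: $m_W(\zeta)\ge m_{W_+}(\zeta)$ for $0\le W\le W_+$, so it suffices to treat $W=W_+$. Next, complete the square in $|\zeta|$:
\[
m_{W_+}(\zeta)=\sigma\Bigl(|\zeta|-\frac{W_+^2}{2\sigma}\Bigr)^2+g-\frac{W_+^4}{4\sigma}.
\]
The constant term is $g-W_+^4/(4\sigma)$, which is positive exactly when $W_+^4<4g\sigma=c_{\min}^4$. Hence $\delta:=\min_\zeta m_{W_+}(\zeta)>0$.

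To obtain quadratic growth, use $W_+^2|\zeta|\le \frac{\sigma}{2}\zeta^2+\frac{W_+^4}{2\sigma}$. This gives $m_{W_+}(\zeta)\ge \frac{\sigma}{2}\zeta^2+g-\frac{W_+^4}{2\sigma}$. Combining with $m_{W_+}(\zeta)\ge\delta$, we can interpolate: for $|\zeta|$ large the quadratic term dominates, and on the remaining compact set $m_{W_+}\ge\delta$ controls $1+\zeta^2$ up to a constant. Concretely, take a convex combination $m\ge \theta\delta+(1-\theta)(\frac{\sigma}{2}\zeta^2-\frac{W_+^4}{2\sigma})$ with $\theta$ chosen so that the constant part stays positive. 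This produces $c_*>0$ depending only on $g,\sigma,W_+$.

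With the symbol bounds in hand, the operator statements follow directly. Since $\|h\|_{H^r}^2=\int(1+\zeta^2)^r|\widehat h|^2\,d\zeta$, multiplying by $m_W$ maps $H^{r+2}$ to $H^r$ with the two-sided bound \eqref{2-24}. Multiplication by $1/m_W$ is a bounded inverse $H^r\to H^{r+2}$, which gives bijectivity and the formula \eqref{2-23}.

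It remains to handle the even and odd subspaces. The symbol $m_W$ is even and real, so $A_\infty(W)$ commutes with the reflection $x\mapsto -x$ and preserves real-valuedness. Its inverse therefore maps real even (respectively odd) functions to real even (respectively odd) functions. The case $r=0$, giving the isomorphism $H^2\to L^2$, is included in the above.

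The only step requiring care is uniformity of the lower bound up to $W_+$. This is secured by the monotonicity reduction to $W=W_+$ together with the discriminant computation, which shows that the constant degenerates only as $W_+\to c_{\min}$.
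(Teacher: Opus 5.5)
Your proof is correct and follows the paper's Fourier-multiplier route: a two-sided comparison of the symbol with $1+\zeta^2$, then parity preservation because the symbol is real and even. The only difference is the lower bound on the symbol, which the paper gets in one line from $c_{\min}^2|\zeta|\le g+\sigma\zeta^2$, giving $m_W(\zeta)\ge(1-W^2/c_{\min}^2)(g+\sigma\zeta^2)$ with an explicit constant, in place of your monotonicity, completing-the-square and convex-combination steps.
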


    \begin{proof}
        As $c_{\min}^2=2\sqrt{g\sigma}$, we have $c_{\min}^2|\zeta|\le g+\sigma\zeta^2$, and hence
        \begin{equation}
            \left(1-\frac{W^2}{c_{\min}^2}\right)(g+\sigma\zeta^2)\le m_W(\zeta)\le g+\sigma\zeta^2.
            \label{2-25}
        \end{equation}
        For $0\le W\le W_+<c_{\min}$, the multiplier $m_W$ is uniformly comparable to $1+\zeta^2$. Fourier integration gives \eqref{2-24}, and division by $m_W$ maps $H^r$ into $H^{r+2}$, proving \eqref{2-23} and invertibility. Since the multiplier is real and even, its inverse preserves both parity subspaces.
    \end{proof}

    At $W=c_{\min}$, $m_W(\zeta)=\sigma(|\zeta|-\sqrt{g/\sigma})^2$, while for $W>c_{\min}$ it vanishes at $|\zeta|=\rho_\pm:=(W^2\pm\sqrt{W^4-4g\sigma})/(2\sigma)$. In either case, choose a positive zero $\rho_0$ and a nonzero real even $\chi\in C_c^\infty(-1,1)$. With $c_n$ chosen so that $\|h_n\|_{H^{s+2}}=1$, set
    \[
    \widehat h_n(\zeta):=c_n n^{1/2}\bigl[\chi(n(\zeta-\rho_0))+\chi(n(\zeta+\rho_0))\bigr].
    \]
    Then $\|A_\infty(W)h_n\|_{H^s}\to0$, since the Fourier supports shrink to $\pm\rho_0$. As $m_W$ vanishes at only finitely many frequencies, the kernel remains trivial; the bounded inverse theorem therefore rules out closed range. Thus $A_\infty(W)$ is not Fredholm for $W\ge c_{\min}$.

    We turn to a factorization of the variable second-order terms to obtain Fredholmness in the subcritical range.

    \begin{lemma}
        \label{lem2-6}
        Let $s>\frac12$ and $(Y,a,W)\in\mathscr U^s$. Then $A=D_YB(Y,a,W):H_{\rm e}^{s+2}(\mathbb R)\to H_{\rm e}^s(\mathbb R)$ is Fredholm of index zero.
    \end{lemma}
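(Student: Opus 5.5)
The plan is to show that $A$ differs from an operator of the form $\mathcal M A_\infty(W)+(\text{compact})$, with $\mathcal M$ an invertible multiplication, up to perturbations that can be deformed away without changing the Fredholm property or the index. The remark after Lemma~\ref{lem2-4} warns that the variable second-order coefficients are not compact perturbations, since the difference coefficients are only decaying, not small. The aim is therefore to factor out the principal symbol by a conformal identity.

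\emph{Step 1: rewrite the principal part.} Write $\vartheta=1+|D|Y$ and $Z_Y'=\vartheta+\mathrm{i}Y'$ on the boundary; $Z_Y'$ is the trace of a holomorphic function in $\Pi_-$. The symbol \eqref{2-17} equals $\sigma J^{-3/2}\zeta^2(\vartheta+\mathrm{i}Y'\operatorname{sgn}\zeta)$. On even/odd-frequency components this is $\sigma J^{-3/2}\zeta^2 Z_Y'$ or $\sigma J^{-3/2}\zeta^2\overline{Z_Y'}$, according to the sign of $\zeta$. Concretely,
\[
A_2h=-\sigma J^{-3/2}\operatorname{Re}\bigl(\overline{Z_Y'}\,\partial_x(h'+\mathrm{i}|D|h)\bigr)\quad\text{(up to sign conventions)},
\]
where $h'+\mathrm{i}|D|h$ is, up to a factor, the boundary trace of the holomorphic extension $\partial_z(e^{y|D|}h+\mathrm{i}e^{y|D|}Hh)$. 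Since $\overline{Z_Y'}$ is antiholomorphic, multiplication by it commutes with the Riesz projections modulo a commutator $[P_\pm,\rho]$ with $\rho=Z_Y'-1\in H^{s+1}$. I expect this commutator to gain one derivative, and because $\rho$ decays, to be compact $H^{s+2}\to H^s$. Here the Rellich-type compactness comes from decay combined with the derivative gain, using \eqref{2-21}. This yields
\[
A_2=\sigma J^{-3/2}\,\mathcal T_Y(-\partial_x^2)+\mathcal K_1,
\]
where $\mathcal T_Y$ is a zeroth-order operator of the form $\vartheta+Y'H$ modulo compacts.

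\emph{Step 2: invert the multiplier.} The operator $\vartheta+Y'H$ corresponds to multiplication of holomorphic traces by $Z_Y'$. Since $\operatorname{Re}Z_Y'\ge c_Y>0$, its reciprocal $1/Z_Y'$ is holomorphic and bounded with $1/Z_Y'-1\in H^{s+1}$. So $\mathcal T_Y$ is invertible modulo compacts: $(\vartheta+Y'H)\circ T_{1/Z_Y'}=I+\text{compact}$. Hence
\[
\widetilde A:=\mathcal S_Y\,\sigma^{-1}J^{3/2}A
\]
with $\mathcal S_Y$ a parametrix for $\mathcal T_Y$ is Fredholm. Moreover, $\widetilde A=\sigma^{-1}A_\infty(W)+\mathfrak e\,\partial_x+\mathfrak f|D|+\mathfrak g+\mathcal K$, with coefficients $\mathfrak e,\mathfrak f,\mathfrak g\in H^s$ that decay at infinity. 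First-order terms with decaying $H^s$ coefficients are compact $H^{s+2}\to H^s$: cut off at $|x|\le R$ to get compact pieces, and bound the tails in norm by \eqref{2-21}-type estimates. Thus $\widetilde A-\sigma^{-1}A_\infty(W)$ is compact. By Lemma~\ref{lem2-5}, $A_\infty(W)$ is an isomorphism, so $\widetilde A$ is Fredholm of index zero. Since $J^{3/2}$ and $\mathcal S_Y$ are Fredholm of index zero (invertible and index-zero respectively, by the homotopy in Step 3), $A$ is Fredholm of index zero.

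\emph{Step 3: index and parity.} To confirm index zero cleanly, use the homotopy $Y_\tau=\tau Y$, $\tau\in[0,1]$, which stays in $\mathcal O^s$ because $c_{\tau Y}\ge\min(1,c_Y)>0$. Along it, $A(\tau Y,a,W)$ is a continuous family in $\mathcal L(H^{s+2},H^s)$ by Lemma~\ref{lem2-2}, and each member is Fredholm by Steps 1--2. At $\tau=0$ the operator is $A_\infty(W)$ plus the compact term $(W^2-q^2)|D|$, whose coefficient decays, so the index is $0$; local constancy of the index then gives index zero at $\tau=1$. Everything preserves evenness, since all coefficients are even and $H,|D|$ respect parity in the stated way, so the result restricts to $H^{s+2}_{\rm e}\to H^s_{\rm e}$.

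The main obstacle will be Step 1: making the commutator between multiplication by the non-decaying-at-order-zero factor $Z_Y'$ (equivalently, $\vartheta$ and $Y'H$) and the Hilbert transform rigorous as a compact operator at the top order. That requires the Coifman--Meyer-type estimate $\|[H,\rho]\partial_x\|_{H^{s+1}\to H^{s}}\lesssim\|\rho\|_{H^{s+1}}$, together with decay to get compactness. If this fails, I would fall back on a direct Gårding inequality from \eqref{2-18} plus a Weyl-sequence argument using \eqref{2-20} to exclude loss of compactness at infinity.
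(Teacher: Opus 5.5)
Your strategy is the paper's. You factor the second-order part through a zeroth-order operator built from $\vartheta=1+|D|Y$ and $Y'H$, invert that operator using the holomorphy of $Z_Y'$ and $\operatorname{Re}Z_Y'\ge c_Y$, and treat everything else as a compact perturbation; your compactness arguments for the decaying lower-order coefficients match the paper's. However, you misdiagnose two steps, and the second one is wrong as written.

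First, no commutator arises. Since $(|D|h)'=-Hh''$, \eqref{2-16} is exactly $A_2h=-\sigma J^{-3/2}(\vartheta h''+Y'Hh'')$. Thus $A_2=\mathcal T_Y(-\sigma\partial_x^2)$ with $\mathcal T_Y:=J^{-3/2}(\vartheta+Y'H)$. The coefficients multiply pointwise after $H$ has acted and nothing is projected back onto holomorphic traces, so $\mathcal K_1=0$. The Coifman--Meyer estimate you single out as the main obstacle is therefore never needed.

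Second, your identification in Step~2 is incorrect, and this is not a matter of convention. With $\widehat{Hh}=\mathrm{i}\operatorname{sgn}(\zeta)\widehat h$, the holomorphic traces in $\Pi_-$ are $f+\mathrm{i}Hf$. Multiplication by $Z_Y'$ gives $\operatorname{Re}(Z_Y'(f+\mathrm{i}Hf))=\vartheta f-Y'Hf=:\mathcal S_Yf$ (the paper's $\mathcal S_Y$, not your parametrix). By contrast, $\vartheta f+Y'Hf=\operatorname{Re}(\overline{Z_Y'}(f+\mathrm{i}Hf))$, so the principal factor acts as $\overline{Z_Y'}$ on the frequencies carried by holomorphic traces. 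Writing $\overline{Z_Y'}=J/Z_Y'$ and pulling the real factor $J$ out of the real part gives the exact identity $\vartheta+Y'H=J\mathcal S_Y^{-1}$.

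Your proposed parametrix therefore fails. Multiplication of holomorphic traces by $1/Z_Y'$ is exactly $\mathcal S_Y^{-1}$, and $(\vartheta+Y'H)\mathcal S_Y^{-1}=J\mathcal S_Y^{-2}$. This operator is an isomorphism but not the identity plus a compact operator: on negative frequencies its symbol is $\overline{Z_Y'}/Z_Y'$, which differs from $1$ wherever $Y'\neq0$. So $\widetilde A$ as you define it would not be a compact perturbation of $\sigma^{-1}A_\infty(W)$.

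The fix is to use the exact inverse $(\vartheta+Y'H)^{-1}=\mathcal S_YJ^{-1}$. Then $\mathcal T_Y$ is an isomorphism of $H^s_{\rm e}$, and $A=\mathcal T_YA_\infty(W)+\mathcal K$ with $\mathcal K$ compact by your decay arguments. Index zero is then immediate from Lemma~\ref{lem2-5}, because $\mathcal T_YA_\infty(W)$ is itself an isomorphism. Your homotopy $Y_\tau=\tau Y$ in Step~3 is valid but unnecessary.
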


    \begin{proof}
        On the full real Sobolev spaces, define $\mathcal S_Yf:=(1+|D|Y)f-Y'Hf$. For $f\in H^s$, the boundary value $f+\mathrm{i}Hf$ extends holomorphically into $\Pi_-$. Since $Z_Y'$ and $1/Z_Y'$ are bounded and holomorphic, multiplication and division preserve the corresponding Hardy class. Taking real parts, with the $L^2$ normalization ruling out an imaginary constant, gives
        \[
        \mathcal S_Yf+\mathrm{i}H(\mathcal S_Yf)=Z_Y'(f+\mathrm{i}Hf),\qquad \mathcal S_Y^{-1}f=\frac{(1+|D|Y)f+Y'Hf}{J}.
        \]
        Define
        \begin{equation}
            \mathcal T_Yf:=J^{-3/2}\bigl[(1+|D|Y)f+Y'Hf\bigr].
            \label{2-26}
        \end{equation}
        Then $\mathcal T_Y=J^{-1/2}\mathcal S_Y^{-1}$ and
        \[
        \mathcal T_Y^{-1}f=(1+|D|Y)J^{1/2}f-Y'H(J^{1/2}f).
        \]
        Both operators are bounded on $H^s$, uniformly when $\|Y\|_{H^{s+2}}\le R$ and $1+|D|Y\ge c_0>0$.

        Using $(|D|h)'=-Hh''$, we obtain $A_2=\mathcal T_Y(-\sigma\partial_x^2)$ and hence
        \begin{equation}
            A=\mathcal T_YA_\infty(W)+\mathcal K_{Y,a,W},
            \label{2-27}
        \end{equation}
        where
        \[
        \mathcal K_{Y,a,W}:=(\mathfrak c+W^2)|D|+\mathfrak d\,\partial_x+(I-\mathcal T_Y)(gI-W^2|D|).
        \]
        For $s>\frac12$ and $\rho\in H^s(\mathbb R)$, multiplication by $\rho$ is compact from $H^{s+1}(\mathbb R)$ to $H^s(\mathbb R)$: approximate $\rho$ in $H^s$ by compactly supported smooth functions, apply local Rellich compactness, and use the Sobolev product estimate for operator-norm convergence. Since $\mathfrak c+W^2,\mathfrak d\in H^s$, this proves compactness of the first two terms in $\mathcal K_{Y,a,W}$ from $H^{s+2}$ to $H^s$. Moreover,
        \[
        \mathcal T_Y-I=\rho_0I+\rho_1H,\qquad \rho_0:=\frac{1+|D|Y}{J^{3/2}}-1,\qquad \rho_1:=\frac{Y'}{J^{3/2}},
        \]
        with $\rho_0,\rho_1\in H^{s+1}$. The same argument makes $I-\mathcal T_Y:H^{s+1}\to H^s$ compact, and therefore also the last term in $\mathcal K_{Y,a,W}$.

        By Lemma~\ref{lem2-5}, $\mathcal T_YA_\infty(W)$ is an isomorphism $H^{s+2}\to H^s$, so \eqref{2-27} proves that $A$ is Fredholm of index zero. Since the coefficients of $\mathcal T_Y$ have the appropriate parity and $H$ exchanges even and odd functions, $\mathcal T_Y$ and its inverse preserve evenness. The factorization and index conclusion thus hold on the even subspaces.
    \end{proof}

    The vortex equation contributes only finite-dimensional blocks to the surface linearization.

    \begin{lemma}
        \label{lem2-7}
        Let $s>\frac12$ and $(Y,a,W)\in\mathscr U^s$. The fixed-speed linearization
        \begin{equation}
            \mathscr L:=D_{(Y,a)}\mathscr F(Y,a,W)=\begin{pmatrix}A&B_a\\[1mm]P_Y&P_a\end{pmatrix}:\mathcal X^s\longrightarrow\mathcal Y^s
            \label{2-28}
        \end{equation}
        is Fredholm of index zero. The full derivative $\mathscr D:=D\mathscr F(Y,a,W):\mathcal X^s\times\mathbb R\to\mathcal Y^s$ is Fredholm of index one.
    \end{lemma}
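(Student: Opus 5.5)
The plan is to treat $\mathscr L$ as a finite-rank bordering of the surface operator $A$, whose Fredholm index is already known from Lemma~\ref{lem2-6}. Then $\mathscr D$ is obtained from $\mathscr L$ by adding one column. Both steps rely only on the stability of the Fredholm index under finite-rank perturbations and finite-dimensional extensions.

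\textbf{Step 1 (the operator $\mathscr L$).} We split
\[
\mathscr L=\begin{pmatrix}A&0\\0&0\end{pmatrix}+\begin{pmatrix}0&B_a\\P_Y&P_a\end{pmatrix}
\]
as operators $H_{\rm e}^{s+2}(\mathbb R)\times\mathbb R\to H_{\rm e}^s(\mathbb R)\times\mathbb R$.

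The diagonal operator is Fredholm of index zero. Indeed, $A$ is Fredholm of index zero on the even spaces by Lemma~\ref{lem2-6}, and the zero map $\mathbb R\to\mathbb R$ has one-dimensional kernel and one-dimensional cokernel, so the indices add to $0+(1-1)=0$.

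The second operator has finite rank. It maps $(h,\ell)\mapsto(\ell B_a,\,P_Y[h]+P_a\ell)$, whose range lies in $\operatorname{span}\{(B_a,0),(0,1)\}$. It is bounded because $B_a\in H_{\rm e}^{s+1}\subset H_{\rm e}^s$ and $P_Y$ is a bounded functional on $H_{\rm e}^{s+2}$, both by Lemma~\ref{lem2-3}.

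Finite-rank perturbations are compact and preserve the Fredholm index. Hence $\mathscr L$ is Fredholm of index zero.

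\textbf{Step 2 (the operator $\mathscr D$).} We write $\mathscr D[(h,\ell),\nu]=\mathscr L(h,\ell)+\nu\,(B_W,1)$ on $\mathcal X^s\times\mathbb R$. Extending a Fredholm operator by a finite-dimensional factor in the domain raises the index by that dimension. Concretely, $\mathscr D$ equals $(\mathscr L,0):\mathcal X^s\times\mathbb R\to\mathcal Y^s$, which is Fredholm of index $0+1$, plus a bounded rank-one operator. Here $(B_W,1)\in\mathcal Y^s$ because $B_W\in H_{\rm e}^{s+1}$ by Lemma~\ref{lem2-3}. Therefore $\mathscr D$ is Fredholm of index one.

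\textbf{Main obstacle.} The only substantive input is Lemma~\ref{lem2-6}: the second-order variable-coefficient part of $A$ is not a compact perturbation of $A_\infty(W)$, and that lemma handles it through the conformal factorization. Beyond this, one only needs to check that every bordering entry lies in the correct space, in particular that $P_Y$ is bounded on $H_{\rm e}^{s+2}$. This was verified in Lemma~\ref{lem2-3} through the exponential decay factor in $E_a$.
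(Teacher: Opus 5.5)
Your proof is correct and follows the paper's argument. The paper also writes $\mathscr L$ as a finite-rank perturbation of a block-diagonal reference built from $A$; it uses $(h,\ell)\mapsto(Ah,\ell)$ instead of your $\operatorname{diag}(A,0)$, which makes no difference. It then obtains $\mathscr D$ exactly as you do, from the index-one operator $(h,\ell,\nu)\mapsto\mathscr L(h,\ell)$ plus the rank-one map $\nu(B_W,1)$, using only Lemma~\ref{lem2-6} and the stability of the index under finite-rank perturbations.
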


    \begin{proof}
        By Lemma~\ref{lem2-6}, the operator $\mathscr L_{\rm ref}(h,\ell):=(Ah,\ell)$ is Fredholm of index zero. Since $\mathscr L-\mathscr L_{\rm ref}$ has finite rank, $\mathscr L$ also has index zero. The operator $\mathscr D_{\rm ref}(h,\ell,\nu):=\mathscr L(h,\ell)$ has index one, and $\mathscr D-\mathscr D_{\rm ref}$ is the rank-one map $(h,\ell,\nu)\mapsto\nu(B_W,1)$. Thus $\mathscr D$ has index one.
    \end{proof}

    If $\mathscr D$ is surjective at a solution, its one-dimensional kernel and the analytic implicit function theorem yield a regular local solution curve. The fixed-speed derivative $\mathscr L$ may nevertheless be singular. The next lemma makes this distinction precise on Bernoulli charts.

    \begin{lemma}
        \label{lem2-8}
        Let $s>\frac12$ and $(Y,a,W)\in\mathscr U^s$, with $A=D_YB(Y,a,W)$ invertible. Define
        \begin{equation}
            L:=P_a-P_YA^{-1}B_a,\qquad M:=1-P_YA^{-1}B_W.
            \label{2-29}
        \end{equation}
        Then $\mathscr L$ is invertible if and only if $L\ne0$. When $L=0$,
        \begin{equation}
            \ker\mathscr L=\operatorname{span}\{(-A^{-1}B_a,1)\},\qquad \operatorname{Ran}\mathscr L=\ker\Lambda,
            \label{2-30}
        \end{equation}
        where $\Lambda(f,\rho):=\rho-P_YA^{-1}f$ is a bounded functional on $\mathcal Y^s$. The full derivative $\mathscr D$ is surjective if and only if $(L,M)\ne(0,0)$, in which case
        \begin{equation}
            \ker\mathscr D=\operatorname{span}\left\{\bigl(-A^{-1}(MB_a-LB_W),M,-L\bigr)\right\}.
            \label{2-31}
        \end{equation}
        Near a coupled solution $(Y_\star,a_\star,W_\star)$ with $A_\star$ invertible, the Bernoulli equation determines a unique local real-analytic map $Y=Y(a,W)$. The nearby coupled solutions are precisely the zeros of
        \begin{equation}
            \mathscr G_0(a,W):=P(Y(a,W),a,W),
            \label{2-32}
        \end{equation}
        with
        \begin{equation}
            \partial_a\mathscr G_0=L,\qquad \partial_W\mathscr G_0=M,
            \label{2-33}
        \end{equation}
        where $L,M$ are evaluated at $(Y(a,W),a,W)$.
    \end{lemma}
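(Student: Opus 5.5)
The argument is finite-dimensional linear algebra: since $A$ is invertible, we eliminate the surface component by block (Schur-complement) reduction. Given $(f,\rho)\in\mathcal Y^s$, solve $\mathscr L(h,\ell)=(f,\rho)$. The first equation $Ah+B_a\ell=f$ is equivalent to
\[
h=A^{-1}(f-B_a\ell).
\]
Substituting into the second equation gives $P_YA^{-1}f-P_YA^{-1}B_a\ell+P_a\ell=\rho$, that is,
\[
L\ell=\rho-P_YA^{-1}f=\Lambda(f,\rho).
\]
Hence $\mathscr L$ is bijective exactly when $L\ne0$, and it is then invertible with bounded inverse, because $A^{-1}$ is bounded and $P_Y$ is a bounded functional by Lemma~\ref{lem2-3}. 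When $L=0$, the range is $\ker\Lambda$. In that case the kernel consists of pairs with $\ell$ arbitrary and $h=-A^{-1}B_a\ell$, which gives \eqref{2-30}. Note that $B_a\in H^{s+1}\subset H^s$, so $A^{-1}B_a\in H_{\rm e}^{s+2}$.

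For $\mathscr D$, the same elimination with an extra unknown $\nu$ gives $h=A^{-1}(f-B_a\ell-B_W\nu)$, and the scalar equation becomes
\[
L\ell+M\nu=\Lambda(f,\rho).
\]
This equation is solvable for every right-hand side if and only if $(L,M)\ne(0,0)$. The kernel is obtained by setting $f=0$ and $\rho=0$: the pairs $(\ell,\nu)$ must be multiples of $(M,-L)$, and then
\[
h=-A^{-1}(MB_a-LB_W),
\]
which is \eqref{2-31}. This is consistent with the index-one statement of Lemma~\ref{lem2-7}.

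For the Bernoulli chart, Lemma~\ref{lem2-2} shows that $B$ is real analytic. Since $D_YB=A_\star$ is invertible, the analytic implicit function theorem gives a neighborhood $\mathcal V$ of $Y_\star$ in $\mathcal O^s$, a neighborhood $\mathcal W$ of $(a_\star,W_\star)$, and a unique analytic map $Y(a,W)$ with $B(Y,a,W)=0$ for $(Y,a,W)\in\mathcal V\times\mathcal W$. Within this neighborhood, solutions of $\mathscr F=0$ are therefore exactly the points $(Y(a,W),a,W)$ with $P=0$, that is, the zeros of $\mathscr G_0$.

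Differentiating $B(Y(a,W),a,W)=0$ gives
\[
Y_a=-A^{-1}B_a,\qquad Y_W=-A^{-1}B_W,
\]
where we shrink the chart so that $A$ stays invertible along it; invertibility is an open condition. The chain rule then gives
\[
\partial_a\mathscr G_0=P_a+P_Y[Y_a]=L,\qquad \partial_W\mathscr G_0=1+P_Y[Y_W]=M,
\]
using $P_W=1$, which is \eqref{2-33}. There is no real obstacle here; the only points needing care are regularity (that $A^{-1}B_a$ and $A^{-1}B_W$ lie in $H_{\rm e}^{s+2}$) and shrinking the chart so that the uniqueness in the implicit function theorem is what identifies all nearby solutions.
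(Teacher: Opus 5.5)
Your proposal is correct and follows the paper's proof essentially step for step. You eliminate $h=A^{-1}(f-B_a\ell-B_W\nu)$ to reduce $\mathscr L$ and $\mathscr D$ to the scalar equations $L\ell=\Lambda(f,\rho)$ and $L\ell+M\nu=\Lambda(f,\rho)$, and then obtain the Bernoulli chart from the analytic implicit function theorem and the chain rule with $Y_a=-A^{-1}B_a$, $Y_W=-A^{-1}B_W$, and $P_W=1$. The only cosmetic difference is that the paper writes out the bounded inverse $\mathscr L^{-1}(f,\rho)=(A^{-1}f,0)+L^{-1}\Lambda(f,\rho)(-A^{-1}B_a,1)$ explicitly and notes $\Lambda(0,1)=1$ to see that the range has codimension one.
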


    \begin{proof}
        Solving $\mathscr L(h,\ell)=(f,\rho)$ for $h$ gives
        \[
        h=A^{-1}f-A^{-1}B_a\ell,\qquad L\ell=\Lambda(f,\rho).
        \]
        If $L\ne0$, the bounded inverse is
        \[
        \mathscr L^{-1}(f,\rho)=(A^{-1}f,0)+\frac{\Lambda(f,\rho)}{L}(-A^{-1}B_a,1).
        \]
        For $L=0$, solvability requires $\Lambda(f,\rho)=0$, and homogeneous solutions are multiples of $(-A^{-1}B_a,1)$. Since $\Lambda(0,1)=1$, this proves \eqref{2-30} and shows that the range has codimension one.

        Eliminating the surface variable in the full derivative yields
        \[
        h=A^{-1}(f-B_a\ell-B_W\nu),\qquad L\ell+M\nu=\Lambda(f,\rho).
        \]
        The last equation is solvable for every right-hand side if and only if $(L,M)\ne(0,0)$. Its homogeneous solutions satisfy $(\ell,\nu)=t(M,-L)$, giving \eqref{2-31}.

        At a coupled solution with $A_\star$ invertible, Lemma~\ref{lem2-2} and the analytic implicit function theorem give $Y=Y(a,W)$ and \eqref{2-32}. Differentiating the Bernoulli equation yields
        \[
        Y_a=-A^{-1}B_a,\qquad Y_W=-A^{-1}B_W,
        \]
        so the chain rule and $P_W=1$ prove \eqref{2-33}.
    \end{proof}

    Along any differentiable solution curve in a Bernoulli chart,
    \begin{equation}
        L\dot a+M\dot W=0.
        \label{2-34}
    \end{equation}
    Thus $W$ can be used as a local parameter where $L\ne0$, while $a$ can be used where $M\ne0$. In particular, a regular solution curve may have stationary speed when $L=0$ and $M\ne0$; the additional condition $\partial_aL\ne0$ characterizes the simple folds considered in Section~\ref{sec5}.

    \bigskip

    \section{The deep-vortex end and global continuation}
    \label{sec3}

    For the small-speed branch of Part~I, we establish invertibility of the fixed-speed coupled linearization $D_{(Y,a)}\mathscr F$ in the symmetric spaces and determine how vortex depth varies with speed. Reciprocal depth then brings the singular deep-vortex end to a finite boundary point suitable for analytic global continuation.

    \subsection{The deep-vortex end}
    Fix $s>\frac12$, and let $(Y_W,a_W)$, $0<W<W_0$, be the small-speed family of Part~I. Set $a_0(W):=\Gamma/(4\pi W)$ and $\rho_W:=4W^2/g$, and write
    \begin{equation}
    	a_W=a_0(W)(1+\lambda_W),\qquad Y_W(x)=\rho_Wu_W\left(\frac{x}{a_0(W)}\right).
    	\label{3-1}
    \end{equation}
    By \cite[Proposition~3.9 and Corollary~3.10]{DuZouPartI}, after decreasing $W_0$ if necessary,
    \begin{equation}
    	\|u_W\|_{H^{s+2}}\le C_s,\qquad \lambda_W=O(W^3),\qquad a_W=\frac{\Gamma}{4\pi W}+\frac{3}{2g}W^2+O(W^4).
    	\label{3-2}
    \end{equation}
    The norm of $u_W$ is taken in the rescaled coordinate $X=x/a_0(W)$, whereas the linearizations below act in the original coordinate $x$. All constants are uniform for sufficiently small $W$, with $s,g,\sigma,\Gamma$ fixed.

    \begin{lemma}
    	\label{lem3-1}
    	There exist $W_{\rm loc}\in(0,\min\{W_0,c_{\min}\})$ and $C_s>0$ such that, for $0<W<W_{\rm loc}$, $A_W:=D_YB(Y_W,a_W,W):H_{\rm e}^{s+2}(\mathbb R)\to H_{\rm e}^s(\mathbb R)$ is an isomorphism with
    	\begin{equation}
    		\|A_W^{-1}\|_{\mathcal L(H_{\rm e}^s,H_{\rm e}^{s+2})}\le C_s.
    		\label{3-3}
    	\end{equation}
    	The reduced derivatives from \eqref{2-29} satisfy
    	\begin{equation}
    		L(W)=\frac{4\pi}{\Gamma}W^2+O(W^5),\qquad M(W)=1+O(W^3).
    		\label{3-4}
    	\end{equation}
    	Consequently, $D_{(Y,a)}\mathscr F(Y_W,a_W,W)$ is invertible, the map $W\mapsto(Y_W,a_W)$ is real analytic on $(0,W_{\rm loc})$, and
    	\begin{equation}
    		\frac{da_W}{dW}=-\frac{\Gamma}{4\pi W^2}\bigl(1+O(W^3)\bigr)<0.
    		\label{3-5}
    	\end{equation}
    \end{lemma}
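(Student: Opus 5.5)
The plan is to treat $A_W$ as a small perturbation of the flat far-field operator $A_\infty(W)$ and then compute $L$ and $M$ from the explicit formulas \eqref{2-13}--\eqref{2-15} and \eqref{2-29}, using the scaling \eqref{3-1}--\eqref{3-2}.

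\emph{Step 1: invertibility of $A_W$.} By \eqref{3-1}--\eqref{3-2}, in the original variable $x$ we have $\|Y_W^{(k)}\|_{L^\infty}\lesssim \rho_W a_0^{-k}\|u_W\|_{H^{s+2}}$. Here $\rho_W\sim W^2$ and $a_0^{-1}\sim W$. Passing to the unscaled $H^{s+2}$ norm costs a factor of at most $a_0^{1/2}$, which gives $\|Y_W\|_{H^{s+2}}=O(W^{3/2})$. In particular $c_{Y_W}\to1$. Next write
\[
A_W-A_\infty(W)=(\mathcal T_{Y_W}-I)A_\infty(W)+\mathcal K_{Y_W,a_W,W},
\]
as in \eqref{2-27}. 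The coefficients $\rho_0,\rho_1,\mathfrak d$ are $O(W^{3/2})$ in the relevant multiplier norms. For $\mathfrak c+W^2$, I split off the vortex contribution $W^2-q^2$. This term is
\[
q^2-W^2=-2Wr_a+r_a^2,\qquad \|r_a\|_{L^\infty}\sim \frac{\Gamma}{\pi a}\sim W,
\]
so it is $O(W^2)$ in $L^\infty$, and its derivatives are smaller by powers of $a^{-1}$. Its $H^s$ multiplier norm is therefore $O(W^{3/2})$ as well. All of these perturbation terms act from $H^{s+2}$ to $H^s$ with operator norm $O(W^{3/2})$. Combined with the uniform bound \eqref{2-24} for $W\le W_+$, a Neumann series gives invertibility and \eqref{3-3} once $W_{\rm loc}$ is small.

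\emph{Step 2: asymptotics of $L$ and $M$.} The leading term of $P_a$ is $\Gamma/(4\pi a_W^2)=\frac{4\pi}{\Gamma}W^2(1+O(W^3))$. The surface corrections involve $E_a(|D|Y'')$ and $E_a(Y'')^2$, and by scaling these are of size $\rho_W a^{-3}\sim W^5$, since $\alpha=1+O(W^2)$. For the coupling term, $B_a=q\,\partial_aq/J$ has size $W\cdot a^{-2}$ pointwise and lives on the scale $a$. Its $H^s$ norm is $\lesssim W^{2}a^{-1/2}\sim W^{5/2}$, so $A_W^{-1}B_a$ is of the same order. Meanwhile $P_Y$ acting on a function $h$ concentrated on scale $a$ costs $a^{-2}$ times $\|h\|_{L^\infty}$. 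A cruder Sobolev bound would lose here, so I use instead that $A_W^{-1}B_a$ is close to $A_\infty^{-1}B_a$, which is explicitly slowly varying. This yields $P_YA_W^{-1}B_a=O(W^5)$. The same reasoning applies to $B_W=(q-W)/J+W(J^{-1}-1)$, which is of size $W$ on scale $a$; it gives $P_YA_W^{-1}B_W=O(a^{-2}\cdot W\cdot\ldots)=O(W^3)$. Hence $M=1+O(W^3)$. This scale-adapted estimate of $P_YA_W^{-1}(\cdot)$ is the main obstacle. I would carry it out in the rescaled variable $X=x/a_0$, where $A_\infty$ becomes $g-\sigma a_0^{-2}\partial_X^2-W^2a_0^{-1}|D_X|\approx g$, and apply the identity $A_W^{-1}=A_\infty^{-1}(I+\text{small})^{-1}$ in weighted $L^\infty$-type norms.

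\emph{Step 3: conclusions.} Since $L\neq0$, Lemma~\ref{lem2-8} gives invertibility of $D_{(Y,a)}\mathscr F$. The analytic implicit function theorem then shows that the branch is locally the unique analytic solution curve parametrized by $W$, so it agrees with $(Y_W,a_W)$; this gives analyticity of $W\mapsto(Y_W,a_W)$. Finally, \eqref{2-34} gives
\[
\frac{da_W}{dW}=-\frac{M}{L}=-\frac{\Gamma}{4\pi W^2}\bigl(1+O(W^3)\bigr),
\]
which is \eqref{3-5}.
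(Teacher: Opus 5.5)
Your Steps~1 and~3 match the paper's argument: a Neumann series about $A_\infty(W)$ with an $O(W^{3/2})$ perturbation, then Lemma~\ref{lem2-8} and \eqref{2-34}. Step~2, however, leaves unproved exactly the estimates that give \eqref{3-4}, namely $P_YA_W^{-1}B_a=O(W^5)$ and $P_YA_W^{-1}B_W=O(W^3)$. You assert them from a pointwise heuristic and defer the proof to an unexecuted inversion of $A_W$ in weighted $L^\infty$-type norms, which would need an invertibility theory you have not set up. The loss you are trying to avoid comes from pairing $P_Y$ against an $L^\infty$ (or $H^{s+2}$) norm. Sobolev embedding gives only $\|A_W^{-1}B_a\|_{L^\infty}\lesssim\|A_W^{-1}B_a\|_{H^{s+2}}=O(W^{5/2})$, which is a factor $a_W^{1/2}\sim W^{-1/2}$ worse than the natural size $W^3$.

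The missing observation is that $P_Y$ is a small bounded functional on $L^2$. Cauchy--Schwarz in the Fourier representation of $E_a$ gives $|E_a(|D|^kh)|\le C_ka^{-k-1/2}\|h\|_{L^2}$. Hence \eqref{2-14} yields $\|P_Y\|_{\mathcal L(L^2,\mathbb R)}\lesssim a_W^{-5/2}+W^4a_W^{-3/2}\lesssim W^{5/2}$. Combine this with the uniform bound \eqref{3-3} and your own estimates $\|B_a\|_{H^s}=O(W^{5/2})$ and $\|B_W\|_{H^s}=O(W^{1/2})$. The plain product of norms then gives $|P_YA_W^{-1}B_a|=O(W^5)$ and $|P_YA_W^{-1}B_W|=O(W^3)$. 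This is how the paper concludes, and no scale-adapted analysis of $A_W^{-1}$ is needed.

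Your $L^\infty$ idea can also be closed without weights. One has $\|P_Y\|_{\mathcal L(L^\infty,\mathbb R)}=O(a_W^{-2})$. The difference $(A_W^{-1}-A_\infty(W)^{-1})B_a$ is $O(W^4)$ in $H^{s+2}$. Finally, $\|A_\infty(W)^{-1}B_a\|_{L^\infty}\lesssim\|\widehat{B_a}\|_{L^1}=O(W^3)$, because $m_W$ is bounded below. None of this is carried out in your proposal, though.
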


    \begin{proof}
    	From \eqref{3-1}--\eqref{3-2} and $a_0\asymp W^{-1}$, the Sobolev scaling estimates give
    	\[
    	\begin{aligned}
    		\|Y_W\|_{H^{s+2}}&=O(W^{3/2}),\\
    		\||D|Y_W\|_{H^{s+1}}+\|Y_W'\|_{H^{s+1}}&=O(W^{5/2}),\\
    		\|Y_W''\|_{H^s}+\|(|D|Y_W)'\|_{H^s}&=O(W^{7/2}).
    	\end{aligned}
    	\]
    	For $r_\lambda(X):=1-4(1+\lambda)/(X^2+(1+\lambda)^2)$, we have $q_{a_W,W}(a_0X)=Wr_{\lambda_W}(X)$. Since $r_{\lambda_W}^2-1$ is uniformly bounded in $H^{s+1}$, $\|q_{a_W,W}^2-W^2\|_{H^s}=O(W^{3/2})$. Substituting into \eqref{2-11}--\eqref{2-12} gives
    	\begin{equation}
    		\|A_W-A_\infty(W)\|_{\mathcal L(H_{\rm e}^{s+2},H_{\rm e}^s)}\le C_sW^{3/2}.
    		\label{3-6}
    	\end{equation}
    	By Lemma~\ref{lem2-5}, $T_W:=A_\infty(W)^{-1}(A_W-A_\infty(W))$ has norm less than $1/2$ on $H_{\rm e}^{s+2}$ for small $W$. A Neumann series for $I+T_W$ proves \eqref{3-3}.

    	Set $\alpha_W:=1+E_{a_W}(|D|Y_W)$. The scaling identity
    	\[
    	E_{a_W}(|D|^kY_W)=\rho_Wa_0^{-k}\bigl(e^{-(1+\lambda_W)|D_X|}|D_X|^ku_W\bigr)(0)
    	\]
    	and the uniform positivity of $1+\lambda_W$ give $\alpha_W=1+O(W^3)$, $E_{a_W}(Y_W'')=O(W^4)$, and $E_{a_W}(|D|Y_W'')=O(W^5)$. Hence \eqref{2-15} yields
    	\begin{equation}
    		P_a=\frac{\Gamma}{4\pi a_W^2}+O(W^5)=\frac{4\pi}{\Gamma}W^2+O(W^5).
    		\label{3-7}
    	\end{equation}
    	The Fourier representation and Cauchy--Schwarz yield
\[
|E_a(|D|^kh)|\le C_ka^{-k-1/2}\|h\|_{L^2},\qquad a>0,\quad k\ge0.
\]
Consequently, \eqref{2-14} extends $P_Y$ to $L^2$ and gives $\|P_Y\|_{\mathcal L(L^2,\mathbb R)}\le C(W^{5/2}+W^4a_W^{-3/2})\le CW^{5/2}$. When differentiating $q_{a,W}$ with respect to $a$, we hold $W$ and $a_0(W)$ fixed. Since $\partial_aq_{a,W}(a_0X)=(W/a_0)\partial_\lambda r_\lambda(X)$ at $\lambda=\lambda_W$, the uniform $H^s$ bounds for $r_\lambda\partial_\lambda r_\lambda$ and $r_\lambda-1$, together with \eqref{2-13} and $B_W=(q-W)/J+W(J^{-1}-1)$, give
    	\[
    	\|B_a\|_{H^s}=O(W^{5/2}),\qquad \|B_W\|_{H^s}=O(W^{1/2}).
    	\]
    	Together with \eqref{3-3}, these estimates yield
    	\[
    	|P_YA_W^{-1}B_a|=O(W^5),\qquad |P_YA_W^{-1}B_W|=O(W^3).
    	\]
    	Equations \eqref{2-29} and \eqref{3-7} now prove \eqref{3-4}.

    	After decreasing $W_{\rm loc}$, we have $L(W)>0$ and $M(W)>0$. Lemma~\ref{lem2-8} then makes the fixed-speed coupled derivative invertible. By the analytic implicit function theorem and the local uniqueness of the continuous Part~I family, $W\mapsto(Y_W,a_W)$ is real analytic. Differentiating the coupled equation along this branch, rather than differentiating the remainder in \eqref{3-2}, gives $da_W/dW=-M(W)/L(W)$ by \eqref{2-34}, and \eqref{3-4} proves \eqref{3-5}.
    \end{proof}

    Estimate \eqref{3-3} controls the surface inverse uniformly, but $L(W)\to0$ precludes the analogous bound for the full fixed-speed inverse in the depth coordinate $a$. Set $b_W:=a_W^{-1}$. Then \eqref{3-2} and the surface estimate above give
    \begin{equation}
    	\|Y_W\|_{H^{s+2}}=O(W^{3/2}),\qquad b_W=\frac{4\pi}{\Gamma}W+O(W^4).
    	\label{3-8}
    \end{equation}
    Thus $(Y_W,b_W,W)\to(0,0,0)$ in $H_{\rm e}^{s+2}(\mathbb R)\times\mathbb R^2$ as $W\to0^+$; the deep-vortex end is now a finite boundary point.

   	\smallskip

   	\subsection{Reciprocal depth and the initial arc}
   	Introduce the reciprocal depth $b:=a^{-1}$ and the admissible set $\widehat{\mathscr U}^{\,s}:=\mathcal O^s\times(0,\infty)\times(0,c_{\min})$ with coordinates $(Y,b,W)$. The analytic change of variables $\mathfrak C(Y,b,W):=(Y,b^{-1},W)$ gives
   	\begin{equation}
   		\widehat{\mathscr F}(Y,b,W):=\mathscr F(Y,b^{-1},W)=\begin{pmatrix}\widehat B(Y,b,W)\\[1mm]\widehat P(Y,b,W)\end{pmatrix}.
   		\label{3-9}
   	\end{equation}
   	With $\widehat q_{b,W}(x):=W-\Gamma b/[\pi(1+b^2x^2)]$, the transformed residuals are
   	\[
   	\begin{aligned}
   		\widehat B(Y,b,W)&=gY+\sigma\kappa(Y)+\frac{\widehat q_{b,W}^2}{2J(Y)}-\frac{W^2}{2},\\
   		\widehat P(Y,b,W)&=W-\frac{\Gamma b}{4\pi}-\frac{\Gamma}{4\pi}\frac{E_{1/b}(Y'')}{1+E_{1/b}(|D|Y)}.
   	\end{aligned}
   	\]

   	\begin{lemma}
   		\label{lem3-2}
   		The map $\widehat{\mathscr F}:\widehat{\mathscr U}^{\,s}\to\mathcal Y^s$ is real analytic, and $D_{(Y,b)}\widehat{\mathscr F}:\mathcal X^s\to\mathcal Y^s$ is Fredholm of index zero throughout $\widehat{\mathscr U}^{\,s}$. After decreasing $W_{\rm loc}$ if necessary, the initial arc
   		\begin{equation}
   			\widehat{\mathscr C}_{\rm loc}:=\{(Y_W,b_W,W):0<W<W_{\rm loc}\},\qquad b_W:=a_W^{-1},
   			\label{3-10}
   		\end{equation}
   		is real analytic and consists of zeros of $\widehat{\mathscr F}$, with $(Y_W,b_W,W)\to(0,0,0)\in\partial\widehat{\mathscr U}^{\,s}$ as $W\to0^+$. Along this arc, $\widehat{\mathscr L}_W:=D_{(Y,b)}\widehat{\mathscr F}(Y_W,b_W,W)$ satisfies
   		\begin{equation}
   			\widehat{\mathscr L}_W\longrightarrow\widehat{\mathscr L}_0:=\begin{pmatrix}g-\sigma\partial_x^2&0\\[1mm]0&-\Gamma/(4\pi)\end{pmatrix}\qquad\text{in }\mathcal L(\mathcal X^s,\mathcal Y^s),
   			\label{3-11}
   		\end{equation}
   		and
   		\begin{equation}
   			\|\widehat{\mathscr L}_W^{-1}\|_{\mathcal L(\mathcal Y^s,\mathcal X^s)}\le C_s,\qquad 0<W<W_{\rm loc}.
   			\label{3-12}
   		\end{equation}
   	\end{lemma}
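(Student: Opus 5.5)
The plan has four parts: analyticity and Fredholmness of $\widehat{\mathscr F}$ by transfer from $\mathscr F$, membership of the arc in the zero set, the limit \eqref{3-11} of the linearization, and the uniform inverse bound \eqref{3-12}.

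\emph{Analyticity and Fredholmness.} The change of variables $\mathfrak C$ is a real-analytic diffeomorphism of $\widehat{\mathscr U}^{\,s}$ onto $\mathscr U^s$, because $b\mapsto b^{-1}$ is analytic on $(0,\infty)$. Hence $\widehat{\mathscr F}=\mathscr F\circ\mathfrak C$ is real analytic by Lemma~\ref{lem2-2}. By the chain rule, $D_{(Y,b)}\widehat{\mathscr F}=D_{(Y,a)}\mathscr F\circ\operatorname{diag}(I,-b^{-2})$. The second factor is an isomorphism of $\mathcal X^s$, so Lemma~\ref{lem2-7} gives Fredholm index zero.

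\emph{The initial arc.} Since $\mathscr F(Y_W,a_W,W)=0$, the arc consists of zeros of $\widehat{\mathscr F}$. Its analyticity follows from Lemma~\ref{lem3-1} together with the analyticity of $a\mapsto a^{-1}$. Convergence to $(0,0,0)$ is exactly \eqref{3-8}.

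\emph{The limit \eqref{3-11}.} I would compute the blocks of $\widehat{\mathscr L}_W$ one at a time.
\begin{itemize}
\item The surface block is $A_W$. By \eqref{3-6}, $A_W=A_\infty(W)+O(W^{3/2})$, and $A_\infty(W)\to g-\sigma\partial_x^2$ in $\mathcal L(H^{s+2},H^s)$ because $W^2|D|$ has norm $O(W^2)$.
\item The $b$-column of the Bernoulli row is $-a_W^2B_a$. I would compute it directly as $\widehat q\,\partial_b\widehat q/J$, where $\partial_b\widehat q_{b,W}=-\Gamma(1-b^2x^2)/[\pi(1+b^2x^2)^2]$. A scaling argument with $X=bx$ gives $\|\partial_b\widehat q\|_{H^s}=O(b^{-1/2})$, which blows up. So the naive estimate fails, and this is the main obstacle.
\item The $P$-row $Y$-entry $P_Y$ has norm $O(W^{5/2})$, as shown in the proof of Lemma~\ref{lem3-1}. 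The $b$-entry is $-a_W^2P_a=-\Gamma/(4\pi)+O(W^3)$ by \eqref{3-7}.
\end{itemize}

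For the $b$-column I would recheck against the bound $\|B_a\|_{H^s}=O(W^{5/2})$ from Lemma~\ref{lem3-1}. It gives $\|a_W^2B_a\|_{H^s}=O(W^{1/2})$, which tends to zero; this is consistent, since the factor $\widehat q\approx W$ supplies the decay. So the $b$-column tends to $0$ in $H^s$, and \eqref{3-11} follows. In the write-up I would state this estimate explicitly through the $L^2$-scaling of $r_\lambda\partial_\lambda r_\lambda$, noting that the $H^s$ norm of a function dilated to scale $a_0$ is controlled by its $L^2$ size $a_0^{1/2}$ times the amplitude.

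\emph{Uniform bound \eqref{3-12}.} The limit operator $\widehat{\mathscr L}_0$ is invertible, because $g-\sigma\partial_x^2$ is an isomorphism by Lemma~\ref{lem2-5} with $W=0$, and $-\Gamma/(4\pi)\neq0$. Convergence in operator norm then makes $\widehat{\mathscr L}_W$ invertible with uniformly bounded inverse for small $W$, via a Neumann series. This is the step where $W_{\rm loc}$ may need to be decreased.
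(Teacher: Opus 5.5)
Your proposal is correct and follows essentially the same route as the paper. Analyticity and Fredholmness come from composing with the isomorphism $(h,\ell)\mapsto(h,-b^{-2}\ell)$. Convergence of $\widehat{\mathscr L}_W$ is block by block, using \eqref{3-6}, \eqref{3-7} and the bounds $\|P_Y\|=O(W^{5/2})$ and $\|b_W^{-2}B_a\|_{H^s}=O(W^{1/2})$ from Lemma~\ref{lem3-1}. The uniform inverse bound is a Neumann series around the invertible $\widehat{\mathscr L}_0$.

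The ``obstacle'' you flag in the $b$-column is not a real difficulty. The factor $\widehat q=O(W)$ is already built into the estimate $\|B_a\|_{H^s}=O(W^{5/2})$, which the paper simply quotes.
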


   	\begin{proof}
   		Lemma~\ref{lem2-2} and the analytic change of variables $\mathfrak C$ give analyticity of the transformed residual. For $b>0$, the chain rule gives
   		\begin{equation}
   			D_{(Y,b)}\widehat{\mathscr F}(Y,b,W)=D_{(Y,a)}\mathscr F(Y,b^{-1},W)\mathfrak T_b,\qquad \mathfrak T_b(h,\ell):=(h,-b^{-2}\ell).
   			\label{3-13}
   		\end{equation}
   		Since $\mathfrak T_b$ is an isomorphism, Lemma~\ref{lem2-7} proves the Fredholm assertion, while Lemma~\ref{lem3-1} and \eqref{3-8} give the analytic initial arc and its boundary limit.

   		Along this arc,
   		\[
   		\widehat{\mathscr L}_W=\begin{pmatrix}A_W&-b_W^{-2}B_a\\[1mm]P_Y&-b_W^{-2}P_a\end{pmatrix},\qquad b_W\asymp W.
   		\]
   		By \eqref{3-6}--\eqref{3-7} and the estimates in Lemma~\ref{lem3-1},
   		\[
   		\begin{aligned}
   			\|A_W-(g-\sigma\partial_x^2)\|_{\mathcal L(H_{\rm e}^{s+2},H_{\rm e}^s)}&=O(W^{3/2}),\\
   			\|b_W^{-2}B_a\|_{H^s}&=O(W^{1/2}),\\
   			\|P_Y\|_{\mathcal L(H_{\rm e}^{s+2},\mathbb R)}&=O(W^{5/2}),\\
   			-b_W^{-2}P_a&=-\frac{\Gamma}{4\pi}+O(W^3).
   		\end{aligned}
   		\]
   		These bounds prove \eqref{3-11}. Since $\widehat{\mathscr L}_0$ is an isomorphism by Lemma~\ref{lem2-5} at $W=0$ and $\Gamma>0$, a Neumann series gives \eqref{3-12}.
   	\end{proof}

   	The Fredholm property also supplies the local properness required for analytic continuation. At any admissible configuration, let $T=D_{(Y,b)}\widehat{\mathscr F}$ and choose decompositions $\mathcal X^s=\ker T\oplus\mathcal E$ and $\mathcal Y^s=\operatorname{Ran}T\oplus\mathcal F$, with $\ker T$ and $\mathcal F$ finite dimensional. After projection onto $\operatorname{Ran}T$, the implicit function theorem solves the $\mathcal E$ component locally in terms of the kernel coordinate, $W$, and the projected residual. Thus bounded sequences in a sufficiently small closed neighborhood with convergent residuals and speeds have convergent subsequences. This is a local compactness statement, not a global compactness assertion for bounded solution families.

   	We may therefore apply the analytic global continuation theorem of Haziot and Wheeler \cite[Theorem~B.1]{HaziotWheeler2023}, which permits an initial arc approaching a boundary point where the equation is undefined. In particular, $\widehat{\mathscr L}_0$ in \eqref{3-11} is only an operator limit along the arc, not a derivative of $\widehat{\mathscr F}$ at $(0,0,0)$; no analytic extension is asserted there. Global loss of compactness is analyzed in Section~\ref{sec4}.

   	\smallskip

   	\subsection{Global continuation}
   	Equip $\mathcal X^s\times\mathbb R$ with the sum norm and define
   	\begin{equation}
   		\mathcal N(Y,b,W):=\|Y\|_{H^{s+2}}+b+W+\frac1{\operatorname{dist}((Y,b,W),\partial\widehat{\mathscr U}^{\,s})}.
   		\label{3-14}
   	\end{equation}

   	\begin{proposition}
   		\label{prop3-3}
   		There exists a continuous solution curve
   		\begin{equation}
   			\widehat\gamma:(0,\infty)\longrightarrow\widehat{\mathscr U}^{\,s},\qquad \widehat\gamma(t)=(Y(t),b(t),W(t)),\qquad \widehat{\mathscr F}(\widehat\gamma(t))=0,
   			\label{3-15}
   		\end{equation}
   		whose image $\widehat{\mathscr C}_0:=\widehat\gamma((0,\infty))$ contains $\widehat{\mathscr C}_{\rm loc}$. The parametrization may be chosen to follow the initial arc near $t=0$, with $\widehat\gamma(t)\to(0,0,0)$ as $t\downarrow0$. Near each $t_0>0$, the curve admits a local real-analytic reparametrization, and $\widehat\gamma(t)\notin\widehat{\mathscr C}_{\rm loc}$ for all sufficiently large $t$.

   		Writing $\mathcal N(t):=\mathcal N(\widehat\gamma(t))$, one of the following alternatives holds:
   		\begin{equation}
   			\mathcal N(t)\longrightarrow\infty\qquad\text{as }t\to\infty,
   			\label{3-16}
   		\end{equation}
   		or there exists $t_n\to\infty$ such that
   		\begin{equation}
   			\sup_n\mathcal N(t_n)<\infty,
   			\label{3-17}
   		\end{equation}
   		but $\{(Y(t_n),b(t_n))\}$ has no convergent subsequence in $\mathcal X^s$. In particular, $\{Y(t_n)\}$ has no strongly convergent subsequence in $H^{s+2}(\mathbb R)$.
   	\end{proposition}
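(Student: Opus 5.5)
The plan is to verify the hypotheses of the Haziot--Wheeler analytic continuation theorem \cite[Theorem~B.1]{HaziotWheeler2023} for $\widehat{\mathscr F}:\widehat{\mathscr U}^{\,s}\subset\mathcal X^s\times\mathbb R\to\mathcal Y^s$, with the initial arc $\widehat{\mathscr C}_{\rm loc}$, and then to translate its conclusions into the statement. That theorem requires four ingredients.
\begin{enumerate}
\item Real analyticity of $\widehat{\mathscr F}$ on the open set $\widehat{\mathscr U}^{\,s}$.
\item The Fredholm index-zero property of $D_{(Y,b)}\widehat{\mathscr F}$ at every point.
\item An analytic initial curve of solutions $W\mapsto(Y_W,b_W,W)$ with $D_{(Y,b)}\widehat{\mathscr F}$ invertible along it, converging to a point of $\partial\widehat{\mathscr U}^{\,s}$ as $W\to0^+$.
\item Local properness: every point has a closed neighborhood in which the solution set, intersected with bounded sets, is compact.
\end{enumerate}
Items (1)--(3) are exactly Lemma~\ref{lem3-2}. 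There analyticity comes from Lemma~\ref{lem2-2} composed with $\mathfrak C$, the index-zero property from Lemma~\ref{lem2-7} via \eqref{3-13}, and invertibility along the arc from \eqref{3-12}. The boundary limit $(0,0,0)$ is \eqref{3-8}. Item (4) is the Fredholm local-properness remark following Lemma~\ref{lem3-2}. In that remark we split off $\ker T$ and a finite-dimensional complement of $\operatorname{Ran}T$. The implicit function theorem then expresses the infinite-dimensional component of nearby solutions as an analytic function of finitely many bounded coordinates, so bounded solution sets in a small closed ball are precompact.

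Next I will invoke the theorem. It yields a continuous curve $\widehat\gamma:(0,\infty)\to\widehat{\mathscr U}^{\,s}$ of zeros, extending the initial arc, that is locally real-analytically reparametrizable near each $t_0>0$. The theorem also gives the dichotomy. Either the blow-up quantity, here $\mathcal N$ in \eqref{3-14}, tends to infinity. Or the curve is a closed loop, or it eventually leaves every compact set without $\mathcal N$ diverging. The closed-loop alternative must be excluded by the structure near the boundary point. Lemma~\ref{lem3-1} shows that $D_{(Y,a)}\mathscr F$ is invertible along $\widehat{\mathscr C}_{\rm loc}$, and Lemma~\ref{lem3-1} with $W_{\rm loc}$ inside the fixed-speed uniqueness interval shows that near the arc the solution set is locally just the arc. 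Since the arc tends to $(0,0,0)\notin\widehat{\mathscr U}^{\,s}$, a loop through it cannot close up. This gives the parametrization following the arc as $t\downarrow0$ and the property $\widehat\gamma(t)\notin\widehat{\mathscr C}_{\rm loc}$ for large $t$. The latter holds because the curve cannot re-enter the arc without retracing it, which the analytic structure and the theorem's injectivity properties forbid.

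Finally, I will recast the non-divergent alternative. If \eqref{3-16} fails, there are $t_n\to\infty$ with $\sup_n\mathcal N(t_n)<\infty$. Suppose $(Y(t_n),b(t_n))$ had a convergent subsequence. Then, since $W(t_n)$ is bounded and $\operatorname{dist}$ to the boundary is bounded below, the limit would lie in $\widehat{\mathscr U}^{\,s}$ and be a zero of $\widehat{\mathscr F}$. By local properness and analyticity near that limit, the curve would then have to accumulate there in a way the theorem rules out, contradicting its alternative. Hence no convergent subsequence exists. Since $b(t_n)$ and $W(t_n)$ are bounded scalars, after passing to a subsequence they converge, so the failure lies in $Y(t_n)$ in $H^{s+2}$.

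The main obstacle is matching the precise hypotheses of \cite[Theorem~B.1]{HaziotWheeler2023}. That theorem requires the initial arc to approach the boundary with uniformly invertible linearization, as in \eqref{3-12}, even though $\widehat{\mathscr F}$ is undefined at $(0,0,0)$. It also requires that its closed-loop and re-entry exclusions be phrased in terms of $\mathcal N$ with the distance to $\partial\widehat{\mathscr U}^{\,s}$. I expect \eqref{3-11}--\eqref{3-12} to be exactly what is needed for the former.
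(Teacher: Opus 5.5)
Your proposal is essentially the paper's proof. Both apply \cite[Theorem~B.1]{HaziotWheeler2023} to $\widehat{\mathscr F}$ with unknown $(Y,b)$ and parameter $W$: analyticity, the index-zero Fredholm property and the invertible initial arc tending to $(0,0,0)$ come from Lemma~\ref{lem3-2}, and the compactness hypothesis comes from the Fredholm local-properness remark.

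The one caveat is that the theorem, as the paper uses it, already yields the curve, its local analytic reparametrizations, its eventual departure from $\widehat{\mathscr C}_{\rm loc}$, and the two alternatives exactly in the form \eqref{3-16}--\eqref{3-17}. Your closed-loop exclusion and final ``recasting'' are therefore unnecessary. Moreover, the recasting's claim that an \emph{arbitrary} sequence with bounded $\mathcal N$ has no convergent subsequence does not follow from the stated dichotomy; you should simply read off the theorem's loss-of-compactness alternative instead.
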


   	\begin{proof}
   		Apply \cite[Theorem~B.1]{HaziotWheeler2023} with unknown $(Y,b)$ and parameter $W$. By Lemma~\ref{lem3-2}, the transformed map is analytic, its fixed-speed derivative is Fredholm of index zero, and the initial analytic arc approaches the boundary point $(0,0,0)$ with invertible fixed-speed derivative. The local properness established above verifies the remaining compactness hypothesis of the continuation theorem. The theorem yields the stated curve and alternatives, oriented from the initial boundary end. Away from the initial arc, its parameter $t$ need not agree with $W$.
   	\end{proof}

   	Returning to conformal depth, define
   	\begin{equation}
   		a(t):=b(t)^{-1},\qquad \gamma(t):=(Y(t),a(t),W(t)),\qquad \mathscr C_0:=\{\gamma(t):t>0\}.
   		\label{3-18}
   	\end{equation}
   	Then $\mathscr F(\gamma(t))=0$, with the initial asymptotics from \eqref{3-2} and \eqref{3-8}. The chosen continuation $\mathscr C_0$ need not exhaust the connected component of the solution set.

   	For the sum norm,
   	\[
   	\operatorname{dist}((Y,b,W),\partial\widehat{\mathscr U}^{\,s})=\min\{\operatorname{dist}(Y,\partial\mathcal O^s),b,W,c_{\min}-W\}.
   	\]
   	Since $|D|:H^{s+2}\to L^\infty$ is continuous, $\operatorname{dist}(Y,\partial\mathcal O^s)\ge c_Y/C_s$. Thus alternative \eqref{3-16} forces, along some sequence, divergence of $\|Y\|_{H^{s+2}}$, $a\to0$ or $a\to\infty$, $W\to0$ or $W\to c_{\min}$, or $c_Y\to0$. The first alternative allows the wave speed to approach the critical value $c_{\min}$; the present continuation theory does not determine whether such a limit occurs along the global solution curve.

   	Under \eqref{3-17}, put $Y_n:=Y(t_n)$, $b_n:=b(t_n)$, $a_n:=b_n^{-1}$, and $W_n:=W(t_n)$. The bound on $\mathcal N$ gives bounded $\|Y_n\|_{H^{s+2}}$, compact ranges for $a_n,W_n$, and a positive lower bound $\delta_0$ on $\operatorname{dist}(Y_n,\partial\mathcal O^s)$. To control the graph condition, if $0<c_Y<1$ then $\widetilde Y:=Y/(1-c_Y)$ belongs to $\partial\mathcal O^s$, since $\inf(1+|D|\widetilde Y)=0$ and $r\widetilde Y\in\mathcal O^s$ for $0\le r<1$. Consequently,
   	\[
   	\operatorname{dist}(Y,\partial\mathcal O^s)\le\frac{c_Y}{1-c_Y}\|Y\|_{H^{s+2}}.
   	\]
   	Together with the uniform surface bound and $\delta_0>0$, this yields $c_{Y_n}\ge c_0>0$ (trivially when $c_{Y_n}\ge1$). Hence for some $R,c_0>0$, $0<a_-<a_+<\infty$, and $[W_-,W_+]\Subset(0,c_{\min})$,
   	\begin{equation}
   		\|Y_n\|_{H^{s+2}}\le R,\qquad 1+|D|Y_n\ge c_0,\qquad a_n\in[a_-,a_+],\qquad W_n\in[W_-,W_+].
   		\label{3-19}
   	\end{equation}
   	After passing to a subsequence, $a_n\to a_\infty>0$ and $W_n\to W_\infty\in(0,c_{\min})$, whereas $Y_n$ has no strongly convergent subsequence in $H_{\rm e}^{s+2}(\mathbb R)$. We analyze this remaining loss of compactness in Section~\ref{sec4}.

    \bigskip

    \section{Compactness and global alternatives}
    \label{sec4}
    We consider bounded sequences of coupled point-vortex solutions. Uniform surface and graph bounds, together with a compact subcritical speed range, control both the vortex depth and its distance from the free surface. Strong compactness may still fail through surface profiles escaping to horizontal infinity. We describe this loss of compactness by a finite profile decomposition with a strongly vanishing remainder.

    \subsection{Control of the vortex position}
    The estimates below rely only on vortex equilibrium $P=0$; neither Bernoulli's equation nor the subcritical speed restriction is needed.

    \begin{lemma}
        \label{lem4-1}
        Let $s>\frac12$ and $Y\in\mathcal O^s$ satisfy
        \[
        0<c_0\le\vartheta_Y(x)\le C_0<\infty,\qquad x\in\mathbb R.
        \]
        If $a,W>0$ satisfy $P(Y,a,W)=0$, then
        \begin{equation}
            \frac{\Gamma}{4\pi}\frac{c_0}{C_0}\le aW\le\frac{\Gamma}{4\pi}\frac{C_0}{c_0},\qquad aW<\frac{\Gamma}{2\pi}.
            \label{4-1}
        \end{equation}
        The physical vortex $p_a:=Z_Y(-\mathrm{i}a)$ has Euclidean distance $d_Y(a):=\operatorname{dist}(p_a,\partial\Omega_Y)$ from the free surface, satisfying
        \begin{equation}
            c_0a\le d_Y(a)\le C_0a.
            \label{4-2}
        \end{equation}
    \end{lemma}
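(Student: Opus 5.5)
The plan is to rewrite the equilibrium $P=0$ as an exact ratio of two Poisson-type averages of $\vartheta_Y$, and to get the distance bounds by integrating $Z_Y'$ along straight segments, using only $c_0\le\operatorname{Re}Z_Y'\le C_0$.

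\emph{Harmonic-extension bounds.} Write $k_a(x):=a/[\pi(x^2+a^2)]$ for the Poisson kernel at height $-a$, so that $E_ah=\int_{\mathbb R}k_ah\,dx$ and $E_a1=1$. Then
\[
\alpha(Y,a)=1+E_a(|D|Y)=\int_{\mathbb R}k_a\vartheta_Y\,dx .
\]
The integral converges absolutely because $\vartheta_Y-1\in H^{s+1}$. More generally, $\operatorname{Re}Z_Y'$ is the harmonic (Poisson) extension of $\vartheta_Y$ to $\Pi_-$, so by the maximum principle $c_0\le\operatorname{Re}Z_Y'\le C_0$ throughout $\overline{\Pi_-}$.

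\emph{Bounds on $aW$.} From $\partial_aE_a=-E_a|D|$ and $|D|^2=-\partial_x^2$ we get $E_a(Y'')=\partial_a\alpha$, exactly as in the proof of Lemma~\ref{lem2-3}. Hence $P=0$ reads
\[
\frac{4\pi aW}{\Gamma}=1+\frac{a\,\partial_a\alpha}{\alpha}.
\]
Differentiating under the integral uses $a\partial_ak_a=k_a\,\frac{x^2-a^2}{x^2+a^2}$. It follows that
\[
\frac{4\pi aW}{\Gamma}=\frac{\int_{\mathbb R}\kappa_a\vartheta_Y\,dx}{\int_{\mathbb R}k_a\vartheta_Y\,dx},\qquad \kappa_a:=k_a\,\frac{2x^2}{x^2+a^2}\ge0 .
\]
An elementary computation gives $\int k_a\,\frac{a^2}{x^2+a^2}\,dx=\frac12$, so $\int\kappa_a\,dx=1$. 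Both numerator and denominator are therefore averages of $\vartheta_Y$ against probability densities. Each lies in $[c_0,C_0]$, which gives the two-sided bound in \eqref{4-1}. For the strict bound, note that $\kappa_a<2k_a$ pointwise (strictly at $x\ne0$... in fact everywhere, since $2x^2/(x^2+a^2)<2$), and $\vartheta_Y\ge c_0>0$. Hence $\int\kappa_a\vartheta_Y<2\int k_a\vartheta_Y$, that is, $aW<\Gamma/(2\pi)$. The only technical point is justifying differentiation in $a$ under the integral. This follows from the $L^2$ decay of $\vartheta_Y-1$ and the uniform integrable bounds on $\partial_ak_a$ for $a$ in compact subsets of $(0,\infty)$.

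\emph{Distance bounds \eqref{4-2}.} For the upper bound, reflection symmetry \eqref{2-1} makes $Z_Y'$ real on the imaginary axis. The boundary point $Z_Y(0)\in\partial\Omega_Y$ then satisfies
\[
|Z_Y(0)-p_a|=\int_{-a}^0\operatorname{Re}Z_Y'(\mathrm iy)\,dy\le C_0a .
\]
For the lower bound, fix $x\in\mathbb R$ and set $e^{\mathrm i\theta}:=(x+\mathrm ia)/|x+\mathrm ia|$. Integrating along the segment $\zeta=-\mathrm ia+te^{\mathrm i\theta}$, $0\le t\le|x+\mathrm ia|$, which lies in $\overline{\Pi_-}$, gives
\[
|Z_Y(x)-p_a|\ge\operatorname{Re}\Bigl(e^{-\mathrm i\theta}\int Z_Y'(\zeta)\,d\zeta\Bigr)=\int\operatorname{Re}Z_Y'\,dt\ge c_0|x+\mathrm ia|\ge c_0a .
\]
Since $\partial\Omega_Y=Z_Y(\mathbb R)$, this yields $d_Y(a)\ge c_0a$.

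I expect the main step to be spotting the probability-density rewriting $\kappa_a$. Once $P=0$ is expressed as a ratio of two weighted averages of $\vartheta_Y$, all bounds are immediate.
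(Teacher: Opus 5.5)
Your proposal is correct and follows essentially the same route as the paper. You rewrite $P=0$ as $aW=\frac{\Gamma}{4\pi}(\alpha+a\alpha')/\alpha$, where the numerator and denominator are averages of $\vartheta_Y$ against the probability kernels $2ax^2/[\pi(a^2+x^2)^2]$ and $a/[\pi(a^2+x^2)]$. You get the strict bound from the positive difference kernel $2a^3/[\pi(a^2+x^2)^2]$. You obtain the distance bounds by integrating $Z_Y'$ along the segment from $-\mathrm{i}a$ to $x$ (the paper phrases this as a difference quotient over the convex half-plane) and along the imaginary axis up to $Z_Y(0)$. Your extra justifications (differentiation under the integral, and reality of $Z_Y'$ on the imaginary axis via \eqref{2-1}) fill in details the paper leaves implicit.
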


    \begin{proof}
        For fixed $Y$, write $\alpha(a):=\alpha(Y,a)$. Since $\alpha'(a)=E_a(Y'')$, the equation $P=0$ gives
        \begin{equation}
            aW=\frac{\Gamma}{4\pi}\frac{\alpha(a)+a\alpha'(a)}{\alpha(a)}.
            \label{4-3}
        \end{equation}
        The Poisson representation yields
        \[
        \begin{aligned}
            \alpha(a)&=\int_{\mathbb R}\frac{a}{\pi(a^2+x^2)}\vartheta_Y(x)\,dx,\\
            \alpha(a)+a\alpha'(a)&=\int_{\mathbb R}\frac{2ax^2}{\pi(a^2+x^2)^2}\vartheta_Y(x)\,dx.
        \end{aligned}
        \]
        Both kernels are nonnegative and integrate to one, so $c_0\le\alpha(a),\alpha(a)+a\alpha'(a)\le C_0$, proving the two-sided bound in \eqref{4-1}. Moreover,
        \[
        2\alpha(a)-\bigl(\alpha(a)+a\alpha'(a)\bigr)=\int_{\mathbb R}\frac{2a^3}{\pi(a^2+x^2)^2}\vartheta_Y(x)\,dx>0.
        \]
        Together with \eqref{4-3}, this gives the strict bound $aW<\Gamma/(2\pi)$.

        For the distance estimate, the Poisson representation gives $\operatorname{Re}Z_Y'\ge c_0$ on $\overline{\Pi_-}$. Since the lower half-plane is convex, for any distinct $z_1,z_2\in\overline{\Pi_-}$,
        \[
        \operatorname{Re}\frac{Z_Y(z_2)-Z_Y(z_1)}{z_2-z_1}=\int_0^1\operatorname{Re}Z_Y'\bigl(z_1+t(z_2-z_1)\bigr)\,dt\ge c_0.
        \]
        Taking $z_1=-\mathrm{i}a$ and $z_2=x\in\mathbb R$ gives $|Z_Y(x)-p_a|\ge c_0|x+\mathrm{i}a|\ge c_0a$, proving the lower bound in \eqref{4-2}. For the upper bound, symmetry and the Poisson estimate give $\alpha(r)\in[c_0,C_0]$, so
        \[
        d_Y(a)\le|Z_Y(-\mathrm{i}a)-Z_Y(0)|=\int_0^a\alpha(r)\,dr\le C_0a.\qedhere
        \]
    \end{proof}

    For a family satisfying $\|Y\|_{H^{s+2}}\le R$, $\vartheta_Y\ge c_0>0$, and $W\in[W_-,W_+]\Subset(0,\infty)$, Sobolev embedding gives $\vartheta_Y\le C_0:=1+C_sR$. Setting
    \[
    a_-:=\frac{\Gamma c_0}{4\pi C_0W_+},\qquad a_+:=\frac{\Gamma}{2\pi W_-},
    \]
    Lemma~\ref{lem4-1} yields
    \begin{equation}
        a_-\le a<a_+,\qquad c_0a_-\le\operatorname{dist}(p_a,\partial\Omega_Y)\le C_0a_+.
        \label{4-4}
    \end{equation}
    These estimates require no uniform spatial decay of the surfaces. Under the stated bounds they rule out both vortex--surface collision and escape to infinite depth. Independently of uniform surface bounds, the strict inequality in \eqref{4-1} also implies $W_n\to0$ whenever $a_n\to\infty$ along a sequence of vortex equilibria. We next consider centered limits of bounded surface profiles.

    \smallskip

    \subsection{Centered limits of bounded solutions}
    We first pass to a limit without translating the surfaces, so the symmetry axis stays fixed. Bernoulli's equation provides one additional derivative and hence local strong convergence without uniform tail bounds.

    \begin{lemma}
        \label{lem4-2}
        Let $s>\frac12$, and let $(Y_n,a_n,W_n)\in\mathscr U^s$ satisfy $\mathscr F(Y_n,a_n,W_n)=0$. Suppose that, for some $R,c_0>0$ and $[W_-,W_+]\Subset(0,c_{\min})$,
        \begin{equation}
            \|Y_n\|_{H^{s+2}}\le R,\qquad 1+|D|Y_n\ge c_0,\qquad W_n\in[W_-,W_+].
            \label{4-5}
        \end{equation}
        Then
        \begin{equation}
            \sup_n\|Y_n\|_{H^{s+3}}\le C,
            \label{4-6}
        \end{equation}
        where $C$ depends only on the stated bounds, $s$, and the fixed physical parameters. After passing to a subsequence, there exist $a_\infty>0$, $W_\infty\in[W_-,W_+]$, and $Y^{(0)}\in\mathcal O^s\cap H^{s+3}(\mathbb R)$ such that $a_n\to a_\infty$, $W_n\to W_\infty$, and
        \begin{equation}
            Y_n\rightharpoonup Y^{(0)}\quad\text{in }H^{s+3}(\mathbb R),\qquad Y_n\to Y^{(0)}\quad\text{in }H_{\rm loc}^{s+2}(\mathbb R).
            \label{4-7}
        \end{equation}
        Moreover, $\|Y^{(0)}\|_{H^{s+2}}\le R$, $1+|D|Y^{(0)}\ge c_0$, and
        \begin{equation}
            \mathscr F(Y^{(0)},a_\infty,W_\infty)=0,\qquad Y^{(0)}\not\equiv0.
            \label{4-8}
        \end{equation}
        The physical vortex positions also converge: $p_n:=Z_{Y_n}(-\mathrm{i}a_n)\to p_\infty:=Z_{Y^{(0)}}(-\mathrm{i}a_\infty)$.
    \end{lemma}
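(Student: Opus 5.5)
The plan is to obtain \eqref{4-6} from a regularity gain in Bernoulli's equation. I will then extract weak and local limits and pass to the limit in each component of $\mathscr F$ using weak continuity of the nonlocal operators. Nontriviality of $Y^{(0)}$ will come from an explicit check at $Y=0$.

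\emph{Step 1: the $H^{s+3}$ bound.} By Lemma~\ref{lem4-1} and \eqref{4-4}, the depths satisfy $a_n\in[a_-,a_+)$ with $0<a_-$, uniformly in $n$. Using $(|D|Y)'=-HY''$, we have $\mathfrak m_Y=(1+|D|Y)Y''+Y'HY''$. Comparing with \eqref{2-26} gives the identity $\kappa(Y)=-\mathcal T_Y(Y'')$. The equation $B(Y_n,a_n,W_n)=0$ then becomes
\[
Y_n''=\sigma^{-1}\mathcal T_{Y_n}^{-1}\Bigl[gY_n+\frac{q_{a_n,W_n}^2-W_n^2}{2J(Y_n)}+\frac{W_n^2}{2}\bigl(J(Y_n)^{-1}-1\bigr)\Bigr].
\]
The bracket is bounded in $H^{s+1}$ uniformly in $n$. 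This uses $J(Y_n)-1\in H^{s+1}$, the lower bound $J\ge c_0^2$ with the analytic inverse from Lemma~\ref{lem2-2}, and \eqref{2-9} for $q^2-W^2$ with $a_n\ge a_-$.

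The operator $\mathcal T_{Y}^{-1}f=(1+|D|Y)J^{1/2}f-Y'H(J^{1/2}f)$ has coefficients in $\mathbb R\oplus H^{s+1}$. Since $H^{s+1}$ is an algebra, it is therefore bounded on $H^{s+1}$, uniformly under \eqref{4-5}. Hence $\|Y_n''\|_{H^{s+1}}\le C$, and together with $\|Y_n\|_{H^{s+2}}\le R$ this gives \eqref{4-6}. I expect the main care to lie here: the bound must depend only on $R,c_0,W_\pm$ and not on any tail control of $Y_n$.

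\emph{Step 2: extraction and passage to the limit.} After passing to a subsequence, $a_n\to a_\infty\in[a_-,a_+]$ and $W_n\to W_\infty\in[W_-,W_+]$. Moreover $Y_n\rightharpoonup Y^{(0)}$ in $H^{s+3}$, and Rellich's theorem gives $Y_n\to Y^{(0)}$ in $H^{s+2}_{\rm loc}$. The operators $|D|$ and $H$ commute with weak limits, so $|D|Y_n\rightharpoonup|D|Y^{(0)}$ in $H^{s+2}$. It follows that $|D|Y_n\to|D|Y^{(0)}$ locally uniformly, and the bounds $1+|D|Y^{(0)}\ge c_0$ and $\|Y^{(0)}\|_{H^{s+2}}\le R$ pass to the limit.

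For the Bernoulli component, every term of $B(Y_n,a_n,W_n)$ is a smooth function of local traces of $Y_n,|D|Y_n,HY_n$ and their derivatives up to order two, together with $q_{a_n,W_n}$. These traces converge in $H^{s}_{\rm loc}$, and $q_{a_n,W_n}\to q_{a_\infty,W_\infty}$ in $H^{s+1}$. Hence $B(Y^{(0)},a_\infty,W_\infty)=0$ on every bounded interval, and therefore in $H^s$.

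For the vortex component, $E_a$ has Fourier kernel $e^{-a|\zeta|}$. The map $a\mapsto E_a|D|^k$ is continuous into $\mathcal L(L^2,\mathbb R)$ for $a\ge a_->0$. Weak convergence of $Y_n$ then gives $E_{a_n}(Y_n'')\to E_{a_\infty}(Y^{(0)\prime\prime})$ and $E_{a_n}(|D|Y_n)\to E_{a_\infty}(|D|Y^{(0)})$. Since the denominator in $P$ is at least $c_0$, this yields $P(Y^{(0)},a_\infty,W_\infty)=0$.

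The same convergence applied to $p_2(Y_n,a_n)=-a_n+E_{a_n}Y_n$ gives $p_n\to p_\infty$.

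\emph{Step 3: nontriviality.} Suppose $Y^{(0)}\equiv0$. Then $P=0$ forces $W_\infty=\Gamma/(4\pi a_\infty)$, and $B=0$ reduces to
\[
q_{a_\infty,W_\infty}^2=W_\infty^2 .
\]
This is impossible: $q_{a_\infty,W_\infty}(x)-W_\infty=-\Gamma a_\infty/[\pi(x^2+a_\infty^2)]$ is negative everywhere, while $q_{a_\infty,W_\infty}+W_\infty\to2W_\infty>0$ as $|x|\to\infty$. Hence $Y^{(0)}\not\equiv0$, which completes \eqref{4-8}.
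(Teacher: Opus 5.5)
Your proposal is correct and follows the paper's proof essentially step for step. Like the paper, you gain a derivative by rewriting Bernoulli as $\sigma\mathcal T_{Y_n}Y_n''=gY_n+\frac{q_{a_n,W_n}^2}{2J(Y_n)}-\frac{W_n^2}{2}$ and applying the explicit inverse of $\mathcal T_{Y_n}$ on $H^{s+1}$; you extract weak and local limits with $|D|$ passed through the weak limit; and you use operator-norm continuity of $a\mapsto E_a$ for the vortex equation and the vortex positions. The only deviation is the nontriviality check. You use the sign of $q^2-W^2$ at large $|x|$, which does not even need $P=0$, whereas the paper uses $W_\infty=\Gamma/(4\pi a_\infty)$ and evaluates $B(0,a_\infty,W_\infty)(0)=4W_\infty^2>0$; both arguments work.
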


    \begin{proof}
        By Sobolev embedding and Lemma~\ref{lem4-1}, the bounds in \eqref{4-5} keep $a_n$ in a compact subinterval of $(0,\infty)$. After extraction, $a_n\to a_\infty>0$ and $W_n\to W_\infty\in[W_-,W_+]$.

        Since $\kappa(Y)=-\mathcal T_YY''$ by \eqref{2-26}, the Bernoulli equation becomes
        \[
        \sigma\mathcal T_{Y_n}Y_n''=gY_n+\frac{q_{a_n,W_n}^2}{2J(Y_n)}-\frac{W_n^2}{2}.
        \]
        The right-hand side is uniformly bounded in $H^{s+1}$, using $J(Y_n)^{-1}-1$, $q_{a_n,W_n}^2-W_n^2\in H^{s+1}$ and the cancellation of the constant terms. By the inverse formula for $\mathcal T_Y$ in Lemma~\ref{lem2-6}, Sobolev product and composition estimates bound $\mathcal T_{Y_n}^{-1}$ uniformly on $H^{s+1}$. Hence $\|Y_n''\|_{H^{s+1}}\le C$, which, together with the $L^2$ bound, proves \eqref{4-6}.

        Weak compactness and the local Rellich theorem give \eqref{4-7}. To pass to the limit in the nonlocal curvature term, global weak convergence also gives $|D|Y_n\rightharpoonup|D|Y^{(0)}$ in $H^{s+2}$; the uniform bound and local compactness then imply $|D|Y_n\to|D|Y^{(0)}$ in $H_{\rm loc}^{s+1}$. As $s>\frac12$, these convergences give $Y_n\to Y^{(0)}$ in $C_{\rm loc}^2$ and $|D|Y_n\to|D|Y^{(0)}$ in $C_{\rm loc}^1$. Thus $1+|D|Y^{(0)}\ge c_0$, and passage to the limit in the Bernoulli equation gives $B(Y^{(0)},a_\infty,W_\infty)=0$.

        For the vortex equation, operator-norm continuity of $E_a$ for $a>0$ and global weak convergence yield $E_{a_n}(Y_n'')\to E_{a_\infty}((Y^{(0)})'')$ and $E_{a_n}(|D|Y_n)\to E_{a_\infty}(|D|Y^{(0)})$. The denominators are bounded below by $c_0$, so $P(Y^{(0)},a_\infty,W_\infty)=0$. If $Y^{(0)}=0$, this identity forces $W_\infty=\Gamma/(4\pi a_\infty)$, whence $q_{a_\infty,W_\infty}(0)=-3W_\infty$ and
        \[
        B(0,a_\infty,W_\infty)(0)=4W_\infty^2>0,
        \]
        contradicting the limiting Bernoulli equation. Thus the centered wave is nontrivial. Finally, $p_n=-\mathrm{i}a_n+\mathrm{i}E_{a_n}Y_n$, and the same convergence of Poisson evaluations gives $p_n\to p_\infty$. Lemma~\ref{lem4-1} keeps the limiting vortex strictly below the free surface.
    \end{proof}

    Set $R_n:=Y_n-Y^{(0)}$. By \eqref{4-7},
    \begin{equation}
        R_n\rightharpoonup0\quad\text{in }H^{s+2}(\mathbb R),\qquad R_n\to0\quad\text{in }H_{\rm loc}^{s+2}(\mathbb R).
        \label{4-9}
    \end{equation}
    If the original sequence has no strongly convergent subsequence in $H^{s+2}$, we may extract further so that $\|R_n\|_{H^{s+2}}\ge\varepsilon_0>0$. Any remaining loss of compactness must therefore occur outside every fixed bounded interval. This leads to the translated-profile analysis below.

    \smallskip

    \subsection{Nonvanishing and translated profiles}
    We examine whether the centered remainder $R_n=Y_n-Y^{(0)}$ converges strongly. For these controlled solutions, vanishing local $L^2$ mass forces global convergence. Otherwise, translation reveals a nontrivial irrotational profile.

    \begin{lemma}
        \label{lem4-3}
        Under the assumptions of Lemma~\ref{lem4-2}, retain the extracted sequence and centered limit, and set $R_n:=Y_n-Y^{(0)}$. If
        \begin{equation}
            \eta_n:=\sup_{x\in\mathbb R}\int_{x-1}^{x+1}|R_n(y)|^2\,dy\longrightarrow0,
            \label{4-10}
        \end{equation}
        then $\|R_n\|_{H^{s+2}}\to0$. Otherwise, if $\{Y_n\}$ has no strongly convergent subsequence in $H^{s+2}(\mathbb R)$, we may extract further and find $x_n\to+\infty$ and a nonzero real-valued $Q\in H^{s+3}(\mathbb R)$ such that
        \begin{equation}
            Y_n(\cdot+x_n)\rightharpoonup Q\quad\text{in }H^{s+3}(\mathbb R),\qquad Y_n(\cdot+x_n)\to Q\quad\text{in }H_{\rm loc}^{s+2}(\mathbb R).
            \label{4-11}
        \end{equation}
        The profile satisfies $1+|D|Q\ge c_0$ and
        \begin{equation}
            gQ+\sigma\kappa(Q)+\frac{W_\infty^2}{2J(Q)}-\frac{W_\infty^2}{2}=0.
            \label{4-12}
        \end{equation}
        By reflection symmetry,
        \begin{equation}
            Y_n(\cdot-x_n)\longrightarrow Q(-\,\cdot)\quad\text{in }H_{\rm loc}^{s+2}(\mathbb R).
            \label{4-13}
        \end{equation}
    \end{lemma}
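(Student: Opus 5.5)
The plan has two parts: the vanishing case and the nonvanishing case.

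\emph{Vanishing case.} Assume \eqref{4-10}. By \eqref{4-6}, $R_n$ is bounded in $H^{s+3}$, and by \eqref{4-9} it converges weakly to zero. I will show that $\|R_n\|_{L^2}\to0$; interpolation between $L^2$ and the uniform $H^{s+3}$ bound then gives $\|R_n\|_{H^{s+2}}\to0$.

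For the $L^2$ decay I will not use a Lions-type lemma, since that only gives $L^p$ decay for $p>2$. Instead I use the Bernoulli equation. Subtract the equations for $Y_n$ and $Y^{(0)}$, which gives $0=B(Y_n,a_n,W_n)-B(Y^{(0)},a_\infty,W_\infty)$, and write the difference as
\[
A_\infty(W_n)R_n+\mathcal N_n .
\]
The term $\mathcal N_n$ collects three kinds of contributions:
\begin{itemize}
\item variable-coefficient contributions whose coefficients are in $H^s$ (Lemma~\ref{lem2-4}), multiplied by derivatives of $R_n$;
\item quadratic terms in $R_n$ and its derivatives;
\item parameter differences $(a_n-a_\infty,W_n-W_\infty)$ multiplied by $H^{s+1}$ functions, which tend to zero in norm.
\end{itemize}

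The coefficient terms tend to zero in $H^{s-1}$. Each coefficient decays at infinity, and $R_n\to0$ in $H^{s+2}_{\rm loc}$. I split $\mathbb R$ into a large ball $|x|<\rho$ and its complement: on the ball local convergence applies, and outside it the coefficient is small in $H^s$.

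The quadratic terms, such as $R_n\cdot|D|R_n$ or $q^2(J^{-1})$-type expansions, are bounded by $\|R_n\|_{L^\infty}\|R_n\|_{H^{s+2}}$. Local $L^2$ smallness together with the uniform $H^{s+3}$ bound gives $\|R_n\|_{L^\infty}\to0$, by interpolating on unit intervals. The delicate point is the nonlocal factor $|D|R_n$. Its sup norm is also small, because $|D|R_n=-H R_n'$ and $\|R_n'\|_{L^\infty}\to0$ by the same unit-interval interpolation. The Hilbert transform, however, is not bounded on $L^\infty$. I will therefore use the Gagliardo--Nirenberg bound $\||D|R_n\|_{L^\infty}\lesssim\|R_n'\|_{L^2}^{1-\theta}\|R_n\|_{H^{s+3}}^{\theta}$ combined with a dyadic splitting. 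This is the step I expect to be the main obstacle, and I will try to close it by absorption. Since $A_\infty(W_n)$ is an isomorphism $H^{s+2}\to H^s$ uniformly in $n$ (Lemma~\ref{lem2-5}), the identity yields
\[
\|R_n\|_{H^{s}}\lesssim\|\mathcal N_n\|_{H^{s-2}}\le o(1)+o(1)\|R_n\|_{H^{s}}+o(1),
\]
where I measure the nonlinear terms in the weak norm $H^{s-2}$ via duality and the product estimate with one small $L^\infty$ factor. This gives $\|R_n\|_{H^s}\to0$, hence $L^2$ decay, and the $H^{s+2}$ conclusion follows by interpolation.

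\emph{Nonvanishing case.} Here \eqref{4-10} fails, so after extraction there are $y_n$ with $\int_{y_n-1}^{y_n+1}|R_n|^2\ge\eta>0$. Since $R_n\to0$ locally, $|y_n|\to\infty$. The function $R_n$ is even, so I may take $x_n:=|y_n|\to+\infty$.

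The translates $Y_n(\cdot+x_n)$ are bounded in $H^{s+3}$. Extracting gives weak convergence in $H^{s+3}$ and strong convergence in $H^{s+2}_{\rm loc}$ to some $Q$. Because $Y^{(0)}(\cdot+x_n)\to0$ locally, $Q$ is also the local limit of $R_n(\cdot+x_n)$, and the mass on $[-1,1]$ shows $Q\ne0$. As in Lemma~\ref{lem4-2}, the nonlocal terms $|D|$ pass to the limit locally, and the lower bound $1+|D|Q\ge c_0$ is preserved.

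For \eqref{4-12} I translate the Bernoulli equation. The translated vortex term $q_{a_n,W_n}(\cdot+x_n)^2-W_n^2$ tends to zero locally uniformly, since $x_n\to\infty$ and $a_n$ stays bounded. Passing to the limit in the $C^2_{\rm loc}$ topology then gives $B_{\rm irr}(Q,W_\infty)=0$. Finally, \eqref{4-13} follows immediately from the evenness of $Y_n$.
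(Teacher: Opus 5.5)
Your nonvanishing case coincides with the paper's argument. In the vanishing case, upgrading local smallness through the Bernoulli equation rather than a Lions-type lemma is the right choice. Perturbing about $A_\infty(W_n)$ and keeping the variable second-order coefficients in the remainder, instead of using the paper's factorization $A^{(0)}=\mathcal T_{Y^{(0)}}A_\infty(W_\infty)+\mathcal K^{(0)}$ with $\mathcal K^{(0)}:H^2\to L^2$ compact, is also legitimate. Local convergence $R_n\to0$ in $H^{s+2}_{\rm loc}$ and global weak convergence of $|D|R_n$ make terms such as $c\,R_n''$ and $c\,(|D|R_n)'$ with $c\in C_0(\mathbb R)$ tend to zero, even though these perturbations are not compact.

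\textbf{The gap.} It is the step you flag yourself. The bound $\||D|R_n\|_{L^\infty}\lesssim\|R_n'\|_{L^2}^{1-\theta}\|R_n\|_{H^{s+3}}^{\theta}$ gives no smallness, because $\|R_n'\|_{L^2}$ is only known to be bounded, and its decay is essentially what you are proving. Absorption cannot replace it. Terms such as $(|D|R_n)^2$ from expanding $J^{-1}$, or $|D|R_n\,R_n''$ and $R_n'(|D|R_n)'$ from the curvature, need a genuinely small $L^\infty$ factor on the nonlocal quantity. Without one they are merely $O(\|R_n\|_{H^2})$ with a non-small constant. The ``dyadic splitting'' is never specified.

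\textbf{How to close it.} The missing ingredient is \eqref{4-14}, namely $\|R_n\|_{W^{2,\infty}}+\||D|R_n\|_{W^{1,\infty}}\to0$. The paper obtains it by translation compactness:
\begin{enumerate}
\item For arbitrary centers $z_n$, the translates $f_n:=R_n(\cdot+z_n)$ are bounded in $H^{s+3}$ with vanishing local $L^2$ mass, so every weak limit is zero.
\item Since $|D|$ commutes with translations, $|D|f_n\rightharpoonup0$ in $H^{s+2}$. Hence $|D|f_n\to0$ in $H^{s+1}_{\rm loc}$ and in $C^1$ near the origin.
\item Choosing $z_n$ as near-maximizers gives the claim.
\end{enumerate}
A direct alternative is the kernel splitting $\|Hf'\|_{L^\infty}\lesssim\|f\|_{L^\infty}+\|f''\|_{L^\infty}$. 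Use $f''$ for $|t|<1$, integrate by parts against $t^{-2}$ for $|t|>1$, and apply this to $f=R_n$ and $f=R_n'$. The right-hand sides tend to zero by your unit-interval interpolation, since $s+3>\frac72$.

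\textbf{The absorption step also needs redoing.} In $H^{s-2}$, a factor that is small only in $L^\infty$ is not a small multiplier on $H^{2-s}$. Moreover, $R_n''$ is not controlled in $L^2$ by $\|R_n\|_{H^s}$. Once \eqref{4-14} is available, work with $A_\infty(W_n):H^2\to L^2$ as the paper does. There, estimates such as $\||D|R_n\,R_n''\|_{L^2}\le\||D|R_n\|_{L^\infty}\|R_n\|_{H^2}$ suffice and give $\|R_n\|_{H^2}\to0$. Interpolation with \eqref{4-6} then yields the $H^{s+2}$ conclusion.
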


    \begin{proof}
        Assume \eqref{4-10}. The uniform $H^{s+3}$ bound from Lemma~\ref{lem4-2} gives
        \begin{equation}
            \|R_n\|_{W^{2,\infty}}+\||D|R_n\|_{W^{1,\infty}}\longrightarrow0.
            \label{4-14}
        \end{equation}
        To see this, for arbitrary centers $z_n$ the translates $f_n:=R_n(\cdot+z_n)$ remain bounded in $H^{s+3}$ and satisfy $\|f_n\|_{L^2(-R,R)}^2\le C_R\eta_n\to0$ for every fixed $R$. Every weak limit is therefore zero. Local compactness, together with global weak convergence for $|D|f_n$, gives $f_n\to0$ in $H_{\rm loc}^{s+2}$ and $|D|f_n\to0$ in $H_{\rm loc}^{s+1}$ after extraction. If \eqref{4-14} failed, a suitable choice of $z_n$ would contradict these limits at the origin by Sobolev embedding.

        Set $Y_{n,\theta}:=Y^{(0)}+\theta R_n$ for $0\le\theta\le1$. The graph condition persists along this segment because $1+|D|Y_{n,\theta}\ge c_0$. Writing $A^{(0)}:=D_YB(Y^{(0)},a_\infty,W_\infty)$, the fundamental theorem of calculus gives
        \[
        \overline A_n:=\int_0^1D_YB(Y_{n,\theta},a_n,W_n)\,d\theta,\qquad \overline A_nR_n=-B(Y^{(0)},a_n,W_n)\longrightarrow0\quad\text{in }H^s.
        \]
        By \eqref{4-14}, parameter convergence, and \eqref{2-11}--\eqref{2-12}, $\|\overline A_n-A^{(0)}\|_{\mathcal L(H^2,L^2)}\to0$. Since $R_n$ is bounded in $H^2$,
        \begin{equation}
            A^{(0)}R_n\longrightarrow0\quad\text{in }L^2(\mathbb R).
            \label{4-15}
        \end{equation}
        No invertibility of $A^{(0)}$ is assumed. By \eqref{2-27},
        \[
        A^{(0)}=\mathcal T_{Y^{(0)}}A_\infty(W_\infty)+\mathcal K^{(0)},
        \]
        where $\mathcal T_{Y^{(0)}}$ is an isomorphism on $L^2$ and $\mathcal K^{(0)}:H^2\to L^2$ is compact. Indeed, the coefficients of the order-at-most-one remainder belong to $C_0(\mathbb R)$, and multiplication by such coefficients is compact from $H^1$ to $L^2$ by cutoff and local Rellich compactness. Since $R_n\rightharpoonup0$ in $H^2$, we have $\mathcal K^{(0)}R_n\to0$ in $L^2$. Applying $\mathcal T_{Y^{(0)}}^{-1}$ to \eqref{4-15} and using Lemma~\ref{lem2-5} yields $\|R_n\|_{H^2}\to0$. Interpolation with the uniform $H^{s+3}$ bound gives
        \[
        \|R_n\|_{H^{s+2}}\le C\|R_n\|_{H^2}^{1/(s+1)}\|R_n\|_{H^{s+3}}^{s/(s+1)}\longrightarrow0.
        \]

        If strong compactness fails, the preceding argument gives nonvanishing along a further subsequence: for some $\eta>0$ and centers $x_n$,
        \begin{equation}
            \int_{x_n-1}^{x_n+1}|R_n(x)|^2\,dx\ge\eta.
            \label{4-16}
        \end{equation}
        Local convergence in \eqref{4-9} gives $|x_n|\to\infty$, and evenness allows $x_n\to+\infty$. Weak and local compactness of $Y_n(\cdot+x_n)$ give \eqref{4-11}. Since $Y^{(0)}(\cdot+x_n)\to0$ locally, \eqref{4-16} implies $\int_{-1}^1|Q|^2\,dx\ge\eta>0$. Translation of the Bernoulli equation gives
        \begin{equation}
            g\widetilde Y_n+\sigma\kappa(\widetilde Y_n)+\frac{q_{a_n,W_n}(x+x_n)^2}{2J(\widetilde Y_n)}-\frac{W_n^2}{2}=0,\qquad \widetilde Y_n:=Y_n(\cdot+x_n).
            \label{4-17}
        \end{equation}
        The parameter bounds imply $q_{a_n,W_n}(\cdot+x_n)\to W_\infty$ in $C_{\rm loc}^k$ for every fixed $k$. Global weak convergence identifies the nonlocal limit $|D|\widetilde Y_n\to|D|Q$ in $H_{\rm loc}^{s+1}$, as in Lemma~\ref{lem4-2}. Passing to the limit in \eqref{4-17} gives the graph bound and \eqref{4-12}. Finally, evenness gives $Y_n(x-x_n)=\widetilde Y_n(-x)$ and hence \eqref{4-13}.
    \end{proof}

    For subsequent extractions, use the irrotational residual
    \begin{equation}
        B_{\rm irr}(Q,W):=gQ+\sigma\kappa(Q)+\frac{W^2}{2J(Q)}-\frac{W^2}{2}.
        \label{4-18}
    \end{equation}
    It belongs to $L^2$ for real $Q\in H^2$ with $\inf(1+|D|Q)>0$; individual profiles need not be even.

    \begin{lemma}
        \label{lem4-4}
        For every $0<W_+<c_{\min}$ there exists $\eta_*>0$, depending only on $g,\sigma,W_+$, such that every nonzero real solution $Q\in H^2(\mathbb R)$ of $B_{\rm irr}(Q,W)=0$, with $0\le W\le W_+$ and $\inf_{\mathbb R}(1+|D|Q)>0$, satisfies
        \begin{equation}
            \|Q\|_{H^2}\ge\eta_*.
            \label{4-19}
        \end{equation}
    \end{lemma}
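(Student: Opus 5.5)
The proof will go by contradiction. The point is that $B_{\rm irr}(Q,W)=A_\infty(W)Q+\mathcal N(Q)$, where $\mathcal N$ vanishes quadratically at $Q=0$. By Lemma~\ref{lem2-5}, $A_\infty(W)$ is uniformly invertible $H^2\to L^2$ for $0\le W\le W_+$. So a small solution would have to satisfy $\|Q\|_{H^2}\lesssim\|\mathcal N(Q)\|_{L^2}\lesssim\|Q\|_{H^2}^2$, which forces $Q=0$ once $\|Q\|_{H^2}$ is below a threshold depending only on $g,\sigma,W_+$.

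\textbf{Step 1: the expansion.} I will write out the nonlinear part explicitly. For the curvature term, $\kappa(Q)=-J^{-3/2}\mathfrak m_Q$ with $\mathfrak m_Q=Q''+(|D|Q)Q''-Q'(|D|Q)'$, so
\[
\sigma\kappa(Q)+\sigma Q''=\sigma(1-J^{-3/2})Q''-\sigma J^{-3/2}\bigl[(|D|Q)Q''-Q'(|D|Q)'\bigr].
\]
For the kinetic term, $\frac{W^2}{2}(J^{-1}-1)=-\frac{W^2}{2}\,\frac{J-1}{J}$ with $J-1=2|D|Q+(|D|Q)^2+(Q')^2$. This gives
\[
\frac{W^2}{2}(J^{-1}-1)+W^2|D|Q=W^2|D|Q\,\frac{J-1}{J}-\frac{W^2}{2}\,\frac{(|D|Q)^2+(Q')^2}{J}.
\]
Together these yield $B_{\rm irr}(Q,W)=A_\infty(W)Q+\mathcal N_W(Q)$, where every term of $\mathcal N_W(Q)$ contains at least two factors from $\{Q',Q'',|D|Q,(|D|Q)'\}$, times a coefficient that is a function of $J$.

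\textbf{Step 2: bounding $\mathcal N_W$.} In one dimension $\|Q'\|_{L^\infty}+\||D|Q\|_{L^\infty}\le C\|Q\|_{H^2}$, since $H^1\subset L^\infty$ and $|D|Q\in H^1$. The second-order factors $Q''$ and $(|D|Q)'$ are only in $L^2$, but each of them appears multiplied by at least one $L^\infty$ factor. The coefficients require care. The graph assumption only says $\inf(1+|D|Q)>0$, with no uniform constant. However, once $\|Q\|_{H^2}\le\eta$ with $\eta$ small, we get $\||D|Q\|_{L^\infty}\le C\eta\le\frac12$, so $J\ge\frac14$. This is what makes $J^{-1}$ and $J^{-3/2}$ uniformly bounded, and the constant then does not depend on the graph constant of $Q$. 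I will also use $|1-J^{-3/2}|\le C|J-1|\le C\eta$. Altogether,
\[
\|\mathcal N_W(Q)\|_{L^2}\le C(1+W_+^2)\|Q\|_{H^2}^2
\]
for $\|Q\|_{H^2}\le\eta_0$, where $\eta_0$ and $C$ depend only on $g,\sigma,W_+$.

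\textbf{Step 3: conclusion.} Let $Q$ be a solution with $\|Q\|_{H^2}<\eta_0$. Then $A_\infty(W)Q=-\mathcal N_W(Q)$, and \eqref{2-24} with $r=0$ gives
\[
C_1^{-1}\|Q\|_{H^2}\le C\|Q\|_{H^2}^2.
\]
Hence either $Q=0$ or $\|Q\|_{H^2}\ge(C_1C)^{-1}$. Taking $\eta_*:=\min\{\eta_0,(C_1C)^{-1}\}$ proves \eqref{4-19}.

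The main obstacle is Step 2. One has to check that no term in the expansion is linear in a second-order quantity with only an $O(1)$ coefficient, since such a term would not be small. It also has to be made sure that nothing relies on the (non-uniform) graph constant of $Q$; the smallness of $\|Q\|_{H^2}$ is what supplies uniform control of $J$.
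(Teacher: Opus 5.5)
Your proposal is correct and is essentially the paper's argument. You split $B_{\rm irr}(Q,W)=A_\infty(W)Q+\mathfrak N_W(Q)$, use smallness to force $J$ bounded below independently of the graph constant, bound the remainder by a small $W^{1,\infty}$-type quantity times $\|Q\|_{H^2}$, and conclude from the uniform lower bound \eqref{2-24}. The paper measures smallness through $\mathfrak e(Q)=\||D|Q\|_{L^\infty}+\|Q'\|_{L^\infty}$ rather than $\|Q\|_{H^2}$, which is equivalent up to the Sobolev embedding constant.
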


    \begin{proof}
        Separate the flat linear part:
        \begin{equation}
            B_{\rm irr}(Q,W)=A_\infty(W)Q+\mathfrak N_W(Q),
            \label{4-20}
        \end{equation}
        where $\mathfrak N_W(Q):=\sigma(\kappa(Q)+Q'')+W^2[(2J(Q))^{-1}-1/2+|D|Q]$. Set $r:=|D|Q$ and $\mathfrak e(Q):=\|r\|_{L^\infty}+\|Q'\|_{L^\infty}\le C_{\rm emb}\|Q\|_{H^2}$. When $\mathfrak e(Q)\le1/4$, the graph bound and $J\ge9/16$ hold. The identities
        \[
        \kappa(Q)+Q''=\left(1-\frac{1+r}{J^{3/2}}\right)Q''+\frac{Q'}{J^{3/2}}(|D|Q)',\qquad \frac1{2J}-\frac12+r=\frac{3r^2-(Q')^2+2r(r^2+(Q')^2)}{2J}
        \]
        show, using $\|(|D|Q)'\|_{L^2}=\|Q''\|_{L^2}$, that
        \begin{equation}
            \|\mathfrak N_W(Q)\|_{L^2}\le C_{\rm N}\mathfrak e(Q)\|Q\|_{H^2},\qquad 0\le W\le W_+,\quad \mathfrak e(Q)\le\frac14.
            \label{4-21}
        \end{equation}
        By \eqref{2-25}, $\|A_\infty(W)Q\|_{L^2}\ge c_{\rm f}\|Q\|_{H^2}$, where $c_{\rm f}:=(1-W_+^2/c_{\min}^2)\min\{g,\sigma\}>0$. If $\mathfrak e(Q)\le r_*:=\min\{1/4,c_{\rm f}/(2C_{\rm N})\}$ and $B_{\rm irr}(Q,W)=0$, then $c_{\rm f}\|Q\|_{H^2}\le(c_{\rm f}/2)\|Q\|_{H^2}$, so $Q=0$. Thus every nonzero solution has $\mathfrak e(Q)>r_*$, and \eqref{4-19} follows with $\eta_*:=r_*/C_{\rm emb}$.
    \end{proof}

    Let $Q$ and $x_n$ be the first extracted profile and center, and set $R_n^{(1)}:=Y_n-Y^{(0)}-Q(\cdot-x_n)-Q(-\,\cdot-x_n)$. Separation of the profiles and weak convergence yield
    \begin{equation}
        \|Y_n\|_{H^2}^2=\|Y^{(0)}\|_{H^2}^2+2\|Q\|_{H^2}^2+\|R_n^{(1)}\|_{H^2}^2+o(1).
        \label{4-22}
    \end{equation}
    Each reflected pair contributes at least $2\eta_*^2$ to the squared norm. To iterate the decomposition, we must preserve the graph condition and control the nonlinear residual after subtraction.

    \begin{lemma}
        \label{lem4-5}
        Let $(Y_n,a_n,W_n)$ and $Y^{(0)}$ be as in Lemma~\ref{lem4-2}. Suppose that nonzero real profiles $Q_1,\ldots,Q_N\in H^{s+3}(\mathbb R)$, for some $N\ge0$, satisfy $B_{\rm irr}(Q_j,W_\infty)=0$ and $1+|D|Q_j\ge c_0$. Let positive centers obey
        \begin{equation}
            x_n^{(j)}\longrightarrow+\infty,\qquad |x_n^{(j)}-x_n^{(k)}|\longrightarrow\infty\quad\text{for }j\ne k,
            \label{4-23}
        \end{equation}
        and assume $Y_n(\cdot+x_n^{(j)})\rightharpoonup Q_j$ in $H^{s+3}$ and $Y_n(\cdot+x_n^{(j)})\to Q_j$ in $H_{\rm loc}^{s+2}$ for each $j$. Define
        \[
        S_n^{(N)}(x):=Y^{(0)}(x)+\sum_{j=1}^N\bigl[Q_j(x-x_n^{(j)})+Q_j(-x-x_n^{(j)})\bigr],
        \]
        and
        \begin{equation}
            R_n^{(N)}:=Y_n-S_n^{(N)}.
            \label{4-24}
        \end{equation}
        Then $R_n^{(N)}$ is even, bounded in $H^{s+3}$, and tends to zero in $H_{\rm loc}^{s+2}$ without translation and after translation by each $\pm x_n^{(j)}$. For all sufficiently large $n$,
        \begin{equation}
            1+|D|R_n^{(N)}\ge c_\sharp,\qquad c_\sharp:=\frac12\min\{1,c_0\}>0,
            \label{4-25}
        \end{equation}
        and
        \begin{equation}
            \|B_{\rm irr}(R_n^{(N)},W_\infty)\|_{L^2}\longrightarrow0.
            \label{4-26}
        \end{equation}
        For $N=0$, the sum is empty.
    \end{lemma}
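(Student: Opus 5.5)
The plan has three parts: the soft properties of $R_n^{(N)}$, the graph bound \eqref{4-25}, and the residual estimate \eqref{4-26}, which is the main work.

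\emph{Soft properties.} Evenness holds because $Y_n$ and $Y^{(0)}$ are even and each reflected pair $Q_j(x-x_n^{(j)})+Q_j(-x-x_n^{(j)})$ is even by construction. Boundedness in $H^{s+3}$ follows from \eqref{4-6} and translation invariance of Sobolev norms. For local convergence without translation, use \eqref{4-7}: $Y_n-Y^{(0)}\to0$ in $H^{s+2}_{\rm loc}$, and every translated profile $Q_j(\pm\cdot-x_n^{(j)})$ tends to zero in $H^{s+2}_{\rm loc}$ because $x_n^{(j)}\to\infty$. After translation by $x_n^{(j)}$, the hypothesis gives $Y_n(\cdot+x_n^{(j)})\to Q_j$. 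The remaining pieces $Y^{(0)}(\cdot+x_n^{(j)})$, $Q_k(\cdot+x_n^{(j)}-x_n^{(k)})$ for $k\ne j$, and all reflected copies (centered near $-2x_n^{(j)}$ or $-x_n^{(j)}-x_n^{(k)}$) escape to infinity by \eqref{4-23}, so they vanish locally. Translation by $-x_n^{(j)}$ then follows from evenness.

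\emph{Graph bound \eqref{4-25}.} I will show that $\||D|R_n^{(N)}-(\text{localized pieces})\|_{L^\infty}$ is small region by region. Write
\[
|D|R_n^{(N)}=|D|Y_n-|D|Y^{(0)}-\sum_j\bigl[(|D|Q_j)(\cdot-x_n^{(j)})+(|D|Q_j)(-\cdot-x_n^{(j)})\bigr].
\]
Each $|D|Q_j$ and $|D|Y^{(0)}$ lies in $H^{s+2}\subset C_0$, so all but at most one term is uniformly small at any given point. Choose a large $\rho$. Within $\rho$ of a center $\pm x_n^{(j)}$, local convergence of $|D|Y_n(\cdot+x_n^{(j)})\to|D|Q_j$ in $H^{s+1}_{\rm loc}$ gives $|D|R_n^{(N)}\approx0$; near the origin the same holds by Lemma~\ref{lem4-2}. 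Away from all centers every subtracted term is small, so $1+|D|R_n^{(N)}\ge1+|D|Y_n-o(1)\ge c_0-o(1)$. Combining the regions gives $1+|D|R_n^{(N)}\ge\min\{1,c_0\}-o(1)\ge c_\sharp$. The step needing care is making this uniform: I would argue by contradiction with a sequence of bad points $z_n$ and pass to translated limits, as in the proof of \eqref{4-14}.

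\emph{Residual estimate \eqref{4-26}.} This is the main obstacle, since $B_{\rm irr}$ is nonlinear and nonlocal. My plan is an approximate superposition of residuals via a partition of unity. Take smooth cutoffs $\chi_0$ near the origin and $\chi_{j,\pm}$ near $\pm x_n^{(j)}$, each equal to $1$ on balls of radius $\rho_n\to\infty$ that grow slowly enough to stay disjoint, and let $\chi_\infty$ be the complement. Then estimate $B_{\rm irr}(R_n^{(N)},W_\infty)$ on each region separately.

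On the $\chi_0$ region, $R_n^{(N)}$ is small in $W^{2,\infty}$ locally, and with \eqref{4-20}–\eqref{4-21} the residual is $A_\infty R_n^{(N)}+O(\mathfrak e)$. The key identity I expect to use is
\[
B_{\rm irr}(R)\approx B(Y_n,a_n,W_n)-B(Y^{(0)},a_\infty,W_\infty)-\textstyle\sum B_{\rm irr}(Q_j\text{ translates})+\text{cross terms},
\]
where the first three terms vanish. The cross terms are products of pieces with essentially disjoint supports, together with commutators of $|D|$ and $H$ with cutoffs. Those commutators are small because far-separated functions interact through the kernel of $H$ only via tails of size $O(\rho_n^{-1})$. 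The vortex term $q_{a_n,W_n}^2-W_\infty^2$ is localized near the origin and is absorbed into the $Y^{(0)}$ equation there.

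To organize the computation I would write $B_{\rm irr}(Y)=A_\infty Y+\mathfrak N(Y)$, where $\mathfrak N$ is a sum of products of $Y',Y'',|D|Y,(|D|Y)'$ with analytic functions of $(|D|Y,Y')$. Multilinear expressions in well-separated bumps then split, up to $o(1)$ in $L^2$, as sums over the separate bumps; this uses \eqref{2-21} for products of translated $H^s$ functions. Since the linear part splits exactly, the nonlinear splitting is the crux, and I would estimate it in $L^2$ using the uniform $H^{s+3}$ bounds and $L^\infty$ control.
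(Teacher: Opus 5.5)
Your soft properties and the graph bound \eqref{4-25} are fine and agree with the paper. The paper fixes $R_0$ with $\sum_j\sup_{|x|>R_0}\bigl||D|Q_j\bigr|\le c_0/2$ and splits into the $R_0$-neighbourhoods of the centers and their complement, so no contradiction argument is needed. Your overall plan for \eqref{4-26} also has the right shape: the linear part splits exactly and the vortex forcing is absorbed at the center.

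\emph{The gap is in \eqref{4-26}.} Since $B_{\rm irr}=A_\infty(W_\infty)+\mathfrak N$, everything reduces to
\[
\mathfrak E_n:=\mathfrak N(Y_n)-\mathfrak N(R_n)-\sum_{j=-N}^{N}\mathfrak N(Q_{j,n})\longrightarrow0\quad\text{in }L^2(\mathbb R),
\]
where $Q_{0,n}=Y^{(0)}$ and $Q_{\pm j,n}$ are the translated and reflected profiles. You leave this as ``I would estimate it,'' and the mechanism you name does not work for it.
\begin{itemize}
\item $R_n$ is not a well-separated bump. It is a varying sequence that vanishes only locally near the extracted centers. It may carry a non-negligible part of its mass elsewhere, because the lemma is applied at each stage before one knows whether further profiles exist.
\item Consequently \eqref{2-21}, which concerns translates of fixed functions, controls interactions among $Y^{(0)}$ and the $Q_j$, but says nothing about interactions involving $R_n$.
\item Multilinear splitting is also unavailable, since $\mathfrak N$ contains the non-polynomial factors $J^{-3/2}$ and $J^{-1}$ at a non-small argument.
\end{itemize}
In the complementary region, exactly where $R_n$ may be large, you must show that $\mathfrak N(Y_n)-\mathfrak N(R_n)$ is small using only the smallness of $S_n$ there. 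Your plan gives no mechanism for this.

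\emph{The missing idea is the structure the paper uses.} Write
\[
\mathfrak N(f)=\mathbf c(\mathbf z(f))\cdot\mathbf w(f)+k(\mathbf z(f)),\qquad \mathbf z(f)=(|D|f,f'),\quad \mathbf w(f)=(f'',(|D|f)').
\]
Here $\mathbf z$ and $\mathbf w$ are linear in $f$. The functions $\mathbf c$ and $k$ are smooth, vanish at $0$, and have bounded derivatives on bounded subsets of $\{1+z_1\ge c\}$.
\begin{itemize}
\item All nonlocality sits in the linear maps, so $\mathbf z(Y_n)=\mathbf z(R_n)+\sum_j\mathbf z(Q_{j,n})$ exactly. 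No commutators with cutoffs ever arise.
\item On the neighbourhoods of the centers, $Y_n$ converges locally to the relevant profile while $R_n$ and all other profiles vanish. Hence $\mathfrak E_n\to0$ there.
\item Off these neighbourhoods, apply the mean value theorem along $R_n+\theta S_n$. This is legitimate because $1+|D|(R_n+\theta S_n)=(1-\theta)(1+|D|R_n)+\theta(1+|D|Y_n)\ge\min\{c_\sharp,c_0\}$. So \eqref{4-25} is an input to \eqref{4-26}, not just a conclusion. It gives
\[
|\mathfrak N(Y_n)-\mathfrak N(R_n)|\le C\bigl(|\mathbf z(S_n)|\,|\mathbf w(R_n)|+|\mathbf w(S_n)|+|\mathbf z(S_n)|\bigr).
\]
\item The right-hand side is small in $L^2$ by the $L^\infty$ and $L^2$ tails of $\mathbf z(Q_j)$ and $\mathbf w(Q_j)$, with $\mathbf w(R_n)$ merely bounded. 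Let $n\to\infty$ first, then $R_0\to\infty$.
\end{itemize}

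\emph{Vortex term.} $q_{a_n,W_n}^2-W_\infty^2$ is not in $L^2$ when $W_n\ne W_\infty$, and $F_\infty=(q_{a_\infty,W_\infty}^2-W_\infty^2)/2$ only decays like $x^{-2}$. The clean route is:
\begin{itemize}
\item use $B(Y_n,a_\infty,W_\infty)\to0$ in $L^2$ and $B=B_{\rm irr}+F_\infty/J$;
\item deduce $B_{\rm irr}(R_n,W_\infty)=F_\infty\bigl[J(Y^{(0)})^{-1}-J(Y_n)^{-1}\bigr]+o_{L^2}(1)$;
\item conclude by dominated convergence, since the bracket tends to zero locally uniformly and stays bounded.
\end{itemize}
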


    \begin{proof}
        Put $Q_0:=Y^{(0)}$, $Q_{-j}(x):=Q_j(-x)$, and
        \[
        z_{0,n}:=0,\qquad z_{j,n}:=x_n^{(j)},\qquad z_{-j,n}:=-x_n^{(j)},\qquad Q_{j,n}:=Q_j(\cdot-z_{j,n}).
        \]
        Write $S_n:=\sum_{j=-N}^NQ_{j,n}$ and $R_n:=R_n^{(N)}$. Subtracting finitely many profiles preserves both evenness and the uniform $H^{s+3}$ bound. By the hypotheses, reflection symmetry, and the nonlocal convergence argument of Lemma~\ref{lem4-2},
        \[
        Y_n(\cdot+z_{j,n})\to Q_j\quad\text{in }H_{\rm loc}^{s+2},\qquad (|D|Y_n)(\cdot+z_{j,n})\to|D|Q_j\quad\text{in }H_{\rm loc}^{s+1}.
        \]
        All other profiles vanish locally in these coordinates. Consequently, the corresponding translates of $R_n$ and $|D|R_n$ also converge locally to zero.

        To establish \eqref{4-25}, choose $R_0$ large enough that
        \[
        \sum_{j=-N}^N\sup_{|x|>R_0}\bigl||D|Q_j(x)\bigr|\le\frac{c_0}{2},\qquad \mathcal U_{n,R_0}:=\bigcup_{j=-N}^N(z_{j,n}-R_0,z_{j,n}+R_0).
        \]
        Such a choice is possible because $|D|Q_j\in C_0(\mathbb R)$. Outside $\mathcal U_{n,R_0}$, the graph bound for $Y_n$ gives $1+|D|R_n\ge c_0/2$; on $\mathcal U_{n,R_0}$, the local convergence of $|D|R_n$ gives $1+|D|R_n\ge1/2$ for large $n$. Hence \eqref{4-25} holds, with $c_\sharp$ independent of $N$.

        For the nonlinear residual, set $\mathfrak N:=\mathfrak N_{W_\infty}$ as in \eqref{4-20}, and define
        \begin{equation}
            \mathfrak E_n:=\mathfrak N(Y_n)-\mathfrak N(R_n)-\sum_{j=-N}^N\mathfrak N(Q_{j,n})\longrightarrow0\quad\text{in }L^2(\mathbb R).
            \label{4-27}
        \end{equation}
        To prove this, set $\mathbf z(f):=(|D|f,f')$ and $\mathbf w(f):=(f'',(|D|f)')$. Then
        \[
        \mathfrak N(f)=\mathbf c(\mathbf z(f))\cdot\mathbf w(f)+k(\mathbf z(f)),
        \]
        where, for $\mathbf z=(z_1,z_2)$ and $J_{\mathbf z}:=(1+z_1)^2+z_2^2$,
        \[
        \mathbf c(\mathbf z):=\sigma\left(1-\frac{1+z_1}{J_{\mathbf z}^{3/2}},\frac{z_2}{J_{\mathbf z}^{3/2}}\right),\qquad k(\mathbf z):=W_\infty^2\left(\frac1{2J_{\mathbf z}}-\frac12+z_1\right).
        \]
        These functions vanish at the origin and have uniformly bounded first derivatives under the Sobolev and graph bounds. For fixed $R_0$, the intervals in $\mathcal U_{n,R_0}$ are disjoint for large $n$; there $Y_n$ converges to the corresponding profile while $R_n$ and the others vanish locally, so $\|\mathfrak E_n\|_{L^2(\mathcal U_{n,R_0})}\to0$.

        On $\mathcal E_{n,R_0}:=\mathbb R\setminus\mathcal U_{n,R_0}$, let $\varepsilon_N(R_0)$ be the sum, over $-N\le j\le N$, of the $L^\infty$ and $L^2$ tail norms of $\mathbf z(Q_j)$ and the $L^2$ tail norms of $\mathbf w(Q_j)$ outside $(-R_0,R_0)$. Then $\varepsilon_N(R_0)\to0$ as $R_0\to\infty$, and
        \[
        \|\mathbf z(S_n)\|_{L^\infty(\mathcal E_{n,R_0})}+\|\mathbf z(S_n)\|_{L^2(\mathcal E_{n,R_0})}+\|\mathbf w(S_n)\|_{L^2(\mathcal E_{n,R_0})}\le\varepsilon_N(R_0).
        \]
        Every intermediate surface $R_n+\theta S_n$, $0\le\theta\le1$, satisfies
        \[
        1+|D|(R_n+\theta S_n)=(1-\theta)(1+|D|R_n)+\theta(1+|D|Y_n)\ge\min\{c_\sharp,c_0\}>0.
        \]
        The mean value theorem therefore gives, on $\mathcal E_{n,R_0}$,
        \[
        |\mathfrak N(Y_n)-\mathfrak N(R_n)|\le C_N\bigl(|\mathbf z(S_n)|\,|\mathbf w(R_n)|+|\mathbf w(S_n)|+|\mathbf z(S_n)|\bigr),
        \]
        and hence
        \[
        \|\mathfrak N(Y_n)-\mathfrak N(R_n)\|_{L^2(\mathcal E_{n,R_0})}+\sum_{j=-N}^N\|\mathfrak N(Q_{j,n})\|_{L^2(\mathcal E_{n,R_0})}\le C_N\varepsilon_N(R_0).
        \]
        Taking first $n\to\infty$ and then $R_0\to\infty$ proves \eqref{4-27}. No smallness of $R_n$ away from the extracted profiles is required.

        Parameter continuity and $B(Y_n,a_n,W_n)=0$ imply $B(Y_n,a_\infty,W_\infty)\to0$ in $L^2$. With $F_\infty:=(q_{a_\infty,W_\infty}^2-W_\infty^2)/2\in L^2$, the Bernoulli residual satisfies
        \[
        B(Y,a_\infty,W_\infty)=B_{\rm irr}(Y,W_\infty)+F_\infty/J(Y).
        \]
        The centered profile satisfies the full Bernoulli equation, while every distant profile satisfies $B_{\rm irr}(Q_j,W_\infty)=0$. Thus the additivity of $A_\infty(W_\infty)$ and \eqref{4-27} give
        \begin{equation}
            B_{\rm irr}(R_n,W_\infty)=F_\infty\bigl[J(Y^{(0)})^{-1}-J(Y_n)^{-1}\bigr]+o_{L^2}(1).
            \label{4-28}
        \end{equation}
        The bracket converges locally uniformly to zero and remains uniformly bounded by the graph conditions. Dominated convergence against $|F_\infty|^2$ proves \eqref{4-26}.
    \end{proof}

    For each fixed $N$, the remainder tends weakly to zero in $H^{s+3}$ after translation by every extracted center, including zero. Consequently, the translated profiles and the remainder are asymptotically orthogonal in $H^2$ and $H^{s+3}$. Expanding \eqref{4-24} yields
    \begin{equation}
        \|Y_n\|_{H^r}^2=\|Y^{(0)}\|_{H^r}^2+2\sum_{j=1}^N\|Q_j\|_{H^r}^2+\|R_n^{(N)}\|_{H^r}^2+o(1),\qquad r=2,\ s+3.
        \label{4-29}
    \end{equation}
    This norm splitting controls each step of the extraction.

    \begin{proposition}
        \label{prop4-6}
        Let $s>\frac12$, and let $(Y_n,a_n,W_n)\in\mathscr U^s$ be coupled solutions satisfying \eqref{4-5}. After extraction, there exist a nontrivial centered solution $(Y^{(0)},a_\infty,W_\infty)$ as in Lemma~\ref{lem4-2}, a finite integer $N\ge0$, and nonzero real profiles $Q_1,\ldots,Q_N\in H^{s+3}(\mathbb R)$ satisfying $B_{\rm irr}(Q_j,W_\infty)=0$ and $1+|D|Q_j\ge c_0$. We have $a_n\to a_\infty>0$, $W_n\to W_\infty\in[W_-,W_+]$, and there are positive centers $x_n^{(j)}$, $1\le j\le N$, satisfying \eqref{4-23} (an empty family when $N=0$), such that
        \begin{equation}
            Y_n(x)=Y^{(0)}(x)+\sum_{j=1}^N\bigl[Q_j(x-x_n^{(j)})+Q_j(-x-x_n^{(j)})\bigr]+R_n^{(N)}(x),
            \label{4-30}
        \end{equation}
        with
        \begin{equation}
            \|R_n^{(N)}\|_{H^{s+2}}\longrightarrow0.
            \label{4-31}
        \end{equation}
        For each $j$, $Y_n(\cdot+x_n^{(j)})$ converges weakly to $Q_j$ in $H^{s+3}$ and strongly in $H_{\rm loc}^{s+2}$. Moreover,
        \begin{equation}
            \|Y_n\|_{H^r}^2=\|Y^{(0)}\|_{H^r}^2+2\sum_{j=1}^N\|Q_j\|_{H^r}^2+o(1),\qquad r=2,\ s+2.
            \label{4-32}
        \end{equation}
        For $\eta_*$ from Lemma~\ref{lem4-4},
        \begin{equation}
            2N\eta_*^2\le\limsup_{n\to\infty}\|Y_n\|_{H^2}^2-\|Y^{(0)}\|_{H^2}^2\le R^2.
            \label{4-33}
        \end{equation}
        Thus $N=0$ if and only if the extracted subsequence converges strongly to $Y^{(0)}$ in $H^{s+2}(\mathbb R)$.
    \end{proposition}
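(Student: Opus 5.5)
The proof will be an iterative extraction driven by Lemmas~\ref{lem4-2}--\ref{lem4-5}, with termination forced by the energy quantization of Lemma~\ref{lem4-4}. First apply Lemma~\ref{lem4-2} to obtain the nontrivial centered solution $(Y^{(0)},a_\infty,W_\infty)$, the parameter limits, and the uniform $H^{s+3}$ bound \eqref{4-6}. Then set $N=0$, $R_n^{(0)}=Y_n-Y^{(0)}$, and proceed inductively.

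\textbf{Inductive step.} Suppose $N$ profiles have been extracted as in Lemma~\ref{lem4-5}, with centers satisfying \eqref{4-23}. Consider the local mass
\[
\eta_n^{(N)}:=\sup_x\int_{x-1}^{x+1}|R_n^{(N)}|^2.
\]

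\emph{Vanishing case.} If $\eta_n^{(N)}\to0$, we stop. The goal is then $\|R_n^{(N)}\|_{H^{s+2}}\to0$, and for this we adapt the argument of Lemma~\ref{lem4-3} to the remainder.
\begin{enumerate}
\item The uniform $H^{s+3}$ bound together with vanishing gives $\|R_n^{(N)}\|_{W^{2,\infty}}+\||D|R_n^{(N)}\|_{W^{1,\infty}}\to0$, as in \eqref{4-14}.
\item In the decomposition \eqref{4-20}, the bound \eqref{4-21} then gives $\|\mathfrak N_{W_\infty}(R_n^{(N)})\|_{L^2}\le o(1)\|R_n^{(N)}\|_{H^2}$.
\item Combining this with \eqref{4-26} yields $A_\infty(W_\infty)R_n^{(N)}=o_{L^2}(1)+o(1)\|R_n^{(N)}\|_{H^2}$.
\item Lemma~\ref{lem2-5} then gives $\|R_n^{(N)}\|_{H^2}\to0$, and interpolation with the $H^{s+3}$ bound gives \eqref{4-31}.
\end{enumerate}
This route avoids the variable-coefficient operator $A^{(0)}$ entirely. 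The subtraction in Lemma~\ref{lem4-5} reduces the problem to the flat irrotational residual, and smallness in $W^{1,\infty}$ makes the nonlinearity perturbative.

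\emph{Nonvanishing case.} Otherwise, after extraction there are centers $y_n$ with $\int_{y_n-1}^{y_n+1}|R_n^{(N)}|^2\ge\eta>0$. Since $R_n^{(N)}$ vanishes locally near $0$ and near each $\pm x_n^{(j)}$, we get $|y_n|\to\infty$ and $|y_n\mp x_n^{(j)}|\to\infty$. By evenness we may take $y_n\to+\infty$. We then argue as in the second half of Lemma~\ref{lem4-3}:
\begin{enumerate}
\item Extract the weak and local limit $Q_{N+1}\ne0$ of $Y_n(\cdot+y_n)$; since the other profiles vanish locally at $y_n$, this coincides with the local limit of $R_n^{(N)}(\cdot+y_n)$.
\item Pass to the limit in the translated Bernoulli equation \eqref{4-17}, using $q_{a_n,W_n}(\cdot+y_n)\to W_\infty$ locally, to obtain $B_{\rm irr}(Q_{N+1},W_\infty)=0$ and $1+|D|Q_{N+1}\ge c_0$.
\end{enumerate}
Setting $x_n^{(N+1)}:=y_n$, the hypotheses of Lemma~\ref{lem4-5} hold with $N+1$ profiles.

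\textbf{Termination.} The norm splitting \eqref{4-29}, combined with $\|Q_j\|_{H^2}\ge\eta_*$ from Lemma~\ref{lem4-4}, gives
\[
2N\eta_*^2\le\limsup\|Y_n\|_{H^2}^2-\|Y^{(0)}\|_{H^2}^2\le R^2.
\]
The upper bound uses $\|Y_n\|_{H^2}\le R$. Hence $N\le R^2/(2\eta_*^2)$, so the iteration must stop after finitely many steps, necessarily in the vanishing case. This proves \eqref{4-30}, \eqref{4-31} and \eqref{4-33}.

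\textbf{Conclusions.} Identity \eqref{4-32} for $r=2$ follows from \eqref{4-29} and \eqref{4-31}. For $r=s+2$, the same asymptotic orthogonality argument applies: weak convergence of translates in $H^{s+3}$ and divergence of center separations make the cross terms vanish, and the remainder term is zero in the limit. Finally, $N=0$ is equivalent to strong convergence. One direction is \eqref{4-31} with $N=0$. Conversely, strong convergence forces every translated weak limit at diverging centers to vanish, which is incompatible with a nonzero $Q_j$.

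\textbf{Main obstacle.} I expect the hardest step to be the vanishing case: showing that the remainder is strongly small when only local mass vanishes. The key point is that \eqref{4-26} needs only $L^2$ convergence of the irrotational residual, while \eqref{4-21} requires $\mathfrak e(R_n)\le 1/4$. I would secure the latter from the $W^{1,\infty}$ decay in step 1, which follows from the uniform $H^{s+3}$ bound by a contradiction argument over translated subsequences, as in \eqref{4-14}.
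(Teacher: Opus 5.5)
Your proposal is correct and follows the paper's proof essentially step for step: you iterate the extraction via Lemma~\ref{lem4-5}, close the vanishing case with the flat operator $A_\infty(W_\infty)$, the bounds \eqref{4-20}--\eqref{4-21}, the residual estimate \eqref{4-26} and interpolation, and force termination through Lemma~\ref{lem4-4} and the norm splitting \eqref{4-29}. The only difference is cosmetic: you obtain $B_{\rm irr}(Q_{N+1},W_\infty)=0$ from the translated full Bernoulli equation \eqref{4-17} rather than from the translated remainder residual \eqref{4-26}, which is equally valid because $q_{a_n,W_n}(\cdot+y_n)\to W_\infty$ locally and the two translated sequences have the same limit.
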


    \begin{proof}
        By Lemma~\ref{lem4-2}, the centered limit exists and $\{Y_n\}$ is uniformly bounded in $H^{s+3}$. After extraction, let $\|Y_n\|_{H^2}^2\to E_2\le R^2$. Suppose $k$ reflected pairs have been extracted and let $R_n^{(k)}$ be given by \eqref{4-24}, initially with $k=0$. Lemma~\ref{lem4-5} provides a uniform $H^{s+3}$ bound, the graph condition, local convergence to zero near every extracted center, and $B_{\rm irr}(R_n^{(k)},W_\infty)\to0$ in $L^2$.

        After passing to a subsequence, let
        \[
        \mu_n^{(k)}:=\sup_{x\in\mathbb R}\int_{x-1}^{x+1}|R_n^{(k)}(y)|^2\,dy
        \]
        have a limit. If $\mu_n^{(k)}\to0$, the translation argument leading to \eqref{4-14}, which uses only the uniform $H^{s+3}$ bound, gives $\mathfrak e(R_n^{(k)})\to0$. By Lemma~\ref{lem2-5} and \eqref{4-20}--\eqref{4-21},
        \[
        \bigl(1-C\mathfrak e(R_n^{(k)})\bigr)\|R_n^{(k)}\|_{H^2}\le C\|B_{\rm irr}(R_n^{(k)},W_\infty)\|_{L^2}\longrightarrow0.
        \]
        The left coefficient is eventually positive, so $R_n^{(k)}\to0$ in $H^2$ and, by interpolation, in $H^{s+2}$. We stop with $N=k$.

        Otherwise, the local mass stays positive. By evenness, choose $\eta_k>0$ and centers $y_n\ge0$ with $\int_{y_n-1}^{y_n+1}|R_n^{(k)}|^2\,dx\ge\eta_k$. Local convergence around zero and the previously extracted centers forces $y_n\to+\infty$ and $|y_n-x_n^{(j)}|\to\infty$ for $1\le j\le k$. The translates $R_n^{(k)}(\cdot+y_n)$ converge weakly in $H^{s+3}$ and strongly in $H_{\rm loc}^{s+2}$ to a nonzero real profile $Q_{k+1}$. Translation of \eqref{4-26}, the graph bound, and the nonlocal convergence argument of Lemma~\ref{lem4-3} show that $B_{\rm irr}(Q_{k+1},W_\infty)=0$. Since all previously subtracted profiles vanish locally around $y_n$, the original translates $Y_n(\cdot+y_n)$ have the same limit; hence $1+|D|Q_{k+1}\ge c_0$. Symmetry supplies the reflection, and setting $x_n^{(k+1)}:=y_n$ gives the next pair satisfying Lemma~\ref{lem4-5}.

        Only finitely many extractions are possible. Indeed, Lemma~\ref{lem4-4} and \eqref{4-29} imply
        \[
        E_2\ge\|Y^{(0)}\|_{H^2}^2+2\sum_{j=1}^k\|Q_j\|_{H^2}^2\ge\|Y^{(0)}\|_{H^2}^2+2k\eta_*^2,
        \]
        which bounds $k$ independently of the extraction. At the final stage, vanishing gives \eqref{4-31}, while asymptotic orthogonality yields \eqref{4-32} and \eqref{4-33}. Finally,
        \[
        \|Y_n-Y^{(0)}\|_{H^{s+2}}^2=2\sum_{j=1}^N\|Q_j\|_{H^{s+2}}^2+o(1),
        \]
        so strong convergence to the centered limit holds precisely when $N=0$.
    \end{proof}

    Thus a noncompact sequence from Proposition~\ref{prop3-3} necessarily contains at least one reflected pair. We next characterize compactness in terms of uniform surface tails.

    \smallskip

    \subsection{A compactness criterion}
    For uniformly controlled coupled solutions, Proposition~\ref{prop4-6} shows that strong compactness fails precisely when distant profiles occur. This criterion can be expressed through uniform decay of the surface tails.

    \begin{corollary}
        \label{cor4-7}
        Let $s>\frac12$, and let $\mathfrak S\subset\mathscr U^s$ be a family of solutions of $\mathscr F=0$ satisfying, for some $R,c_0>0$ and $[W_-,W_+]\Subset(0,c_{\min})$,
        \[
        \|Y\|_{H^{s+2}}\le R,\qquad 1+|D|Y\ge c_0,\qquad W\in[W_-,W_+]
        \]
        for every $(Y,a,W)\in\mathfrak S$. Then $\mathfrak S$ is relatively compact in $\mathcal X^s\times\mathbb R$ if and only if
        \begin{equation}
            \lim_{\rho\to\infty}\sup_{(Y,a,W)\in\mathfrak S}\int_{|x|>\rho}|Y(x)|^2\,dx=0.
            \label{4-34}
        \end{equation}
        Equivalently, relative compactness holds if and only if
        \begin{equation}
            \lim_{\rho\to\infty}\sup_{(Y,a,W)\in\mathfrak S}\sup_{|x|>\rho}|Y(x)|=0.
            \label{4-35}
        \end{equation}
        Under either condition, the closure of $\mathfrak S$ is compact in the admissible solution set; in particular, $\mathfrak S$ is compact if it is closed.
    \end{corollary}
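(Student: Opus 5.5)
The plan is to reduce everything to Proposition~\ref{prop4-6}, using the fact that every sequence in $\mathfrak S$ satisfies \eqref{4-5}.

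\emph{Necessity of \eqref{4-34}.} Suppose $\mathfrak S$ is relatively compact in $\mathcal X^s\times\mathbb R$. Then the set of surfaces $\{Y\}$ is relatively compact in $H^{s+2}\subset L^2$. A relatively compact subset of $L^2(\mathbb R)$ has uniformly small tails: cover it by finitely many $\epsilon$-balls and use the tail decay of each center. This gives \eqref{4-34}.

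\emph{Sufficiency of \eqref{4-34}.} Take any sequence $(Y_n,a_n,W_n)$ in $\mathfrak S$ and apply Proposition~\ref{prop4-6}. After extraction, $a_n\to a_\infty>0$, $W_n\to W_\infty$, and the decomposition \eqref{4-30} holds with some $N\ge0$. If $N\ge1$, the translates $Y_n(\cdot+x_n^{(1)})$ converge to $Q_1\ne0$ in $H^{s+2}_{\rm loc}$, hence in $L^2(-1,1)$. Since $x_n^{(1)}\to\infty$, the interval $(x_n^{(1)}-1,x_n^{(1)}+1)$ eventually lies in $\{|x|>\rho\}$ for every fixed $\rho$. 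This gives
\[
\liminf_n\int_{|x|>\rho}|Y_n|^2\ge\int_{-1}^1|Q_1|^2>0
\]
uniformly in $\rho$, which contradicts \eqref{4-34}. So $N=0$, and \eqref{4-31} gives $Y_n\to Y^{(0)}$ in $H^{s+2}$. Together with the parameter convergence, this yields relative compactness in $\mathcal X^s\times\mathbb R$.

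\emph{Equivalence with \eqref{4-35}.} Lemma~\ref{lem4-2} gives a uniform $H^{s+3}$ bound, hence a uniform $W^{1,\infty}$ bound on $\mathfrak S$.
\begin{itemize}
\item[(a)] Assume \eqref{4-34}. For $|x|>\rho+1$, the one-dimensional Gagliardo--Nirenberg estimate on $(x-1,x+1)$ gives $|Y(x)|^2\lesssim\|Y\|_{L^2(x-1,x+1)}\|Y\|_{H^1(x-1,x+1)}+\|Y\|_{L^2(x-1,x+1)}^2$. The right-hand side is uniformly small, so \eqref{4-35} holds.
\item[(b)] Assume \eqref{4-35}. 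The $L^\infty$ tail alone does not control the $L^2$ tail, so we do not argue directly. Instead we rerun the sufficiency argument. If $N\ge1$, then $Y_n(x_n^{(1)}+\cdot)\to Q_1$ locally uniformly, so $\sup_{|x|>\rho}|Y_n(x)|\ge\tfrac12\|Q_1\|_{L^\infty(-1,1)}>0$ for large $n$. This contradicts \eqref{4-35}, hence $N=0$ and compactness follows.
\end{itemize}
Thus both tail conditions are each equivalent to relative compactness. Since compactness implies \eqref{4-34}, this closes the loop.

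\emph{Closure in the admissible solution set.} Limits lie in $\mathscr U^s$ and solve $\mathscr F=0$ by Lemma~\ref{lem4-2}: there $1+|D|Y^{(0)}\ge c_0$, $a_\infty>0$, and $W_\infty\in[W_-,W_+]\Subset(0,c_{\min})$. So the closure stays inside the admissible solution set. The main care point is checking that $\mathfrak S$ meets exactly the hypotheses \eqref{4-5} of Proposition~\ref{prop4-6}; no further difficulty is expected.
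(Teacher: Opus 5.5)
Your proposal is correct and follows essentially the paper's argument: both directions reduce to Proposition~\ref{prop4-6}, with any distant profile $Q_j\ne0$ contradicting either tail condition. The only differences are minor. You obtain \eqref{4-35} from \eqref{4-34} by a local Gagliardo--Nirenberg bound, whereas the paper derives it directly from compactness via finite coverings in $C_0(\mathbb R)$. Also, the fact that $Q_1\not\equiv0$ on $(-1,1)$ holds by the extraction (see \eqref{4-16}) but is not part of the statement of Proposition~\ref{prop4-6}. It is therefore cleaner to choose, as the paper does, an interval $(-\ell,\ell)$ or a point $x_*$ where $Q_1\ne0$.
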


    \begin{proof}
        If $\mathfrak S$ is relatively compact, its surface projection is relatively compact in $L^2(\mathbb R)$ and $C_0(\mathbb R)$. Finite coverings in these spaces give the uniform tails in \eqref{4-34} and \eqref{4-35}.

        Conversely, take a sequence $(Y_n,a_n,W_n)\in\mathfrak S$ and apply Proposition~\ref{prop4-6}. If a nonzero distant profile $Q_j$ occurs, choose $\ell>0$ with $\int_{-\ell}^{\ell}|Q_j|^2\,dx>0$. By local convergence after translation,
        \[
        \int_{x_n^{(j)}-\ell}^{x_n^{(j)}+\ell}|Y_n(x)|^2\,dx\longrightarrow\int_{-\ell}^{\ell}|Q_j(x)|^2\,dx>0,
        \]
        contradicting \eqref{4-34} since $x_n^{(j)}\to+\infty$. Likewise, choosing $x_*$ with $Q_j(x_*)\ne0$ gives $Y_n(x_n^{(j)}+x_*)\to Q_j(x_*)\ne0$, contradicting \eqref{4-35}. Hence either tail condition forces $N=0$, and Proposition~\ref{prop4-6} yields strong convergence of the surface and convergence of the depth and speed. By Lemma~\ref{lem4-1} and continuity of $\mathscr F$ and $|D|:H^{s+2}\to L^\infty$, all limits satisfy the equation and the graph bound within the admissible set. This proves the assertions.
    \end{proof}

    The equivalence uses the wave equations through Proposition~\ref{prop4-6}. For arbitrary bounded Sobolev families, uniform decay in supremum norm need not control the $L^2$ tails.

    Even without a tail condition, the profile decomposition quantifies the $L^2$ mass escaping to horizontal infinity. Norm splitting and local convergence to the centered profile give
    \begin{equation}
        \lim_{\rho\to\infty}\lim_{n\to\infty}\int_{|x|>\rho}|Y_n(x)|^2\,dx=2\sum_{j=1}^N\|Q_j\|_{L^2}^2.
        \label{4-36}
    \end{equation}
    The limit in $n$ precedes the limit in $\rho$, since each fixed $Y_n$ has vanishing $L^2$ tails.

    Under either compactness condition, the conformal normalization at infinity is also uniform over $\mathfrak S$. Setting $\Phi_Y:=Z_Y-\operatorname{id}$, the Poisson representation and Sobolev embedding yield
    \[
    \|\Phi_Y-\Phi_{\widetilde Y}\|_{L^\infty(\overline{\Pi_-})}+\|\Phi_Y'-\Phi_{\widetilde Y}'\|_{L^\infty(\overline{\Pi_-})}\le C_s\|Y-\widetilde Y\|_{H^{s+2}}.
    \]
    Since $\Phi_Y$ and $\Phi_Y'$ vanish at infinity for each fixed $Y$, a finite covering of the compact closure of the surface family gives
    \begin{equation}
        \lim_{\rho\to\infty}\sup_{(Y,a,W)\in\mathfrak S}\sup_{\substack{z\in\overline{\Pi_-}\\|z|>\rho}}\bigl(|Z_Y(z)-z|+|Z_Y'(z)-1|\bigr)=0.
        \label{4-37}
    \end{equation}
    This uniformity is a consequence of compactness; it was not assumed in controlling the vortex position or extracting profiles. Combined with Lemma~\ref{lem4-1}, these facts lead to the refined global alternatives.

    \smallskip

    \subsection{Refined global alternatives}
    Lemma~\ref{lem4-1} shows that the surface, graph, and speed bounds also control the vortex depth. Along the curve $\gamma(t)=(Y(t),a(t),W(t))$ in \eqref{3-18}, set $c(t):=\inf_{\mathbb R}(1+|D|Y(t))$ and define
    \begin{equation}
        \mathcal N_{\rm s}(t):=\|Y(t)\|_{H^{s+2}}+\frac1{c(t)}+\frac1{W(t)}+\frac1{c_{\min}-W(t)}.
        \label{4-38}
    \end{equation}

    \begin{proposition}
        \label{prop4-8}
        For the global curve of Proposition~\ref{prop3-3}, either
        \begin{equation}
            \mathcal N_{\rm s}(t)\longrightarrow\infty\qquad\text{as }t\to\infty,
            \label{4-39}
        \end{equation}
        or there exists a sequence $t_n\to\infty$ such that
        \begin{equation}
            \sup_n\mathcal N_{\rm s}(t_n)<\infty,
            \label{4-40}
        \end{equation}
        and, after extraction, $(Y_n,a_n,W_n):=\gamma(t_n)$ admits the decomposition of Proposition~\ref{prop4-6} with $1\le N<\infty$. In particular, $a_n\to a_\infty>0$, $W_n\to W_\infty\in(0,c_{\min})$, and the centered limit is a nontrivial coupled solution. The nonzero distant profiles satisfy $B_{\rm irr}(Q_j,W_\infty)=0$, their centers obey \eqref{4-23}, and the remainder converges strongly to zero in $H^{s+2}(\mathbb R)$ as in \eqref{4-31}.
    \end{proposition}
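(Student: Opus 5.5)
The plan is to deduce Proposition~\ref{prop4-8} from Proposition~\ref{prop3-3} by comparing the two control quantities $\mathcal N$ and $\mathcal N_{\rm s}$, and then to apply Proposition~\ref{prop4-6} along a bounded subsequence.

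Suppose \eqref{4-39} fails. Then there are $t_n\to\infty$ with $\sup_n\mathcal N_{\rm s}(t_n)<\infty$, which gives \eqref{4-40}. Along this sequence we have the bounds $\|Y(t_n)\|_{H^{s+2}}\le R$, $c(t_n)\ge c_0>0$, and $W(t_n)\in[W_-,W_+]\Subset(0,c_{\min})$. Lemma~\ref{lem4-1}, in the form \eqref{4-4}, then yields $a(t_n)\in[a_-,a_+)$. Hence $b(t_n)=a(t_n)^{-1}$ is bounded above and bounded away from zero.

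The distance to $\partial\widehat{\mathscr U}^{\,s}$ is the minimum of $\operatorname{dist}(Y,\partial\mathcal O^s)$, $b$, $W$ and $c_{\min}-W$. The last three are already controlled. The first satisfies $\operatorname{dist}(Y,\partial\mathcal O^s)\ge c_Y/C_s$ by the estimate following \eqref{3-18}, so it is bounded below by $c_0/C_s$. Consequently $\sup_n\mathcal N(t_n)<\infty$.

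Because $t_n\to\infty$ and $\mathcal N(t_n)$ stays bounded, the alternative \eqref{3-16} of Proposition~\ref{prop3-3} fails. Proposition~\ref{prop3-3} therefore provides a sequence with bounded $\mathcal N$ and no convergent subsequence. The one subtle point is that this sequence is produced by the theorem and need not coincide with the $t_n$ chosen above.

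To handle this, I apply the dichotomy directly to the sequence $s_n$ furnished by \eqref{3-17}. The converse of the bound just proved also holds. A bound on $\mathcal N(s_n)$ gives bounded $\|Y\|_{H^{s+2}}$, bounded $b$ and $W$, and $W$, $c_{\min}-W$ bounded below. It also gives $c_Y\ge c_0$, via the inequality $\operatorname{dist}(Y,\partial\mathcal O^s)\le c_Y\|Y\|/(1-c_Y)$ proved before \eqref{3-19}. Hence $\sup_n\mathcal N_{\rm s}(s_n)<\infty$, and I rename $s_n$ as $t_n$.

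Now \eqref{3-19}, equivalently \eqref{4-5}, holds, so Proposition~\ref{prop4-6} applies after extraction. It gives the centered nontrivial solution from Lemma~\ref{lem4-2}, the limits $a_n\to a_\infty>0$ and $W_n\to W_\infty\in[W_-,W_+]\subset(0,c_{\min})$, and profiles $Q_j$ with $B_{\rm irr}(Q_j,W_\infty)=0$. The centers satisfy \eqref{4-23} and the remainder satisfies \eqref{4-31}.

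It remains to show $N\ge1$. By the last assertion of Proposition~\ref{prop4-6}, $N=0$ would force $Y_n\to Y^{(0)}$ strongly in $H^{s+2}$. Since $b_n\to a_\infty^{-1}$, this would make $(Y_n,b_n)$ convergent in $\mathcal X^s$, contradicting the choice of the sequence from \eqref{3-17}. Finiteness of $N$ follows from \eqref{4-33}.

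I expect no real obstacle. The only care needed is to observe that the bounds on $\mathcal N$ and $\mathcal N_{\rm s}$ are mutually equivalent along solutions, with Lemma~\ref{lem4-1} supplying control of $b$. This equivalence lets the noncompact sequence from Proposition~\ref{prop3-3} serve as the sequence in \eqref{4-40}.
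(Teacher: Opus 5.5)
Your proposal is correct and follows the paper's proof essentially step for step. It uses the two-way comparison of $\mathcal N$ and $\mathcal N_{\rm s}$: Lemma~\ref{lem4-1} controls $b$, and the bounds $c_Y/C_s\le\operatorname{dist}(Y,\partial\mathcal O^s)\le c_Y\|Y\|_{H^{s+2}}/(1-c_Y)$ control the graph condition. Like the paper, you then apply Proposition~\ref{prop4-6} to the noncompact sequence supplied by Proposition~\ref{prop3-3}, and exclude $N=0$ because it would force $(Y_n,b_n)$ to converge in $\mathcal X^s$.
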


    \begin{proof}
        We compare $\mathcal N_{\rm s}$ with $\mathcal N$ from \eqref{3-14}. Write $c_Y:=\inf_{\mathbb R}(1+|D|Y)$, $\delta_Y:=\operatorname{dist}(Y,\partial\mathcal O^s)$, and $b:=a^{-1}$. If $\mathcal N_{\rm s}\le K$, then $\|Y\|_{H^{s+2}}\le K$, $c_Y\ge K^{-1}$, and $W,c_{\min}-W\ge K^{-1}$. By Sobolev embedding, $1+|D|Y\le C_0:=1+C_sK$. Lemma~\ref{lem4-1} gives
        \[
        \frac{2\pi}{\Gamma}W<b\le\frac{4\pi}{\Gamma}\frac{C_0}{c_Y}W.
        \]
        Thus $b$ is bounded above and away from zero. Since $\delta_Y\ge c_Y/C_s$ and
        \[
        \operatorname{dist}((Y,b,W),\partial\widehat{\mathscr U}^{\,s})=\min\{\delta_Y,b,W,c_{\min}-W\},
        \]
        we obtain $\mathcal N\le C_K$.

        Conversely, if $\mathcal N\le K$, then $\|Y\|_{H^{s+2}}\le K$, $\delta_Y\ge K^{-1}$, and $W,c_{\min}-W\ge K^{-1}$. As $|D|Y$ vanishes at infinity, $0<c_Y\le1$. For $c_Y<1$, the profile $\widetilde Y:=Y/(1-c_Y)$ lies on $\partial\mathcal O^s$, so
        \[
        \delta_Y\le\frac{c_Y}{1-c_Y}\|Y\|_{H^{s+2}},\qquad \frac1{c_Y}\le1+\frac{\|Y\|_{H^{s+2}}}{\delta_Y}\le1+K^2.
        \]
        The final bound also holds for $c_Y=1$, giving $\mathcal N_{\rm s}\le C_K$. Hence the two control quantities are bounded on precisely the same subsets of the solution curve.

        If \eqref{3-16} holds but \eqref{4-39} fails, a sequence with bounded $\mathcal N_{\rm s}$ contradicts the first comparison. Otherwise, Proposition~\ref{prop3-3} supplies a bounded-$\mathcal N$ sequence with no convergent subsequence in $\mathcal X^s$ for $(Y_n,b_n)$. The second comparison yields \eqref{4-40}, and Proposition~\ref{prop4-6} applies. If $N=0$, its strong remainder convergence and $a_n\to a_\infty>0$ imply convergence of $(Y_n,b_n)$, a contradiction. Hence $N\ge1$.
    \end{proof}

    Uniform control of the surface tails rules out the noncompact alternative. For $K>0$, set $\mathfrak S_K:=\{\gamma(t):t>0,\ \mathcal N_{\rm s}(t)\le K\}$, and suppose that every nonempty $\mathfrak S_K$ satisfies
    \begin{equation}
        \lim_{\rho\to\infty}\sup_{(Y,a,W)\in\mathfrak S_K}\sup_{|x|>\rho}|Y(x)|=0.
        \label{4-41}
    \end{equation}
    By Corollary~\ref{cor4-7}, each $\mathfrak S_K$ is relatively compact, contradicting the sequence in \eqref{4-40}. Thus only \eqref{4-39} remains. For these controlled solution families the uniform $L^2$ tail condition \eqref{4-34} is equivalent. Neither condition is assumed in Proposition~\ref{prop4-8}.

    \begin{proof}[Proof of Theorem~\ref{thm1-1}]
        Proposition~\ref{prop3-3} and the change of variables \eqref{3-18} give the solution curve with its initial arc, local analytic reparametrizations, and eventual departure from the initial arc. Estimates \eqref{3-2} and \eqref{3-8} give the initial asymptotics \eqref{1-5}, while Proposition~\ref{prop2-1} supplies the physical reconstruction.

        The control quantity \eqref{4-38} agrees with \eqref{1-6}, and Proposition~\ref{prop4-8} gives the two alternatives. In the noncompact case, Lemma~\ref{lem4-2} and Proposition~\ref{prop4-6} provide the nontrivial centered solution, finitely many reflected irrotational profile pairs, their regularity and separation, and the strong remainder convergence in \eqref{1-8}. By \eqref{4-32} and $N\ge1$,
        \[
        \lim_{n\to\infty}\|Y_n-Y^{(0)}\|_{H^{s+2}}^2=2\sum_{j=1}^N\|Q_j\|_{H^{s+2}}^2>0.
        \]
        Since $Y_n\to Y^{(0)}$ locally, the sequence has no globally strongly convergent subsequence. The proof is complete.
    \end{proof}

    \bigskip

    \section{Effective drift and folds}
    \label{sec5}
    We use the reduced drift to characterize possible simple speed folds and count the nearby waves at fixed speed. Section~\ref{sec6} develops their variational interpretation.

    \subsection{Second derivatives of the reduced drift}
    Near a coupled solution $(Y_\star,a_\star,W_\star)$ with $A_\star$ invertible, Lemma~\ref{lem2-8} gives a real-analytic Bernoulli chart $Y=Y(a,W)$, with $A$ invertible throughout a smaller neighborhood. Set
    \[
    \mathscr G_0(a,W):=P(Y(a,W),a,W),\qquad L:=\partial_a\mathscr G_0,\qquad M:=\partial_W\mathscr G_0.
    \]
    The zeros of $\mathscr G_0$ are exactly the coupled solutions in this chart. Derivatives of $B,P,\mathscr F$ below treat $(Y,a,W)$ as independent variables, while those of $\mathscr G_0$ include the surface response. The functional from Lemma~\ref{lem2-8}, evaluated on this chart, eliminates second derivatives of the surface response:
    \[
    \Lambda(f,\rho):=\rho-P_YA^{-1}f,\qquad (f,\rho)\in\mathcal Y^s.
    \]

    \begin{lemma}
        \label{lem5-1}
        Define
        \begin{equation}
            e_a:=(Y_a,1,0)=(-A^{-1}B_a,1,0),\qquad e_W:=(Y_W,0,1)=(-A^{-1}B_W,0,1).
            \label{5-1}
        \end{equation}
        Then, for $i,j\in\{a,W\}$,
        \begin{equation}
            \partial_i\partial_j\mathscr G_0=\Lambda\bigl(D^2\mathscr F[e_i,e_j]\bigr).
            \label{5-2}
        \end{equation}
        In particular, $\partial_aL=\Lambda(D^2\mathscr F[e_a,e_a])$ and $\partial_WL=\partial_aM=\Lambda(D^2\mathscr F[e_a,e_W])$.
    \end{lemma}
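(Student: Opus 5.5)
The identity comes from differentiating the Bernoulli-chart identities twice and using $\Lambda$ to discard the second derivatives of the surface response. Write $\mathcal Z(a,W):=(Y(a,W),a,W)$, so that $\partial_a\mathcal Z=e_a$ and $\partial_W\mathcal Z=e_W$ by Lemma~\ref{lem2-8}. On the chart we have the identity $B(\mathcal Z(a,W))\equiv0$, and by definition $\mathscr G_0=P\circ\mathcal Z$. Hence
\[
0=\bigl(0,\mathscr G_0\bigr)-\bigl(0,P\circ\mathcal Z\bigr)\quad\text{and}\quad \mathscr F\circ\mathcal Z=\bigl(0,\mathscr G_0\bigr),
\]
and the second form is the one we use. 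Applying the analytic chain rule (Lemma~\ref{lem2-2}) twice gives
\[
\partial_i\partial_j(\mathscr F\circ\mathcal Z)=D^2\mathscr F(\mathcal Z)[e_i,e_j]+D\mathscr F(\mathcal Z)[\partial_i\partial_j\mathcal Z].
\]
The left side equals $(0,\partial_i\partial_j\mathscr G_0)$. Since $a$ and $W$ enter $\mathcal Z$ linearly, $\partial_i\partial_j\mathcal Z=(Y_{ij},0,0)$.

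Next I apply $\Lambda$ to both sides. On the left, $\Lambda(0,\partial_i\partial_j\mathscr G_0)=\partial_i\partial_j\mathscr G_0$. For the correction term, \eqref{2-10} gives
\[
D\mathscr F[(h,0,0)]=(Ah,P_Y[h]),
\]
and therefore
\[
\Lambda(Ah,P_Y h)=P_Y h-P_YA^{-1}Ah=0.
\]
So the correction term vanishes under $\Lambda$, and this yields \eqref{5-2}.

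The special cases follow by taking $(i,j)=(a,a)$ and $(a,W)$. The identity $\partial_WL=\partial_aM$ holds by the symmetry of mixed partials of the analytic function $\mathscr G_0$, and also by the symmetry of $D^2\mathscr F$.

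The only point needing care is that $A$ stays invertible on the chart neighborhood. This holds after shrinking the neighborhood, so $\Lambda$ is well defined and bounded there, and $Y(a,W)$ is $C^2$ (indeed analytic) into $H_{\rm e}^{s+2}$, so the chain rule is legitimate. I do not expect a genuine obstacle; the key step is simply recognising that $\Lambda$ annihilates the range of $D\mathscr F$ restricted to pure surface directions $(h,0,0)$.
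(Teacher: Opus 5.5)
Your proposal is correct and follows the paper's proof: differentiate $\mathscr F(Y(a,W),a,W)=(0,\mathscr G_0(a,W))$ twice, then apply $\Lambda$, which annihilates $D_Y\mathscr F[Y_{ij}]=(AY_{ij},P_Y[Y_{ij}])$. The displayed line $0=(0,\mathscr G_0)-(0,P\circ\mathcal Z)$ is redundant and can be dropped.
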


    \begin{proof}
        Since $\mathscr F(Y(a,W),a,W)=(0,\mathscr G_0(a,W))$, twice differentiating yields
        \[
        D^2\mathscr F[e_i,e_j]+D_Y\mathscr F[Y_{ij}]=\begin{pmatrix}0\\ \partial_i\partial_j\mathscr G_0\end{pmatrix}.
        \]
        As $\Lambda(D_Y\mathscr F[h])=\Lambda(Ah,P_Y[h])=0$ for every $h$, applying $\Lambda$ gives \eqref{5-2}.
    \end{proof}

    In particular, the second depth derivative is
    \begin{equation}
        \begin{aligned}\partial_aL={}&P_{aa}+2P_{Ya}[Y_a]+P_{YY}[Y_a,Y_a]\\ &-P_YA^{-1}\bigl(B_{aa}+2B_{Ya}[Y_a]+B_{YY}[Y_a,Y_a]\bigr).\end{aligned}
        \label{5-3}
    \end{equation}
    The turning coefficient therefore includes the surface response; it is not simply $P_{aa}$.

    At a coupled solution with $L_\star=0$, Lemma~\ref{lem2-8} gives
    \[
    \ker\mathscr L_\star=\operatorname{span}\{\zeta_\star\},\qquad \zeta_\star:=(-A_\star^{-1}B_{a,\star},1),\qquad \operatorname{Ran}\mathscr L_\star=\ker\Lambda_\star.
    \]
    Moreover,
    \begin{equation}
        \Lambda_\star(\mathscr F_{W,\star})=M_\star,\qquad \Lambda_\star\bigl(D_{(Y,a)}^2\mathscr F_\star[\zeta_\star,\zeta_\star]\bigr)=(\partial_aL)_\star.
        \label{5-4}
    \end{equation}
    Hence $M_\star\ne0$ makes the speed direction transverse to $\operatorname{Ran}\mathscr L_\star$, while $(\partial_aL)_\star\ne0$ is the corresponding quadratic nondegeneracy along its kernel.

    These nondegeneracy conditions are unchanged when expressed in terms of the physical regular drift. Indeed, put $\alpha(a,W):=Z_{Y(a,W)}'(-\mathrm{i}a)>0$ and $\mathscr V_0:=\mathscr G_0/\alpha$, the regular relative horizontal velocity at the vortex by Proposition~\ref{prop2-1}. At an equilibrium,
    \[
    \partial_a\mathscr V_0=\frac{L}{\alpha},\qquad \partial_W\mathscr V_0=\frac{M}{\alpha};
    \]
    if also $L=0$, then $\partial_a^2\mathscr V_0=(\partial_aL)/\alpha$. Thus the conformal factor does not change the fold conditions, though all depth derivatives are taken in the conformal coordinate $a$.

    \begin{proposition}
        \label{prop5-2}
        Let $(Y_\star,a_\star,W_\star)$ be a coupled solution with $A_\star$ invertible and
        \begin{equation}
            L_\star=0,\qquad M_\star\ne0,\qquad (\partial_aL)_\star\ne0.
            \label{5-5}
        \end{equation}
        Set $\chi_\star:=-(\partial_aL)_\star/M_\star$. All sufficiently nearby coupled solutions lie on a regular real-analytic curve
        \[
        \gamma_\star(a):=(Y(a,w(a)),a,w(a)),\qquad w(a_\star)=W_\star,
        \]
        with
        \begin{equation}
            w(a)-W_\star=\frac{\chi_\star}{2}(a-a_\star)^2+O(|a-a_\star|^3)
            \label{5-6}
        \end{equation}
        and
        \begin{equation}
            Y(a,w(a))=Y_\star-A_\star^{-1}B_{a,\star}(a-a_\star)+O_{H^{s+2}}(|a-a_\star|^2).
            \label{5-7}
        \end{equation}
        In a sufficiently small configuration neighborhood, each nearby speed with $\chi_\star(W-W_\star)>0$ has exactly two solutions, at conformal depths
        \begin{equation}
            a_\pm(W)=a_\star\pm\sqrt{\frac{2(W-W_\star)}{\chi_\star}}+O(|W-W_\star|).
            \label{5-8}
        \end{equation}
        There are no nearby solutions when $\chi_\star(W-W_\star)<0$, and only the fold solution when $W=W_\star$. Along the curve,
        \begin{equation}
            L(a,w(a))=(\partial_aL)_\star(a-a_\star)+O(|a-a_\star|^2).
            \label{5-9}
        \end{equation}
        Thus $L$ changes sign across the fold. The fixed-speed linearization is invertible away from the fold, and the full derivative is surjective throughout the local curve.
    \end{proposition}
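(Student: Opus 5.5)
The argument reduces everything to the scalar equation $\mathscr G_0(a,W)=0$ on the Bernoulli chart of Lemma~\ref{lem2-8}. By that lemma, $\mathscr G_0$ is real analytic near $(a_\star,W_\star)$. Its zeros are exactly the nearby coupled solutions, and $\partial_a\mathscr G_0=L$, $\partial_W\mathscr G_0=M$. Since $M_\star\ne0$, the analytic implicit function theorem gives a unique real-analytic $w$ with $w(a_\star)=W_\star$. In a small box around $(a_\star,W_\star)$, the zero set is exactly the graph $\{W=w(a)\}$. Hence the nearby solutions are $\gamma_\star(a)=(Y(a,w(a)),a,w(a))$, and this curve is regular because its $a$-component has derivative one.

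Next I compute the derivatives of $w$. Differentiating $\mathscr G_0(a,w(a))=0$ gives $w'=-L/M$, so $w'(a_\star)=0$. Differentiating once more and using $L_\star=0$, $w'(a_\star)=0$ gives $w''(a_\star)=-(\partial_aL)_\star/M_\star=\chi_\star$. Taylor's theorem then gives \eqref{5-6}. For \eqref{5-7}, I expand $Y(a,w(a))$ using $Y_a=-A^{-1}B_a$ and $w'(a_\star)=0$; the $Y_W$ contribution is then $O(|a-a_\star|^2)$. For \eqref{5-9}, I write $\frac{d}{da}L(a,w(a))=\partial_aL+\partial_WL\,w'$, which equals $(\partial_aL)_\star$ at $a_\star$, and expand to first order.

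For the solution count, I would set $a=a_\star+\tau$. Then \eqref{5-6} reads $w(a_\star+\tau)-W_\star=\tau^2\,\frac{\chi_\star}{2}\,(1+O(\tau))$. Taking an analytic square root, $\phi(\tau):=\tau\sqrt{1+O(\tau)}$ is a local analytic diffeomorphism with $\phi'(0)=1$. This gives $W-W_\star=\frac{\chi_\star}{2}\phi(\tau)^2$. The equation $w(a)=W$ therefore has two solutions $\tau=\phi^{-1}(\pm\sqrt{2(W-W_\star)/\chi_\star})$ when $\chi_\star(W-W_\star)>0$, none when the product is negative, and only $\tau=0$ when $W=W_\star$. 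Inverting $\phi$ gives \eqref{5-8} with an $O(|W-W_\star|)$ error.

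The "configuration neighborhood" must be chosen so that it lies inside the Bernoulli chart and the implicit-function box. Uniqueness in the chart then guarantees that no other solutions are missed.

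Finally, the linearization statements follow from Lemma~\ref{lem2-8} together with continuity.
\begin{itemize}
\item $A$ stays invertible on a smaller neighborhood, and $M\ne0$ there, so $(L,M)\ne(0,0)$ and $\mathscr D$ is surjective along the whole local curve.
\item By \eqref{5-9}, $L\ne0$ for $0<|a-a_\star|$ small, so $\mathscr L$ is invertible away from the fold.
\end{itemize}

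I expect no serious obstacle. The only delicate point is bookkeeping: the neighborhoods must be shrunk consistently, so that the two-solution count holds in one fixed configuration neighborhood, and the $O(|W-W_\star|)$ error in \eqref{5-8} must come from the analytic inversion of $\phi$.
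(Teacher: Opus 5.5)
Your proposal is correct and follows essentially the same route as the paper. Both arguments use the analytic implicit function theorem on the Bernoulli chart (via $M_\star\ne0$) to get $W=w(a)$, derive $w'=-L/M$ and $w''(a_\star)=\chi_\star$, introduce an analytic square-root coordinate with $W-W_\star=\frac{\chi_\star}{2}\tau^2$ for the fixed-speed count and \eqref{5-8}, and conclude the linearization claims from Lemma~\ref{lem2-8} after shrinking neighborhoods. Your expansions of $Y(a,w(a))$ and $L(a,w(a))$ via $w'(a_\star)=0$ match the paper's justification of \eqref{5-7} and \eqref{5-9}.
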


    \begin{proof}
        Since $M_\star\ne0$, the analytic implicit function theorem solves $\mathscr G_0(a,W)=0$ locally for $W=w(a)$. Uniqueness on the Bernoulli chart identifies all nearby coupled solutions. The curve is regular because its depth component is $a$. Differentiating the reduced equation gives $w'=-L/M$ and hence $w'(a_\star)=0$ and $w''(a_\star)=-(\partial_aL)_\star/M_\star=\chi_\star$. Taylor expansion proves \eqref{5-6}, while $Y_{a,\star}=-A_\star^{-1}B_{a,\star}$ gives \eqref{5-7}.

        For the exact fixed-speed count, write $\ell:=a-a_\star$ and $w(a_\star+\ell)-W_\star=\ell^2k(\ell)$, where $k$ is analytic and $k(0)=\chi_\star/2$. The analytic coordinate $\tau:=\ell\sqrt{2k(\ell)/\chi_\star}$ satisfies $\tau'(0)=1$ and puts the speed relation in the form
        \begin{equation}
            W-W_\star=\frac{\chi_\star}{2}\tau^2.
            \label{5-10}
        \end{equation}
        In a sufficiently small symmetric $\tau$-neighborhood this gives the asserted solution count. Since $\ell=\tau+O(\tau^2)$, it also gives \eqref{5-8}.

        Finally, $w'(a_\star)=0$ yields \eqref{5-9}. Shrinking the neighborhood keeps $M\ne0$ and $A$ invertible, with $L\ne0$ off the fold. Lemma~\ref{lem2-8} then gives the linearization claims; at the fold the tangent $(-A_\star^{-1}B_{a,\star},1,0)$ spans the kernel of the full derivative.
    \end{proof}

    The criterion is conditional: it does not assert that the global curve contains a fold. When it does, the speed has a strict local minimum if $\chi_\star>0$ and a strict local maximum if $\chi_\star<0$; the sign change of $L$ does not by itself determine dynamical stability.

    For the regularization in Section~\ref{sec7}, we express the fold conditions as zeros of $\mathfrak H_0(a,W):=(\mathscr G_0(a,W),L(a,W))$. At a fold satisfying \eqref{5-5},
    \begin{equation}
        \begin{aligned}D\mathfrak H_0(a_\star,W_\star)&=\begin{pmatrix}0&M_\star\\[1mm](\partial_aL)_\star&(\partial_WL)_\star\end{pmatrix},\\ \det D\mathfrak H_0(a_\star,W_\star)&=-M_\star(\partial_aL)_\star\ne0.\end{aligned}
        \label{5-11}
    \end{equation}
    Thus the fold is an isolated nondegenerate zero of $\mathfrak H_0$, a formulation used in Section~\ref{sec7} to establish its persistence under regularization.

    \smallskip

    \subsection{A uniform choice of local parameter}
    The fixed-speed inverse involves $L^{-1}$ and loses uniform control as a fold is approached. Allowing $W$ to vary, we add a scalar normalization to eliminate the tangent direction to the solution curve.

    \begin{lemma}
        \label{lem5-3}
        Let $(Y,a,W)\in\mathscr U^s$, with $A$ invertible and $L^2+M^2>0$. The bordered operator $\mathscr D_{\rm b}:\mathcal X^s\times\mathbb R\to\mathcal Y^s\times\mathbb R$ defined by
        \begin{equation}
            \mathscr D_{\rm b}(h,\ell,\nu):=\begin{pmatrix}Ah+B_a\ell+B_W\nu\\[1mm]P_Y[h]+P_a\ell+\nu\\[1mm]M\ell-L\nu\end{pmatrix}
            \label{5-12}
        \end{equation}
        is an isomorphism. For data $(f,\rho,\beta)\in H_{\rm e}^s(\mathbb R)\times\mathbb R^2$, its inverse is given by
        \begin{equation}
            \ell=\frac{L\Lambda(f,\rho)+M\beta}{L^2+M^2},\qquad \nu=\frac{M\Lambda(f,\rho)-L\beta}{L^2+M^2},
            \label{5-13}
        \end{equation}
        where $\Lambda(f,\rho):=\rho-P_YA^{-1}f$ and $h=A^{-1}(f-B_a\ell-B_W\nu)$. Moreover, for every compact subarc $\mathscr K\subset\mathscr C_0$ on which $A$ is invertible and $L^2+M^2>0$,
        \begin{equation}
            \sup_{(Y,a,W)\in\mathscr K}\|\mathscr D_{\rm b}^{-1}\|_{\mathcal L(\mathcal Y^s\times\mathbb R,\mathcal X^s\times\mathbb R)}\le C_{\mathscr K}.
            \label{5-14}
        \end{equation}
    \end{lemma}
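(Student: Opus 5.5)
The plan is to eliminate the surface variable through $A^{-1}$, exactly as in the proof of Lemma~\ref{lem2-8}, which reduces the bordered system to a $2\times2$ linear system in $(\ell,\nu)$.

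Given data $(f,\rho,\beta)$, the first equation of \eqref{5-12} can be solved uniquely as $h=A^{-1}(f-B_a\ell-B_W\nu)$, since $A$ is invertible. Substituting this into the second equation and using the definitions \eqref{2-29} of $L$ and $M$ gives $L\ell+M\nu=\Lambda(f,\rho)$. The third equation is $M\ell-L\nu=\beta$. The resulting $2\times2$ matrix $\begin{pmatrix}L&M\\ M&-L\end{pmatrix}$ has determinant $-(L^2+M^2)\ne0$. Cramer's rule therefore yields \eqref{5-13}, and one checks directly that these formulas solve the system. Uniqueness follows from the same elimination applied with zero data: it forces $\ell=\nu=0$, and then $h=0$. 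Boundedness of the inverse follows from the explicit formula, because $\Lambda$ is a bounded functional (by Lemma~\ref{lem2-3}, $P_Y$ is bounded on $H^{s+2}_{\rm e}$), and $A^{-1}$, $B_a$, $B_W$ are bounded with $B_a,B_W\in H^{s}_{\rm e}$. Hence $\mathscr D_{\rm b}$ is an isomorphism.

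For the uniform bound \eqref{5-14}, the explicit inverse gives
\[
\|\mathscr D_{\rm b}^{-1}\|\lesssim\bigl(1+\|A^{-1}\|\bigr)\bigl(1+\|B_a\|_{H^s}+\|B_W\|_{H^s}\bigr)\bigl(1+\|P_Y\|\bigr)\frac{1+|L|+|M|}{L^2+M^2}.
\]
So it suffices to show four things on $\mathscr K$: the quantities $\|B_a\|$, $\|B_W\|$, $\|P_Y\|$, $|L|$, $|M|$ are bounded; $\|A^{-1}\|$ is bounded; and $L^2+M^2$ is bounded below.

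First, $\mathscr K$ is compact in $\mathscr U^s$. It is therefore contained in a set of the type in Lemma~\ref{lem2-2}, with $\|Y\|_{H^{s+2}}\le R$, graph bound $c_0$, $a\in[a_-,a_+]$, and $W\in[W_-,W_+]\Subset(0,c_{\min})$. The derivative bounds then follow from Lemma~\ref{lem2-3}.

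Second, the map $(Y,a,W)\mapsto A$ is continuous into $\mathcal L(H^{s+2}_{\rm e},H^s_{\rm e})$, by the analyticity in Lemma~\ref{lem2-2}. Inversion is continuous on the open set of invertible operators, so $(Y,a,W)\mapsto\|A^{-1}\|$ is continuous on the compact set $\mathscr K$, hence bounded.

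Third, $L$ and $M$ are continuous functions of the point, being compositions of continuous maps through \eqref{2-29}. Thus $L^2+M^2$ is a continuous positive function on a compact set and has a positive minimum.

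The only mildly delicate step is this compactness and continuity argument. In particular, $L$ and $M$ must be understood pointwise through \eqref{2-29} rather than through a single Bernoulli chart. Alternatively, one can cover $\mathscr K$ by finitely many charts and use that, by \eqref{2-33}, $L$ and $M$ agree with the pointwise formulas on each chart. The point of all this is that the bordered operator avoids any $L^{-1}$ factor, so it stays uniformly invertible even through folds where $L=0$.
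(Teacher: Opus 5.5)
Your proof is correct and follows the paper's argument. You eliminate $h$ via $A^{-1}$, solve the $2\times2$ system whose matrix squares to $(L^2+M^2)I$ to obtain \eqref{5-13}, and get \eqref{5-14} from continuity of $A^{-1}$, the blocks, and $L^2+M^2$ on the compact arc $\mathscr K$.

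The only blemish is the bookkeeping in your displayed bound. The $h$-component contains $A^{-1}$ applied to $\ell,\nu$, which already involve $A^{-1}$ through $\Lambda$, and the term $A^{-1}f$ is not damped by $(L^2+M^2)^{-1}$. It should read, e.g., $\|\mathscr D_{\rm b}^{-1}\|\lesssim(1+\|A^{-1}\|)^2(1+\|B_a\|_{H^s}+\|B_W\|_{H^s})(1+\|P_Y\|)\bigl(1+(L^2+M^2)^{-1/2}\bigr)$. This does not affect the conclusion.
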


    \begin{proof}
        Eliminating $h$ from \eqref{5-12} gives
        \[
        \begin{pmatrix}L&M\\ M&-L\end{pmatrix}\begin{pmatrix}\ell\\ \nu\end{pmatrix}=\begin{pmatrix}\Lambda(f,\rho)\\ \beta\end{pmatrix}.
        \]
        The coefficient matrix squares to $(L^2+M^2)I$, yielding \eqref{5-13} and the bounded inverse. In particular,
        \[
        \sqrt{\ell^2+\nu^2}=\frac{\sqrt{|\Lambda(f,\rho)|^2+|\beta|^2}}{\sqrt{L^2+M^2}},
        \]
        while
        \[
        \begin{aligned}
            |\Lambda(f,\rho)|&\le|\rho|+\|P_Y\|\,\|A^{-1}\|\,\|f\|_{H^s},\\
            \|h\|_{H^{s+2}}&\le\|A^{-1}\|\bigl(\|f\|_{H^s}+\|B_a\|_{H^s}|\ell|+\|B_W\|_{H^s}|\nu|\bigr).
        \end{aligned}
        \]
        On $\mathscr K$, continuity and compactness give uniform bounds for $A^{-1}$ and the remaining blocks, together with $\inf_{\mathscr K}(L^2+M^2)>0$. These estimates prove \eqref{5-14}.
    \end{proof}

    At a fold, where $L=0$ and $M\ne0$, the homogeneous normalization $\beta=0$ fixes the conformal depth to first order while allowing a speed correction. When $M=0$ and $L\ne0$, it instead fixes the speed. The bordered system is therefore invertible across changes of local parameter.

    Fix a coupled solution $(Y_\circ,a_\circ,W_\circ)$ satisfying the hypotheses of Lemma~\ref{lem5-3}, and use the fixed coefficients $L_\circ,M_\circ$ to define
    \begin{equation}
        \mathscr F(Y,a,W)=0,\qquad M_\circ(a-a_\circ)-L_\circ(W-W_\circ)=\theta.
        \label{5-15}
    \end{equation}
    The derivative with respect to $(Y,a,W)$ at $\theta=0$ is $\mathscr D_{{\rm b},\circ}$. Hence the analytic implicit function theorem gives a unique local real-analytic solution curve $\gamma_\circ(\theta)$, containing all sufficiently nearby coupled solutions as $\theta$ varies. Differentiating \eqref{5-15} and using \eqref{5-13} yields
    \begin{equation}
        \gamma_\circ'(0)=\frac1{L_\circ^2+M_\circ^2}\bigl(-A_\circ^{-1}(M_\circ B_{a,\circ}-L_\circ B_{W,\circ}),M_\circ,-L_\circ\bigr).
        \label{5-16}
    \end{equation}
    This is the kernel direction in Lemma~\ref{lem2-8}, normalized by \eqref{5-15}. At a fold, its speed component vanishes but its depth component equals $M_\circ^{-1}\ne0$, so the curve remains regular.

    A compact subarc satisfying Lemma~\ref{lem5-3} is covered by finitely many such analytic charts, all subject to the uniform inverse bound \eqref{5-14}. They provide the point-vortex reference curves for the regularization in Section~\ref{sec7}, where the corresponding smooth-core estimates and convergence are established.

    \bigskip

    \section{Variational interpretation}
    \label{sec6}
    We relate the folds of Section~\ref{sec5} to the fixed-speed energy--momentum Hessian. Since a surface variation at fixed conformal depth generally displaces the physical vortex, we use its physical vertical coordinate to separate the surface and vortex variations and identify the scalar coefficient responsible for the change in negative index. Throughout, $W$ is fixed in each variation, and the index statements concern the static Hessian in the symmetric class, not dynamical stability. The point-vortex calculation also serves as a reference for the joint dual variational analysis of Part~III \cite{DuZouPartIII} and the dynamical results of Part~V \cite{DuZouPartV}.

    \subsection{The action in conformal coordinates}
    Fix $s>\frac12$ and $0<W<c_{\min}$, and write $\vartheta_Y:=1+|D|Y$, $J:=J(Y)$, and $\alpha:=\alpha(Y,a)=Z_Y'(-\mathrm{i}a)>0$. The gravitational and capillary potential energy is
    \begin{equation}
        \mathcal V(Y):=\frac g2\int_{\mathbb R}Y^2\vartheta_Y\,dx+\sigma\int_{\mathbb R}\bigl(\sqrt{J}-\vartheta_Y\bigr)\,dx.
        \label{6-1}
    \end{equation}
    The two terms in the capillary integrand are kept together because $\sqrt{J}-\vartheta_Y=(Y')^2/(\sqrt{J}+\vartheta_Y)$ is integrable under the graph condition, while its two terms need not be integrable separately. Set also $\mathcal I(Y):=\int_{\mathbb R}Y|D|Y\,dx$. We establish the identities first for smooth decaying data and then extend them to the stated Sobolev spaces by continuity.

    For the reconstructed flow of Proposition~\ref{prop2-1}, the laboratory-frame stream function satisfies
    \[
    \psi_{\rm lab}(Z_Y(x+\mathrm{i}y))=\psi_a(x,y)-We^{y|D|}Y(x).
    \]
    We renormalize its kinetic energy by excising a physical disk of radius $\delta$ about the vortex, subtracting $\Gamma^2(4\pi)^{-1}\log(1/\delta)$, and letting $\delta\to0$. Integration by parts removes the Dirichlet cross term; the harmonic contribution is $W^2\mathcal I(Y)/2$, and the vortex contribution is the Robin term computed in \cite[Section~4]{DuZouPartI}. Consequently,
    \begin{equation}
        E=\mathcal V(Y)+\frac{W^2}{2}\mathcal I(Y)+\frac{\Gamma^2}{4\pi}\log\bigl(2a\alpha\bigr).
        \label{6-2}
    \end{equation}

    With the momentum convention of \cite[Section~5.2]{VarholmWahlenWalsh2020},
    \[
    \mathcal P=\Gamma\operatorname{Im}p_a-\int_{\mathbb R}\eta_Y'(\xi)\varphi_{\rm tot}(\xi)\,d\xi,
    \]
    where $\varphi_{\rm tot}$ is the laboratory-frame velocity-potential trace. Up to an additive constant, it satisfies
    \[
    \begin{aligned}
        \frac{d}{dx}\bigl(\varphi_{\rm tot}(\Xi_Y(x))\bigr)&=q_{a,W}-W\vartheta_Y,\\
        \varphi_{\rm tot}(\Xi_Y(x))&=WHY(x)-\frac{\Gamma}{\pi}\arctan\frac xa.
    \end{aligned}
    \]
    Since $\operatorname{Im}p_a=-a+E_aY$, integration by parts gives
    \begin{equation}
        \mathcal P=\Gamma(-a+E_aY)-W\mathcal I(Y)-\Gamma E_aY=-W\mathcal I(Y)-\Gamma a.
        \label{6-3}
    \end{equation}
    The arctangent contribution is evaluated through the Poisson pairing $E_aY$; no $L^1$ assumption on $Y$ is needed.

    For waves traveling with velocity $-We_1$, the energy--momentum action is $E+W\mathcal P$. Its restriction to the reconstructed flow is
    \begin{equation}
        \mathcal J_W(Y,a):=\mathcal V(Y)-\frac{W^2}{2}\mathcal I(Y)+\frac{\Gamma^2}{4\pi}\log\bigl(2a\alpha(Y,a)\bigr)-\Gamma Wa.
        \label{6-4}
    \end{equation}
    The last two terms give the Kirchhoff--Routh function $K_{\Omega_Y,W}(a)$ from Part~I, retaining its dependence on $Y$. Define the surface variation operator
    \[
    \mathcal C_Yh:=\vartheta_Yh+Y'Hh.
    \]
    As $\mathcal C_Y=J^{3/2}\mathcal T_Y$, Lemma~\ref{lem2-6} makes it an isomorphism on $L^2(\mathbb R)$ preserving the even subspace.

    \begin{lemma}
        \label{lem6-1}
        The functional $\mathcal J_W:\mathcal O^s\times(0,\infty)\to\mathbb R$ is real analytic. For every $(Y,a)$ in this domain and $(h,\ell)\in\mathcal X^s$,
        \begin{equation}
            D\mathcal J_W(Y,a)[h,\ell]=\int_{\mathbb R}B(Y,a,W)\mathcal C_Yh\,dx+\Gamma P(Y,a,W)\left(\frac{E_ah}{\alpha(Y,a)}-\ell\right).
            \label{6-5}
        \end{equation}
        In particular, $\partial_a\mathcal J_W=-\Gamma P$, and $D\mathcal J_W(Y,a)=0$ if and only if $\mathscr F(Y,a,W)=0$.
    \end{lemma}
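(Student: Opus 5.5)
The plan is to prove analyticity structurally, then obtain \eqref{6-5} by differentiating each term of \eqref{6-4} separately.

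\emph{Analyticity.} I would argue as in Lemma~\ref{lem2-2}. The functional $\mathcal V$ is analytic on $\mathcal O^s$ because its capillary integrand can be written as $\sqrt J-\vartheta_Y=(Y')^2/(\sqrt J+\vartheta_Y)$. Here $\sqrt J+\vartheta_Y\ge 2c_Y$, and the map $Y\mapsto(\sqrt J+\vartheta_Y)^{-1}-\tfrac12$ is analytic into $H^{s+1}$, by the same binomial expansion as for $J^{-\theta}$. The quadratic form $\mathcal I$ is bounded on $H^{1/2}$. The Robin term is analytic because $a\mapsto E_a$ is analytic by \eqref{2-9} and $\alpha\ge c_Y>0$. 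The term $-\Gamma Wa$ is linear. I would first establish all identities for smooth, rapidly decaying $Y,h$, and then extend them by density and continuity of both sides in $\mathcal X^s$.

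\emph{Depth derivative.} Since $\partial_a\alpha=E_a(Y'')$ by Lemma~\ref{lem2-3},
\[
\partial_a\mathcal J_W=\frac{\Gamma^2}{4\pi}\Bigl(\frac1a+\frac{E_a(Y'')}{\alpha}\Bigr)-\Gamma W=-\Gamma P
\]
by \eqref{2-4}. This gives the $\ell$-part of \eqref{6-5}.

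\emph{Surface derivative.} The key algebraic observation is
\[
\mathcal C_Yh=\operatorname{Re}\bigl(\overline{Z_Y'}\,(h+\mathrm iHh)\bigr),\qquad\text{so}\qquad \frac{\mathcal C_Yh}{J}=\operatorname{Re}\frac{h+\mathrm iHh}{Z_Y'}.
\]
I would then split $B$ into three parts.

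The gravity part is immediate. Differentiating gives $\tfrac g2\int Y^2|D|h+g\int Yh\vartheta_Y$. Using $|D|=-H\partial_x$ and integrating by parts, $\int Y^2|D|h=2\int YY'Hh$, so the sum equals $g\int Y\,\mathcal C_Yh$.

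For the capillary part I would compute $D\bigl(\sigma\int(\sqrt J-\vartheta_Y)\bigr)[h]=\sigma\int J^{-1/2}\bigl(\vartheta_Y|D|h+Y'h'\bigr)-\sigma\int|D|h$. I would then integrate by parts and use the antisymmetry of $H$, together with $(|D|h)'=-Hh''$, to rewrite this as $\sigma\int\kappa(Y)\,\mathcal C_Yh$. This is the complex form of the classical identity that arclength varies by curvature times normal displacement. The conformal factorization $\kappa=-\mathcal T_YY''$ from Lemma~\ref{lem2-6} should organize the computation.

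For the kinetic part, the target is to show
\[
\int_{\mathbb R}\Bigl(\frac{q^2}{2J}-\frac{W^2}{2}\Bigr)\mathcal C_Yh\,dx+\Gamma P\,\frac{E_ah}{\alpha}=-W^2\int h|D|Y+\frac{\Gamma^2}{4\pi}\frac{E_a(|D|h)}{\alpha}.
\]
Using $\tfrac12\int h|D|Y\cdot 2$ for $D\mathcal I[h]$, the constant piece is handled by $\int\mathcal C_Yh=\int h$ and similar Hilbert identities. For the $q^2$ piece, I would write $q=\operatorname{Re}$ of the boundary value of $\omega'(z)$, where $\omega(z)=Wz+\tfrac{\Gamma}{2\pi\mathrm i}\log\frac{z-z_a}{z-\bar z_a}$ is the reference complex potential. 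Since $q$ is real on $\mathbb R$, the product $q^2(h+\mathrm iHh)/Z_Y'$ is the boundary trace of $\omega'(z)^2F(z)$, where $F:=(h+\mathrm iHh)/Z_Y'$ (extended) is holomorphic in $\Pi_-$. Closing the contour in $\Pi_-$, the integral is computed by the residue at $z_a=-\mathrm ia$, where $\omega'^2$ has a double pole. The residue involves $F(z_a)$ and $F'(z_a)$, and these are expressed through $E_ah$, $E_a(|D|h)$, $\alpha$ and $Z_Y''(z_a)=\mathrm iE_a(Y'')$. Subtracting the $W^2$ far-field contribution, which is handled separately as a decaying term, should reproduce exactly the combination $\Gamma P\,E_ah/\alpha$ plus the Robin variation $\tfrac{\Gamma^2}{4\pi}E_a(|D|h)/\alpha$.

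\emph{Main obstacle.} I expect the residue bookkeeping in the kinetic step to be the main difficulty. It requires tracking orientation, the behaviour at infinity (where $\omega'\to W$ and $F$ decays), and the precise cancellation with the $-\tfrac{W^2}{2}\mathcal I$ variation. I would first check the formula at $Y=0$, where $F=h+\mathrm iHh$ and everything is explicit, before doing the general case.

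\emph{Conclusion.} The equivalence $D\mathcal J_W=0\iff\mathscr F=0$ then follows. Taking $\ell$ arbitrary gives $P=0$. With $P=0$, the remaining condition is $\int B\,\mathcal C_Yh=0$ for all even $h$. Since $\mathcal C_Y$ is an isomorphism of $L^2_{\rm e}$ and $B\in L^2_{\rm e}$, this forces $B=0$.
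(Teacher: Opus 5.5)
There is one concrete error, in the bookkeeping of the $W^2$ terms in your kinetic step. The identity $\int_{\mathbb R}\mathcal C_Yh\,dx=\int_{\mathbb R}h\,dx$ is false. Since $H$ is antisymmetric and $HY'=-|D|Y$, one has $\int Y'Hh\,dx=\int h|D|Y\,dx$, and hence
\[
\int_{\mathbb R}\mathcal C_Yh\,dx=\int_{\mathbb R}h\,dx+2\int_{\mathbb R}h|D|Y\,dx .
\]
The identity that does hold is $\int J^{-1}\mathcal C_Yh\,dx=\int h\,dx$. To see it, write $J^{-1}\mathcal C_Yh=\operatorname{Re}F$ with $F=(h+\mathrm iHh)/Z_Y'$. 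Then close the contour for $(h+\mathrm iHh)(1/Z_Y'-1)$, which is holomorphic in $\Pi_-$ and decays faster than $|z|^{-1}$. Note that $F$ itself is only $O(|z|^{-1})$, so the far-field piece $\tfrac{W^2}{2}\int\operatorname{Re}F$ is not negligible and must be computed this way.

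Suppose you use your identity instead. Then the two far-field pieces $\tfrac{W^2}{2}\int\operatorname{Re}F$ and $-\tfrac{W^2}{2}\int\mathcal C_Yh$ cancel. The term $-W^2\int h|D|Y$ on your right-hand side, which comes from $-\tfrac{W^2}{2}D\mathcal I(Y)[h]$, is then never produced, and your target identity fails by exactly that amount. With the correct pair of identities you get $\tfrac{W^2}{2}\int(J^{-1}-1)\mathcal C_Yh\,dx=-W^2\int h|D|Y\,dx$. This is the paper's first identity $\mathcal C_Y^*(J^{-1}-1)=-2|D|Y$ in \eqref{6-7}.

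With that repair, the rest of your kinetic argument is correct, and it is a genuinely more direct route than the paper's. The regular part of $\omega'$ at $z_a$ equals $W-\Gamma/(4\pi a)$. The clockwise orientation gives $\int_{\mathbb R}(q^2-W^2)F\,dx=-2\pi\mathrm i\operatorname{Res}_{z_a}\bigl[(\omega'^2-W^2)F\bigr]$. Using $\operatorname{Re}F(z_a)=E_ah/\alpha$ and $\operatorname{Im}F'(z_a)=-E_a(|D|h)/\alpha-E_ah\,E_a(Y'')/\alpha^2$, one finds
\[
\frac12\operatorname{Re}\int_{\mathbb R}(q^2-W^2)F\,dx=\frac{\Gamma^2}{4\pi\alpha}E_a(|D|h)-\frac{\Gamma P}{\alpha}E_ah,
\]
which is exactly what is needed.

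The paper proceeds differently. It proves the adjoint identity $\mathcal C_Y^*(k_a/J)=k_a/\alpha$ by evaluating the same holomorphic quotient at $-\mathrm ia$ and applying Poisson duality. It then obtains the $k_a^2$ contribution by differentiating in $a$ via $k_a^2=(k_a/a-\partial_ak_a)/(2\pi)$. That route yields the $L^2$ gradient \eqref{6-9} as a function, which is reused in Lemma~\ref{lem6-2}; your single residue at the double pole gives the pairing in one step. Likewise, the paper treats gravity and capillarity together through the physical graph variation $D\eta_Y[h]\circ\Xi_Y=h+Y'Hh/\vartheta_Y$ with $d\xi=\vartheta_Y\,dx$. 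That makes your term-by-term capillary integration by parts unnecessary, although your route also works.
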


    \begin{proof}
        Real analyticity follows from Lemma~\ref{lem2-2}, using the integrable capillary expression in \eqref{6-1}, the Sobolev product and composition estimates, and the positivity of $a$ and $\alpha$ in the logarithmic term.

        Differentiating $\eta_Y(\Xi_Y(x))=Y(x)$ gives the physical graph variation
        \[
        D\eta_Y[h](\Xi_Y(x))=h+\frac{Y'}{\vartheta_Y}Hh.
        \]
        Since $d\xi=\vartheta_Y\,dx$, we obtain
        \[
        D\mathcal V(Y)[h]=\int_{\mathbb R}(gY+\sigma\kappa(Y))\mathcal C_Yh\,dx.
        \]
        Together with $D\mathcal I(Y)[h]=2\int_{\mathbb R}|D|Y\,h\,dx$ and $D_Y\alpha[h]=E_a(|D|h)$, this yields
        \begin{equation}
            D_Y\mathcal J_W[h]=\int_{\mathbb R}(gY+\sigma\kappa(Y))\mathcal C_Yh\,dx-W^2\int_{\mathbb R}|D|Y\,h\,dx+\frac{\Gamma^2}{4\pi\alpha}E_a(|D|h).
            \label{6-6}
        \end{equation}

        To identify the kinetic contribution to $B$, write $k_a(x):=a/[\pi(a^2+x^2)]$, so $q_{a,W}=W-\Gamma k_a$. The real $L^2$ adjoint $\mathcal C_Y^*f=\vartheta_Yf-H(Y'f)$ satisfies
        \begin{equation}
            \mathcal C_Y^*(J^{-1}-1)=-2|D|Y,\qquad \mathcal C_Y^*\left(\frac{k_a}{J}\right)=\frac{k_a}{\alpha}.
            \label{6-7}
        \end{equation}
        The first identity follows from the holomorphic boundary relation $H(Y'/J)=\vartheta_Y/J-1$ and $HY'=-|D|Y$. For the second, observe that
        \[
        \frac{\mathcal C_Yh}{J}=\operatorname{Re}\frac{h+\mathrm{i}Hh}{Z_Y'},\qquad E_a\left(\frac{\mathcal C_Yh}{J}\right)=\frac{E_ah}{\alpha}.
        \]
        The latter identity follows by evaluating the holomorphic quotient at $-\mathrm{i}a$, where $Z_Y'=\alpha$ is real. Poisson duality then gives the second identity in \eqref{6-7}.

        Using $k_a^2=(k_a/a-\partial_ak_a)/(2\pi)$ and $\partial_ak_a=-|D|k_a$, differentiation of \eqref{6-7} at fixed $Y$ gives
        \begin{equation}
            \mathcal C_Y^*\left(\frac{k_a^2}{J}\right)=\frac1{2\pi\alpha}\left[|D|k_a+\left(\frac1a+\frac{\alpha_a}{\alpha}\right)k_a\right],
            \label{6-8}
        \end{equation}
        where $\alpha_a=E_a(Y'')$. Expanding $q_{a,W}=W-\Gamma k_a$ and applying \eqref{6-7}--\eqref{6-8}, we obtain
        \begin{equation}
            \mathcal C_Y^*\left(\frac{q_{a,W}^2}{2J}-\frac{W^2}{2}\right)=-W^2|D|Y+\frac{\Gamma^2}{4\pi\alpha}|D|k_a-\frac{\Gamma P}{\alpha}k_a.
            \label{6-9}
        \end{equation}
        Here the coefficient of $k_a$ follows from $P=W-\Gamma(4\pi)^{-1}(a^{-1}+\alpha_a/\alpha)$. Pairing \eqref{6-9} with $h$ and using $\int_{\mathbb R}|D|k_a\,h\,dx=E_a(|D|h)$, comparison with \eqref{6-6} gives
        \[
        D_Y\mathcal J_W[h]=\int_{\mathbb R}B\mathcal C_Yh\,dx+\frac{\Gamma P}{\alpha}E_ah.
        \]
        On the other hand, direct differentiation of \eqref{6-4} at fixed $Y$ gives $\partial_a\mathcal J_W=-\Gamma P$, proving \eqref{6-5}.

        If $D\mathcal J_W=0$, variation in $a$ gives $P=0$, and hence $\int_{\mathbb R}B\mathcal C_Yh\,dx=0$ for every $h\in H_{\rm e}^{s+2}$. Density and invertibility of $\mathcal C_Y$ on $L_{\rm e}^2$ imply $B=0$. The converse follows immediately from \eqref{6-5}.
    \end{proof}

    In \eqref{6-5}, the term $\Gamma PE_ah/\alpha$ accounts for the displacement $D_Yp_a[h]=\mathrm{i}E_ah$ of the physical vortex under a surface variation at fixed conformal depth. Thus $B=0$ alone does not imply surface stationarity of $\mathcal J_W$ in these coordinates.

    On the Bernoulli chart of Section~\ref{sec5}, set
    \begin{equation}
        \Phi_{\rm B}(a,W):=\mathcal J_W(Y(a,W),a),\qquad \beta_{\rm g}(a,W):=1-\frac{E_aY_a}{\alpha(a,W)}.
        \label{6-10}
    \end{equation}
    Here $Y_a$ is taken at fixed $W$, and $\alpha(a,W)=1+E_a(|D|Y(a,W))$. Since $B=0$ on this chart, \eqref{6-5} applied to $(Y_a,1)$ yields
    \begin{equation}
        \partial_a\Phi_{\rm B}=-\Gamma\beta_{\rm g}\mathscr G_0.
        \label{6-11}
    \end{equation}
    At a coupled equilibrium, $\mathscr G_0=0$, and differentiation gives
    \begin{equation}
        \partial_a^2\Phi_{\rm B}=-\Gamma\beta_{\rm g}L.
        \label{6-12}
    \end{equation}

    The geometric meaning of $\beta_{\rm g}$ follows from the physical vortex height $p_2(a,W)=-a+E_aY(a,W)$:
    \begin{equation}
        \partial_ap_2=-1-E_a(|D|Y)+E_aY_a=-\alpha\beta_{\rm g}.
        \label{6-13}
    \end{equation}
    Thus $\beta_{\rm g}$ measures the change of physical vortex height relative to conformal depth after the surface response is included. On the small-speed branch, Lemma~\ref{lem3-1} gives $\|Y_a\|_{L^2}=O(W^{5/2})$. Since $a\asymp W^{-1}$, the Poisson bound yields $E_aY_a=O(W^3)$; together with $\alpha=1+O(W^3)$, we obtain $\beta_{\rm g}=1+O(W^3)>0$. We impose no global sign condition on $\beta_{\rm g}$.

    The function $\Phi_{\rm B}$ is restricted by $B=0$, not by surface stationarity at fixed $a$. Consequently, \eqref{6-12} is not yet the scalar Schur complement of the fixed-speed Hessian. We identify that coefficient next by eliminating the surface variation at fixed physical vortex height.

    \smallskip

    \subsection{The Hessian and the physical vortex coordinate}
    Fix $W$ and use the real $L^2$ pairing. At a coupled equilibrium, write $\mathcal H:=D^2\mathcal J_W(Y,a)$ and $\mathcal E h:=E_ah/\alpha$. The physical vortex height satisfies
    \[
    p_2(Y,a)=-a+E_aY,\qquad Dp_2(Y,a)[h,\ell]=E_ah-\alpha\ell.
    \]
    Since $\partial_ap_2=-\alpha<0$ at fixed $Y$, the change of variables $(Y,a)\mapsto(Y,p_2)$ is locally real analytic and invertible. With inverse depth map $a=\mathfrak a(Y,p_2)$, define
    \begin{equation}
        \widetilde{\mathcal J}_W(Y,p_2):=\mathcal J_W(Y,\mathfrak a(Y,p_2)).
        \label{6-14}
    \end{equation}
    A variation $(h,\nu)$ in these coordinates corresponds to $\ell=\mathcal E h-\nu/\alpha$. Here $\nu$ denotes vertical displacement of the physical vortex, not variation of the wave speed.

    \begin{lemma}
        \label{lem6-2}
        At a coupled equilibrium, the Hessian satisfies
        \begin{equation}
            \begin{aligned}
                \mathcal H[(h,\ell),(k,m)]={}&\langle Ah+B_a\ell,\mathcal C_Yk\rangle\\
                &+\Gamma\bigl(P_Y[h]+P_a\ell\bigr)(\mathcal E k-m).
            \end{aligned}
            \label{6-15}
        \end{equation}
        Its kernel, understood in the bilinear-form sense, equals $\ker\mathscr L$, where $\mathscr L=D_{(Y,a)}\mathscr F$.

        Suppose also that $A$ is invertible, and write $Y_a=-A^{-1}B_a$ and $\beta_{\rm g}=1-\mathcal E Y_a$ on the Bernoulli chart. The surface Hessian at fixed $p_2$ is represented by
        \begin{equation}
            \widetilde{\mathcal H}_{\rm s}=\mathcal C_Y^*\widetilde A,\qquad \widetilde A:=A+B_a\mathcal E=A(I-Y_a\mathcal E),
            \label{6-16}
        \end{equation}
        where $(Y_a\mathcal E)h:=Y_a\,\mathcal E h$. It is an isomorphism from $H_{\rm e}^{s+2}$ to $H_{\rm e}^s$ if and only if $\beta_{\rm g}\ne0$. At an equilibrium with $L=0$, this condition holds automatically.

        When $A$ is invertible and $\beta_{\rm g}\ne0$, surface stationarity at fixed $p_2$ determines a unique local real-analytic map $Y=\widetilde Y(p_2,W)$. The reduced action
        \begin{equation}
            \Phi(p_2,W):=\widetilde{\mathcal J}_W(\widetilde Y(p_2,W),p_2)
            \label{6-17}
        \end{equation}
        satisfies, at every coupled equilibrium in this chart,
        \begin{equation}
            \Phi_{p_2p_2}=-\frac{\Gamma L}{\alpha^2\beta_{\rm g}}.
            \label{6-18}
        \end{equation}
    \end{lemma}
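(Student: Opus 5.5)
The plan is to get \eqref{6-15} by differentiating the first-variation formula \eqref{6-5} of Lemma~\ref{lem6-1} once more at an equilibrium. Write $D\mathcal J_W(Y,a)[k,m]=\langle B,\mathcal C_Yk\rangle+\Gamma P(\mathcal E k-m)$. Since $B=P=0$ at the equilibrium, the terms in which the derivative hits $\mathcal C_Y$ or $\mathcal E$ vanish. Only the derivatives of $B$ and $P$ in the direction $(h,\ell)$ survive, and these are $Ah+B_a\ell$ and $P_Y[h]+P_a\ell$ by Lemma~\ref{lem2-3}.

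For the kernel claim, suppose $(h,\ell)$ annihilates every $(k,m)$. Varying $m$ alone gives $P_Y[h]+P_a\ell=0$. Then $\langle Ah+B_a\ell,\mathcal C_Yk\rangle=0$ for all even $k$, and invertibility of $\mathcal C_Y$ on $L^2_{\rm e}$ together with density gives $Ah+B_a\ell=0$. Hence $(h,\ell)\in\ker\mathscr L$. The converse inclusion is immediate from \eqref{6-15}.

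Next I pass to the physical coordinates. The substitution $\ell=\mathcal E h-\nu/\alpha$ is linear in $(h,\nu)$. Because the first derivative vanishes at the equilibrium, the second derivative of the nonlinear change of variables contributes nothing. So $\widetilde{\mathcal H}[(h,\nu),(k,\mu)]=\mathcal H[(h,\mathcal E h-\nu/\alpha),(k,\mathcal E k-\mu/\alpha)]$. Setting $\nu=\mu=0$ makes the factor $\mathcal E k-m$ vanish, which leaves $\langle (A+B_a\mathcal E)h,\mathcal C_Yk\rangle$ and yields \eqref{6-16}.

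Invertibility of $\widetilde A=A(I-Y_a\mathcal E)$ reduces to that of the rank-one perturbation $I-Y_a\otimes\mathcal E$. By the standard determinant identity it is invertible exactly when $1-\mathcal E Y_a=\beta_{\rm g}\ne0$, and it preserves evenness. For the automatic claim at $L=0$, I argue by contradiction: suppose $\beta_{\rm g}=0$ and $L=0$. Then $\partial_ap_2=0$ by \eqref{6-13}. Moreover $(Y_a,1)$ spans $\ker\mathscr L$, so it lies in the kernel of $\mathcal H$, and by \eqref{6-15} $\widetilde{\mathcal H}$ also degenerates along it. I expect this step to be the main obstacle. My first attempt would combine $\Lambda(B_a,P_a)=L=0$ with $\mathcal E Y_a=1$ and look for an identity forcing $\Gamma/(4\pi\alpha)\cdots\ne0$, for example via \eqref{6-7}. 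If that fails, I would fall back on the fact that $\widetilde{\mathcal H}_{\rm s}$ is the adjoint-compatible Hessian, so that $P_Y$ is expressible through $\mathcal C_Y^*$ and $\mathcal E$. In that case $L=P_a+\Gamma^{-1}\cdots$ should combine with $\beta_{\rm g}=0$ to contradict the positivity of $\Gamma^2/(4\pi a^2)$-type terms.

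With $\widetilde{\mathcal H}_{\rm s}$ an isomorphism, the analytic implicit function theorem applied to $D_Y\widetilde{\mathcal J}_W=0$ produces $\widetilde Y(p_2,W)$. The reduced action then has second derivative equal to the Schur complement $\Phi_{p_2p_2}=\widetilde{\mathcal H}[(\widetilde Y_{p},1),(\widetilde Y_p,1)]$, where $\widetilde Y_p=-\widetilde{\mathcal H}_{\rm s}^{-1}\widetilde{\mathcal H}_{Y p}$.

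To evaluate this, I exploit that near an equilibrium the coupled solutions coincide with the Bernoulli chart. The tangent direction along the chart is $(Y_a,\partial_ap_2)=(Y_a,-\alpha\beta_{\rm g})$, and I compare it with \eqref{6-12}. More directly, I would plug $(h,\nu)=(Y_a,-\alpha\beta_{\rm g})$, which corresponds to $(Y_a,1)$ in conformal variables, into \eqref{6-15}. Since $AY_a+B_a=0$, the value is $\Gamma L(\mathcal E Y_a-1)=-\Gamma\beta_{\rm g}L$. The surface-stationarity equation is satisfied by this direction because its $\mathcal C_Yk$-component vanishes and $\mathcal E k-m=0$ for $\mu=0$. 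So $(Y_a,-\alpha\beta_{\rm g})$ lies in the graph of $\widetilde Y_p$ up to scaling, i.e.\ $\widetilde Y_p=Y_a/(-\alpha\beta_{\rm g})$. Then $\Phi_{p_2p_2}=(-\Gamma\beta_{\rm g}L)/(\alpha\beta_{\rm g})^2$, which is \eqref{6-18}.
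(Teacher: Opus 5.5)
\paragraph{The gap: automatic nonvanishing of $\beta_{\rm g}$ at $L=0$.} Your argument that $\beta_{\rm g}\ne0$ holds automatically when $L=0$ is incomplete. You give only possible directions, and the remark that $\widetilde{\mathcal H}$ degenerates along $(Y_a,1)$ produces no contradiction. For $\beta_{\rm g}=0$ it only restates $\widetilde AY_a=A(I-Y_a\mathcal E)Y_a=0$. No sign or positivity information (such as $\Gamma^2/(4\pi a^2)$-type terms) is needed here.

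The missing idea is the symmetry of the Hessian at the critical point. Formula \eqref{6-15} is not manifestly symmetric. Place the kernel vector $\zeta=(Y_a,1)$ in the \emph{second} argument, so that the factor $\mathcal Ek-m$ becomes $\mathcal EY_a-1=-\beta_{\rm g}$. Since $\zeta\in\ker\mathcal H$, symmetry gives
\[
0=\mathcal H[(h,0),\zeta]=\langle Ah,\mathcal C_YY_a\rangle-\Gamma\beta_{\rm g}P_Y[h]\qquad\text{for all }h\in H_{\rm e}^{s+2}(\mathbb R).
\]
Suppose $\beta_{\rm g}=0$. Then $\mathcal C_YY_a$ is $L^2$-orthogonal to $\operatorname{Ran}A=H_{\rm e}^s$, which is dense in $L_{\rm e}^2$. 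Hence $\mathcal C_YY_a=0$, and invertibility of $\mathcal C_Y$ gives $Y_a=0$. This contradicts $\mathcal EY_a=1-\beta_{\rm g}=1$. More generally, comparing the two slot orders gives
\[
\langle Ah,\mathcal C_YY_a\rangle=\Gamma\beta_{\rm g}P_Y[h]+\Gamma L\,\mathcal Eh
\]
at every equilibrium of the chart. Evaluating \eqref{6-15} only with $\zeta$ in the first slot, as you did, cannot detect this identity.

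\paragraph{The remaining steps.} The rest of your plan is correct, with one small omission. The statement concerns $\mathcal C_Y^*\widetilde A$, not $\widetilde A$, so you also need $\mathcal C_Y^*$ to be an isomorphism of $H_{\rm e}^s$. This follows from the explicit inverse $(\mathcal C_Y^*)^{-1}f=(\vartheta_Yf+H(Y'f))/J$.

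Your derivation of \eqref{6-18} is valid but takes a different route from the paper. You check that $(Y_a,-\alpha\beta_{\rm g})$ solves the linearized surface-stationarity equation. Invertibility of $\widetilde{\mathcal H}_{\rm s}$ then gives $\widetilde Y_{p_2}=-Y_a/(\alpha\beta_{\rm g})$, and you evaluate the Hessian on this direction to get the Schur complement.

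The paper instead uses \eqref{6-19} to obtain $\Phi_{p_2}=\Gamma P/\alpha$ along the stationary family. It identifies that family with the Bernoulli chart by local uniqueness, and differentiates using $\partial a/\partial p_2|_W=-1/(\alpha\beta_{\rm g})$ from \eqref{6-13}. Your version is a self-contained second-order computation at the equilibrium. The paper's route additionally yields the first-derivative formula $\Phi_{p_2}=\Gamma P/\alpha$ on the whole chart.
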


    \begin{proof}
        Differentiating \eqref{6-5} at $B=P=0$ gives \eqref{6-15}, since derivatives of $\mathcal C_Y$ and $\mathcal E$ are multiplied by the vanishing residuals. If $(h,\ell)$ annihilates every test variation, testing with $(0,m)$ gives $P_Y[h]+P_a\ell=0$. Testing with $(k,0)$ then yields $\langle Ah+B_a\ell,\mathcal C_Yk\rangle=0$ for every $k\in H_{\rm e}^{s+2}$. Density and invertibility of $\mathcal C_Y$ on $L_{\rm e}^2$ imply $Ah+B_a\ell=0$. The converse follows from \eqref{6-15}, proving $\ker\mathcal H=\ker\mathscr L$.

        Substituting $\ell=\mathcal E h-\nu/\alpha$ into \eqref{6-5} gives, throughout the coordinate neighborhood,
        \begin{equation}
            D\widetilde{\mathcal J}_W(Y,p_2)[h,\nu]=\langle B,\mathcal C_Yh\rangle+\frac{\Gamma P}{\alpha}\nu,
            \label{6-19}
        \end{equation}
        with residuals evaluated at $(Y,\mathfrak a(Y,p_2),W)$. Thus surface stationarity at fixed $p_2$ is equivalent to $B=0$. Its linearization in $Y$ is $\widetilde A=A+B_a\mathcal E$, and differentiation of the surface gradient $\mathcal C_Y^*B$ at equilibrium gives \eqref{6-16}.

        Since $AY_a=-B_a$, we have $\widetilde A=A(I-Y_a\mathcal E)$. If $\beta_{\rm g}\ne0$, the rank-one factor has inverse
        \[
        (I-Y_a\mathcal E)^{-1}=I+\frac{Y_a\mathcal E}{\beta_{\rm g}}.
        \]
        If $\beta_{\rm g}=0$, then $(I-Y_a\mathcal E)Y_a=0$ with $Y_a\ne0$. Moreover, Lemma~\ref{lem2-6} and the Sobolev product estimates show that $\mathcal C_Y^*$ is an isomorphism on $H_{\rm e}^s$, with inverse $(\mathcal C_Y^*)^{-1}f=(\vartheta_Yf+H(Y'f))/J$. This proves the criterion for $\widetilde{\mathcal H}_{\rm s}$.

        For nonvanishing at $L=0$, Lemma~\ref{lem2-8} gives $\zeta:=(Y_a,1)\in\ker\mathscr L=\ker\mathcal H$. By symmetry and \eqref{6-15},
        \[
        0=\mathcal H[(h,0),\zeta]=\langle Ah,\mathcal C_YY_a\rangle-\Gamma\beta_{\rm g}P_Y[h].
        \]
        If $\beta_{\rm g}=0$, surjectivity of $A$ and density in $L_{\rm e}^2$ imply $\mathcal C_YY_a=0$. Hence $Y_a=0$, contradicting $\beta_{\rm g}=1-\mathcal E Y_a=0$. Thus $\beta_{\rm g}\ne0$.

        When $\beta_{\rm g}\ne0$, the analytic implicit function theorem applied to $B(Y,\mathfrak a(Y,p_2),W)=0$ gives $Y=\widetilde Y(p_2,W)$. Equation~\eqref{6-19} then yields
        \[
        \Phi_{p_2}=\frac{\Gamma P}{\alpha}.
        \]
        By local uniqueness, this stationary surface family is the Bernoulli chart expressed in $(p_2,W)$. From \eqref{6-13},
        \[
        \left.\frac{\partial a}{\partial p_2}\right|_W=-\frac1{\alpha\beta_{\rm g}}.
        \]
        At equilibrium $P=\mathscr G_0=0$, so differentiation gives $\Phi_{p_2p_2}=(\Gamma/\alpha)L(-1/(\alpha\beta_{\rm g}))$, proving \eqref{6-18}.
    \end{proof}

    At every coupled equilibrium, the physical-coordinate Hessian has the block representation
    \[
    D^2\widetilde{\mathcal J}_W=\begin{pmatrix}\widetilde{\mathcal H}_{\rm s}&\widetilde k\\ \widetilde k^*&\widetilde d\end{pmatrix},
    \]
    where
    \[
    \widetilde{\mathcal H}_{\rm s}=\mathcal C_Y^*(A+B_a\mathcal E),\qquad \widetilde k=-\alpha^{-1}\mathcal C_Y^*B_a,\qquad \widetilde d=-\Gamma\alpha^{-2}P_a,
    \]
    and $\widetilde k^*h=\langle\widetilde k,h\rangle$. These identities follow from \eqref{6-19} at $B=P=0$ and require no invertibility assumption.

    When $A$ is invertible and $\beta_{\rm g}\ne0$, differentiating the surface stationarity equation gives $\widetilde Y_{p_2}=-\widetilde{\mathcal H}_{\rm s}^{-1}\widetilde k$. Hence the scalar Schur complement is
    \begin{equation}
        \widetilde d-\langle\widetilde k,\widetilde{\mathcal H}_{\rm s}^{-1}\widetilde k\rangle=\Phi_{p_2p_2}=-\frac{\Gamma L}{\alpha^2\beta_{\rm g}}.
        \label{6-20}
    \end{equation}
    The elimination uses stationarity, not minimization; the surface Hessian may be indefinite. The resulting quadratic-form decomposition will be established on the common form domain in Proposition~\ref{prop6-4}. On the common local chart, $\Phi_{\rm B}(a,W)=\Phi(p_2(a,W),W)$; since $\Phi_{p_2}=0$ at equilibrium, $\partial_a^2\Phi_{\rm B}=\Phi_{p_2p_2}(\partial_ap_2)^2$, recovering \eqref{6-12}.

    At a simple fold from Proposition~\ref{prop5-2}, Lemma~\ref{lem6-2} gives $\beta_{{\rm g},\star}\ne0$, so the physical-coordinate surface Hessian remains invertible nearby. Along $W=w(a)$, using $L_\star=w'(a_\star)=0$, we obtain
    \begin{equation}
        \left.\frac{d}{da}\left(-\frac{\Gamma L(a,w(a))}{\alpha(a,w(a))^2\beta_{\rm g}(a,w(a))}\right)\right|_{a=a_\star}=-\frac{\Gamma(\partial_aL)_\star}{\alpha_\star^2\beta_{{\rm g},\star}}\ne0.
        \label{6-21}
    \end{equation}
    Thus the scalar Schur complement crosses zero nondegenerately at the fold. To determine the resulting change in negative index, we first place the Hessian forms on a common domain and establish their closedness and finite negative index.

    \smallskip

    \subsection{The common form domain}
    We extend the Hessian forms to fixed spaces of variations, without extending the nonlinear action itself to an open set of $H^1$ configurations. Set
    \[
    \begin{aligned}
        \mathscr H_{\rm s}&:=L_{\rm e}^2(\mathbb R),\qquad \mathscr V_{\rm s}:=H_{\rm e}^1(\mathbb R),\\
        \mathscr H&:=L_{\rm e}^2(\mathbb R)\times\mathbb R,\qquad \mathscr V:=H_{\rm e}^1(\mathbb R)\times\mathbb R.
    \end{aligned}
    \]
    At a coupled equilibrium, with $W$ fixed, define initially
    \[
    \mathfrak q_{\rm s}[h]:=D^2\widetilde{\mathcal J}_W[(h,0),(h,0)],\qquad \mathfrak q[(h,\nu)]:=D^2\widetilde{\mathcal J}_W[(h,\nu),(h,\nu)]
    \]
    for $h\in H_{\rm e}^{s+2}$ and physical vertical vortex displacement $\nu\in\mathbb R$. We use two arguments for the associated bilinear forms. The negative index is the maximal dimension of a subspace on which the form is strictly negative. Its kernel consists of variations pairing to zero with every test variation.

    \begin{lemma}
        \label{lem6-3}
        Let $(Y,a,W)\in\mathscr U^s$ be a coupled equilibrium. The forms $\mathfrak q_{\rm s}$ and $\mathfrak q$ extend continuously to $\mathscr V_{\rm s}$ and $\mathscr V$, respectively. They are closed and bounded below on $\mathscr H_{\rm s}$ and $\mathscr H$, and define self-adjoint operators with these form domains. Moreover,
        \begin{equation}
            n_-(\mathfrak q_{\rm s})<\infty,\qquad n_-(\mathfrak q)<\infty.
            \label{6-22}
        \end{equation}
        If $A$ is invertible and $\beta_{\rm g}\ne0$, the surface form induces an isomorphism $\widetilde{\mathcal H}_{\rm s}:H_{\rm e}^1\to H_{\rm e}^{-1}$. Its negative index is locally constant along the coupled solution set where these conditions hold.
    \end{lemma}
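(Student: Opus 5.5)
The plan is to split each form into a principal second-order capillary part, which is positive and controls the $H^1$ norm, and a remainder that is form-compact relative to it. The KLMN theorem and standard min--max arguments then give all the claims.

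\textbf{Structure of the surface form.} By \eqref{6-16}, $\mathfrak q_{\rm s}[h]=\langle \mathcal C_Y^*(A+B_a\mathcal E)h,h\rangle$. By the factorization \eqref{2-27},
\[
\mathcal C_Y^*A=\mathcal C_Y^*\mathcal T_YA_\infty(W)+\mathcal C_Y^*\mathcal K_{Y,a,W}.
\]
The principal part is $\mathcal C_Y^*\mathcal T_Y(-\sigma\partial_x^2)$. We first check that $\mathcal C_Y^*\mathcal T_Y$ equals multiplication by a positive function plus a commutator-type term of order zero with decaying coefficients. Indeed, $\mathcal C_Y=J^{3/2}\mathcal T_Y$ gives $\mathcal C_Y^*\mathcal T_Y=\mathcal T_Y^*J^{3/2}\mathcal T_Y$, a positive operator whose principal symbol is $\sigma J^{-3/2}\vartheta_Y^2+\dots$. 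Self-adjointness of the Hessian form is automatic, since it is a second derivative.

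Integrating by parts once, the principal quadratic form is
\[
\sigma\int \rho_Y|h'|^2\,dx+\text{(lower order)},\qquad \rho_Y\ge c>0,
\]
with $\rho_Y-1\in H^{s+1}$. We verify this directly by expanding the second variation of $\sigma\int(\sqrt J-\vartheta_Y)$, which gives $\sigma\int J^{-3/2}|\mathcal C_Yh'|^2$ up to lower order. The remaining terms are of three kinds:
\begin{itemize}
\item the far-field part $\int (g|h|^2-W^2h|D|h)$;
\item terms $\int c(x)\,h\cdot\partial h$ or $\int c(x)|h|^2$ with $c\in C_0$;
\item the finite-rank pieces $B_a\mathcal E$, $\widetilde k$, $\widetilde d$, whose functionals $E_a$ and $E_a|D|$ are bounded on $L^2$ by the Poisson estimate in the proof of Lemma~\ref{lem3-1}.
\end{itemize}
The ones with $C_0$ coefficients satisfy $|r[h]|\le\epsilon\|h'\|^2+C_\epsilon\|h\|^2$. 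Hence $\mathfrak q_{\rm s}$ and $\mathfrak q$ extend continuously to $\mathscr V_{\rm s}$ and $\mathscr V$, are bounded below, and are closed, because they are equivalent to $\|h\|_{H^1}^2$ up to a multiple of $\|h\|_{L^2}^2$. KLMN then yields the associated self-adjoint operators.

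\textbf{Finite negative index.} Write
\[
\mathfrak q_{\rm s}=\mathfrak q_\infty+\mathfrak r,\qquad \mathfrak q_\infty[h]=\int\bigl(\sigma|h'|^2+g|h|^2-W^2h|D|h\bigr)\,dx.
\]
By \eqref{2-25}, $\mathfrak q_\infty\ge c_{\rm f}\|h\|_{H^1}^2$, and $\mathfrak r$ is compact on $H^1_{\rm e}$. The reason is that its coefficients (e.g.\ $\rho_Y-1$ and $\mathfrak c+W^2$) lie in $C_0$ and the rank-one parts are finite rank, and multiplication by a $C_0$ function is compact $H^1\to L^2$ by cutoff plus Rellich, as in Lemma~\ref{lem4-3}. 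The operator $\mathfrak q_\infty^{-1/2}\mathfrak r\,\mathfrak q_\infty^{-1/2}$ is then compact on $L^2$, so $I+$compact has only finitely many eigenvalues below $0$. By min--max, $n_-(\mathfrak q_{\rm s})<\infty$. The full form $\mathfrak q$ adds one real dimension, so $n_-(\mathfrak q)\le n_-(\mathfrak q_{\rm s})+1$.

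\textbf{Isomorphism and constancy of index.} When $A$ is invertible and $\beta_{\rm g}\ne0$, the induced operator
\[
\widetilde{\mathcal H}_{\rm s}=(\text{coercive part})+(\text{compact}):H^1_{\rm e}\to H^{-1}_{\rm e}
\]
is Fredholm of index zero, so it suffices to prove injectivity. Suppose $\widetilde{\mathcal H}_{\rm s}h=0$ with $h\in H^1$. Elliptic regularity bootstraps $h$ into $H^{s+2}$: the equation $\sigma$-principal $\cdot\,h''=$ lower order can be inverted via $\mathcal T_Y^{-1}$ and $A_\infty(W)^{-1}$ as in Lemma~\ref{lem4-2}. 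Lemma~\ref{lem6-2} then gives $h=0$.

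Along a path of equilibria with these conditions, the forms vary continuously in the norm on $\mathcal L(H^1,H^{-1})$, because the coefficients depend continuously on $(Y,a,W)$ in $H^{s+2}\times\mathbb R^2$. Invertibility persists along the path, so no eigenvalue crosses zero, and the count of negative eigenvalues is locally constant.

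\textbf{Main obstacle.} The hardest step is the regularity bootstrap from $H^1$ to $H^{s+2}$, together with the precise identification of the principal form. The coefficients are only in $H^{s+1}$ and $H^s$, and the operator involves Hilbert transforms. I would do the bootstrap through the conformal factorization $\mathcal T_Y$, raising one derivative at a time. At each step, products with $H^{s}$ coefficients lose nothing below order $s$, by the Sobolev product estimates for $s>\frac12$.
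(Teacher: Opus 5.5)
Most of your outline is sound and close to the paper's proof. This includes the G\aa rding-type lower bound and closedness, the finite-rank Poisson blocks, the bound $n_-(\mathfrak q)\le n_-(\mathfrak q_{\rm s})+1$, the $H^1\to H^{s+2}$ bootstrap followed by Lemma~\ref{lem6-2} for injectivity, and norm continuity in $\mathcal L(H_{\rm e}^1,H_{\rm e}^{-1})$ for local constancy.

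The gap is in the finite-index step, and your Fredholm claim in the isomorphism step inherits it. You correctly find the leading capillary coefficient to be $\sigma\rho_Y$ with $\rho_Y=J^{-1/2}$. You then compare $\mathfrak q_{\rm s}$ with $\mathfrak q_\infty$, whose leading coefficient is the constant $\sigma$. So the remainder $\mathfrak r=\mathfrak q_{\rm s}-\mathfrak q_\infty$ contains the top-order term $\sigma\int_{\mathbb R}(\rho_Y-1)|h'|^2\,dx$, and this is not a compact form on $H_{\rm e}^1$. Cutoff plus Rellich makes multiplication by a $C_0$ coefficient compact from $H^1$ to $L^2$, which handles terms of order at most one. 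Here, however, the coefficient acts on $h'$, which lies only in $L^2$, and multiplication by a nonzero function is never compact on $L^2$.

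Concretely, let $\chi\in C_c^\infty(\mathbb R)$ be even, nonzero, and supported where $\rho_Y-1$ has a fixed sign, and set $h_n(x):=n^{-1}\chi(x)\cos(nx)$. Then $h_n\rightharpoonup0$ in $H_{\rm e}^1$, so any compact form tends to zero along $h_n$, whereas
\[
\sigma\int_{\mathbb R}(\rho_Y-1)|h_n'|^2\,dx\longrightarrow\frac{\sigma}{2}\int_{\mathbb R}(\rho_Y-1)\chi^2\,dx\ne0 .
\]
Hence $\mathfrak q_\infty^{-1/2}\mathfrak r\,\mathfrak q_\infty^{-1/2}$ is not compact. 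Neither the ``$I+{}$compact'' eigenvalue count nor ``coercive${}+{}$compact, hence Fredholm of index zero'' follows as written.

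The missing idea is to build the variable top-order coefficient into the coercive reference form. The paper takes $m_Y:=J^{-1/4}$, so that $m_Y^2=\rho_Y$, and compares with $\mathfrak q_{\rm ref}[h]:=\mathfrak q_\infty[m_Yh]$, which is the form \eqref{6-27}. This form remains coercive on $H_{\rm e}^1$ because multiplication by $m_Y^{\pm1}$ is bounded on $H^1$. Since
\[
\sigma\|(m_Yh)'\|_{L^2}^2=\sigma\int_{\mathbb R}\rho_Y|h'|^2\,dx+2\sigma\int_{\mathbb R}m_Ym_Y'hh'\,dx+\sigma\int_{\mathbb R}|m_Y'|^2h^2\,dx,
\]
$\mathfrak q_{\rm ref}$ matches the capillary form at top order. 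The difference $\mathfrak q_{\rm s}-\mathfrak q_{\rm ref}$ then consists only of terms of order at most one with $C_0$ coefficients, the commutator part of \eqref{6-26}, and finite-rank Poisson terms. All of these are compact from $H_{\rm e}^1$ to $H_{\rm e}^{-1}$. With this replacement, your finite-index argument and your Fredholm-plus-injectivity proof of the isomorphism go through unchanged.

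Alternatively, you could compare the resolvents of the self-adjoint operators associated with $\mathfrak q_{\rm s}$ and $\mathfrak q_\infty$ and invoke Weyl's theorem. There the $C_0$ coefficient is composed with a smoothing resolvent, so compactness does hold. What fails is the form-compactness claim itself.
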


    \begin{proof}
        At fixed conformal depth, set $\mathfrak q_a[h]:=D_{YY}^2\mathcal J_W[h,h]$. Differentiating \eqref{6-4} gives
        \begin{equation}
            \begin{aligned}
                \mathfrak q_a[h]={}&\sigma\int_{\mathbb R}J^{-3/2}|\mathcal C_Y(h')|^2\,dx+g\int_{\mathbb R}\vartheta_Yh^2\,dx\\
                &+2g\int_{\mathbb R}Yh|D|h\,dx-W^2\langle h,|D|h\rangle-\frac{\Gamma^2}{4\pi\alpha^2}\bigl(E_a(|D|h)\bigr)^2.
            \end{aligned}
            \label{6-23}
        \end{equation}
        Indeed, the capillary second variation is $J^{-3/2}(\vartheta_Yh'-Y'|D|h)^2=J^{-3/2}|\mathcal C_Y(h')|^2$. At fixed physical vortex position, $\ell=\mathcal E h=E_ah/\alpha$. Since the action is stationary at the equilibrium, only the first derivative of this coordinate change enters the Hessian, yielding
        \begin{equation}
            \mathfrak q_{\rm s}[h]=\mathfrak q_a[h]-2\Gamma\,\mathcal E h\,P_Y[h]-\Gamma P_a(\mathcal E h)^2.
            \label{6-24}
        \end{equation}
        The Poisson terms in \eqref{6-23}--\eqref{6-24} are finite-rank forms bounded on $L_{\rm e}^2$.

        All terms extend continuously to $H_{\rm e}^1$. The graph bounds and invertibility of $\mathcal C_Y$ on $L^2$ imply
        \[
        \sigma\int_{\mathbb R}J^{-3/2}|\mathcal C_Y(h')|^2\,dx\ge c\|h'\|_{L^2}^2.
        \]
        The first-order terms are bounded by $C\|h\|_{L^2}\|h'\|_{L^2}$, while the remaining terms are bounded on $L^2$. Hence Young's inequality gives
        \begin{equation}
            \mathfrak q_{\rm s}[h]\ge c_1\|h'\|_{L^2}^2-C_1\|h\|_{L^2}^2,\qquad |\mathfrak q_{\rm s}[h]|\le C_2\|h\|_{H^1}^2.
            \label{6-25}
        \end{equation}
        Adding a sufficiently large multiple of $\|h\|_{L^2}^2$ gives a form norm equivalent to the $H^1$ norm. This proves closedness and the lower bound. By the block identities following Lemma~\ref{lem6-2},
        \[
        \mathfrak q[(h,\nu)]=\mathfrak q_{\rm s}[h]+2\nu\langle\widetilde k,h\rangle+\widetilde d\,\nu^2,
        \]
        where $\widetilde k=-\alpha^{-1}\mathcal C_Y^*B_a\in L_{\rm e}^2$ and $\widetilde d=-\Gamma\alpha^{-2}P_a\in\mathbb R$. These additional terms are bounded on $\mathscr H$, so the full form is likewise closed and bounded below. The representation theorem for closed lower-bounded forms gives the associated self-adjoint operators \cite{Kato1995}.

        For finiteness of the negative indices, compare the surface form with a positive reference form that includes the variable capillary coefficient. Let $M_f$ denote multiplication by $f$, set $[H,M_f]:=HM_f-M_fH$, and define $b_1:=J^{-3/2}\vartheta_YY'$ and $b_2:=J^{-3/2}(Y')^2$. Since $\mathcal C_Y=M_{\vartheta_Y}+M_{Y'}H$, $H^*=-H$, and $H^2=-I$,
        \begin{equation}
            \mathcal C_Y^*M_{J^{-3/2}}\mathcal C_Y=M_{J^{-1/2}}+\mathcal K_{\rm c},\qquad \mathcal K_{\rm c}:=-[H,M_{b_1}]-[H,M_{b_2}]H.
            \label{6-26}
        \end{equation}
        For $f\in H^1(\mathbb R)$, the Hilbert-transform commutator is Hilbert--Schmidt, with
        \[
        \|[H,M_f]\|_{\mathrm{HS}}^2=\frac1{\pi^2}\iint_{\mathbb R^2}\frac{|f(x)-f(y)|^2}{|x-y|^2}\,dx\,dy\le C\|f\|_{H^1}^2.
        \]
        Since $b_1,b_2\in H^{s+1}\subset H^1$, $\mathcal K_{\rm c}$ is compact and self-adjoint on $L^2$. Thus the capillary form equals
        \[
        \sigma\int_{\mathbb R}J^{-1/2}|h'|^2\,dx+\sigma\langle\mathcal K_{\rm c}h',h'\rangle,
        \]
        where the second term induces a compact map $H^1\to H^{-1}$.

        Set $m_Y:=J^{-1/4}$. Multiplication by $m_Y$ and $m_Y^{-1}$ is bounded on $H^1$, and we define
        \begin{equation}
            \mathfrak q_{\rm ref}[h]:=g\|m_Yh\|_{L^2}^2+\sigma\|(m_Yh)'\|_{L^2}^2-W^2\||D|^{1/2}(m_Yh)\|_{L^2}^2.
            \label{6-27}
        \end{equation}
        The subcritical symbol bound \eqref{2-25} yields $\mathfrak q_{\rm ref}[h]\ge c\|m_Yh\|_{H^1}^2\ge c'\|h\|_{H^1}^2$. Since $m_Y^2=J^{-1/2}$, the reference form has the same leading capillary coefficient as $\mathfrak q_{\rm s}$. We claim that
        \begin{equation}
            \mathfrak q_{\rm s}=\mathfrak q_{\rm ref}+\mathfrak r,
            \label{6-28}
        \end{equation}
        where $\mathfrak r$ is a compact symmetric form, in the sense that its associated operator $H_{\rm e}^1\to H_{\rm e}^{-1}$ is compact.

        Indeed, multiplication by a bounded coefficient vanishing at infinity is compact from $H^1$ to $L^2$, by cutoff approximation and local Rellich compactness. For the capillary difference,
        \[
        \int_{\mathbb R}J^{-1/2}|h'|^2\,dx-\int_{\mathbb R}|(m_Yh)'|^2\,dx=-2\int_{\mathbb R}m_Ym_Y'hh'\,dx-\int_{\mathbb R}|m_Y'|^2h^2\,dx.
        \]
        Since $m_Y'\in C_0(\mathbb R)\cap L^\infty$, these terms are compact, as is the commutator term in \eqref{6-26}. The gravitational differences involve multiplication by $\vartheta_Y-m_Y^2$ and $Y$, both vanishing at infinity, and are compact by the same argument. For the kinetic difference, writing $d_Y:=m_Y-1$ gives
        \[
        \begin{aligned}
            \langle m_Yh,|D|(m_Yh)\rangle-\langle h,|D|h\rangle={}&2\langle d_Yh,|D|h\rangle\\
            &+\langle d_Yh,|D|(d_Yh)\rangle.
        \end{aligned}
        \]
        Multiplication by $d_Y$ is compact from $H^1$ to $L^2$ and bounded on $H^1$. Since $|D|:H^1\to L^2$ is bounded, polarization of the kinetic differences shows that their associated operators factor through the compact map $M_{d_Y}:H^1\to L^2$ and hence are compact from $H^1$ to $H^{-1}$. The remaining Poisson terms are finite rank. This proves \eqref{6-28}.

        Using $\mathfrak q_{\rm ref}$ as an equivalent inner product on $H_{\rm e}^1$, we may write
        \[
        \mathfrak q_{\rm s}[h,k]=\mathfrak q_{\rm ref}[(I+\mathfrak K)h,k]
        \]
        with $\mathfrak K$ compact and self-adjoint. Its negative index is the total multiplicity of the eigenvalues of $\mathfrak K$ below $-1$ and is therefore finite. The same argument, using $\mathfrak q_{\rm ref}[h]+\nu^2$ as reference form and the finite-rank additional blocks, proves \eqref{6-22} for $\mathfrak q$.

        Now suppose $A$ is invertible and $\beta_{\rm g}\ne0$. We next exclude additional kernel vectors in $H_{\rm e}^1$. If $\mathfrak q_{\rm s}[h,k]=0$ for every $k\in H_{\rm e}^1$, then \eqref{6-16} gives $\mathcal C_Y^*\widetilde Ah=0$ in $H_{\rm e}^{-1}$. The explicit inverse formulas for $\mathcal C_Y^*$ and $\mathcal T_Y$, together with the $W^{1,\infty}$ bounds on their coefficients, extend to $H^{-1}$ by duality and density. Thus $\widetilde Ah=0$ in $H_{\rm e}^{-1}$, equivalently
        \[
        \mathcal T_Y(-\sigma h'')+\mathfrak c\,|D|h+\mathfrak d\,h'+gh+B_a\mathcal E h=0,
        \]
        where $\mathfrak c,\mathfrak d$ are the coefficients from Lemma~\ref{lem2-4}. For $h\in H^1$, all terms except $\mathcal T_Y(-\sigma h'')$ belong to $L^2$. Invertibility of $\mathcal T_Y$ on $L^2$ gives $h''\in L^2$, hence $h\in H^2$. The Sobolev multiplier estimates then yield, for $h\in H^r$ with $r\ge2$,
        \[
        h\in H^{2+\min\{s,r-1\}}.
        \]
        Iteration gives $h\in H_{\rm e}^{s+2}$. The invertibility of $\widetilde{\mathcal H}_{\rm s}$ on this space from Lemma~\ref{lem6-2} forces $h=0$. Since \eqref{6-28} makes the surface form Fredholm of index zero from $H_{\rm e}^1$ to $H_{\rm e}^{-1}$, its trivial kernel implies surjectivity and a bounded inverse.

        Finally, \eqref{6-23}--\eqref{6-24} depend continuously on the configuration in $\mathcal L(H_{\rm e}^1,H_{\rm e}^{-1})$. The associated bounded self-adjoint operators in a fixed equivalent inner product are therefore norm-continuous. Local invertibility keeps the spectrum away from zero, and the finite negative index is therefore locally constant.
    \end{proof}

    Near a simple fold with $A_\star$ invertible, Lemmas~\ref{lem6-2} and \ref{lem6-3} ensure that the surface Hessian remains invertible on the common form domain. This permits an exact Schur-complement reduction of the full Hessian without assuming that the surface form is positive.

    \begin{proposition}
        \label{prop6-4}
        Let $(Y,a,W)\in\mathscr U^s$ be a coupled equilibrium with $A$ invertible and $\beta_{\rm g}\ne0$. Set
        \[
        v_{\rm s}:=\widetilde{\mathcal H}_{\rm s}^{-1}\widetilde k,\qquad \mathfrak s:=\widetilde d-\langle\widetilde k,v_{\rm s}\rangle=-\frac{\Gamma L}{\alpha^2\beta_{\rm g}}.
        \]
        Then, on $\mathscr V=H_{\rm e}^1(\mathbb R)\times\mathbb R$,
        \begin{equation}
            \mathfrak q[(h,\nu)]=\mathfrak q_{\rm s}[h+v_{\rm s}\nu]+\mathfrak s\,\nu^2.
            \label{6-29}
        \end{equation}
        The conformal-coordinate Hessian extends to the same form domain and has the same negative index and nullity as $\mathfrak q$. In particular,
        \begin{equation}
            n_-(D^2\mathcal J_W)=n_-(\mathfrak q)=n_-(\mathfrak q_{\rm s})+\mathbf 1_{\{L\beta_{\rm g}>0\}}.
            \label{6-30}
        \end{equation}
        The kernel of $\mathfrak q$ is trivial for $L\ne0$ and is $\operatorname{span}\{(-v_{\rm s},1)\}$ for $L=0$.

        Along the simple-fold curve $\gamma_\star(a)$ of Proposition~\ref{prop5-2}, denote the full and surface forms by $\mathfrak q(a)$ and $\mathfrak q_{\rm s}(a)$, with $W=w(a)$ fixed in each Hessian. On a sufficiently small segment, $n_-(\mathfrak q_{\rm s}(a))$ is constant and, for every sufficiently small $\delta>0$,
        \begin{equation}
            n_-(\mathfrak q(a_\star+\delta))-n_-(\mathfrak q(a_\star-\delta))=\operatorname{sgn}\bigl(\beta_{{\rm g},\star}(\partial_aL)_\star\bigr).
            \label{6-31}
        \end{equation}
        At the fold, $n_-(\mathfrak q(a_\star))=n_-(\mathfrak q_{\rm s}(a_\star))$ and $\dim\ker\mathfrak q(a_\star)=1$.
    \end{proposition}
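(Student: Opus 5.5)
The plan is to prove \eqref{6-29} by completing the square on the common form domain, then read off index and kernel from it. By the block identities after Lemma~\ref{lem6-2}, for $h\in H_{\rm e}^{s+2}$ we have $\mathfrak q[(h,\nu)]=\mathfrak q_{\rm s}[h]+2\nu\langle\widetilde k,h\rangle+\widetilde d\nu^2$. Both sides extend continuously to $\mathscr V$ by Lemma~\ref{lem6-3}, since $\widetilde k\in L_{\rm e}^2$. Lemma~\ref{lem6-3} also makes $\widetilde{\mathcal H}_{\rm s}:H_{\rm e}^1\to H_{\rm e}^{-1}$ an isomorphism, so $v_{\rm s}\in H_{\rm e}^1$ is well defined. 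Moreover, $\mathfrak q_{\rm s}[v_{\rm s},k]=\langle\widetilde k,k\rangle$ for every $k\in H_{\rm e}^1$. Expanding $\mathfrak q_{\rm s}[h+v_{\rm s}\nu]$ gives $\mathfrak q_{\rm s}[h]+2\nu\langle\widetilde k,h\rangle+\nu^2\langle\widetilde k,v_{\rm s}\rangle$, which yields \eqref{6-29}. Elliptic regularity as in the kernel argument of Lemma~\ref{lem6-3} shows $v_{\rm s}\in H_{\rm e}^{s+2}$, so $v_{\rm s}=-\widetilde Y_{p_2}$. The identification $\mathfrak s=-\Gamma L/(\alpha^2\beta_{\rm g})$ is then \eqref{6-20}.

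Next I would deduce the index and kernel statements. The map $(h,\nu)\mapsto(h+v_{\rm s}\nu,\nu)$ is a bounded isomorphism of $\mathscr V$. Under it, \eqref{6-29} turns $\mathfrak q$ into the direct sum $\mathfrak q_{\rm s}\oplus\mathfrak s\,\nu^2$. Hence $n_-(\mathfrak q)=n_-(\mathfrak q_{\rm s})+\mathbf 1_{\{\mathfrak s<0\}}$, and $\mathfrak s<0$ exactly when $L\beta_{\rm g}>0$, since $\Gamma>0$. For the kernel, $\mathfrak q_{\rm s}$ has trivial kernel because it induces an isomorphism. Therefore $\ker\mathfrak q$ is trivial when $\mathfrak s\ne0$, and equals $\operatorname{span}\{(-v_{\rm s},1)\}$ when $\mathfrak s=0$, that is, when $L=0$.

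For the conformal-coordinate Hessian, I would use the linear change of variables $(h,\ell)\mapsto(h,\nu)$ with $\nu=\alpha(\mathcal E h-\ell)$. This is $Dp_2$, up to sign conventions, and it is bounded and invertible on $H_{\rm e}^1\times\mathbb R$ because $\mathcal E$ is bounded on $L^2$. At an equilibrium the first derivative of $\mathcal J_W$ vanishes. The second-order term of the coordinate change therefore drops out, so $D^2\mathcal J_W$ is the pullback of $\mathfrak q$ under this isomorphism. It extends to the same form domain and has the same negative index and nullity. This proves \eqref{6-30}.

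Finally I would treat the fold curve. By Lemma~\ref{lem6-2}, $\beta_{{\rm g},\star}\ne0$, and by continuity $\beta_{\rm g}$ keeps its sign nearby while $A$ remains invertible. Lemma~\ref{lem6-3} then gives constancy of $n_-(\mathfrak q_{\rm s}(a))$ on a small segment. By \eqref{5-9}, $L(a,w(a))$ has the sign of $(\partial_aL)_\star(a-a_\star)$ for $a\ne a_\star$. So $\mathbf 1_{\{L\beta_{\rm g}>0\}}$ equals $1$ on exactly one side of $a_\star$: the side $a>a_\star$ when $\beta_{{\rm g},\star}(\partial_aL)_\star>0$, and the side $a<a_\star$ otherwise. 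Subtracting gives \eqref{6-31}. At $a_\star$, $L=0$ gives $\mathfrak s=0$, hence $n_-(\mathfrak q(a_\star))=n_-(\mathfrak q_{\rm s}(a_\star))$ and a one-dimensional kernel.

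I expect the main obstacle to be the justification on the $H^1$ form domain rather than the algebra. Two points need care: that $\mathfrak q_{\rm s}[v_{\rm s},\cdot]=\langle\widetilde k,\cdot\rangle$ holds for $H^1$ test functions, and that the physical and conformal extensions agree. I would handle both by density of $H_{\rm e}^{s+2}$ in $H_{\rm e}^1$, together with continuity of all forms in $H^1$ from \eqref{6-25}.
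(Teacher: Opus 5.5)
Your proposal is correct and follows essentially the same route as the paper: complete the square using $\mathfrak q_{\rm s}[v_{\rm s},\cdot]=\langle\widetilde k,\cdot\rangle$, apply the congruence $(h,\nu)\mapsto(h+v_{\rm s}\nu,\nu)$, pull back under $(h,\ell)\mapsto(h,E_ah-\alpha\ell)$ (criticality removes the second-order term), and use constancy of $n_-(\mathfrak q_{\rm s})$ along the fold segment. The differences are minor: you obtain $v_{\rm s}\in H_{\rm e}^{s+2}$ by elliptic regularity, where the paper identifies it with the Lemma~\ref{lem6-2} inverse by uniqueness; you read \eqref{6-31} off the sign of $L$ in \eqref{5-9} rather than the simple zero of $\mathfrak s$ in \eqref{6-21}; and your density remark leaves implicit the paper's explicit check, by approximating finite-dimensional negative subspaces within $\mathcal X^s$, that the negative index on the regular space equals that on $\mathscr V$.
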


    \begin{proof}
        By Lemma~\ref{lem6-3}, $v_{\rm s}\in H_{\rm e}^1$ satisfies $\mathfrak q_{\rm s}[v_{\rm s},h]=\langle\widetilde k,h\rangle$ for every $h\in H_{\rm e}^1$. Its value agrees with the Sobolev inverse of Lemma~\ref{lem6-2} by uniqueness. Hence
        \[
        \mathfrak q_{\rm s}[h+v_{\rm s}\nu]=\mathfrak q_{\rm s}[h]+2\nu\langle\widetilde k,h\rangle+\nu^2\langle\widetilde k,v_{\rm s}\rangle,
        \]
        and comparison with the full block form and \eqref{6-20} proves \eqref{6-29}.

        The transformation $(h,\nu)\mapsto(h+v_{\rm s}\nu,\nu)$ is a bounded isomorphism of $\mathscr V$. Thus \eqref{6-29} is a congruence with the direct sum of $\mathfrak q_{\rm s}$ and $\mathfrak s\nu^2$, preserving negative indices and nullities. Since $\mathfrak q_{\rm s}$ is nondegenerate,
        \[
        n_-(\mathfrak q)=n_-(\mathfrak q_{\rm s})+\mathbf 1_{\{\mathfrak s<0\}},\qquad \mathfrak s<0\ \Longleftrightarrow\ L\beta_{\rm g}>0,
        \]
        and the kernel is one-dimensional exactly when $\mathfrak s=0$, with generator $(-v_{\rm s},1)$.

        For conformal coordinates, define $\mathscr T(h,\ell):=(h,E_ah-\alpha\ell)$. The Poisson evaluation is bounded on $L^2$, so $\mathscr T$ is a bounded isomorphism on both $\mathscr H$ and $\mathscr V$, with inverse $\ell=(E_ah-\nu)/\alpha$. At equilibrium, criticality eliminates the second derivative of the coordinate change, giving
        \[
        D^2\mathcal J_W[(h,\ell),(h,\ell)]=\mathfrak q[(h,E_ah-\alpha\ell)].
        \]
        By density, this identity extends to $\mathscr V$, and the congruence preserves negative indices and nullities. The negative index on the original higher-regularity space $\mathcal X^s$ agrees with that on $\mathscr V$: any finite-dimensional negative subspace can be approximated within $\mathcal X^s$, preserving strict negativity of its Gram matrix.

        To identify the nullities, we verify that every form-kernel vector has the original regularity. If $(h,\nu)\in\ker\mathfrak q$, its surface equation gives $\widetilde{\mathcal H}_{\rm s}h+\widetilde k\nu=0$ in $H_{\rm e}^{-1}$. Using \eqref{6-16} and $\widetilde k=-\alpha^{-1}\mathcal C_Y^*B_a$, we obtain
        \[
        (A+B_a\mathcal E)h=\frac{\nu}{\alpha}B_a.
        \]
        Since $B_a\in H_{\rm e}^{s+1}$, the elliptic regularity argument of Lemma~\ref{lem6-3} applies and yields $h\in H_{\rm e}^{s+2}$. Thus no additional kernel vectors arise on the larger form domain, proving the coordinate assertion and \eqref{6-30}.

        Along $\gamma_\star$, restrict the neighborhood so that $A$ remains invertible and $\beta_{\rm g}\ne0$. The surface negative index is constant by Lemma~\ref{lem6-3}. Formula~\eqref{6-21} gives a simple zero of $\mathfrak s$ at $a_\star$, with
        \[
        \operatorname{sgn}\mathfrak s'(a_\star)=-\operatorname{sgn}\bigl(\beta_{{\rm g},\star}(\partial_aL)_\star\bigr).
        \]
        As $a$ increases through the fold, the scalar negative index changes by $-\operatorname{sgn}\mathfrak s'(a_\star)$, proving \eqref{6-31}. At the fold, the scalar form contributes one kernel direction and no negative direction.
    \end{proof}

    The jump is oriented by increasing conformal depth rather than speed; the two nearby waves at the same speed therefore have negative indices differing by one. The congruence preserves indices and nullities, though not individual eigenvalues.

    The same nondegenerate crossing has a direct expression in the original conformal kernel direction. Along the fold curve, set
    \[
    \mathcal H(a):=D^2\mathcal J_{w(a)}(Y(a,w(a)),a),\qquad \zeta(a):=(Y_a(a,w(a)),1),
    \]
    where $Y_a$ is taken at fixed $W$. From \eqref{6-15} and $AY_a+B_a=0$, we have $\mathcal H(a)[\zeta(a),\zeta(a)]=-\Gamma\beta_{\rm g}(a,w(a))L(a,w(a))$. At the fold, $\zeta_\star\in\ker\mathcal H(a_\star)$, so differentiation, using $L_\star=w'(a_\star)=0$, yields
    \begin{equation}
        \left.\frac{d}{da}\mathcal H(a)[\zeta_\star,\zeta_\star]\right|_{a=a_\star}=-\Gamma\beta_{{\rm g},\star}(\partial_aL)_\star\ne0.
        \label{6-32}
    \end{equation}
    The conclusions concern the static energy--momentum Hessian in the symmetric class, not dynamical stability; they do not include the internal degrees of freedom of a smooth vortex core. Part~III \cite{DuZouPartIII} develops the joint dual variational construction and effective Hessian analysis for finite-core waves, while Part~V \cite{DuZouPartV} studies fixed-impulse coercivity, energy exchange, and conditional logarithmic-time orbital tracking. We turn next to smooth-core regularization of point-vortex solution arcs and their simple folds.

    \begin{proof}[Proof of Theorem~\ref{thm1-2}]
        Proposition~\ref{prop5-2} proves part~{\rm(i)}, including the quadratic turning, fixed-speed solution count, and linearization properties. For part~{\rm(ii)}, Lemma~\ref{lem6-2} gives $\beta_{{\rm g},\star}\ne0$. On a sufficiently small segment, $A$ and the physical-coordinate surface Hessian remain invertible. Lemma~\ref{lem6-3} provides the common closed form domains, finite negative indices, and constancy of the surface index. Proposition~\ref{prop6-4} then proves the jump \eqref{1-14}, the kernel assertions, and the equivalence of the conformal- and physical-coordinate Hessian indices and nullities.
    \end{proof}

    \bigskip

    \section{Regularization along solution curves}
    \label{sec7}
    We regularize compact regular arcs of the point-vortex solution curve, using conformal depth $a$ and wave speed $W$ as independent parameters. At a simple fold, $L=\partial_a\mathscr G_0=0$ makes the fixed-speed coupled linearization singular, but the Bernoulli surface operator and the mass-constrained inner operator on the translation complement remain invertible. Using the local construction of Part~I \cite{DuZouPartI}, we solve the complementary equations jointly and leave the vertical translation condition as a scalar drift equation. Two-parameter $C^2$ convergence of this drift then gives persistence of the regularized arcs and their specified simple folds.

    Fix $s>\frac12$, and let $\mathscr K\subset\mathscr C_0$ be a compact regular subarc on which $A$ is invertible and $L^2+M^2>0$. Compactness gives
    \[
        \sup_{\mathscr K}\|A^{-1}\|_{\mathcal L(H_{\rm e}^s,H_{\rm e}^{s+2})}<\infty,\qquad \inf_{\mathscr K}(L^2+M^2)>0.
    \]
    The Bernoulli equation can therefore be solved uniformly, including at folds where $L=0$ and $M\ne0$. The admissible core size may depend on $\mathscr K$; we make no assertion about a global smooth-vorticity continuum at fixed $\varepsilon>0$.

    Assume $f\in C_{\rm loc}^{2,\gamma}(\mathbb R)$, $\gamma\in(0,1)$, and use the radial profile $U$ from \cite[Assumption~5.1]{DuZouPartI}, satisfying
    \[
    -\Delta U=f(U),\qquad \int_{\mathbb R^2}f(U)\,dX=\Gamma,\qquad U(X)=-\frac{\Gamma}{2\pi}\log|X|\quad (|X|\ge1).
    \]
    In particular, $U$ is nondegenerate modulo translations, $\operatorname{supp}f(U),\operatorname{supp}f'(U)\subset\overline{B_1}$, and $\int_{\mathbb R^2}f'(U)\,dX\ne0$. Fix $\nu\in(0,1)$ and $0<\gamma'<\gamma$. Inner parameter derivatives are always taken at fixed $X$, and no differentiability in $\varepsilon$ at zero is required.

    \smallskip

    \subsection{A uniform complementary inverse}
    Lemma~\ref{lem2-8} supplies a finite covering of $\mathscr K$ by Bernoulli charts. In each chart, let $Y=Y_0(a,W)$ on an open parameter set $\mathcal U_{\rm par}\subset(0,\infty)\times(0,c_{\min})$, with $B(Y_0(a,W),a,W)=0$ and $A_0(a,W):=D_YB(Y_0(a,W),a,W)$ invertible. Choose $K\Subset\mathcal U_{\rm par}$ whose interior contains the corresponding arc segment, and work on a slightly larger parameter neighborhood. Shrinking the configuration neighborhoods ensures uniform surface bounds, positivity of $1+|D|Y$ and $\alpha(Y,a):=Z_Y'(-\mathrm ia)$, and separation of $p_{Y,a}:=Z_Y(-\mathrm ia)$ from the free surface. The ensuing estimates hold uniformly on these neighborhoods.

    Use the weighted H\"older space $\mathcal X_*:=C_{\nu,\mathrm e}^{2,\gamma'}(\mathbb R^2)$ of Part~I, with evenness in $X_1$ excluding horizontal translations. Set $Z_0:=\partial_2U$, and choose $\eta_0\in C_c^\infty(\mathbb R^2)$, even in $X_1$ and odd in $X_2$, such that $\int_{\mathbb R^2}\eta_0Z_0\,dX=1$. Then $\int_{\mathbb R^2}\eta_0\,dX=0$. Fix $R_*>2$ with $\operatorname{supp}\eta_0\Subset B_{R_*}$ and define
    \[
        \begin{aligned}
            \mathcal X_{R_*}&:=\{\phi\in\mathcal X_*:\operatorname{supp}(-\Delta\phi)\subset\overline{B_{R_*}}\},\\
            \mathcal Y_{R_*}^{0}&:=\left\{r\in C_{\mathrm e}^{0,\gamma'}(\mathbb R^2):\operatorname{supp}r\subset\overline{B_{R_*}},\ \int_{\mathbb R^2}r\,dX=0\right\}.
        \end{aligned}
    \]
    Both are closed Banach spaces with the inherited norms. For $\phi\in\mathcal X_{R_*}$, weighted decay gives $\int_{\mathbb R^2}(-\Delta\phi)\,dX=0$, so $-\Delta:\mathcal X_{R_*}\to\mathcal Y_{R_*}^{0}$ is well defined.

    Define the projections
    \[
        \Pi_0\phi:=\left(\int_{\mathbb R^2}\eta_0\phi\,dX\right)Z_0,\qquad \ell_0(r):=\int_{\mathbb R^2}rZ_0\,dX,\qquad Q_0r:=r-\ell_0(r)\eta_0.
    \]
    Since $-\Delta Z_0=f'(U)Z_0$ is supported in $\overline{B_1}$ and $\eta_0$ has fixed support and zero integral, these projections preserve the corresponding fixed-source spaces. Put $\mathcal X_{R_*}^{\perp}:=\ker\Pi_0\cap\mathcal X_{R_*}$ and $\mathcal Y_{R_*}^{\perp}:=\ker\ell_0\cap\mathcal Y_{R_*}^{0}$, where $\perp$ denotes the chosen complements, and set
    \[
        \mathbb X_{R_*}^s:=H_{\rm e}^{s+2}(\mathbb R)\times\mathcal X_{R_*}^{\perp},\qquad \mathbb Y_{R_*}^s:=H_{\rm e}^{s}(\mathbb R)\times\mathcal Y_{R_*}^{\perp}.
    \]
    The condition $\Pi_0\phi=0$ normalizes the whole-plane correction rather than its Dirichlet reconstruction.

    For $\phi\in\mathcal X_{R_*}$, write $g_\phi:=-\Delta\phi$. The zero-mass Newton representation is
    \begin{equation}
        \phi(X)=\frac1{2\pi}\int_{B_{R_*}}\log\frac1{|X-S|}\,g_\phi(S)\,dS.
        \label{7-5}
    \end{equation}
    The difference is an entire harmonic function that vanishes at infinity. Cancellation of the constant logarithmic term and exterior harmonic estimates give, for $|X|\ge2R_*$ and $k\ge0$,
    \[
    |D_X^k\phi(X)|\le C_{k,R_*}|X|^{-1-k}\|g_\phi\|_{C^{0,\gamma'}}\le C_{k,R_*}|X|^{-1-k}\|\phi\|_{\mathcal X_{R_*}}.
    \]

    Set $z_a:=-\mathrm ia$, $\delta:=\varepsilon/\alpha(Y,a)$, and $H:=H_{\varepsilon,Y,a}:=a/\delta$. The inner coordinates $z=z_a+\delta X$ transform $\Pi_-$ into $\mathcal H_H:=\{X_2<H\}$. Uniformly, $H\asymp\varepsilon^{-1}$, so $H>2R_*$ for small $\varepsilon$. Define
    \[
        \mathfrak P_{\varepsilon,Y,a}\phi(X):=\mathfrak P_H\phi(X):=\phi(X)-\phi(X_1,2H-X_2),\qquad X\in\mathcal H_H.
    \]
    The reflected term is harmonic in $\mathcal H_H$, and hence $-\Delta_X\mathfrak P_H\phi=g_\phi$ with zero Dirichlet data on $X_2=H$. Equivalently, with $G_{\Pi_-}(z,\zeta)=(2\pi)^{-1}\log(|z-\overline\zeta|/|z-\zeta|)$, define $\mathcal C_{\varepsilon,Y,a}[\phi](z):=\int_{B_{R_*}}G_{\Pi_-}(z,z_a+\delta S)g_\phi(S)\,dS$. Writing $S^H:=(S_1,2H-S_2)$, we have the exact identity
    \[
        \mathcal C_{\varepsilon,Y,a}[\phi](z_a+\delta X)=\frac1{2\pi}\int_{B_{R_*}}\log\frac{|X-S^H|}{|X-S|}\,g_\phi(S)\,dS=\mathfrak P_H\phi(X).
    \]
    The correction therefore satisfies the Dirichlet condition exactly and decays at infinity.

    For $\mathfrak b_H:=\mathfrak P_H-I$, the exterior estimates following \eqref{7-5} yield, on every fixed inner disk,
    \begin{equation}
        \|\partial_H^j\mathfrak b_H[\phi]\|_{C^{2,\gamma'}(\overline{B_R})}\le C_{R,R_*}H^{-1-j}\|\phi\|_{\mathcal X_{R_*}},\qquad j=0,1,2.
        \label{7-8}
    \end{equation}
    If $\theta=(a,W)$ and $Y=Y(\theta)$ is a $C^2$ family with uniformly bounded $C^2(K;H^{s+2})$ norm, then $\partial_\theta^\beta H=O(H)$ for $1\le|\beta|\le2$. The chain rule therefore gives
    \[
    \sum_{|\beta|\le2}\sup_{\theta\in K}\|\partial_\theta^\beta\mathfrak b_{H_{\varepsilon,Y(\theta),a}}[\phi]\|_{C^{2,\gamma'}(\overline{B_R})}\le C_{K,R}\varepsilon\|\phi\|_{\mathcal X_{R_*}}.
    \]
    These estimates keep $\phi$ fixed and concern fixed inner disks, not the whole-plane weighted norm.

    Use the approximate radial core constructed in Part~I. Set $P_{\varepsilon,Y,a}(z):=\int_{B_1}G_{\Pi_-}(z,z_a+\delta S)f(U(S))\,dS$. Radiality and the mean-value property give
    \begin{equation}
        P_{\varepsilon,Y,a}(z)=\Gamma G_{\Pi_-}(z,z_a),\qquad z\in\Pi_-,\quad |z-z_a|>\delta.
        \label{7-9}
    \end{equation}
    Outside the core, this field agrees exactly with the point-vortex Green field. On the inner scale,
    \[
        P_{\varepsilon,Y,a}(z_a+\delta X)=U(X)+\frac{\Gamma}{2\pi}\log|X-2He_2|,\qquad X\in\mathcal H_H,
    \]
    where $e_2=(0,1)$. Choose $\mu_{\varepsilon,Y,a,W}^{\rm app}:=\Gamma(2\pi)^{-1}\log(2H)-Wa$ and define
    \begin{equation}
        r_\varepsilon(Y,a,W;X):=\frac{\Gamma}{2\pi}\log\frac{|X-2He_2|}{2H}+W\delta X_2.
        \label{7-11}
    \end{equation}
    Then $P_{\varepsilon,Y,a}(z_a+\delta X)+W(-a+\delta X_2)-\mu_{\varepsilon,Y,a,W}^{\rm app}=U(X)+r_\varepsilon(Y,a,W;X)$. Since
    \[
    \log\frac{|X-2He_2|}{2H}=\frac12\log\left(1-\frac{\delta X_2}{a}+\frac{\delta^2|X|^2}{4a^2}\right),
    \]
    Taylor expansion on fixed disks gives, for the parameter families above,
    \begin{equation}
        \left\|r_\varepsilon-\frac{\varepsilon}{\alpha}\left(W-\frac{\Gamma}{4\pi a}\right)X_2\right\|_{C^2(K;C^{2,\gamma'}(\overline{B_R}))}\le C_{K,R}\varepsilon^2.
        \label{7-12}
    \end{equation}
    The logarithmic argument stays uniformly positive, and two parameter derivatives preserve the stated remainder bound. The same bounds hold for independent surface and depth variations. In particular, $r_\varepsilon=O(\varepsilon)$ on fixed inner disks.

    For small $\phi\in\mathcal X_{R_*}$ and $m\in\mathbb R$, define
    \[
        V_{\varepsilon,Y,a,W,\phi,m}:=U+r_\varepsilon(Y,a,W;\,\cdot)+\mathfrak P_H\phi-m.
    \]
    Writing $\mu:=\mu_{\varepsilon,Y,a,W}^{\rm app}+m$ and $z=z_a+\delta X$, this is $V_\varepsilon(X)=P_{\varepsilon,Y,a}(z)+\mathcal C_{\varepsilon,Y,a}[\phi](z)+W\operatorname{Im}z-\mu$.

    We claim that there exist $\rho_*,\varepsilon_*,c_*>0$, uniform in the chosen neighborhoods, such that
    \begin{equation}
        V_{\varepsilon,Y,a,W,\phi,m}(X)\le-c_*\qquad (X\in\mathcal H_H\setminus B_{R_*})
        \label{7-14}
    \end{equation}
    whenever $0<\varepsilon<\varepsilon_*$ and $\|\phi\|_{\mathcal X_{R_*}}+|m|\le\rho_*$. Indeed, $V_\varepsilon$ is harmonic outside $\overline{B_{R_*}}$, since the nonlinear sources are supported there and the reflected singularities lie outside $\mathcal H_H$. Set $\kappa_*:=\Gamma(2\pi)^{-1}\log R_*>0$. On $\partial B_{R_*}$, we have $U=-\kappa_*$, while \eqref{7-8}, \eqref{7-12}, and the smallness of $\|\phi\|_{\mathcal X_{R_*}}+|m|$ give $V_\varepsilon\le-\kappa_*/2$. On $X_2=H$, both Dirichlet fields vanish and $V_\varepsilon=-\mu$, where $\mu=\Gamma(2\pi)^{-1}\log(2H)-Wa+m\to+\infty$ uniformly. At spatial infinity, the Dirichlet fields decay and $W\operatorname{Im}z\le0$, so $\limsup_{|X|\to\infty}V_\varepsilon(X)\le-\mu$. The maximum principle on truncated exterior domains therefore proves \eqref{7-14} with a uniform $c_*>0$. Moreover, since $R_*>2$ and $U$ is strictly negative near $\partial B_{R_*}$, the same local perturbation estimates give a fixed annulus around this circle on which $V_\varepsilon<0$. As $f$ vanishes on $(-\infty,0]$, the nonlinear source is supported strictly inside $B_{R_*}$ and can be extended by zero without a cutoff.

    We now choose $m$ to impose the circulation constraint. Define $j_{\varepsilon,Y,a}(X):=|Z_Y'(z_a+\delta X)|^2/\alpha(Y,a)^2$, so the physical area element is $\varepsilon^2j_{\varepsilon,Y,a}\,dX$. The exact mass equation is
    \begin{equation}
        \mathcal M_\varepsilon(Y,a,W,\phi,m):=\int_{B_{R_*}}j_{\varepsilon,Y,a}(X)f(V_\varepsilon(X))\,dX-\Gamma=0.
        \label{7-15}
    \end{equation}
    Its limiting counterpart is
    \[
        \mathcal M_0(\phi,m):=\int_{B_{R_*}}f(U+\phi-m)\,dX-\Gamma.
    \]
    Set $I_0:=\int_{\mathbb R^2}f'(U)\,dX\ne0$. Since $\mathcal M_0(0,0)=0$ and $\partial_m\mathcal M_0(0,0)=-I_0$, the scalar implicit function theorem gives a local $C^2$ map $m=\mathfrak m(\phi)$ with $\mathfrak m(0)=0$ and
    \[
    D\mathfrak m(0)[h]=m_0[h]:=I_0^{-1}\int_{\mathbb R^2}f'(U)h\,dX.
    \]

    For small $\varepsilon$, the same argument yields a unique $C^2$ level $m=m_\varepsilon(Y,a,W,\phi)$ on a fixed trial neighborhood. The reconstruction estimates and $f\in C_{\rm loc}^{2,\gamma}$, with $\gamma'<\gamma$, give the required $C^2$ regularity in the spatial $C^{0,\gamma'}$ target. Uniform convergence of the mass functional and its level derivative permits $|\partial_m\mathcal M_\varepsilon|\ge|I_0|/2$, yielding uniform first and second derivative bounds. Since $j_{\varepsilon,Y,a}-1$, $r_\varepsilon$, and $\mathfrak P_H-I$ are $O(\varepsilon)$ on the source disk, comparison with $\mathcal M_0$ gives
    \begin{equation}
        |m_\varepsilon-\mathfrak m(\phi)|\le C\varepsilon,\qquad |m_\varepsilon|\le C\bigl(\varepsilon+\|\phi\|_{\mathcal X_{R_*}}\bigr).
        \label{7-17}
    \end{equation}
    Choose the trial neighborhood so that \eqref{7-14} continues to hold. The estimate $m_\varepsilon=O(\varepsilon)$ will apply along the complementary solutions, not to arbitrary trial corrections.

    Henceforth set $V_\varepsilon:=U+r_\varepsilon+\mathfrak P_H\phi-m_\varepsilon(Y,a,W,\phi)$ and $j_\varepsilon:=j_{\varepsilon,Y,a}$. By \eqref{7-14}, $j_\varepsilon f(V_\varepsilon)$ vanishes in a fixed annulus around $\partial B_{R_*}$ and extends by zero to $C_{\mathrm e}^{0,\gamma'}(\mathbb R^2)$. Define the mass-constrained inner residual
    \begin{equation}
        \mathcal E_\varepsilon(Y,a,\phi;W):=-\Delta_X\phi+f(U)-j_\varepsilon f(V_\varepsilon).
        \label{7-18}
    \end{equation}
    With $\Phi_\varepsilon(z):=P_{\varepsilon,Y,a}(z)+\mathcal C_{\varepsilon,Y,a}[\phi](z)+W\operatorname{Im}z$, the reconstruction gives
    \[
    \mathcal E_\varepsilon(X)=\delta^2\left[-\Delta_z\Phi_\varepsilon-\frac{|Z_Y'|^2}{\varepsilon^2}f(\Phi_\varepsilon-\mu_\varepsilon)\right](z_a+\delta X),\qquad X\in\mathcal H_H.
    \]
    This holds on the source disk by $\delta^2|Z_Y'|^2/\varepsilon^2=j_\varepsilon$, and both sides vanish outside it. Moreover, \eqref{7-15} and the zero mass of $-\Delta\phi$ give $\int_{\mathbb R^2}\mathcal E_\varepsilon\,dX=0$. Thus $\mathcal E_\varepsilon\in\mathcal Y_{R_*}^{0}$ and $Q_0\mathcal E_\varepsilon\in\mathcal Y_{R_*}^{\perp}$.

    For the surface response, write $\mathcal N_{\varepsilon,Y,a}\phi:=\partial_y\mathcal C_{\varepsilon,Y,a}[\phi](\cdot,0)$ and $k_d(x):=d/[\pi(x^2+d^2)]$. Differentiating the half-plane Green function gives
    \[
        \mathcal N_{\varepsilon,Y,a}\phi(x)=-\int_{B_{R_*}}k_{a-\delta S_2}(x-\delta S_1)g_\phi(S)\,dS.
    \]
    Since $\int g_\phi=0$ and $\widehat{k_d}(\xi)=e^{-d|\xi|}$,
    \[
    \widehat{\mathcal N_{\varepsilon,Y,a}\phi}(\xi)=-e^{-a|\xi|}\int_{B_{R_*}}\left[e^{\delta(S_2|\xi|-\mathrm iS_1\xi)}-1\right]g_\phi(S)\,dS.
    \]
    If $a_->0$ is the uniform depth lower bound and $\delta R_*\le a_-/2$, the exponential difference is bounded by $C\varepsilon|\xi|e^{\delta R_*|\xi|}$. Consequently,
    \[
    |\widehat{\mathcal N_{\varepsilon,Y,a}\phi}(\xi)|\le C\varepsilon|\xi|e^{-a_-|\xi|/2}\|g_\phi\|_{L^1},
    \]
    and, for every fixed $r\ge0$,
    \begin{equation}
        \|\mathcal N_{\varepsilon,Y,a}\phi\|_{H^r(\mathbb R)}\le C_{r,K}\varepsilon\|\phi\|_{\mathcal X_{R_*}}.
        \label{7-20}
    \end{equation}
    The zero-mass cancellation produces the factor $\varepsilon$; separation from the surface gives any fixed boundary Sobolev regularity. The Fourier formula also controls two parameter derivatives, as used below.

    Both Dirichlet fields have vanishing tangential derivative on $y=0$. By \eqref{7-9}, the normal derivative of the reconstructed stream function is $q_{a,W}+\mathcal N_{\varepsilon,Y,a}\phi$. Writing $\mathcal N_\varepsilon:=\mathcal N_{\varepsilon,Y,a}$, the Bernoulli residual becomes
    \begin{equation}
        \mathcal B_\varepsilon(Y,a,\phi;W):=B(Y,a,W)+\frac{q_{a,W}}{J(Y)}\mathcal N_\varepsilon\phi+\frac{(\mathcal N_\varepsilon\phi)^2}{2J(Y)}.
        \label{7-21}
    \end{equation}
    In particular, $\mathcal B_\varepsilon(Y,a,0;W)=B(Y,a,W)$ exactly. Combine the surface equation and the projected inner residual as
    \[
        \mathscr R_\varepsilon(Y,\phi;a,W):=\begin{pmatrix}\mathcal B_\varepsilon(Y,a,\phi;W)\\[1mm]Q_0\mathcal E_\varepsilon(Y,a,\phi;W)\end{pmatrix},\qquad \phi\in\mathcal X_{R_*}^{\perp}.
    \]
    By \eqref{7-20}, the Sobolev product estimates, and the fixed-support and circulation properties, $\mathscr R_\varepsilon$ takes values in $\mathbb Y_{R_*}^s$. For $\varepsilon>0$, it is $C^2$ in $(Y,a,W,\phi)$: the geometric operators are smooth, and the nonlinear derivatives involve only $f'$ and $f''$. The condition $\gamma'<\gamma$ ensures continuity of the second derivatives in the spatial $C^{0,\gamma'}$ target by H\"older interpolation, without requiring $f'''$.

    At $\varepsilon=0$, the limiting residual is
    \[
        \begin{aligned}
            \mathcal E_0(\phi)&:=-\Delta\phi+f(U)-f\bigl(U+\phi-\mathfrak m(\phi)\bigr),\\
            \mathscr R_0(Y,\phi;a,W)&:=\bigl(B(Y,a,W),Q_0\mathcal E_0(\phi)\bigr).
        \end{aligned}
    \]
    Since $D\mathfrak m(0)=m_0$, its inner linearization is $D_\phi\mathcal E_0(0)=\mathscr L_0$, where $\mathscr L_0\varphi=-\Delta\varphi-f'(U)(\varphi-m_0[\varphi])$ is the mass-constrained operator of Part~I.

    The projected inverse from Part~I \cite[Eq.~(5.18)]{DuZouPartI} extends to the present exponent $\gamma'<\gamma$: the weighted Newton inverse, Fredholm argument, and translation-kernel characterization remain valid because $f'(U)\in C_c^{0,\gamma}$. We use this inverse on the fixed-source spaces. Set $\mathscr L_\perp:=Q_0\mathscr L_0|_{\mathcal X_{R_*}^{\perp}}$. For $r\in\mathcal Y_{R_*}^{\perp}$, the Part~I inverse gives a unique $\phi\in\ker\Pi_0\cap\mathcal X_*$ with $Q_0\mathscr L_0\phi=r$. Since $-\Delta Z_0=f'(U)Z_0$ and $\int f'(U)Z_0\,dX=0$, integration by parts gives $\ell_0(\mathscr L_0\phi)=0$. Consequently, the projected equation implies $\mathscr L_0\phi=r$, or equivalently,
    \[
    -\Delta\phi=r+f'(U)\bigl(\phi-m_0[\phi]\bigr).
    \]
    The right-hand side is supported in $\overline{B_{R_*}}$, so $\phi\in\mathcal X_{R_*}^{\perp}$. Equivalence of the weighted and unweighted target norms on fixed support yields $\|\phi\|_{\mathcal X_{R_*}}\le C_{R_*}\|r\|_{C^{0,\gamma'}}$. Thus $\mathscr L_\perp:\mathcal X_{R_*}^{\perp}\to\mathcal Y_{R_*}^{\perp}$ is an isomorphism.

    Linearize the complementary map at the reference Bernoulli surface:
    \[
        \mathbb L_\varepsilon(a,W):=D_{(Y,\phi)}\mathscr R_\varepsilon(Y_0(a,W),0;a,W):\mathbb X_{R_*}^s\longrightarrow\mathbb Y_{R_*}^s.
    \]
    At $\varepsilon=0$, the surface and inner equations decouple into the invertible blocks $A_0(a,W)$ and $\mathscr L_\perp$. The next lemma shows that their invertibility persists uniformly under regularization, even where $L=0$.

    \begin{lemma}
        \label{lem7-1}
        For every compact parameter set $K$ chosen above, there exist $\varepsilon_K,C_K>0$ such that, with $\mathbb L_0(a,W):=\operatorname{diag}(A_0(a,W),\mathscr L_\perp)$,
        \begin{equation}
            \sup_{(a,W)\in K}\|\mathbb L_\varepsilon(a,W)-\mathbb L_0(a,W)\|_{\mathcal L(\mathbb X_{R_*}^s,\mathbb Y_{R_*}^s)}\le C_K\varepsilon
            \label{7-25}
        \end{equation}
        for $0<\varepsilon<\varepsilon_K$. After decreasing $\varepsilon_K$ if necessary, each $\mathbb L_\varepsilon(a,W)$ is an isomorphism and
        \begin{equation}
            \sup_{(a,W)\in K}\|\mathbb L_\varepsilon(a,W)^{-1}\|_{\mathcal L(\mathbb Y_{R_*}^s,\mathbb X_{R_*}^s)}\le C_K.
            \label{7-26}
        \end{equation}
    \end{lemma}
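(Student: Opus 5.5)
The plan is to prove \eqref{7-25} by computing the four blocks of $\mathbb L_\varepsilon(a,W)$ at $(Y_0(a,W),0)$ and comparing each with the block-diagonal $\mathbb L_0$. Then \eqref{7-26} will follow from a Neumann series. We write $Y_0=Y_0(a,W)$ and use $\phi=0$ as the base point throughout.

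\emph{Surface--surface block.} Since $\mathcal B_\varepsilon(Y,a,0;W)=B(Y,a,W)$ exactly by \eqref{7-21}, we get $D_Y\mathcal B_\varepsilon(Y_0,a,0;W)=A_0(a,W)$ with no error.

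\emph{Surface--inner block.} At $\phi=0$ the quadratic term in \eqref{7-21} contributes nothing to first order. Hence $D_\phi\mathcal B_\varepsilon[\psi]=q_{a,W}J(Y_0)^{-1}\mathcal N_\varepsilon\psi$. Its $H^s$ norm is at most $C_K\varepsilon\|\psi\|_{\mathcal X_{R_*}}$, by \eqref{7-20} together with the Sobolev product estimates and uniform bounds for $q_{a,W}$ and $J(Y_0)^{-1}$ on $K$. Note that $q_{a,W}-W\in H^{s+1}$ and $W$ is bounded, so the product is controlled.

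\emph{Inner--inner block.} Differentiate \eqref{7-18} in $\phi$ at $\phi=0$. This requires the derivative of the level $m_\varepsilon$, obtained by implicit differentiation of \eqref{7-15}:
\[
D_\phi\mathcal E_\varepsilon[\psi]=-\Delta\psi-j_\varepsilon f'(V_\varepsilon)\bigl(\mathfrak P_H\psi-D_\phi m_\varepsilon[\psi]\bigr),
\]
\[
D_\phi m_\varepsilon[\psi]=\frac{\int j_\varepsilon f'(V_\varepsilon)\mathfrak P_H\psi}{\int j_\varepsilon f'(V_\varepsilon)}.
\]
We compare this with $\mathscr L_0\psi=-\Delta\psi-f'(U)(\psi-m_0[\psi])$. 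On the source disk $\overline{B_{R_*}}$ we have three $O(\varepsilon)$ estimates. First, $j_\varepsilon-1=O(\varepsilon)$ in $C^{0,\gamma'}$, since $Z_Y'$ is smooth near $z_a$ and $\delta=O(\varepsilon)$. Second, $\mathfrak P_H-I=O(\varepsilon)$ in operator norm into $C^{2,\gamma'}(\overline{B_{R_*}})$, by \eqref{7-8} and $H^{-1}\asymp\varepsilon$. Third, $V_\varepsilon-U=r_\varepsilon-m_\varepsilon(Y_0,a,W,0)=O(\varepsilon)$ in $C^{2,\gamma'}$, by \eqref{7-12} and \eqref{7-17} at $\phi=0$. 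Because $f\in C^{2,\gamma}$, $f'$ is Lipschitz, so $f'(V_\varepsilon)-f'(U)=O(\varepsilon)$ in $C^{0,\gamma'}$. Combining these, the mass denominators stay bounded away from zero and the inner difference is $O(\varepsilon)$ from $\mathcal X_{R_*}$ to $C^{0,\gamma'}$. Since $Q_0$ is bounded, the projected difference $Q_0D_\phi\mathcal E_\varepsilon-\mathscr L_\perp$ is $O(\varepsilon)$ on $\mathcal X^\perp_{R_*}$.

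\emph{Inner--surface block.} This is the step I expect to be the main obstacle, because of the chain rule through the $Y$-dependence of $\delta=\varepsilon/\alpha(Y,a)$, $H=a/\delta$, $j_\varepsilon$ and $m_\varepsilon$. Varying $Y$ in a direction $h$ gives
\[
D_Y\delta[h]=-\varepsilon E_a(|D|h)/\alpha^2=O(\varepsilon\|h\|),
\]
and the relative change of $H$ is $O(\|h\|)$. I would split $D_Y\mathcal E_\varepsilon[h]$ into three pieces:
\begin{itemize}
\item the $D_Yj_\varepsilon$ term, which is $O(\varepsilon)$ because $j_\varepsilon$ depends on $Y$ only through $Z_Y'(z_a+\delta X)/\alpha$, whose variation on the disk is $O(\delta)$ after the normalization by $\alpha$;
\item the $D_Yr_\varepsilon$ term, which is $O(\varepsilon)$ by \eqref{7-12} applied to independent surface variations, as stated there;
\item the term through $m_\varepsilon$, which is $O(\varepsilon)$ by differentiating the mass identity, given the first two estimates.
\end{itemize}
Here $f(U)$ does not depend on $Y$, and $\mathfrak P_H\phi=0$ at $\phi=0$, so no $H$-derivative of $\mathfrak P_H$ appears. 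Hence this block is $O(\varepsilon)$ from $H^{s+2}_{\rm e}$ to $\mathcal Y^\perp_{R_*}$. If the normalization of $j_\varepsilon$ turns out to be delicate, the fallback is to expand $|Z_Y'(z_a+\delta X)|^2/\alpha^2=1+O(\delta|X|\,\|Z_Y''\|_{L^\infty(\text{near }z_a)})$. Its $Y$-derivative is then controlled by the Poisson bounds \eqref{2-9}, using the uniform depth separation $a\ge a_-$.

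All constants are uniform on the compact set $K$ by the uniform surface, graph and depth bounds of the chart neighborhoods. This proves \eqref{7-25}.

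For \eqref{7-26}: $A_0(a,W)$ is invertible and continuous on the compact set $K$, so $\sup_K\|A_0^{-1}\|<\infty$. The operator $\mathscr L_\perp$ is a fixed isomorphism, as shown above. Hence $\sup_K\|\mathbb L_0^{-1}\|\le C_K'$. Choosing $\varepsilon_K$ so small that $C_KC_K'\varepsilon_K\le\frac12$, the Neumann series for
\[
\mathbb L_\varepsilon=\mathbb L_0\bigl(I+\mathbb L_0^{-1}(\mathbb L_\varepsilon-\mathbb L_0)\bigr)
\]
gives invertibility with $\|\mathbb L_\varepsilon^{-1}\|\le 2C_K'$.
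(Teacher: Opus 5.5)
Your proposal is correct and follows the paper's proof essentially step for step. The upper-left block is exactly $A_0$, the off-diagonal blocks are $O(\varepsilon)$ via \eqref{7-20} and the $D_Yj_\varepsilon$, $D_Yr_\varepsilon$, $D_Ym_\varepsilon$ bounds, the inner block is compared with $\mathscr L_0$ through the implicitly differentiated level, and \eqref{7-26} follows by a Neumann series.

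One small correction: Lipschitz continuity of $f'$ only gives $f'(V_\varepsilon)-f'(U)=O(\varepsilon)$ in $C^0$. For the $C^{0,\gamma'}$ bound, use the identity $f'(V_\varepsilon)-f'(U)=(V_\varepsilon-U)\int_0^1f''\bigl(U+t(V_\varepsilon-U)\bigr)\,dt$ together with $f''\in C^{0,\gamma}_{\rm loc}$, as the paper does.
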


    \begin{proof}
        We estimate the four blocks of $\mathbb L_\varepsilon-\mathbb L_0$, differentiating in $(Y,\phi)$ at fixed $(a,W)$. The constants are uniform on $K$.

        Since $\mathcal N_{\varepsilon,Y,a}\phi=0$ at $\phi=0$, \eqref{7-21} gives
        \[
            D_Y\mathcal B_\varepsilon=A_0(a,W),\qquad D_\phi\mathcal B_\varepsilon[\varphi]=\frac{q_{a,W}}{J(Y_0)}\mathcal N_{\varepsilon,Y_0,a}\varphi.
        \]
        Thus the upper-left block agrees exactly with $A_0$, while \eqref{7-20} and the uniform multiplier bounds make the upper-right block $O(\varepsilon)$ from $\mathcal X_{R_*}^{\perp}$ to $H_{\rm e}^s$.

        For the inner blocks, put $\overline m_\varepsilon(Y,a,W):=m_\varepsilon(Y,a,W,0)$ and define $V_\varepsilon^0:=U+r_\varepsilon-\overline m_\varepsilon$, $c_\varepsilon:=j_\varepsilon f'(V_\varepsilon^0)$, and $I_\varepsilon:=\int_{B_{R_*}}c_\varepsilon\,dX$, with $c_\varepsilon$ extended by zero. Equations~\eqref{7-12} and \eqref{7-17} give $V_\varepsilon^0-U=O(\varepsilon)$ and $j_\varepsilon-1=O(\varepsilon)$ in $C^{0,\gamma'}$ on the source disk. The identity
        \[
        f'(V_\varepsilon^0)-f'(U)=(V_\varepsilon^0-U)\int_0^1f''\bigl(U+t(V_\varepsilon^0-U)\bigr)\,dt
        \]
        and the uniform H\"older bounds for the integral coefficient imply
        \[
        \|c_\varepsilon-f'(U)\|_{C^{0,\gamma'}(\mathbb R^2)}+|I_\varepsilon-I_0|\le C_K\varepsilon.
        \]
        In particular, $|I_\varepsilon|\ge|I_0|/2$.

        Differentiating \eqref{7-15} with respect to $\phi$ gives
        \[
            \lambda_\varepsilon[\varphi]:=D_\phi m_\varepsilon[\varphi]=\frac{\displaystyle\int_{B_{R_*}}c_\varepsilon\mathfrak P_H\varphi\,dX}{I_\varepsilon}.
        \]
        By the coefficient estimate and \eqref{7-8}, $\|\lambda_\varepsilon-m_0\|_{\mathcal L(\mathcal X_{R_*},\mathbb R)}\le C_K\varepsilon$. Consequently,
        \[
            D_\phi\mathcal E_\varepsilon[\varphi]=-\Delta\varphi-c_\varepsilon\bigl(\mathfrak P_H\varphi-\lambda_\varepsilon[\varphi]\bigr).
        \]
        Subtracting $\mathscr L_0\varphi$ leaves
        \[
        -(c_\varepsilon-f'(U))\bigl(\varphi-m_0[\varphi]\bigr)-c_\varepsilon(\mathfrak P_H-I)\varphi+c_\varepsilon(\lambda_\varepsilon-m_0)[\varphi].
        \]
        Each term is $O(\varepsilon)\|\varphi\|_{\mathcal X_{R_*}}$ in $C^{0,\gamma'}$. The differentiated circulation constraint gives zero integral, and the support remains in $\overline{B_{R_*}}$. Applying $Q_0$ gives the required $O(\varepsilon)$ bound for the lower-right block.

        For the lower-left block, we first establish
        \begin{equation}
            \|D_Yj_\varepsilon[h]\|_{C^{0,\gamma'}(\overline{B_{R_*}})}+\|D_Yr_\varepsilon[h]\|_{C^{0,\gamma'}(\overline{B_{R_*}})}\le C_K\varepsilon\|h\|_{H^{s+2}}.
            \label{7-30}
        \end{equation}
        In \eqref{7-11}, every surface derivative acts through $\delta=\varepsilon/\alpha(Y,a)$ or $H^{-1}=\delta/a$, retaining a factor $\varepsilon$. For $j_\varepsilon$, use the exact identity
        \[
        j_\varepsilon(X)-1=\frac2{\alpha^2}\operatorname{Re}\int_0^\delta\overline{Z_Y'(z_a+tX)}Z_Y''(z_a+tX)X\,dt.
        \]
        The integration interval has length $O(\varepsilon)$ and $D_Y\delta[h]=O(\varepsilon)\|h\|_{H^{s+2}}$. Uniform interior conformal estimates, valid because the evaluation points stay separated from the boundary, yield \eqref{7-30}.

        At $\phi=0$, differentiation of the circulation constraint in $Y$ gives
        \begin{equation}
            D_Y\overline m_\varepsilon[h]=\frac{\displaystyle\int_{B_{R_*}}\bigl(D_Yj_\varepsilon[h]f(V_\varepsilon^0)+c_\varepsilon D_Yr_\varepsilon[h]\bigr)\,dX}{I_\varepsilon}.
            \label{7-31}
        \end{equation}
        Thus $|D_Y\overline m_\varepsilon[h]|\le C_K\varepsilon\|h\|_{H^{s+2}}$. Since $D_Y(\mathfrak P_H\phi)=0$ at $\phi=0$,
        \[
        D_Y\mathcal E_\varepsilon[h]=-D_Yj_\varepsilon[h]f(V_\varepsilon^0)-c_\varepsilon\bigl(D_Yr_\varepsilon[h]-D_Y\overline m_\varepsilon[h]\bigr).
        \]
        This residual has fixed support and zero integral by the differentiated mass constraint. Equations~\eqref{7-30}--\eqref{7-31} therefore give $\|D_Y\mathcal E_\varepsilon[h]\|_{\mathcal Y_{R_*}^{0}}\le C_K\varepsilon\|h\|_{H^{s+2}}$. After applying $Q_0$, this gives the lower-left block estimate and completes \eqref{7-25}.

        Finally, $A_0(a,W)^{-1}$ is uniformly bounded on $K$ and $\mathscr L_\perp^{-1}$ is fixed, so $\sup_K\|\mathbb L_0^{-1}\|\le C_0$. Decreasing $\varepsilon_K$ until
        \[
        \sup_K\|\mathbb L_0^{-1}(\mathbb L_\varepsilon-\mathbb L_0)\|\le\frac12,
        \]
        the Neumann series gives $\sup_K\|\mathbb L_\varepsilon^{-1}\|\le2C_0$, proving \eqref{7-26}.
    \end{proof}

    The complementary inverse remains uniformly bounded at a simple fold because it uses $A_0$ and $\mathscr L_\perp$, not an inverse involving $L$. The vertical translation condition remains to be imposed. At $(Y,\phi)=(Y_0(a,W),0)$, the Bernoulli residual vanishes exactly, while \eqref{7-12}, \eqref{7-17}, and the metric estimates give $\|Q_0\mathcal E_\varepsilon(Y_0(a,W),a,0;W)\|_{\mathcal Y_{R_*}^{\perp}}\le C_K\varepsilon$. The next subsection uses this residual and the uniform inverse to construct a two-parameter $C^2$ complementary family, leaving the translation equation to the reduced drift.

    \smallskip

    \subsection{Two-parameter joint reduction and the regular drift limit}
    We now solve the coupled surface--core complement with independent parameters $a$ and $W$. Write $\theta=(a,W)$, $Y=Y_0(\theta)+\eta$, and $\zeta=(\eta,\phi)\in\mathbb X_{R_*}^s$, and define
    \[
        \mathscr S_\varepsilon(\theta,\zeta):=\mathscr R_\varepsilon(Y_0(\theta)+\eta,\phi;a,W).
    \]
    We work on a fixed neighborhood of $K$ with compact closure in $\mathcal U_{\rm par}$ and a correction ball independent of $\varepsilon$. Estimates are established on a slightly larger compact parameter set before restriction to $K$. In explicit parameter derivatives, $\zeta$ and the inner coordinate $X$ are held fixed.

    \begin{proposition}
        \label{prop7-2}
        There exist $\rho_0,\varepsilon_K,C_K>0$ such that, for every $0<\varepsilon<\varepsilon_K$ and $(a,W)$ in a neighborhood of $K$, the complementary equations $\mathscr R_\varepsilon(Y,\phi;a,W)=0$ have a unique solution satisfying $\|Y-Y_0(a,W)\|_{H^{s+2}}+\|\phi\|_{\mathcal X_{R_*}}<\rho_0$ and $\phi\in\mathcal X_{R_*}^{\perp}$. The resulting maps $(Y_\varepsilon(a,W),\phi_\varepsilon(a,W))$ are $C^2$ and satisfy
        \begin{equation}
            \|Y_\varepsilon-Y_0\|_{C^2(K;H_{\rm e}^{s+2})}+\|\phi_\varepsilon\|_{C^2(K;\mathcal X_{R_*}^{\perp})}\le C_K\varepsilon.
            \label{7-33}
        \end{equation}
        Moreover, the selected level $m_\varepsilon^{\rm sol}(a,W):=m_\varepsilon(Y_\varepsilon(a,W),a,W,\phi_\varepsilon(a,W))$ satisfies
        \begin{equation}
            \|m_\varepsilon^{\rm sol}\|_{C^2(K)}\le C_K\varepsilon.
            \label{7-34}
        \end{equation}
    \end{proposition}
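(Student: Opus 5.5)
We apply a quantitative implicit function theorem to $\mathscr S_\varepsilon(\theta,\zeta)=0$ in the correction variable $\zeta=(\eta,\phi)$, with $\theta=(a,W)$ as a parameter, using the uniform inverse of Lemma~\ref{lem7-1}. The plan has three steps: obtain the solution by a contraction, obtain $C^2$ regularity from the implicit function theorem, and obtain the $C^2$ size bound by differentiating the equation. Throughout we fix a compact parameter set $K'$ whose interior contains $K$ and estimate there.

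\textbf{Step 1: existence and uniqueness.} At $\zeta=0$ the surface component of $\mathscr S_\varepsilon(\theta,0)$ vanishes exactly, since $\mathcal B_\varepsilon(Y,a,0;W)=B(Y,a,W)$ and $B(Y_0(\theta),a,W)=0$. The inner component satisfies $\|Q_0\mathcal E_\varepsilon(Y_0,a,0;W)\|\le C\varepsilon$ by \eqref{7-12}, \eqref{7-17} and the metric estimate $j_\varepsilon-1=O(\varepsilon)$. We rewrite the equation as the fixed-point problem
\[
\zeta=\mathcal T_\theta(\zeta):=\zeta-\mathbb L_\varepsilon(\theta)^{-1}\mathscr S_\varepsilon(\theta,\zeta).
\]
To make $\mathcal T_\theta$ a contraction, we need a uniform Lipschitz bound
\[
\|D_\zeta\mathscr S_\varepsilon(\theta,\zeta)-\mathbb L_\varepsilon(\theta)\|\le C\|\zeta\|.
\]
This holds uniformly in $\varepsilon$ because the second derivatives of $\mathscr S_\varepsilon$ are bounded on the correction ball, which follows from:
\begin{itemize}
\item Lemma~\ref{lem2-2} for $B$;
\item \eqref{7-20} for $\mathcal N_\varepsilon$;
\item $f\in C^{2,\gamma}$ together with the uniform bound $|\partial_m\mathcal M_\varepsilon|\ge|I_0|/2$ for $m_\varepsilon$.
\end{itemize}
With \eqref{7-26}, $\mathcal T_\theta$ is then a contraction on the ball of radius $2C_K\varepsilon$, and its fixed point is unique in the fixed ball of radius $\rho_0$. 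This gives the zeroth-order part of \eqref{7-33}. Where $\mathcal T_\theta$ requires it, $\mathcal X_{R_*}^\perp$ is preserved by $Q_0$ and by $\mathbb L_\varepsilon^{-1}$.

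\textbf{Step 2: $C^2$ regularity.} For fixed $\varepsilon>0$, $\mathscr S_\varepsilon$ is $C^2$ jointly in $(\theta,\zeta)$, as recorded before Lemma~\ref{lem7-1}. Its $\zeta$-derivative at the solution is invertible by Step 1, being a perturbation of size $O(\varepsilon)$ of $\mathbb L_\varepsilon$. The $C^2$ implicit function theorem therefore makes $\zeta_\varepsilon(\theta)$ a $C^2$ function of $\theta$.

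\textbf{Step 3: the derivative bounds, which is the main obstacle.} Differentiating $\mathscr S_\varepsilon(\theta,\zeta_\varepsilon(\theta))=0$ once gives
\[
\partial_\theta\zeta_\varepsilon=-\bigl(D_\zeta\mathscr S_\varepsilon\bigr)^{-1}\partial_\theta\mathscr S_\varepsilon.
\]
It therefore suffices to show that $\partial_\theta\mathscr S_\varepsilon(\theta,\zeta)$ is $O(\varepsilon+\|\zeta\|)$. At $\zeta=0$ the surface component of $\partial_\theta\mathscr S_\varepsilon$ vanishes identically, because $B(Y_0(\theta),\theta)\equiv0$. For the inner component, \eqref{7-12} shows that $\partial_\theta r_\varepsilon=O(\varepsilon)$ on fixed disks, and the $j_\varepsilon$ identity in the proof of Lemma~\ref{lem7-1} shows that $\partial_\theta j_\varepsilon=O(\varepsilon)$. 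We also use that the derivative of the chain-rule bound on $\mathfrak b_H$ is $O(\varepsilon)$, and that $Y_0$ depends analytically on $\theta$. The delicate point is that the explicit $\theta$-derivatives must hold $X$ fixed while $\delta=\varepsilon/\alpha$ and $H=a/\delta$ vary; the factor $\varepsilon$ is then produced uniformly because every $\theta$-dependence in the inner problem enters through $\delta$ or $H^{-1}$.

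Differentiating a second time gives
\[
\partial_\theta^2\zeta_\varepsilon=-\bigl(D_\zeta\mathscr S_\varepsilon\bigr)^{-1}\Bigl[\partial_\theta^2\mathscr S_\varepsilon+2D_\zeta\partial_\theta\mathscr S_\varepsilon[\partial_\theta\zeta_\varepsilon]+D_\zeta^2\mathscr S_\varepsilon[\partial_\theta\zeta_\varepsilon,\partial_\theta\zeta_\varepsilon]\Bigr].
\]
The term $\partial_\theta^2\mathscr S_\varepsilon$ is $O(\varepsilon)$ by the second-order versions of \eqref{7-8} and \eqref{7-12}. The cross terms are $O(1)\cdot O(\varepsilon)$, and the quadratic term is $O(\varepsilon^2)$. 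This proves \eqref{7-33}. The level bound \eqref{7-34} then follows from the chain rule: $m_\varepsilon$ is $C^2$ with uniform bounds, its $\phi$-derivative is $\lambda_\varepsilon=m_0+O(\varepsilon)$, its $Y$-derivative is $O(\varepsilon)$ by \eqref{7-31}, and $|m_\varepsilon|\le C(\varepsilon+\|\phi\|)$ by \eqref{7-17}. Combining these with the $C^2$ bound on $\phi_\varepsilon$ from \eqref{7-33} gives $\|m_\varepsilon^{\rm sol}\|_{C^2(K)}\le C_K\varepsilon$.
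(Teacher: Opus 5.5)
Your proposal is correct and follows essentially the same route as the paper. Both arguments use a contraction with the frozen inverse $\mathbb L_\varepsilon(\theta)^{-1}$, the $C^2$ implicit function theorem, and two differentiations of $\mathscr S_\varepsilon(\theta,\zeta_\varepsilon(\theta))=0$, resting on the bounds $\partial_\theta\mathscr S_\varepsilon,\partial_\theta^2\mathscr S_\varepsilon=O(\varepsilon+\|\zeta\|)$ together with uniform bounds on $D_\zeta^2\mathscr S_\varepsilon$ and $D_\zeta\partial_\theta\mathscr S_\varepsilon$. The only ingredient the paper spells out that you leave implicit is the $C^2(K)$ bound on the explicit $\theta$-dependence of the level $\widehat m_\varepsilon(\theta,\zeta)$ at fixed $\zeta$, obtained by differentiating the circulation constraint twice; your argument needs it both for $\partial_\theta\mathcal E_\varepsilon$ and in the final chain rule for $m_\varepsilon^{\rm sol}$.
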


    \begin{proof}
        Choose a fixed correction ball $\|\zeta\|_{\mathbb X_{R_*}^s}\le\rho$ sufficiently small that all configurations and selected levels remain in the neighborhoods of Section~7.1. All estimates on this ball are uniform in $\varepsilon$ and $\zeta$. Spatial H\"older norms below are taken on $\overline{B_{R_*}}$ unless indicated otherwise.

        Evaluate the geometric quantities at $Y=Y_0(\theta)+\eta$. The reconstruction estimates \eqref{7-8}, \eqref{7-12}, and \eqref{7-20}, together with the interior conformal bounds, give
        \begin{equation}
            \begin{aligned}
                \|j_\varepsilon-1\|_{C^2(K;C^{0,\gamma'})}+\|r_\varepsilon\|_{C^2(K;C^{2,\gamma'})}&\le C_K\varepsilon,\\
                \|\mathfrak P_H-I\|_{C^2(K;\mathcal L(\mathcal X_{R_*},C^{2,\gamma'}))}&\le C_K\varepsilon,\\
                \|\mathcal N_\varepsilon\|_{C^2(K;\mathcal L(\mathcal X_{R_*},H^{s+1}))}&\le C_K\varepsilon,
            \end{aligned}
            \label{7-35}
        \end{equation}
        where $\mathcal N_\varepsilon:=\mathcal N_{\varepsilon,Y_0(\theta)+\eta,a}$. Indeed, $Z_Y'(z_a+\delta X)=\alpha+\delta\int_0^1Z_Y''(z_a+t\delta X)X\,dt$, so $j_\varepsilon-1$ retains a factor $\delta=O(\varepsilon)$ after two parameter derivatives. The same follows from the explicit formulas for $r_\varepsilon$ and $\mathfrak P_H-I$. Differentiating the Fourier representation preceding \eqref{7-20}, with $\phi$ fixed, preserves the zero-mass cancellation and bounds each differentiated term by $C_K\varepsilon|\xi|(1+|\xi|)^2e^{-a_-|\xi|/2}\|g_\phi\|_{L^1}$ for $|\beta|\le2$. Integration against the Sobolev weight proves the boundary estimate in \eqref{7-35}.

        We first estimate the circulation level. Put $\widehat m_\varepsilon(\theta,\zeta):=m_\varepsilon(Y_0(\theta)+\eta,a,W,\phi)$ and $V_\varepsilon:=U+r_\varepsilon+\mathfrak P_H\phi-\widehat m_\varepsilon$. For $i,j\in\{a,W\}$, set $I_\varepsilon:=\int_{B_{R_*}}j_\varepsilon f'(V_\varepsilon)\,dX$, $T_i:=\partial_i r_\varepsilon+(\partial_i\mathfrak P_H)\phi$, and $T_{ij}:=\partial_{ij}r_\varepsilon+(\partial_{ij}\mathfrak P_H)\phi$. By circulation transversality, $|I_\varepsilon|\ge c_m>0$, while \eqref{7-35} gives $\|T_i\|_{C^{2,\gamma'}}+\|T_{ij}\|_{C^{2,\gamma'}}\le C_K\varepsilon$.

        Differentiating the exact constraint $\int_{B_{R_*}}j_\varepsilon f(V_\varepsilon)\,dX=\Gamma$ at fixed $\zeta$ yields
        \[
            \partial_i\widehat m_\varepsilon=\frac1{I_\varepsilon}\int_{B_{R_*}}\bigl[(\partial_i j_\varepsilon)f(V_\varepsilon)+j_\varepsilon f'(V_\varepsilon)T_i\bigr]\,dX.
        \]
        Hence $\partial_i\widehat m_\varepsilon=O(\varepsilon)$ and $\partial_iV_\varepsilon=T_i-\partial_i\widehat m_\varepsilon=O_{C^{2,\gamma'}}(\varepsilon)$. Differentiating again gives
        \[
            \begin{aligned}
                I_\varepsilon\partial_{ij}\widehat m_\varepsilon
                ={}&\int_{B_{R_*}}\bigl[(\partial_{ij}j_\varepsilon)f(V_\varepsilon)+j_\varepsilon f'(V_\varepsilon)T_{ij}\bigr]\,dX\\
                &+\int_{B_{R_*}}f'(V_\varepsilon)\bigl[(\partial_i j_\varepsilon)\partial_jV_\varepsilon+(\partial_j j_\varepsilon)\partial_iV_\varepsilon\bigr]\,dX\\
                &+\int_{B_{R_*}}j_\varepsilon f''(V_\varepsilon)\partial_iV_\varepsilon\partial_jV_\varepsilon\,dX.
            \end{aligned}
        \]
        The first integral is $O(\varepsilon)$ and the remaining ones are $O(\varepsilon^2)$. Thus $\partial_{ij}\widehat m_\varepsilon=O(\varepsilon)$ and $\partial_{ij}V_\varepsilon=O_{C^{2,\gamma'}}(\varepsilon)$. Since $\mathfrak m(\phi)$ has no explicit dependence on $\theta$, \eqref{7-17} now gives
        \begin{equation}
            \sup_{\|\zeta\|_{\mathbb X_{R_*}^s}\le\rho}\|\widehat m_\varepsilon(\cdot,\zeta)-\mathfrak m(\phi)\|_{C^2(K)}\le C_K\varepsilon.
            \label{7-38}
        \end{equation}
        These estimates apply throughout the trial ball and do not assume $\phi=O(\varepsilon)$.

        We next compare the nonlinear complementary maps. At fixed $\zeta$, the terms $-\Delta\phi$ and $f(U)$ are independent of $\theta$, so
        \[
        \partial_i\mathcal E_\varepsilon=-(\partial_i j_\varepsilon)f(V_\varepsilon)-j_\varepsilon f'(V_\varepsilon)\partial_iV_\varepsilon.
        \]
        Equations~\eqref{7-35}--\eqref{7-38} and a second differentiation give $\|\partial_i\mathcal E_\varepsilon\|_{\mathcal Y_{R_*}^{0}}+\|\partial_{ij}\mathcal E_\varepsilon\|_{\mathcal Y_{R_*}^{0}}\le C_K\varepsilon$. For continuity in the spatial H\"older target, the trial functions $V_\varepsilon$ have uniformly bounded $C^1$ norms and take values in a fixed compact interval. Since $f''\in C_{\rm loc}^{0,\gamma}$, the compositions $f''(V_\varepsilon)$ are uniformly bounded in $C^{0,\gamma}$. For such uniformly bounded families, H\"older interpolation gives $\|f''(V_1)-f''(V_2)\|_{C^{0,\gamma'}}\le C\|V_1-V_2\|_{C^0}^{\gamma-\gamma'}$. This ensures continuity of the second derivatives of the nonlinear map in the $C^{0,\gamma'}$ target, without requiring $f'''$. The differentiated residuals retain fixed support and zero integral by \eqref{7-14} and the circulation constraint.

        At zero order, \eqref{7-35} and \eqref{7-38} imply $V_\varepsilon-[U+\phi-\mathfrak m(\phi)]=O_{C^{2,\gamma'}}(\varepsilon)$, hence $\mathcal E_\varepsilon-\mathcal E_0(\phi)=O_{\mathcal Y_{R_*}^{0}}(\varepsilon)$. For the surface component, \eqref{7-21} gives $\|\mathcal B_\varepsilon-B(Y_0(\theta)+\eta,a,W)\|_{C^2(K;H_{\rm e}^s)}\le C_K(\varepsilon\|\phi\|_{\mathcal X_{R_*}}+\varepsilon^2\|\phi\|_{\mathcal X_{R_*}}^2)$. Defining $\mathscr S_0(\theta,\zeta):=(B(Y_0(\theta)+\eta,a,W),Q_0\mathcal E_0(\phi))$, we obtain
        \begin{equation}
            \sup_{\|\zeta\|_{\mathbb X_{R_*}^s}\le\rho}\|(\mathscr S_\varepsilon-\mathscr S_0)(\cdot,\zeta)\|_{C^2(K;\mathbb Y_{R_*}^s)}\le C_K\varepsilon.
            \label{7-39}
        \end{equation}

        Since $B(Y_0(\theta),a,W)=0$ and $\mathcal E_0(0)=0$, we have $\mathscr S_0(\theta,0)=0$. Moreover,
        \[
        B(Y_0(\theta)+\eta,a,W)-B(Y_0(\theta),a,W)=\int_0^1D_YB(Y_0(\theta)+t\eta,a,W)[\eta]\,dt.
        \]
        The analyticity estimates of Lemma~\ref{lem2-2} show that its first two parameter derivatives retain the factor $\|\eta\|_{H^{s+2}}$. Combining this with \eqref{7-39} gives
        \begin{equation}
            \begin{aligned}
                \|\mathscr S_\varepsilon(\cdot,0)\|_{C^2(K;\mathbb Y_{R_*}^s)}&\le C_K\varepsilon,\\
                \|\partial_i\mathscr S_\varepsilon(\theta,\zeta)\|_{\mathbb Y_{R_*}^s}+\|\partial_{ij}\mathscr S_\varepsilon(\theta,\zeta)\|_{\mathbb Y_{R_*}^s}&\le C_K(\varepsilon+\|\zeta\|_{\mathbb X_{R_*}^s}).
            \end{aligned}
            \label{7-40}
        \end{equation}
        The same reconstruction bounds and differentiated circulation constraint control derivatives involving the correction variables. As $|I_\varepsilon|\ge c_m$, implicit differentiation of $\widehat m_\varepsilon$ gives uniformly bounded derivatives of total order at most two, involving only $f'$ and $f''$. Consequently,
        \begin{equation}
            \|D_\zeta^2\mathscr S_\varepsilon\|_{\mathcal L^2(\mathbb X_{R_*}^s,\mathbb Y_{R_*}^s)}+\sum_{i\in\{a,W\}}\|D_\zeta\partial_i\mathscr S_\varepsilon\|_{\mathcal L(\mathbb X_{R_*}^s,\mathbb Y_{R_*}^s)}\le C_K.
            \label{7-41}
        \end{equation}
        All bounds hold on the fixed trial ball. The argument requires only uniform boundedness, not $O(\varepsilon)$ convergence, of these mixed derivatives.

        By Lemma~\ref{lem7-1}, $\mathbb L_\varepsilon(\theta)=D_\zeta\mathscr S_\varepsilon(\theta,0)$ has a uniformly bounded inverse, with norm at most $C_0$. Define
        \[
            \mathscr T_{\varepsilon,\theta}(\zeta):=\zeta-\mathbb L_\varepsilon(\theta)^{-1}\mathscr S_\varepsilon(\theta,\zeta).
        \]
        Equations~\eqref{7-40}--\eqref{7-41} give $\|\mathscr T_{\varepsilon,\theta}(0)\|\le C_K\varepsilon$ and $\|D_\zeta\mathscr T_{\varepsilon,\theta}(\zeta)\|\le C\|\zeta\|$. Choose $\rho_0\in(0,\rho)$ with $C\rho_0\le1/2$, and then $\varepsilon_K$ small enough that $2C_K\varepsilon_K<\rho_0$. The map $\mathscr T_{\varepsilon,\theta}$ is then a contraction of the fixed closed ball of radius $\rho_0$ into itself. Its unique fixed point $\zeta_\varepsilon(\theta)$ satisfies $\|\zeta_\varepsilon(\theta)\|\le2C_K\varepsilon$, proving existence, uniqueness, and the zero-order estimate.

        Set $\mathbb A_\varepsilon(\theta):=D_\zeta\mathscr S_\varepsilon(\theta,\zeta_\varepsilon(\theta))$. Since $\mathbb A_\varepsilon-\mathbb L_\varepsilon=O(\varepsilon)$, its inverse is uniformly bounded. The $C^2$ implicit function theorem gives local $C^2$ dependence on $\theta$, and uniqueness identifies the solutions on overlaps. Differentiating $\mathscr S_\varepsilon(\theta,\zeta_\varepsilon(\theta))=0$ yields $\mathbb A_\varepsilon\partial_i\zeta_\varepsilon=-\partial_i\mathscr S_\varepsilon$, hence $\partial_i\zeta_\varepsilon=O(\varepsilon)$ by \eqref{7-40}. A second differentiation gives
        \[
            \begin{aligned}
                \mathbb A_\varepsilon\partial_{ij}\zeta_\varepsilon
                ={}&-\partial_{ij}\mathscr S_\varepsilon-D_\zeta\partial_i\mathscr S_\varepsilon[\partial_j\zeta_\varepsilon]-D_\zeta\partial_j\mathscr S_\varepsilon[\partial_i\zeta_\varepsilon]\\
                &-D_\zeta^2\mathscr S_\varepsilon[\partial_i\zeta_\varepsilon,\partial_j\zeta_\varepsilon],
            \end{aligned}
        \]
        with derivatives evaluated at $(\theta,\zeta_\varepsilon(\theta))$. The first three terms are $O(\varepsilon)$ and the last is $O(\varepsilon^2)$ by \eqref{7-40}--\eqref{7-41}. Uniform invertibility of $\mathbb A_\varepsilon$ gives $\partial_{ij}\zeta_\varepsilon=O(\varepsilon)$, proving \eqref{7-33}.

        Finally, $m_\varepsilon^{\rm sol}(\theta)=\widehat m_\varepsilon(\theta,\zeta_\varepsilon(\theta))$. Its value is $O(\varepsilon)$ by \eqref{7-17}, while \eqref{7-38} and the chain rule give the same bound for its first derivatives. At second order, the explicit derivatives $\partial_{ij}\widehat m_\varepsilon$ are $O(\varepsilon)$, and every remaining term contains a first or second derivative of $\zeta_\varepsilon$ multiplied by uniformly bounded derivatives of $\widehat m_\varepsilon$. This proves \eqref{7-34}.
    \end{proof}

    On the complementary family, write $\alpha_\varepsilon(a,W):=Z_{Y_\varepsilon(a,W)}'(-\mathrm ia)$ and $H_\varepsilon:=a\alpha_\varepsilon/\varepsilon$. Set $v_\varepsilon:=r_\varepsilon(Y_\varepsilon,a,W;\,\cdot)+\mathfrak P_{H_\varepsilon}\phi_\varepsilon-m_\varepsilon^{\rm sol}$. Equations~\eqref{7-33}--\eqref{7-35} give
    \begin{equation}
        \|v_\varepsilon\|_{C^2(K;C^{2,\gamma'}(\overline{B_{R_*}}))}\le C_K\varepsilon.
        \label{7-44}
    \end{equation}
    Thus the shifted inner stream function is $U+v_\varepsilon$ on the source disk. Define the normalized scalar drift
    \begin{equation}
        \mathscr G_\varepsilon(a,W):=\frac{\alpha_\varepsilon(a,W)}{\Gamma\varepsilon}\ell_0\bigl(\mathcal E_\varepsilon(Y_\varepsilon(a,W),a,\phi_\varepsilon(a,W);W)\bigr).
        \label{7-45}
    \end{equation}
    Unless otherwise stated, parameter derivatives below include the complementary solution and selected level, while the inner coordinate $X$ is fixed.

    \begin{lemma}
        \label{lem7-3}
        For sufficiently small $\varepsilon_K$, the drift $\mathscr G_\varepsilon$ is $C^2$ on a neighborhood of $K$ and satisfies
        \begin{equation}
            \left\|\ell_0(\mathcal E_\varepsilon)-\frac{\Gamma\varepsilon}{\alpha_\varepsilon}P(Y_\varepsilon,a,W)\right\|_{C^2(K)}\le C_K\varepsilon^2,
            \label{7-46}
        \end{equation}
        where the residual is evaluated along the complementary family. Consequently,
        \begin{equation}
            \|\mathscr G_\varepsilon-\mathscr G_0\|_{C^2(K)}\le C_K\varepsilon,\qquad \mathscr G_0(a,W):=P(Y_0(a,W),a,W).
            \label{7-47}
        \end{equation}
        Within this family, the full mass-constrained regularized equations hold if and only if $\mathscr G_\varepsilon(a,W)=0$.
    \end{lemma}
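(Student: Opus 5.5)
The key object is $\ell_0(\mathcal E_\varepsilon)=\int_{\mathbb R^2}\mathcal E_\varepsilon Z_0\,dX$ along the complementary family. We expand it to second order in the small inner perturbation $v_\varepsilon$ and the metric defect $j_\varepsilon-1$. The equivalence statement is immediate and comes first. By Proposition~\ref{prop7-2}, the surface equation and $Q_0\mathcal E_\varepsilon=0$ hold exactly, so $\mathcal E_\varepsilon=\ell_0(\mathcal E_\varepsilon)\eta_0$. Since $\alpha_\varepsilon>0$, the full regularized system therefore holds if and only if $\mathscr G_\varepsilon=0$. Local uniqueness in Proposition~\ref{prop7-2} ensures that all nearby solutions of the full system arise this way. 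The $C^2$ regularity of $\mathscr G_\varepsilon$ follows from the $C^2$ dependence of $(Y_\varepsilon,\phi_\varepsilon,m^{\rm sol}_\varepsilon)$ and the $C^2$ character of $\mathcal E_\varepsilon$.

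For \eqref{7-46}, first note that $\int(-\Delta\phi_\varepsilon)Z_0=\int\phi_\varepsilon f'(U)Z_0$ after integration by parts. The boundary terms vanish because $\phi_\varepsilon$ decays like $|X|^{-1}$ and $Z_0$ like $|X|^{-2}$. Then write
\[
\ell_0(\mathcal E_\varepsilon)=\int\bigl[f(U)+f'(U)\phi_\varepsilon-j_\varepsilon f(U+v_\varepsilon)\bigr]Z_0\,dX .
\]
Taylor expansion gives $f(U+v_\varepsilon)=f(U)+f'(U)v_\varepsilon+O(v_\varepsilon^2)$ with an $f''$ remainder. Combined with \eqref{7-44} and $j_\varepsilon-1=O(\varepsilon)$ in $C^2(K;C^{0,\gamma'})$, the remainder and the cross term $(j_\varepsilon-1)f'(U)v_\varepsilon$ are $O(\varepsilon^2)$ in $C^2(K)$; the parameter derivatives are handled as in Proposition~\ref{prop7-2}. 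The term $\int f(U)Z_0$ vanishes by oddness in $X_2$. The constant $-m^{\rm sol}_\varepsilon$ contributes $\int f'(U)Z_0=0$. The leading difference $f'(U)(\phi_\varepsilon-\mathfrak P_{H_\varepsilon}\phi_\varepsilon)$ is $O(\varepsilon\cdot\varepsilon)$ by \eqref{7-8} and \eqref{7-33}. What remains is
\[
\ell_0(\mathcal E_\varepsilon)=-\int(j_\varepsilon-1)f(U)Z_0\,dX-\int f'(U)r_\varepsilon Z_0\,dX+O_{C^2(K)}(\varepsilon^2).
\]

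The two explicit integrals are computed from first-order expansions. By \eqref{7-12}, $r_\varepsilon=\frac{\varepsilon}{\alpha}(W-\frac{\Gamma}{4\pi a})X_2+O(\varepsilon^2)$. Using $f'(U)Z_0=f'(U)\partial_2U=\partial_2(f(U))$ and integrating by parts, $\int f'(U)X_2Z_0=-\int f(U)=-\Gamma$. The $r_\varepsilon$ term therefore gives $\frac{\Gamma\varepsilon}{\alpha}(W-\frac{\Gamma}{4\pi a})$. For the metric term, the identity for $j_\varepsilon-1$ in the proof of Lemma~\ref{lem7-1} gives
\[
j_\varepsilon-1=\frac{2\delta}{\alpha}\operatorname{Re}\bigl(Z_Y''(z_a)X\bigr)+O(\varepsilon^2),\qquad Z_Y''(z_a)=\mathrm iE_a(Y''),
\]
with $X$ read as the complex number $X_1+\mathrm iX_2$. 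Hence $j_\varepsilon-1=-\frac{2\varepsilon}{\alpha^2}E_a(Y'')X_2+O(\varepsilon^2)$. Radiality of $U$ gives $\int f(U)X_2\partial_2U=\frac12\int X_2\partial_2F(U)=-\frac12\int F(U)$, where $F'=f$. Here I expect the subtlety: this integral must reduce to a multiple of $\Gamma^2$, namely $-\Gamma^2/(8\pi)$. That value should come from a Pohozaev-type identity for radial solutions of $-\Delta U=f(U)$ with total mass $\Gamma$ (in 2D, $\int F(U)=\Gamma^2/(4\pi)$), and I would verify it carefully. If it holds, the metric term yields $-\frac{\Gamma\varepsilon}{\alpha}\frac{\Gamma}{4\pi}\frac{E_a(Y'')}{\alpha}$. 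Summing the two contributions gives $\frac{\Gamma\varepsilon}{\alpha}P(Y_\varepsilon,a,W)$, up to replacing $\alpha$ by $\alpha_\varepsilon$ in the second-order error. This proves \eqref{7-46}.

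Finally, divide \eqref{7-46} by $\Gamma\varepsilon/\alpha_\varepsilon$; $\alpha_\varepsilon$ is bounded in $C^2(K)$ and away from zero. This gives $\|\mathscr G_\varepsilon-P(Y_\varepsilon,a,W)\|_{C^2(K)}\le C\varepsilon$. Since $P$ is analytic with derivatives bounded uniformly (Lemma~\ref{lem2-2}), \eqref{7-33} and the chain rule give $\|P(Y_\varepsilon,\cdot)-P(Y_0,\cdot)\|_{C^2(K)}\le C\varepsilon$, which yields \eqref{7-47}.

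The main obstacle is the bookkeeping that keeps the remainders $O(\varepsilon^2)$ after two parameter derivatives, with the derivatives also hitting $Y_\varepsilon$, $\phi_\varepsilon$ and $m^{\rm sol}_\varepsilon$. This is handled by the $C^2(K)$ bounds \eqref{7-33}--\eqref{7-35} and \eqref{7-44}, together with the Hölder-interpolation control of $f''(U+v_\varepsilon)$ used in Proposition~\ref{prop7-2}.
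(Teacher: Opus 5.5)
Your route is essentially the paper's, but the radial identity you flagged as the subtlety is justified incorrectly. The paper first writes $\mathcal E_\varepsilon=\mathscr L_0\phi_\varepsilon+c_\varepsilon f'(U)+\varepsilon F_{1,\varepsilon}+\mathfrak r_\varepsilon$ and then applies $\ell_0$, while you apply $\ell_0$ term by term. The ingredients are otherwise the same:
\begin{itemize}
\item $\ell_0(\mathscr L_0\phi_\varepsilon)=0$, and $\ell_0(f'(U))=0$, so the level drops out;
\item the reflection error $(\mathfrak P_{H_\varepsilon}-I)\phi_\varepsilon$ is $O(\varepsilon^2)$;
\item the first-order terms $j_\varepsilon-1\approx-2\varepsilon E_a(Y_\varepsilon'')X_2/\alpha_\varepsilon^2$ and $r_\varepsilon\approx\varepsilon(W-\Gamma/(4\pi a))X_2/\alpha_\varepsilon$;
\item $\int X_2f'(U)\partial_2U\,dX=-\Gamma$.
\end{itemize}
The equivalence statement and the final comparison with $\mathscr G_0$ are also argued as in the paper.

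The flagged step contains two compensating factor-of-two errors.
\begin{itemize}
\item $f(U)\partial_2U=\partial_2F(U)$ exactly, so integrating by parts in $X_2$ gives $\int X_2f(U)\partial_2U\,dX=-\int F(U)\,dX$. There is no factor $\tfrac12$, and radiality is not needed.
\item The two-dimensional Pohozaev identity gives $\int F(U)\,dX=\Gamma^2/(8\pi)$, not $\Gamma^2/(4\pi)$. Multiply $-\Delta U=f(U)$ by $X\cdot\nabla U$ and integrate over $B_R$ with $R\ge1$; since $F(U)=0$ on $\partial B_R$, this yields $-2\int F(U)\,dX=-\frac R2\int_{\partial B_R}U_r^2\,dS=-\Gamma^2/(4\pi)$.
\end{itemize}
Your target value $-\Gamma^2/(8\pi)$ is therefore right. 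But verifying $\int F(U)=\Gamma^2/(4\pi)$ as stated would fail. Correcting only that value while keeping the $\tfrac12$ would give $-\Gamma^2/(16\pi)$, and the metric term would then be half of the $-\varepsilon\Gamma^2E_a(Y_\varepsilon'')/(4\pi\alpha_\varepsilon^2)$ needed to reproduce $P$. A direct radial computation settles it:
\[
\int_{\mathbb R^2}X_2f(U)\partial_2U\,dX=\pi\int_0^1r^2f(U)U'\,dr=-\pi\int_0^1rU'(rU')'\,dr=-\frac\pi2U'(1)^2=-\frac{\Gamma^2}{8\pi}.
\]
This uses $f(U)=-r^{-1}(rU')'$, $f(U)=0$ for $r>1$, and $U'(1)=-\Gamma/(2\pi)$.

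Two smaller points.
\begin{itemize}
\item $Z_0=\partial_2U$ decays like $|X|^{-1}$, not $|X|^{-2}$. The boundary terms in your integration by parts still vanish, since $\phi_\varepsilon=O(|X|^{-1})$ and both gradients are $O(|X|^{-2})$.
\item For the $C^2(K)$ bound on the Taylor remainder, differentiate $f(U+v_\varepsilon)-f(U)-f'(U)v_\varepsilon$ directly, as the paper does. Differentiating the integral form of the remainder twice in the parameters would require $f'''$, which is not assumed.
\end{itemize}
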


    \begin{proof}
        Set $\delta_\varepsilon:=\varepsilon/\alpha_\varepsilon$, $b_\varepsilon:=E_a(Y_\varepsilon'')$, and $d_0:=W-\Gamma/(4\pi a)$. All expansions below hold on $\overline{B_{R_*}}$, including their first two parameter derivatives.

        First, let $h_\varepsilon:=(\mathfrak P_{H_\varepsilon}-I)\phi_\varepsilon$. By \eqref{7-8}, the reflection operator and its first two parameter derivatives are $O(\varepsilon)$ on the source disk. Since $\phi_\varepsilon=O_{C^2(K;\mathcal X_{R_*}^{\perp})}(\varepsilon)$, the product rule gives
        \begin{equation}
            \|h_\varepsilon\|_{C^2(K;C^{2,\gamma'}(\overline{B_{R_*}}))}\le C_K\varepsilon^2.
            \label{7-48}
        \end{equation}

        By symmetry, $Z_{Y_\varepsilon}''(z_a)=\mathrm ib_\varepsilon$. Taylor expansion at $z_a=-\mathrm ia$ gives
        \[
        Z_{Y_\varepsilon}'(z_a+\delta_\varepsilon X)=\alpha_\varepsilon+\mathrm i\delta_\varepsilon b_\varepsilon X+\delta_\varepsilon^2X^2\int_0^1(1-t)Z_{Y_\varepsilon}'''(z_a+t\delta_\varepsilon X)\,dt.
        \]
        Taking the squared modulus and using $\operatorname{Re}(\mathrm iX)=-X_2$, together with \eqref{7-12}, yields
        \[
            j_\varepsilon=1+\varepsilon j_{1,\varepsilon}+R_{j,\varepsilon},\qquad r_\varepsilon=\varepsilon r_{1,\varepsilon}+R_{r,\varepsilon},
        \]
        where $j_{1,\varepsilon}(X):=-2b_\varepsilon X_2/\alpha_\varepsilon^2$ and $r_{1,\varepsilon}(X):=d_0X_2/\alpha_\varepsilon$. The remainders satisfy
        \begin{equation}
            \|R_{j,\varepsilon}\|_{C^2(K;C^{0,\gamma'}(\overline{B_{R_*}}))}+\|R_{r,\varepsilon}\|_{C^2(K;C^{2,\gamma'}(\overline{B_{R_*}}))}\le C_K\varepsilon^2.
            \label{7-50}
        \end{equation}
        Indeed, the evaluation points remain uniformly separated from the free surface, and $\delta_\varepsilon$ and its first two parameter derivatives are $O(\varepsilon)$. The interior conformal estimates therefore preserve the quadratic remainder after two parameter differentiations.

        Define $T_f(v):=f(U+v)-f(U)-f'(U)v$ and $c_\varepsilon:=m_\varepsilon^{\rm sol}-m_0[\phi_\varepsilon]$. Substituting $v_\varepsilon=\phi_\varepsilon-m_\varepsilon^{\rm sol}+\varepsilon r_{1,\varepsilon}+R_{r,\varepsilon}+h_\varepsilon$ into \eqref{7-18} gives
        \begin{equation}
            \mathcal E_\varepsilon=\mathscr L_0\phi_\varepsilon+c_\varepsilon f'(U)+\varepsilon F_{1,\varepsilon}+\mathfrak r_\varepsilon,
            \label{7-51}
        \end{equation}
        where
        \[
        F_{1,\varepsilon}:=-j_{1,\varepsilon}f(U)-r_{1,\varepsilon}f'(U)=\frac{2b_\varepsilon}{\alpha_\varepsilon^2}X_2f(U)-\frac{d_0}{\alpha_\varepsilon}X_2f'(U)
        \]
        and
        \begin{equation}
            \begin{aligned}
                \mathfrak r_\varepsilon={}&-R_{j,\varepsilon}f(U)-f'(U)(R_{r,\varepsilon}+h_\varepsilon)\\
                &-(j_\varepsilon-1)f'(U)v_\varepsilon-j_\varepsilon T_f(v_\varepsilon).
            \end{aligned}
            \label{7-52}
        \end{equation}
        The level difference $c_\varepsilon$ is kept explicit because its vertical translation projection vanishes exactly.

        The first three terms in \eqref{7-52} are $O_{C^2(K;C^{0,\gamma'})}(\varepsilon^2)$ by \eqref{7-44}, \eqref{7-48}, and \eqref{7-50}. For the nonlinear term, Taylor's formula gives $T_f(v_\varepsilon)=v_\varepsilon^2\int_0^1(1-t)f''(U+tv_\varepsilon)\,dt$. To avoid assuming $f'''$, differentiate the defining difference directly:
        \[
        \begin{aligned}
            \partial_iT_f(v_\varepsilon)&=[f'(U+v_\varepsilon)-f'(U)]\partial_iv_\varepsilon,\\
            \partial_{ij}T_f(v_\varepsilon)&=f''(U+v_\varepsilon)\partial_iv_\varepsilon\partial_jv_\varepsilon+[f'(U+v_\varepsilon)-f'(U)]\partial_{ij}v_\varepsilon.
        \end{aligned}
        \]
        Since $v_\varepsilon$ and its first two parameter derivatives are $O(\varepsilon)$ in $C^{2,\gamma'}$, while $f''(U+tv_\varepsilon)$ is uniformly bounded in $C^{0,\gamma'}$ for $\gamma'<\gamma$, every term is $O(\varepsilon^2)$. The nonlinear sources vanish in a fixed annulus around $\partial B_{R_*}$, so zero extension preserves these bounds. Consequently,
        \begin{equation}
            \|\mathfrak r_\varepsilon\|_{C^2(K;C^{0,\gamma'}(\mathbb R^2))}\le C_K\varepsilon^2,\qquad \operatorname{supp}\mathfrak r_\varepsilon\subset\overline{B_{R_*}}.
            \label{7-53}
        \end{equation}
        The remainder need not have zero integral: the circulation constraint applies to the full residual $\mathcal E_\varepsilon$.

        Project \eqref{7-51} onto the translation mode $Z_0=\partial_2U$. Since $-\Delta Z_0=f'(U)Z_0$, Green's identity and the decay of $\phi_\varepsilon$ and $Z_0$ give $\ell_0(\mathscr L_0\phi_\varepsilon)=0$. Also,
        \[
        \ell_0(f'(U))=\int_{\mathbb R^2}\partial_2f(U)\,dX=0.
        \]
        As $U$, $\mathscr L_0$, and $\ell_0$ are independent of $(a,W)$, these cancellations hold after parameter differentiation. Thus only $\varepsilon F_{1,\varepsilon}$ and $\mathfrak r_\varepsilon$ contribute.

        The radial identities from \cite[Proof of Proposition~5.4]{DuZouPartI} give $\int_{\mathbb R^2}X_2f'(U)\partial_2U\,dX=-\Gamma$. Writing $F_f(\tau):=\int_0^\tau f(r)\,dr$, the radial equation and $U(1)=0$, $U'(1)=-\Gamma/(2\pi)$ yield
        \[
            \int_{\mathbb R^2}F_f(U)\,dX=\frac{\Gamma^2}{8\pi},\qquad \int_{\mathbb R^2}X_2f(U)\partial_2U\,dX=-\frac{\Gamma^2}{8\pi}.
        \]
        Therefore,
        \begin{equation}
            \ell_0(F_{1,\varepsilon})=\frac{\Gamma d_0}{\alpha_\varepsilon}-\frac{\Gamma^2b_\varepsilon}{4\pi\alpha_\varepsilon^2}=\frac{\Gamma}{\alpha_\varepsilon}P(Y_\varepsilon,a,W).
            \label{7-55}
        \end{equation}
        The first term represents the background and image-vortex drift, while the second is the conformal metric correction generated by the free surface. Together they yield the full regular drift of the coupled point-vortex problem.

        Applying $\ell_0$ to \eqref{7-51} and using \eqref{7-53} and \eqref{7-55}, we obtain
        \[
        \ell_0(\mathcal E_\varepsilon)=\frac{\Gamma\varepsilon}{\alpha_\varepsilon}P(Y_\varepsilon,a,W)+O_{C^2(K)}(\varepsilon^2),
        \]
        which proves \eqref{7-46}. By \eqref{7-45}, $\mathscr G_\varepsilon=P(Y_\varepsilon,a,W)+O_{C^2(K)}(\varepsilon)$. Analyticity of $P$ and \eqref{7-33} give $\|P(Y_\varepsilon,a,W)-P(Y_0,a,W)\|_{C^2(K)}\le C_K\varepsilon$, proving \eqref{7-47}.

        Finally, $Q_0\mathcal E_\varepsilon=0$ implies $\mathcal E_\varepsilon=\ell_0(\mathcal E_\varepsilon)\eta_0$. Since $\alpha_\varepsilon/(\Gamma\varepsilon)>0$, this translation component vanishes precisely when $\mathscr G_\varepsilon=0$. The Bernoulli equation, Dirichlet condition, and exact circulation constraint have already been imposed, so the scalar drift equation is equivalent to the full regularized system within the complementary family.
    \end{proof}

    Set $L_\varepsilon:=\partial_a\mathscr G_\varepsilon$ and $M_\varepsilon:=\partial_W\mathscr G_\varepsilon$. Lemma~\ref{lem7-3} gives
    \[
        \|L_\varepsilon-L\|_{C^1(K)}+\|M_\varepsilon-M\|_{C^1(K)}\le C_K\varepsilon.
    \]
    Since $L^2+M^2$ is uniformly positive on the reference arc, the reduced gradient remains nonzero for sufficiently small $\varepsilon$, including near simple folds where $L=0$ and $M\ne0$. This nondegeneracy allows us to construct the regularized solution arcs and prove persistence of their specified simple folds.

    \smallskip

    \subsection{Persistence of compact arcs and folds}
    Let $I=[t_-,t_+]$ be a compact interval and $\bm{\gamma}:I\to\mathscr C_0$ a smooth regular embedding with image $\mathscr K$, where $A$ is invertible and $L^2+M^2>0$. Write $\bm{\gamma}(t)=(Y(t),a(t),W(t))$. The regularity hypotheses allow an extension to a slightly larger interval $I'\supset I$, used to obtain uniform estimates up to the endpoints. Use finitely many Bernoulli charts; the projection to the $(a,W)$ plane need not be globally injective. The parameter $t$ labels steady configurations and is not physical time.

    Use the complementary solutions $(Y_\varepsilon(a,W),\phi_\varepsilon(a,W))$ from Proposition~\ref{prop7-2}, with the same core profile, circulation constraint, fixed-source spaces, and translation normalization in every chart. The remaining scalar drift equation selects the neighboring regularized arc.

    \begin{proposition}
        \label{prop7-4}
        There exist $\varepsilon_{\mathscr K},C_{\mathscr K}>0$ such that, for every $0<\varepsilon<\varepsilon_{\mathscr K}$, the regularized problem admits a regular $C^2$ embedded solution arc $\mathscr C_{\varepsilon,\mathscr K}$ parametrized by $\bm{\gamma}_\varepsilon(t):=(\widehat Y_\varepsilon(t),a_\varepsilon(t),W_\varepsilon(t))$, $t\in I$, with
        \begin{equation}
            \|\bm{\gamma}_\varepsilon-\bm{\gamma}\|_{C^2(I;H_{\rm e}^{s+2}(\mathbb R)\times\mathbb R^2)}+\|\widehat\phi_\varepsilon\|_{C^2(I;\mathcal X_{R_*}^{\perp})}\le C_{\mathscr K}\varepsilon.
            \label{7-57}
        \end{equation}
        The correspondence is locally unique under the prescribed mass and translation normalizations and the transverse condition \eqref{7-62}.

        The associated smooth-vorticity waves $(\Omega_\varepsilon(t),\Psi_\varepsilon(t),\mu_\varepsilon(t))$ travel with velocity $-W_\varepsilon(t)e_1$ and satisfy
        \begin{equation}
            -\Delta_p\Psi_\varepsilon(t,p)=\omega_{\varepsilon,t}(p),\qquad \omega_{\varepsilon,t}(p)=\varepsilon^{-2}f\bigl(\Psi_\varepsilon(t,p)-\mu_\varepsilon(t)\bigr)\quad\text{in }\Omega_\varepsilon(t),
            \label{7-58}
        \end{equation}
        together with the kinematic and Bernoulli boundary conditions and the laboratory-frame normalization at infinity. Their nonnegative vorticity satisfies
        \begin{equation}
            \int_{\Omega_\varepsilon(t)}\omega_{\varepsilon,t}(p)\,dp=\Gamma,\qquad \operatorname{supp}\omega_{\varepsilon,t}\subset B_{C_{\mathscr K}\varepsilon}(p_\varepsilon(t)),
            \label{7-59}
        \end{equation}
        where $p_\varepsilon(t):=Z_{\widehat Y_\varepsilon(t)}(-\mathrm ia_\varepsilon(t))$. With $p(t):=Z_{Y(t)}(-\mathrm ia(t))$, we have $\sup_I|p_\varepsilon-p|\le C_{\mathscr K}\varepsilon$ and $\omega_{\varepsilon,t}\rightharpoonup\Gamma\delta_{p(t)}$ uniformly in $t$ against bounded Lipschitz test functions.

        If $\mathscr K$ contains a specified interior simple fold $(Y_\star,a_\star,W_\star)$, then, for sufficiently small $\varepsilon>0$, the regularized arc contains a unique nearby simple fold in the prescribed normalized configuration neighborhood. Its parameters satisfy
        \begin{equation}
            \mathscr G_\varepsilon(a_{\varepsilon,\star},W_{\varepsilon,\star})=0,\qquad L_\varepsilon(a_{\varepsilon,\star},W_{\varepsilon,\star})=0.
            \label{7-60}
        \end{equation}
        Writing $\theta_{\varepsilon,\star}:=(a_{\varepsilon,\star},W_{\varepsilon,\star})$ and $\chi_{\varepsilon,\star}:=-\partial_aL_\varepsilon(\theta_{\varepsilon,\star})/M_\varepsilon(\theta_{\varepsilon,\star})$, we have
        \begin{equation}
            |a_{\varepsilon,\star}-a_\star|+|W_{\varepsilon,\star}-W_\star|+|\chi_{\varepsilon,\star}-\chi_\star|\le C_\star\varepsilon,\qquad \chi_\star:=-\frac{(\partial_aL)_\star}{M_\star}\ne0.
            \label{7-61}
        \end{equation}
        In particular, the quadratic turning direction and local fixed-speed two-wave structure are preserved.
    \end{proposition}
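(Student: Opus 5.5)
The regularized arc is the zero set of the scalar drift $\mathscr G_\varepsilon$ of Lemma~\ref{lem7-3}, lifted to configurations through the complementary solutions of Proposition~\ref{prop7-2}. Work chartwise. Cover $\mathscr K$ by finitely many Bernoulli charts with compact parameter sets $K$, on which \eqref{7-47} holds. For each $t$ in the extended interval $I'$, set $\theta(t)=(a(t),W(t))$, $L(t)=L(\theta(t))$, $M(t)=M(\theta(t))$, and the unit normal $n(t)=(L,M)/\sqrt{L^2+M^2}$. Seek $\theta_\varepsilon(t)=\theta(t)+\tau\,(L(t),M(t))$ with scalar $\tau$. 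This is exactly the transverse normalization \eqref{1-21}, equivalently \eqref{7-62}, since $M(a_\varepsilon-a)-L(W_\varepsilon-W)=0$ says the displacement is normal.

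The scalar equation is $g_\varepsilon(t,\tau):=\mathscr G_\varepsilon(\theta(t)+\tau(L,M))=0$. At $\varepsilon=0$ we have $g_0(t,0)=0$ and $\partial_\tau g_0(t,0)=L^2+M^2\ge c_{\mathscr K}>0$. By \eqref{7-47}, $\|g_\varepsilon-g_0\|_{C^2}\le C\varepsilon$ on $I'\times[-\tau_0,\tau_0]$, where the composition with the smooth map $(t,\tau)\mapsto\theta(t)+\tau(L,M)$ keeps $C^2$ control because $\bm\gamma$ is smooth and $L,M$ are analytic. A uniform scalar implicit-function or contraction argument then gives a unique root $\tau_\varepsilon(t)$ with $|\tau_\varepsilon|\le C\varepsilon$. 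Differentiating $g_\varepsilon(t,\tau_\varepsilon(t))=0$ twice and using the uniform bounds yields $\|\tau_\varepsilon\|_{C^2(I)}\le C\varepsilon$. On chart overlaps, uniqueness of the normal root and of the complementary solutions makes the constructions agree.

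Define $a_\varepsilon,W_\varepsilon$ from $\theta_\varepsilon$, and set $\widehat Y_\varepsilon(t)=Y_\varepsilon(\theta_\varepsilon(t))$ and $\widehat\phi_\varepsilon(t)=\phi_\varepsilon(\theta_\varepsilon(t))$. The chain rule with \eqref{7-33}, together with $Y_0(\theta(t))=Y(t)$ and the analyticity of $Y_0$, gives \eqref{7-57}. Regularity and embeddedness follow from $C^1$-closeness to the embedded regular curve $\bm\gamma$, since $\bm\gamma$ is injective with derivative bounded below on the compact set $I$.

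Physical reconstruction comes from Lemma~\ref{lem7-3}: $\mathscr G_\varepsilon=0$ is equivalent to the full system. Set $\Psi_\varepsilon=(P_{\varepsilon}+\mathcal C_\varepsilon[\phi]+W\operatorname{Im}z)\circ Z^{-1}$ and $\mu_\varepsilon=\mu^{\rm app}+m^{\rm sol}_\varepsilon$. Then \eqref{7-58} follows from $\mathcal E_\varepsilon=0$, and the boundary conditions follow from $\mathcal B_\varepsilon=0$ and Proposition~\ref{prop2-1}'s reconstruction. Exact circulation is \eqref{7-15}. The support claim uses \eqref{7-14}: the source lies in the inner disk $B_{R_*}$, whose physical image has radius $\lesssim\delta\sup|Z'|\lesssim\varepsilon$. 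The bound $|p_\varepsilon-p|\le C\varepsilon$ follows from \eqref{7-57} and Lipschitz dependence of $Z_Y(-\mathrm ia)$. The bounded-Lipschitz estimate \eqref{1-23} is immediate from unit mass, nonnegativity and support, via $|\int\varphi\omega-\Gamma\varphi(p)|\le\Gamma\operatorname{Lip}(\varphi)(C\varepsilon+|p_\varepsilon-p|)$.

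For fold persistence, apply the inverse function theorem to $\mathfrak H_\varepsilon=(\mathscr G_\varepsilon,L_\varepsilon)$ near $\theta_\star$. By \eqref{7-47}, $\|\mathfrak H_\varepsilon-\mathfrak H_0\|_{C^1}\le C\varepsilon$, and \eqref{5-11} gives $\det D\mathfrak H_0(\theta_\star)\ne0$. A quantitative inverse-function argument (Newton/contraction on a fixed ball) then gives a unique zero $\theta_{\varepsilon,\star}$ with $|\theta_{\varepsilon,\star}-\theta_\star|\le C\varepsilon$. Next, $|\chi_{\varepsilon,\star}-\chi_\star|\le C\varepsilon$ follows from the $C^0$ closeness of $\partial_aL_\varepsilon$ and $M_\varepsilon$, using the $C^2$ bound on $\mathscr G_\varepsilon$. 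Since $M_\varepsilon\ne0$ nearby, solve $\mathscr G_\varepsilon=0$ as $W=w_\varepsilon(a)$ in $C^2$, with $w_\varepsilon'=-L_\varepsilon/M_\varepsilon$. Then $w_\varepsilon'(a_{\varepsilon,\star})=0$ and $w_\varepsilon''(a_{\varepsilon,\star})=\chi_{\varepsilon,\star}$. The two-solution count repeats Proposition~\ref{prop5-2} in $C^2$ rather than analytically. Because $w_\varepsilon''$ is uniformly bounded away from zero on a fixed neighborhood, $w_\varepsilon$ is strictly convex or concave there, which gives exactly two, one or no preimages. Uniqueness within the normalized neighborhood follows from uniqueness in Proposition~\ref{prop7-2} and of $\theta_{\varepsilon,\star}$.

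The main obstacle is obtaining uniform $C^2$ control of $\tau_\varepsilon$ and of the fold data, which is where the $C^2$, not merely $C^1$, estimate \eqref{7-47} is indispensable. The rest is bookkeeping across charts.
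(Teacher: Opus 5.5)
Your proposal is correct and follows essentially the same route as the paper. The shared steps are: the transverse normal-line ansatz $\theta(t)+\tau\,(L,M)$ with the scalar root obtained from $\partial_\tau g_\varepsilon$ bounded below; composition with the complementary family of Proposition~\ref{prop7-2} and patching by uniqueness; a contraction for $\mathfrak H_\varepsilon=(\mathscr G_\varepsilon,L_\varepsilon)$ near $\theta_\star$ via \eqref{5-11} and \eqref{7-47}; and physical reconstruction through Lemma~\ref{lem7-3}. Your unnormalized direction $(L,M)$, versus the paper's $(L,M)/(L^2+M^2)$, is immaterial.

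Two points deserve explicit mention. First, the $O(\varepsilon)$ bounds on $\tau_\varepsilon'$ and $\tau_\varepsilon''$ come from $\partial_tg_0(t,0)=\partial_{tt}g_0(t,0)=0$ (because $g_0(t,0)\equiv0$), combined with $|\tau_\varepsilon|\le C\varepsilon$ and the $C^2$ comparison; uniform bounds alone would only give $O(1)$. Second, the fold zero $\theta_{\varepsilon,\star}$ lies on your constructed arc because the normal-line coordinates cover a neighborhood of $\theta_\star$ and the normal root is unique there.
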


    \begin{proof}
        To construct the regularized arc, set $\theta(t):=(a(t),W(t))$, with $L(t),M(t)$ evaluated along $\bm{\gamma}$, and define
        \[
        d(t):=L(t)^2+M(t)^2,\qquad n(t):=\frac{(L(t),M(t))}{d(t)}.
        \]
        On the extended arc, $d$ is uniformly positive. In each Bernoulli chart, $Y(t)=Y_0(\theta(t))$, so regularity of $\bm{\gamma}$ implies $\theta'(t)\ne0$. Differentiating $\mathscr G_0(\theta(t))=0$ gives $L(t)a'(t)+M(t)W'(t)=0$. Hence $n(t)$ is transverse to the reference tangent, with $\nabla\mathscr G_0(\theta(t))\cdot n(t)=1$.

        Seek the regularized parameters in the form $\theta_\varepsilon(t)=\theta(t)+\tau_\varepsilon(t)n(t)$, equivalently,
        \begin{equation}
            M(t)\bigl(a_\varepsilon(t)-a(t)\bigr)-L(t)\bigl(W_\varepsilon(t)-W(t)\bigr)=0.
            \label{7-62}
        \end{equation}
        This is a correspondence condition between curves, not an additional physical equation. At a fold, $L=0$ and $M\ne0$, so paired waves have the same conformal depth but need not have the same speed.

        In each chart, put $g_\varepsilon(t,\tau):=\mathscr G_\varepsilon(\theta(t)+\tau n(t))$. Since $g_0(t,0)=0$ and $\partial_\tau g_0(t,0)=1$, compactness and a finite covering give a uniform $\tau_0>0$ such that $\partial_\tau g_0\ge3/4$ for $|\tau|\le\tau_0$. By \eqref{7-47}, $\|g_\varepsilon-g_0\|_{C^2}\le C_{\mathscr K}\varepsilon$, so $\partial_\tau g_\varepsilon\ge1/2$ for small $\varepsilon$. Moreover, $|g_\varepsilon(t,0)|\le C_{\mathscr K}\varepsilon$. Thus $g_\varepsilon(t,\cdot)$ has opposite signs at $\pm\tau_0$ and a unique zero $\tau_\varepsilon(t)$ satisfying $|\tau_\varepsilon(t)|\le C_{\mathscr K}\varepsilon$.

        The scalar implicit function theorem gives $\tau_\varepsilon\in C^2(I)$. Differentiating $g_\varepsilon(t,\tau_\varepsilon(t))=0$ yields
        \[
        (\partial_\tau g_\varepsilon)\tau_\varepsilon'=-\partial_tg_\varepsilon,\qquad (\partial_\tau g_\varepsilon)\tau_\varepsilon''=-\partial_{tt}g_\varepsilon-2(\partial_{t\tau}g_\varepsilon)\tau_\varepsilon'-(\partial_{\tau\tau}g_\varepsilon)(\tau_\varepsilon')^2.
        \]
        Since $g_0(t,0)\equiv0$, its first two $t$-derivatives vanish there. The $C^2$ comparison and $\tau_\varepsilon=O(\varepsilon)$ therefore give $\partial_tg_\varepsilon(t,\tau_\varepsilon(t))=O(\varepsilon)$ and $\partial_{tt}g_\varepsilon(t,\tau_\varepsilon(t))=O(\varepsilon)$. The preceding identities imply $\tau_\varepsilon',\tau_\varepsilon''=O(\varepsilon)$, hence
        \begin{equation}
            \|\tau_\varepsilon\|_{C^2(I)}+\|\theta_\varepsilon-\theta\|_{C^2(I;\mathbb R^2)}\le C_{\mathscr K}\varepsilon.
            \label{7-63}
        \end{equation}

        Define the surface and core corrections along the arc by
\[
\widehat Y_\varepsilon(t):=Y_\varepsilon(\theta_\varepsilon(t)),\qquad \widehat\phi_\varepsilon(t):=\phi_\varepsilon(\theta_\varepsilon(t)).
\]
Proposition~\ref{prop7-2} and \eqref{7-63} give $\|\widehat\phi_\varepsilon\|_{C^2(I;\mathcal X_{R_*}^{\perp})}\le C_{\mathscr K}\varepsilon$. For the surface, decompose
        \[
        \widehat Y_\varepsilon-Y=(Y_\varepsilon-Y_0)\circ\theta_\varepsilon+Y_0\circ\theta_\varepsilon-Y_0\circ\theta.
        \]
        The first term is $O(\varepsilon)$ in $C^2$ by \eqref{7-33}, and the second by \eqref{7-63} and smoothness of $Y_0$. This proves \eqref{7-57}; any higher derivatives used in the composition comparison concern only the smooth reference maps.

        On overlaps of the reference configuration charts, Bernoulli uniqueness identifies the surface maps $Y_0$, and the local uniqueness in Proposition~\ref{prop7-2} identifies the corresponding normalized complementary solutions after the neighborhoods are suitably restricted. The reduced drift and the transverse condition \eqref{7-62} are therefore consistent on these overlaps, so the locally constructed families patch into a $C^2$ solution curve. Configurations with identical $(a,W)$ but distinct surfaces remain in separate configuration neighborhoods and are not identified.

        The resulting curve is regular by the $C^1$ estimate in \eqref{7-57} and the nonvanishing tangent of the reference embedding. To verify injectivity, first use regularity and $C^1$ closeness to obtain uniform local injectivity on sufficiently short parameter intervals. For parameter pairs separated by a fixed positive distance, compactness of $I$ and injectivity of $\bm{\gamma}$ give a positive lower bound for their distance in the full space $H_{\rm e}^{s+2}(\mathbb R)\times\mathbb R^2$. The $C^0$ estimate in \eqref{7-57} preserves this separation for small $\varepsilon$. Hence $\bm{\gamma}_\varepsilon$ is a regular embedded arc. Local uniqueness holds within the prescribed normalized configuration neighborhoods and under the transverse correspondence \eqref{7-62}.

        We next prove fold persistence. Work in a Bernoulli chart containing $\theta_\star:=(a_\star,W_\star)$, and set $\mathfrak H_\varepsilon:=(\mathscr G_\varepsilon,L_\varepsilon)$. By \eqref{7-47},
        \[
        \|\mathfrak H_\varepsilon-\mathfrak H_0\|_{C^1(K_\star)}\le C_\star\varepsilon
        \]
        on a compact neighborhood $K_\star$ of $\theta_\star$. At the reference fold, $\mathfrak H_0(\theta_\star)=0$ and, by \eqref{5-11},
        \[
        J_\star:=D\mathfrak H_0(\theta_\star),\qquad \det J_\star=-M_\star(\partial_aL)_\star\ne0.
        \]
        Choose a closed ball $B_\rho(\theta_\star)$ contained in the chart and in a transverse-coordinate neighborhood over the interior of $I$, such that $\|I-J_\star^{-1}D\mathfrak H_0(\theta)\|\le1/4$ throughout the ball. For small $\varepsilon$, the map $\mathscr T_\varepsilon(\theta):=\theta-J_\star^{-1}\mathfrak H_\varepsilon(\theta)$ has Lipschitz constant at most $1/2$ there, while $|\mathscr T_\varepsilon(\theta_\star)-\theta_\star|\le C_\star\varepsilon$. Decreasing $\varepsilon$ makes $\mathscr T_\varepsilon$ a contraction of the ball into itself, with a unique fixed point $\theta_{\varepsilon,\star}$. Consequently,
        \[
        \mathfrak H_\varepsilon(\theta_{\varepsilon,\star})=0,\qquad |\theta_{\varepsilon,\star}-\theta_\star|\le2|J_\star^{-1}\mathfrak H_\varepsilon(\theta_\star)|\le C_\star\varepsilon.
        \]
        Thus \eqref{7-60} holds, and the fold lies on the regularized arc constructed above. No differentiability with respect to $\varepsilon$ at zero is needed.

        The $C^2$ drift comparison, the location estimate, and smoothness of $\mathscr G_0$ give
        \[
        M_\varepsilon(\theta_{\varepsilon,\star})=M_\star+O(\varepsilon),\qquad \partial_aL_\varepsilon(\theta_{\varepsilon,\star})=(\partial_aL)_\star+O(\varepsilon).
        \]
        Both remain nonzero, and therefore
        \[
        \chi_{\varepsilon,\star}:=-\frac{\partial_aL_\varepsilon(\theta_{\varepsilon,\star})}{M_\varepsilon(\theta_{\varepsilon,\star})}=\chi_\star+O(\varepsilon),
        \]
        proving \eqref{7-61}. The estimate uses smoothness of the limiting drift, not a uniform third-derivative bound for $\mathscr G_\varepsilon$.

        Since $M_\varepsilon(\theta_{\varepsilon,\star})\ne0$, the reduced equation determines a local $C^2$ function $W=w_\varepsilon(a)$. Differentiating $\mathscr G_\varepsilon(a,w_\varepsilon(a))=0$ gives $w_\varepsilon'(a_{\varepsilon,\star})=0$ and $w_\varepsilon''(a_{\varepsilon,\star})=\chi_{\varepsilon,\star}\ne0$. Hence
        \begin{equation}
            w_\varepsilon(a)-W_{\varepsilon,\star}=\frac{\chi_{\varepsilon,\star}}2(a-a_{\varepsilon,\star})^2+o(|a-a_{\varepsilon,\star}|^2).
            \label{7-64}
        \end{equation}
        Continuity of $w_\varepsilon''$ makes the speed strictly monotone on either side of the fold in a sufficiently small interval. Thus nearby fixed speeds have exactly two normalized solutions if $\chi_{\varepsilon,\star}(W-W_{\varepsilon,\star})>0$, none if the product is negative, and only the fold solution when $W=W_{\varepsilon,\star}$.

        The turning point also corresponds to a fold of the full normalized steady system. Adjoin to $\mathscr R_\varepsilon$ the scalar residual $\alpha(Y,a)(\Gamma\varepsilon)^{-1}\ell_0(\mathcal E_\varepsilon)$. At a solution, the derivative of the complementary equations with respect to $(Y,\phi)$ is invertible by Proposition~\ref{prop7-2}. Eliminating these variables leaves the scalar linearization $L_\varepsilon\ell+M_\varepsilon\nu$ in the depth and speed variations $(\ell,\nu)$. Since $M_\varepsilon\ne0$ near the fold, the full derivative is surjective when both parameters vary. At fixed speed it is invertible for $L_\varepsilon\ne0$, while at the fold its kernel is one-dimensional, generated in $(Y,\phi,a)$ coordinates by
        \[
        \bigl(\partial_aY_\varepsilon(\theta_{\varepsilon,\star}),\partial_a\phi_\varepsilon(\theta_{\varepsilon,\star}),1\bigr).
        \]
        Here the derivatives are taken at fixed $W$. Thus the scalar fold corresponds precisely to the loss of fixed-speed invertibility in the full normalized problem.

        It remains to reconstruct the physical waves. Fix $t\in I$ and abbreviate $Y:=\widehat Y_\varepsilon(t)$, $a:=a_\varepsilon(t)$, $W:=W_\varepsilon(t)$, and $\phi:=\widehat\phi_\varepsilon(t)$. Define
        \[
        \begin{aligned}
            \Phi_\varepsilon(t,z)&:=P_{\varepsilon,Y,a}(z)+\mathcal C_{\varepsilon,Y,a}[\phi](z)+W\operatorname{Im}z,\\
            \mu_\varepsilon(t)&:=\mu_{\varepsilon,Y,a,W}^{\rm app}+m_\varepsilon(Y,a,W,\phi),\\
            \Omega_\varepsilon(t)&:=Z_Y(\Pi_-),\qquad \Psi_\varepsilon(t,p):=\Phi_\varepsilon(t,Z_Y^{-1}(p)).
        \end{aligned}
        \]
        By Lemma~\ref{lem7-3}, $\mathscr G_\varepsilon(\theta_\varepsilon(t))=0$ completes the inner equation, giving \eqref{7-58}. The Dirichlet reconstruction yields $\Psi_\varepsilon=0$ on the free surface, and $\mathcal B_\varepsilon=0$ gives the Bernoulli condition. The relative velocity is $-\nabla_p^\perp\Psi_\varepsilon$, while the laboratory-frame velocity $-\nabla_p^\perp\Psi_\varepsilon-We_1$ tends to zero at infinity.

        For completeness, let $F_f(\tau):=\int_0^\tau f(r)\,dr$ and define
        \[
        \mathsf p_\varepsilon(t,p):=\frac{W_\varepsilon(t)^2}{2}-\frac12|\nabla_p\Psi_\varepsilon(t,p)|^2-gp_2-\varepsilon^{-2}F_f(\Psi_\varepsilon(t,p)-\mu_\varepsilon(t)).
        \]
        The identity $(v_{\rm rel}\cdot\nabla_p)v_{\rm rel}=\nabla_p(|v_{\rm rel}|^2/2)+\omega_{\varepsilon,t}\nabla_p\Psi_\varepsilon$, together with \eqref{7-58}, gives the stationary Euler momentum equation. Since $\mu_\varepsilon(t)>0$ for small $\varepsilon$ by its logarithmic leading term, $F_f(-\mu_\varepsilon(t))=0$ on the free surface; the Bernoulli equation then yields $\mathsf p_\varepsilon=\sigma\kappa_\varepsilon$. The reconstruction therefore solves the full traveling-wave problem, as in Part~I \cite{DuZouPartI}.

        The circulation is exactly $\Gamma$ by \eqref{7-15}, and $\omega_{\varepsilon,t}\ge0$ since $f\ge0$. By \eqref{7-14}, the active inner region lies in $B_{R_*}$. With $\delta_\varepsilon(t)=\varepsilon/\alpha(Y,a)$, uniform interior conformal estimates give
        \[
        |Z_Y(-\mathrm ia+\delta_\varepsilon(t)X)-p_\varepsilon(t)|\le C_{\mathscr K}\varepsilon,\qquad |X|\le R_*,
        \]
        proving \eqref{7-59}. Smooth dependence of the conformal map and \eqref{7-57} also give $\sup_I|p_\varepsilon-p|\le C_{\mathscr K}\varepsilon$. Since the vorticity has total mass $\Gamma$, for every bounded Lipschitz function $\varphi$,
        \begin{equation}
            \sup_{t\in I}\left|\int_{\mathbb R^2}\varphi(p)\omega_{\varepsilon,t}(p)\,dp-\Gamma\varphi(p(t))\right|\le C_{\mathscr K}\Gamma\operatorname{Lip}(\varphi)\varepsilon.
            \label{7-65}
        \end{equation}
        To obtain this bound, subtract $\varphi(p(t))$ inside the integral and use the support and center estimates. This proves uniform concentration along the arc.

        The core geometry is controlled by \eqref{7-44}. On the fixed inner source disk, $U+v_\varepsilon\to U$ in $C^{2,\gamma'}$, uniformly in $t$. Since $U_r(1)=-\Gamma/(2\pi)\ne0$, its zero level near $\partial B_1$ is a unique $C^{2,\gamma'}$ radial perturbation of the unit circle. The strict signs of $U$ away from this circle and \eqref{7-14} exclude additional active components. Because $f$ is positive precisely on $(0,\infty)$, this zero level bounds the active vorticity region.

        Moreover,
        \[
        \frac{Z_Y(-\mathrm ia+\delta_\varepsilon(t)X)-p_\varepsilon(t)}{\varepsilon}=X+O(\varepsilon)
        \]
        in local spatial norms, uniformly in $t$, since $Z_Y'(-\mathrm ia)\delta_\varepsilon(t)=\varepsilon$. The inverse coordinate maps also converge to the identity. Hence the rescaled physical core boundaries converge to $\partial B_1$ in $C^{1,\gamma''}$ for every $0<\gamma''<\gamma'$, and, on every fixed inner disk,
        \[
        \sup_{t\in I}\sup_{|X|\le R}\left|\varepsilon^2\omega_{\varepsilon,t}(p_\varepsilon(t)+\varepsilon X)-f(U(X))\right|\longrightarrow0,\qquad R>0.
        \]
        Interior Schauder estimates for \eqref{7-58} give $\Psi_\varepsilon(t,\cdot)\in C_{\rm loc}^{4,\gamma'}(\Omega_\varepsilon(t))$ and $\omega_{\varepsilon,t}\in C_c^{2,\gamma'}(\Omega_\varepsilon(t))$ for fixed $\varepsilon,t$. The active-core boundary is $C^{2,\gamma'}$ by the nonvanishing normal derivative. No $C^\infty$ regularity is claimed under the present assumptions on $f$.

        Outside the moving cores, \eqref{7-9} identifies the reference field exactly with the point-vortex Green field. The fixed-source representation, $\phi_\varepsilon=O(\varepsilon)$, and convergence of the free surfaces and vortex centers give local convergence of the outer fields away from the point-vortex positions, after identification by the conformal maps.
    \end{proof}

    Where $L\ne0$, the reference and regularized waves may be compared at fixed speed, whereas the transverse correspondence \eqref{7-62} permits speed variation through a fold. Any prescribed finite collection of interior simple folds can be preserved with a common sufficiently small core-size threshold. The argument does not address higher-order degeneracies or yield a uniform positive core size along a noncompact global curve. Uniqueness is restricted to the prescribed normalized configuration neighborhoods, and the steady fold geometry alone implies neither preservation of the energy--momentum Hessian index nor dynamical stability.

    \begin{proof}[Proof of Theorem~\ref{thm1-3}]
        Lemmas~\ref{lem7-1} and \ref{lem7-3}, together with Proposition~\ref{prop7-2}, provide the normalized complementary solutions and their $C^2$ reduced-drift convergence. Proposition~\ref{prop7-4} constructs the regular embedded arc satisfying \eqref{7-57} and the transverse correspondence \eqref{7-62}; its physical reconstruction proves part~{\rm(i)}. The exact circulation, support estimate \eqref{7-59}, and concentration estimate \eqref{7-65} prove part~{\rm(ii)}. Finally, the nondegeneracy \eqref{5-11} and the $C^2$ comparison \eqref{7-47} yield the unique nearby simple fold, while \eqref{7-61}--\eqref{7-64} establish its quadratic turning and fixed-speed solution count, proving part~{\rm(iii)}.
    \end{proof}

    Theorem~\ref{thm1-3} extends the regularization of individual point-vortex waves in Part~I to compact regular solution arcs, including their specified simple folds. The resulting smooth-core waves have the prescribed circulation and admit uniform control of the coupled free boundaries and vortex centers.

    \bigskip
\noindent{\bf Declaration of generative AI and AI-assisted technologies in the manuscript preparation process:}
	The authors developed the mathematical framework and proof strategies. ChatGPT (OpenAI) was used to assist with the preparation of schematic figures and the revision of the mathematical exposition and language of the manuscript. The authors reviewed and checked all AI-assisted material and take full responsibility for the content of this work.
	
    \bigskip
	
	\noindent{\bf Conflict of interest statement:} On behalf of all authors, the corresponding author states that there is no conflict of interest.
	
	\bigskip
	
	\noindent{\bf Data availability statement:} Our manuscript has no associated data.
	
	\bigskip
	
	\bibliographystyle{plain}
	\bibliography{reference}

\end{document}